\setlist[enumerate,1]{label={(\roman*)}} 
\newtheorem{thm}{Theorem}[section]
\newtheorem{mainthm}{Theorem}
\newtheorem{lemma}[thm]{Lemma}
\newtheorem{cor}[thm]{Corollary}
\newtheorem{claim}{Claim}[thm]
\newtheorem{prop}[thm]{Proposition}
\newtheorem{theorem}[thm]{Theorem}
\theoremstyle{definition}
\newtheorem{defn}[thm]{Definition}
\newtheorem{defi}{Definition}
\newtheorem{conv}[thm]{Convention}
\theoremstyle{remark}
\newtheorem{remark}[thm]{Remark}
\newenvironment{why}[1][Proof]{\proof[#1]\mbox{}}{\endproof}
\DeclareMathOperator\cspec{Cspec}
\DeclareMathOperator\ssup{ssup}
\DeclareMathOperator\tcf{tcf}
\DeclareMathOperator\sep{Sep}
\DeclareMathOperator\ad{AD}
\DeclareMathOperator\ubd{{\sf unbounded}}
\DeclareMathOperator\onto{{\sf onto}}
\DeclareMathOperator\proj{{\sf projection}}
\DeclareMathOperator{\reg}{Reg}
\DeclareMathOperator{\sing}{Sing}
\DeclareMathOperator{\cf}{cf}
\DeclareMathOperator{\im}{Im}
\DeclareMathOperator{\otp}{otp}
\DeclareMathOperator{\acc}{acc}
\DeclareMathOperator{\nacc}{nacc}
\DeclareMathOperator{\Tr}{Tr}
\DeclareMathOperator{\tr}{tr}
\DeclareMathOperator{\p}{P}
\DeclareMathOperator{\PP}{PP}
\DeclareMathOperator{\U}{U}
\DeclareMathOperator{\cov}{cov}
\DeclareMathOperator{\cof}{cof}
\newcommand\self{{\circlearrowleft}}
\newcommand\s{\subseteq}
\newcommand\sq{\sqsubseteq}
\newcommand\br{\blacktriangleright}
\newcommand\last[2]{\eth_{#1,#2}}
\newcommand\stick{{{\ensuremath \mspace{2mu}\mid\mspace{-12mu} {\raise0.6em\hbox{$\bullet$}}}}}
\newcommand\on{\text{ORD}}
\newcommand*\axiomfont[1]{\textsf{\textup{#1}}}
\newcommand\ssh{\axiomfont{SSH}}
\newcommand\ns{\textup{NS}}
\newcommand\sa{\textup{SA}}
\newcommand\bd{\textup{bd}}
\newcommand\tree{\mathbf{T}}
\renewcommand\restriction{\mathbin\upharpoonright}
\renewcommand\mid{\mathrel{|}\allowbreak}
\author{Tanmay Inamdar}
\address{Department of Mathematics, Bar-Ilan University, Ramat-Gan 5290002, Israel.}
\curraddr{Department of Mathematics, Ben-Gurion University of the Negev, P.O.B. 653, Be’er Sheva, 84105 Israel}
\urladdr{https://scholar.google.com/citations?user=RDXg2z8AAAAJ}
\author{Assaf Rinot}
\address{Department of Mathematics, Bar-Ilan University, Ramat-Gan 5290002, Israel.}
\urladdr{https://www.assafrinot.com}
\title[Was Ulam right? II]{Was Ulam right? II:\\ Small width and general ideals} 
\date{Preprint as of December 17, 2023. For the latest version, visit \textsf{http://p.assafrinot.com/53}.}
\subjclass[2010]{Primary 03E02; Secondary 03E35, 03E55}
\begin{document}
\dedicatory{This paper is dedicated to W.~F.~Sierpi\'nski on the occasion of his $140^{th}$ birthday}

\begin{abstract}
We continue our study of Sierpi\'nski-type colourings.
In contrast to the prequel paper, we focus here on colourings for ideals stratified by their completeness degree.
In particular, improving upon Ulam's theorem and its extension by Hajnal,
it is proved that if $\kappa$ is a regular uncountable cardinal that is not weakly compact in $L$,
then there is a universal witness for non-weak-saturation of $\kappa$-complete ideals. Specifically,
there are $\kappa$-many decompositions of $\kappa$ such that, for every $\kappa$-complete ideal $J$ over $\kappa$,
and every $B\in J^+$, one of the decompositions shatters $B$ into $\kappa$-many $J^+$-sets.

A second focus here is the feature of narrowness of colourings, one already present in the theorem of Sierpi\'nski.
This feature ensures that a colouring suitable for an ideal is also suitable for all superideals possessing the requisite completeness degree.
It is proved that unlike successors of regulars, every successor of a singular cardinal admits such a narrow colouring.

\end{abstract}
\maketitle
\section{Introduction}
Throughout this paper, $\kappa$ denotes an infinite cardinal
and $\lambda,\mu,\nu$ denote infinite cardinals $\le\kappa$,
and $\theta$ is a cardinal with $2\le\theta\le\kappa$. Also,
$J^{\bd}[\kappa]$ stands for the ideal of bounded subsets of $\kappa$,
and we let
$$\mathcal J^\kappa_\nu:=\{ J\mid J\text{ is a }\nu\text{-complete ideal over }\kappa\text{ extending }J^{\bd}[\kappa]\}.$$
Some additional conventions are listed in Subsection~\ref{nandc} below.

In the prequel paper \cite{paper47}, we initiated the systematic study of Sierpi\'nski-type colourings.
To exemplify, for an ideal $J$ over $\kappa$, the principle  $\onto(J,\theta)$ asserts the existence of a colouring $c:[\kappa]^2\rightarrow\theta$ with the property that for
every $J$-positive set $B$,
there exists some $\eta<\kappa$ such that $c[\{\eta\}\circledast B]=\theta$.

Sierpi\'nski-type colourings are quite natural,
and they can be used to characterize large cardinals (for example, a regular uncountable cardinal $\kappa$ is almost ineffable iff $\onto(J^{\bd}[\kappa],2)$ fails,
and is weakly compact iff $\onto(J^{\bd}[\kappa],3)$ fails),
but our motivation comes from the problem of finding sufficient conditions for any $J$-positive set of a given ideal $J\in\mathcal J^\kappa_\kappa$ to have a decomposition into $\kappa$-many $J$-positive sets.
Probably the earliest such sufficient condition was discovered by Ulam in \cite{ulam1930masstheorie}.
Ulam proved that any successor cardinal $\kappa=\lambda^+$ admits an \emph{Ulam matrix}, that is,
an array $\langle U_{\eta,\tau}\mid \eta<\lambda, \tau<\kappa\rangle$ of subsets of $\kappa$
such that:
\begin{itemize}
\item For every $\eta<\lambda$, $\langle U_{\eta, \tau}\mid \tau<\kappa\rangle$ consists of pairwise disjoint sets;
\item For every $\tau<\kappa$, $|\kappa\setminus \bigcup_{\eta<\lambda}U_{\eta,\tau}|<\kappa$.
\end{itemize}

In particular, given $J\in\mathcal J^\kappa_\kappa$,
each row $\vec {U_\eta}=\langle U_{\eta, \tau} \mid \tau < \kappa\rangle$ shatters any $B\in J^+$ into $\kappa$-many disjoint sets,
and there must exist an $\eta<\lambda$ such that $\{\tau < \kappa\mid U_{\eta, \tau}\cap B\in J^+\}$ has order-type $\kappa$.
Put differently, there is a $\lambda$-list of candidates such that  any element of $J^+$ is partitioned into $\kappa$-many disjoint pieces,
$\kappa$ of which are in $J^+$, by at least one of the candidates.

Later, in \cite{MR260597}, Hajnal extended the idea of Ulam to accommodate some limit cardinals as well. He showed that for every inaccessible cardinal $\kappa$
admitting a stationary set that does not reflect at regulars, there exists
a \emph{triangular Ulam matrix}, that is, an array $\langle U_{\eta,\tau}\mid \eta<\tau<\kappa\rangle$ of subsets of $\kappa$ such that:
\begin{itemize}
\item For every $\eta<\kappa$, $\langle U_{\eta, \tau}\mid \eta<\tau<\kappa\rangle$ consists of pairwise disjoint sets;
\item For stationarily many $\tau<\kappa$, $|\kappa\setminus(\bigcup_{\eta<\tau}U_{\eta,\tau})|<\kappa$.
\end{itemize}

The existence of a triangular Ulam matrix at $\kappa$ is in fact equivalent to
the existence of a stationary subset of $\kappa$ that does not reflect at regulars,
but our focus here is its primary consequence.
The question we consider is the following:  given an ideal $J\in\mathcal J^\kappa_\kappa$,
are there other means to obtain a $\kappa$-list of candidates such that
any element of $J^+$ is partitioned into $\kappa$-many disjoint pieces, $\kappa$ of which are in $J^+$, by at least one of the candidates?

To streamline the discussion, we rephrase things in the language of colourings:
\begin{defi}[\cite{paper47}]\label{def1} Let $J$ be an ideal over $\kappa$.
\begin{itemize}
\item $\onto^+(J,\theta)$  asserts the existence of a colouring $c:[\kappa]^2\rightarrow\theta$
such that
for every $B\in J^+$, there is an $\eta<\kappa$ such that
$$\{\tau<\theta\mid \{\beta\in B\setminus(\eta+1)\mid c(\eta,\beta)=\tau\}\in J^+\}=\theta;$$
\item  $\ubd^+(J,\theta)$  asserts the existence of a colouring $c:[\kappa]^2\rightarrow\theta$
that is upper-regressive (i.e., $c(\alpha,\beta)<\beta$ for all $\alpha<\beta<\kappa$)
with the property that,
for every $B\in J^+$, there is an $\eta<\kappa$ such that
$$\otp(\{\tau<\theta\mid \{\beta\in B\setminus(\eta+1)\mid c(\eta,\beta)=\tau\}\in J^+\})=\theta.$$
\end{itemize}
For a collection $\mathcal J$ of ideals over $\kappa$, we write $\onto^+(\mathcal J,\theta)$  to assert the existence
of a colouring $c:[\kappa]^2\rightarrow\theta$ simultaneously witnessing $\bigwedge_{J\in\mathcal J}\onto^+(J,\kappa)$.
The same convention applies to $\ubd^+$.
\end{defi}

By the above-mentioned theorems of Ulam and Hajnal, if $\kappa$ is a regular uncountable cardinal and $\ubd^+(\mathcal J^\kappa_\kappa,\kappa)$ fails,
then $\kappa$ is a Mahlo cardinal all of whose stationary subsets reflect at inaccessibles.\footnote{By the work of Mekler and Shelah \cite{MkSh:367}, the consistency strength of this reflection principle is weaker than that of a weakly compact cardinal.}
The first main result of this paper improves it, deriving
that $\kappa$ is a greatly Mahlo cardinal that is weakly compact in $L$,
and for every sequence $\langle S_i\mid i<\kappa\rangle$ of stationary subsets of $\kappa$,
there exists an inaccessible $\alpha<\kappa$ such that $S_i\cap\alpha$ is stationary in $\alpha$ for every $i<\alpha$.
The result is obtained by drawing a connection between the theory of walks on ordinals and the problem of decomposing a positive set of an ideal in $\mathcal J^\kappa_\kappa$ into the maximal number of positive pieces.
It reads as follows.
\begin{mainthm}\label{thmb}
For a regular uncountable cardinal $\kappa$, the following are equivalent:
\begin{enumerate}[(1)]
\item $\ubd^+(J^{\bd}[\kappa],\kappa)$ holds;
\item $\ubd^+(\mathcal J^\kappa_\kappa,\kappa)$ holds;
\item $\kappa$ admits a nontrivial $C$-sequence in the sense of \cite[Definition~6.3.1]{TodWalks}.
\end{enumerate}
\end{mainthm}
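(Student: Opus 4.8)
The plan is to prove the cycle $(2)\Rightarrow(1)\Rightarrow(3)\Rightarrow(2)$. The implication $(2)\Rightarrow(1)$ is immediate: since $\kappa$ is regular, $J^{\bd}[\kappa]$ is itself a member of $\mathcal J^\kappa_\kappa$, so any single colouring witnessing $\ubd^+(\mathcal J^\kappa_\kappa,\kappa)$ witnesses $\ubd^+(J^{\bd}[\kappa],\kappa)$ in particular. The content therefore lies in the constructive direction $(3)\Rightarrow(2)$ and the necessity direction $(1)\Rightarrow(3)$.

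For $(3)\Rightarrow(2)$, I would fix a nontrivial $C$-sequence $\vec C=\langle C_\alpha\mid\alpha<\kappa\rangle$ and let $\last{\eta}{\beta}$ denote the last-step datum of the walk from $\beta$ down to $\eta$ along $\vec C$. I would define the colouring $c(\eta,\beta)$ from $\last{\eta}{\beta}$ so that its value lies strictly below $\beta$, making $c$ upper-regressive by design. The heart of the matter is then a single lemma, proved once and for all: for every $B$ that is positive with respect to some $J\in\mathcal J^\kappa_\kappa$, there is a level $\eta$ for which $\kappa$-many colours $\tau$ satisfy that $\{\beta\in B\setminus(\eta+1)\mid c(\eta,\beta)=\tau\}$ is again $J$-positive (a set of size $\kappa$ of colours below $\kappa$ automatically has order type $\kappa$). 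Here nontriviality of $\vec C$ is used to guarantee that the traces $\Tr(\eta,\beta)$ do not stabilise, so that the last-step values range cofinally, while the $\kappa$-completeness of $J$ is what converts ``unboundedly many $\beta$'s'' into ``$J$-positively many fibres'', via a pressing-down argument on $B$. Because this lemma refers to $J$ only through its completeness and positivity, the \emph{same} colouring serves every $J\in\mathcal J^\kappa_\kappa$ at once, which is exactly what $\ubd^+(\mathcal J^\kappa_\kappa,\kappa)$ demands.

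For $(1)\Rightarrow(3)$ I would argue the contrapositive, invoking the theorem of Todorcevic that a regular uncountable $\kappa$ carries a nontrivial $C$-sequence if and only if it is not weakly compact. Thus assume $\kappa$ is weakly compact and let $c:[\kappa]^2\to\kappa$ be an arbitrary upper-regressive colouring; the goal is to produce a single unbounded $B\subseteq\kappa$ witnessing the failure of $\ubd^+(J^{\bd}[\kappa],\kappa)$, i.e.\ one with $\otp(A_\eta)<\kappa$ for every $\eta<\kappa$, where $A_\eta:=\{\tau<\kappa\mid\{\beta\in B\setminus(\eta+1)\mid c(\eta,\beta)=\tau\}\text{ is unbounded}\}$. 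A useful first reduction is that $A_\eta\subseteq U_\eta:=\{\tau<\kappa\mid\{\beta<\kappa\mid c(\eta,\beta)=\tau\}\text{ is unbounded}\}$ regardless of $B$, so the only rows that must be actively tamed are the \emph{rich} ones, namely those $\eta$ with $\otp(U_\eta)=\kappa$; for a single rich row, a diagonal transversal through its (unboundedly many, pairwise disjoint, unbounded) fibres already yields an unbounded $B$ meeting all but boundedly many fibres in a bounded set.

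The main obstacle is to carry out this taming \emph{simultaneously} across all $\kappa$-many rich rows with one unbounded $B$, since the transversals demanded by different rows conflict. This is precisely where weak compactness must be spent: I expect to run the construction inside the normal $\kappa$-complete weakly compact filter (equivalently, to extract $B$ as a branch through a tree of finite-to-one approximations via the tree property, or to reflect the relevant $\Pi^1_1$ assertion by $\Pi^1_1$-indescribability), using normality to diagonalise the $\kappa$-many fibre-partitions into a single unbounded $B$ on which every rich row realises only boundedly many colours cofinally. Arranging the bookkeeping so that it respects \emph{all} rich rows at once — rather than only those that happen to land in $B$ — is the crux, and it is what pins weak compactness (and not merely inaccessibility or Mahloness) as the exact threshold, in harmony with the equivalence to nontriviality of the $C$-sequence.
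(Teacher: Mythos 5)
Your reduction of $(1)\Rightarrow(3)$ to weak compactness is where the proposal breaks down. There is no theorem asserting that a regular uncountable $\kappa$ carries a nontrivial $C$-sequence if and only if it is not weakly compact; only one direction holds (weak compactness implies every $C$-sequence is trivial). The converse consistently fails: in Kunen's model of a $\kappa$-saturated ideal on a strongly inaccessible non-weakly-compact $\kappa$, there is no nontrivial $C$-sequence at all --- this is the Donder--K\"onig observation recorded in the remark following Theorem~\ref{uparrowlemma}, and it is exactly why the conclusions of this paper read ``weakly compact in $L$'' rather than ``weakly compact''. Consequently, your contrapositive starts from a hypothesis (weak compactness) strictly stronger than the negation of (3), so even if your sketched construction with the weakly compact filter were completed, it would prove only that weak compactness kills $\ubd^+(J^{\bd}[\kappa],\kappa)$, not the implication $(1)\Rightarrow(3)$. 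The paper's argument for this direction is much softer and uses no large-cardinal reflection: $\ubd^+(J^{\bd}[\kappa],\kappa)$ implies $\ubd(J^{\bd}[\kappa],\kappa)$, which by \cite[Lemma~3.4]{paper47} is equivalent to $\kappa\in\sa_\kappa$ (existence of a strongly amenable $C$-sequence), and Lemma~\ref{strongstrongamen} identifies strong amenability with $\chi(\vec C)>1$, i.e.\ nontriviality; see Corollary~\ref{cor28}.

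Your $(3)\Rightarrow(2)$ also has a gap at its crux. You propose to convert ``unboundedly many $\beta$'' into ``$J$-positively many fibres'' by ``a pressing-down argument'', but Fodor's lemma requires normality, whereas an arbitrary member of $\mathcal J^\kappa_\kappa$ is merely $\kappa$-complete: a regressive map on a $J$-positive set decomposes it into $\kappa$-many (not ${<}\kappa$-many) fibres, so completeness gives nothing. This is precisely the obstruction discussed at the opening of Section~\ref{subnormalrevisited}, where it is noted that a witness to $\ubd(J^{\bd}[\kappa],\kappa)$ can be injective on every row. The paper's repair is twofold. First, the walks colouring is not the last step of the walk but $c(\eta,\beta):=\Tr(\eta,\beta)(o(\eta,\beta)+1)$, where $o$ witnesses $\kappa\nrightarrow[\kappa]^2_\omega$ (available from nontriviality via Todor\v{c}evi\'c's theorem); this yields $\ubd^+(\self,J^{\bd}[\kappa],\kappa)$ for the bounded ideal only (Theorem~\ref{strongamenfirstrepair}). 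Second, a separate pumping-up theorem (Theorem~\ref{uparrowlemma}) transfers the conclusion to every $J\in\mathcal J^\kappa_\kappa$, replacing Fodor by a genuinely two-dimensional device: a colour realized cofinally often on a diagonally constructed cofinal set is realized at a pair $(\alpha',\beta')$; upper-regressivity places that colour below $\alpha'$, and the diagonal construction then forces $\beta'$ into the relevant $J$-large set, producing the contradiction. Without some such mechanism, the ``single lemma'' you describe, quantifying over all $\kappa$-complete ideals at once, will not close.
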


Corollary~\ref{cor55} below gives a pumping-up condition, showing that for every infinite regular cardinal $\theta$ such that $\mathfrak b_\theta=\theta^+$, if $\onto^+(J^{\bd}[\theta],\theta)$ holds,
then so does $\onto^+(J^\bd[\theta^+],\theta^+)$.
Corollary~\ref{cmonotone} below uncovers a downward monotonicity feature, showing that
for every $\mathcal J\s \mathcal J^\kappa_\omega$, $\ubd^+(\mathcal J,\allowbreak\theta)$ implies
$\onto^+(\mathcal J,\allowbreak\vartheta)$ for all regular $\vartheta<\theta$.
When put together with Theorem~\ref{thmb}, this shows that if $\theta\le\kappa$ is a pair of infinite regular cardinals such that
$\onto^+(J^{\bd}[\kappa],\theta)$ fails, then either $\theta=\kappa$ or $\kappa$ is greatly Mahlo.

Now, let us address the harder problem of finding \emph{disjoint refinements}:
given an ideal $J\in\mathcal J^\kappa_\kappa$ and a sequence $\vec B=\langle B_\tau\mid \tau<\theta\rangle$ of $J^+$-sets,
is there a pairwise disjoint sequence $\langle A_\tau\mid \tau<\theta\rangle$ of $J^+$-sets such that $A_\tau\s B_\tau$ for all $\tau<\theta$?

While (classical and triangular) Ulam matrices provide a disjoint refinement for any constant sequence $\vec B$,
they fail to handle more general sequences. Here, we show that from the same hypothesis under which Ulam matrices exist,
there does exist a universal refining matrix, as follows.
\begin{mainthm}\label{thmc}
Suppose that $\kappa$ is a regular uncountable cardinal admitting a stationary set that does not reflect at regulars.
Then, for every cardinal $\theta<\kappa$, $\onto^{++}(\mathcal J^\kappa_\kappa,\theta)$ holds.\footnote{Here, one cannot take $\theta=\kappa$, since, by \cite[Proposition~9.11]{paper47}, $\onto^{++}(J^{\bd}[\kappa],\kappa)$ fails.}
That is, there exists a colouring $c:[\kappa]^2\rightarrow\theta$
such that for every $J\in\mathcal J^\kappa_\kappa$ and every sequence
$\vec B=\langle B_\tau\mid \tau<\theta\rangle$ of $J^+$-sets,
there exists an $\eta<\kappa$ such that
$$\{\tau<\theta\mid \{\beta\in B_\tau\setminus(\eta+1)\mid c(\eta,\beta)=\tau\}\in J^+\}=\theta.$$
\end{mainthm}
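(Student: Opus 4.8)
The plan is to reduce Theorem~\ref{thmc} to a single-colour, club-many-witnesses statement, and only then to exploit the hypothesis $\theta<\kappa$. Concretely, I would aim to construct one colouring $c\colon[\kappa]^2\to\theta$ with the following property: for every $J\in\mathcal J^\kappa_\kappa$, every $B\in J^+$, and every $\tau<\theta$, the set
$$D^J_\tau(B):=\{\eta<\kappa\mid \{\beta\in B\setminus(\eta+1)\mid c(\eta,\beta)=\tau\}\in J^+\}$$
contains a club in $\kappa$. Granting this, the theorem is immediate: given a sequence $\vec B=\langle B_\tau\mid\tau<\theta\rangle$ of $J^+$-sets, the intersection $\bigcap_{\tau<\theta}D^J_\tau(B_\tau)$ is an intersection of $\theta$-many clubs, hence—since $\kappa$ is regular and $\theta<\kappa$—itself contains a club and is in particular nonempty; any $\eta$ in it witnesses the required instance. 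Two remarks make this reduction attractive. First, it automatically covers \emph{non-constant} sequences $\vec B$, which is exactly the advance over classical and triangular Ulam matrices noted before the statement. Second, it pinpoints why $\theta=\kappa$ must be excluded: for $\theta=\kappa$ one would be intersecting $\kappa$-many clubs, which need not meet, matching the footnoted failure of $\onto^{++}(J^{\bd}[\kappa],\kappa)$.

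The entire content is therefore the club-many-witnesses property, and this is where the non-reflecting stationary set enters. Let $S\s\kappa$ be stationary with $S\cap\alpha$ nonstationary for every regular uncountable $\alpha<\kappa$. I would first fix a $C$-sequence $\vec C=\langle C_\alpha\mid\alpha<\kappa\rangle$ guided by $S$, arranged so that the accumulation points of each $C_\alpha$ stay off $S$ wherever possible—which is exactly what non-reflection at regulars permits—and then define $c$ from the associated walks, composing a walk characteristic (for instance one reading off the last passage of the walk through $S$) with a fixed surjection onto $\theta$. The design objective is that the avoidance of $S$ along the $C_\alpha$ makes the relevant trace functions behave uniformly enough that, for a fixed $B$ and fixed $\tau$, the set of levels $\eta$ failing to realise colour $\tau$ positively on $B$ above $\eta$ is \emph{nonstationary}; equivalently, $D^J_\tau(B)$ contains a club, as desired.

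The main obstacle is precisely proving that this bad set of levels is nonstationary. I would argue by contradiction: assuming that for some $J\in\mathcal J^\kappa_\kappa$, some $B\in J^+$, and some $\tau<\theta$ the set of $\eta$ for which colour $\tau$ is $J$-null on $B\setminus(\eta+1)$ is stationary, I would run a pressing-down argument along the walks to concentrate the offending fibres at a single ordinal. The point of tying $c$ to $\vec C$ and to $S$ is that such a concentration, fed back through the definition of $c$, forces $S$ to reflect at an ordinal below $\kappa$, and the regularity of $\kappa$ together with the $\kappa$-completeness of $J$ should ensure that this reflection point is itself regular—contradicting the choice of $S$. The delicate part, and the technical crux of the whole argument, is extracting an honest reflection point of $S$ out of a merely stationary failure set and verifying that the witness is regular (indeed inaccessible) rather than of small cofinality; it is here that I expect the bulk of the work, and where the precise walk characteristic used in the definition of $c$ must be chosen to make the contradiction go through.
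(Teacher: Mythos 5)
Your opening reduction is correct: if a single colouring $c$ had the property that $D^J_\tau(B)$ contains a club for every $J\in\mathcal J^\kappa_\kappa$, $B\in J^+$ and $\tau<\theta$, then intersecting the $\theta$-many clubs $D^J_\tau(B_\tau)$ (legitimate, as $\theta<\kappa$) would indeed yield Theorem~\ref{thmc}. But this moves the entire content of the theorem into the club-many-witnesses property, and that property is strictly stronger than anything the hypothesis is known to deliver; your sketch does not close that gap. The paper's proof produces onto-type behaviour in two stages: walks along a strongly amenable $C$-sequence give $\ubd^+([\kappa]^\kappa,\mathcal J^\kappa_\kappa,\kappa)$ (Theorems \ref{strongamenfirstrepair}, \ref{strongamensecondrepair} and \ref{uparrowlemma}(2)), whose witness sets are indeed large (co-bounded); but converting ``order-type $\kappa$ many values attained'' into ``every prescribed colour attained'' is done by composing with a projection $p$ through a pairing bijection $\pi:\kappa\leftrightarrow\kappa\times\kappa$ (Corollary~\ref{cor415}, Lemma~\ref{lemma15}). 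After that composition, a witness for a given $(B,\tau)$ has the form $\pi^{-1}(\eta',i)$, where $\eta'$ ranges over a co-bounded set but $i$ is merely \emph{some} projection index good for the set of positively-attained values of $c_{\eta'}$ --- possibly one index out of $\kappa$ many, depending on $J$ and $B$. Such witness sets have size $\kappa$ but are in no way club, and no choice of $\pi$ repairs this. That is exactly why the paper performs the intersection over $\tau$ at the level of the \emph{pre-projection} $\ubd^+$-witnesses (Lemma~\ref{lemma39}, Lemma~\ref{lemma15}(2)): one $\eta'$ good for all $\tau$ simultaneously is found first, and only afterwards a single projection index is chosen. Your plan performs the intersection at the level of the final onto-witnesses, which is precisely what this machinery cannot supply. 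Your alternative colouring --- a walk characteristic composed with a fixed surjection onto $\theta$ --- avoids projections but then has no mechanism at all forcing a prescribed colour $\tau$ to be realised $J$-positively on $B$: walk characteristics deliver richness of values, not surjectivity onto prescribed targets, which is the very reason the projection apparatus exists.

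The proposed contradiction is also unsubstantiated. Nowhere in the paper (or, as far as I know, elsewhere) is the hypothesis used by extracting a reflection point of $S$ from a stationary set of failures; it is used to import two cited deep results --- $\chi(\kappa)>1$ and the rectangular relation $\kappa\nrightarrow[\kappa;\kappa]^2_\omega$ (Shelah for inaccessibles, Moore--Shelah and Rinot--Todor\v{c}evi\'c at successors) --- and the walks contradiction in Theorems \ref{strongamenfirstrepair}/\ref{strongamensecondrepair} is of a different kind: a set of values trapped cofinally inside a closed set $C_\gamma$ below a point $\alpha\notin C_\gamma$. Your sentence ``the $\kappa$-completeness of $J$ should ensure that this reflection point is itself regular'' is a hope, not an argument, and it is exactly the step nobody knows how to perform. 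Finally, your sketch is uniform in $\theta$, but the theorem is not: the case $\kappa=\theta^+$ with $\theta$ singular cannot be handled by the regular-$\theta$ route (the projections behind Corollary~\ref{cor415} require $\theta\in\reg(\kappa)$); for it, the paper must invoke $\theta^+\nrightarrow[\theta^+;\theta^+]^2_\theta$ from \cite{paper15} together with Theorem~\ref{prop46}(2). So: a correct and clean reduction, but the crux is missing, and the route you propose toward it conflicts with how onto-type principles are actually obtainable from these hypotheses.
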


Finally, let us turn our attention to \emph{narrow} colourings.
For a principle $\mathsf p\in\{\onto^+,\allowbreak\ubd^+\}$,
the instance $\mathsf p(\{\lambda\},\allowbreak J,\theta)$
is obtained by requiring in Definition~\ref{def1} that the ordinal $\eta$ be chosen below $\lambda$.
By Proposition~\ref{prop26} below, if $\mathsf p(\{\lambda\},J^{\bd}[\kappa],\theta)$ holds for a regular cardinal $\kappa$,
then $\theta\le\kappa\le 2^\lambda$.
More importantly, by Proposition~\ref{prop614}  below, if $\theta\le\lambda<\kappa$, then any colouring witnessing $\mathsf p(\{\lambda\},J^{\bd}[\kappa],\allowbreak \theta)$
moreover witnesses $\mathsf p(\{\lambda\}, \mathcal J^\kappa_{\lambda^+},\theta)$.
The following provides a sample of the results obtained here on narrow colourings.

\begin{mainthm}\label{thmd} Assume $\theta<\lambda<\cf(\kappa)\le2^\lambda$.
\begin{enumerate}[(1)]
\item If $\lambda$ is singular, then $\ubd^+(\{\lambda\},J^{\bd}[\kappa],\lambda)$ holds for $\kappa=\lambda^+$;
\item If $\lambda$ is regular, then $\ubd^+(\{\lambda\},J^{\bd}[\kappa],\lambda)$ holds for $\kappa\in\{\mathfrak b_\lambda,\mathfrak d_\lambda\}$;
\item If $\lambda$ is singular, then $\onto^+(\{\lambda\},J^{\bd}[\kappa],\theta)$ holds for $\kappa\in\PP(\lambda)$;
\item If $\lambda=\lambda^\theta$ or if $\lambda$ is a strong limit, then $\onto^+(\{\lambda\},J^{\bd}[\kappa],\theta)$ holds;
\item If $\lambda=\theta^+$ and $\mathfrak b_\lambda=\mathfrak d_\lambda=\kappa$, then $\ubd^+(\{\lambda\},J^{\bd}[\kappa],\theta)$ holds;
\item If $\lambda>\theta^{++}$, then $\onto^+(\{\lambda\},J^{\bd}[\kappa],\theta^+)$ holds for $\kappa=\lambda^+$.
\end{enumerate}
\end{mainthm}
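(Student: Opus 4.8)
By Proposition~\ref{prop614} it suffices to handle the bounded ideal, so throughout I read ``$\in J^+$'' as ``unbounded in $\kappa$''. For each item I attach to every $\beta<\kappa$ a function $f_\beta\in{}^{\lambda}\mu$, where $\mu\in\{\theta,\theta^+,\lambda\}$ is the relevant number of colours, and set $c(\eta,\beta):=f_\beta(\eta)$ for $\eta<\min\{\lambda,\beta\}$ (defining $c$ regressively and arbitrarily on the bounded set of pairs with $\beta\le\theta$). Since any value is $<\mu\le\lambda\le\beta$ once $\beta\ge\lambda$, upper-regressiveness holds on a tail, which is all the $\ubd^+$-clause inspects. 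Using $\lambda<\cf(\kappa)$ one checks that the two principles become purely statements about the family $\langle f_\beta\mid\beta<\kappa\rangle$: \emph{(onto)} $\onto^+(\{\lambda\},J^{\bd}[\kappa],\mu)$ holds iff every $p\in{}^{\lambda}\mu$ has bounded \emph{avoidance set} $\{\beta<\kappa\mid\forall\eta<\lambda\,(f_\beta(\eta)\ne p(\eta))\}$; and \emph{(ubd)} $\ubd^+(\{\lambda\},J^{\bd}[\kappa],\mu)$ holds iff every unbounded $B$ admits a coordinate $\eta<\lambda$ at which $\{\tau<\mu\mid\{\beta\in B\mid f_\beta(\eta)=\tau\}\text{ is unbounded}\}$ has order-type $\mu$. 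The first equivalence follows because a failure of $\onto^+$ at $B$ yields, after taking a supremum of $\lambda<\cf(\kappa)$ many bounds, a pattern $p$ avoided on a tail of $B$.

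For the regular-$\lambda$ cases (2) and (5) of $\ubd^+$ the engine is a \emph{tail-pigeonhole}. Take $\langle f_\beta\rangle$ to be $<^*$-increasing and unbounded in $({}^{\lambda}\lambda,<^*)$ of length $\kappa$; if, for some unbounded $B$ and every coordinate $\eta$, the values $\{f_\beta(\eta)\mid\beta\in B\}$ were bounded in $\lambda$, their pointwise supremum would bound the cofinal subfamily $\langle f_\beta\mid\beta\in B\rangle$, contradicting unboundedness. Running this for every tail of $B$ and pigeonholing over the $\lambda<\cf(\kappa)$ coordinates pins a single $\eta^{*}$ whose value-set is unbounded in $\lambda$ over \emph{every} tail of $B$; a final supremum argument (again invoking $\lambda<\cf(\kappa)$) upgrades this to: cofinally-in-$\lambda$ many colours have unbounded fibre, which for regular $\lambda$ is exactly order-type $\lambda$. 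This proves (2) directly with $\kappa=\mathfrak b_\lambda$ (an unbounded family) and, dually, with $\kappa=\mathfrak d_\lambda$ (a $<^*$-increasing dominating family). For (5) I use a genuine scale (available since $\mathfrak b_\lambda=\mathfrak d_\lambda=\kappa$) and read the colour through a fixed surjection $\lambda\to\theta$ chosen, using $\lambda=\theta^+$, so that order-type-$\lambda$ value-sets descend to order-type-$\theta$ colour-sets.

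For the $\onto^+$-items (3), (4), (6) I instead construct a bounded-avoidance family. Item~(3) (singular $\lambda$, $\kappa\in\PP(\lambda)$) uses a PCF scale: fix $\langle\lambda_i\mid i<\cf(\lambda)\rangle$ cofinal in $\lambda$ and a scale $\langle f_\beta\mid\beta<\kappa\rangle$ for $\prod_{i<\cf(\lambda)}\lambda_i$ of length $\kappa=\tcf(\prod_{i<\cf(\lambda)}\lambda_i/J^{\bd}[\cf(\lambda)])$, restrict to coordinates $i$ with $\lambda_i>\theta$, and post-compose with a surjection onto $\theta$; the exactness of the scale forces every pattern to be met on a tail, so avoidance is bounded and $\onto^+$ follows. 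Items~(4) and (6) are built by a recursion of length $\cf(\kappa)\le2^\lambda$ in which ``threat'' patterns are neutralised stage by stage: the cardinal gap — $\lambda=\lambda^\theta$ (respectively $2^{<\lambda}=\lambda$ for strong-limit $\lambda$) in (4), and $\lambda>\theta^{++}$ with $\kappa=\lambda^+$ in (6) — bounds the stock of patterns that must be handled at each stage, allowing one to refresh the $f_\beta$ so that every pattern is eventually met.

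The main obstacle is item~(1): the \emph{full} colour count $\mu=\lambda$ at a \emph{singular} $\lambda$. Here the tail-pigeonhole of the second paragraph stalls, because a single coordinate of a $\prod_{i<\cf(\lambda)}\lambda_i$-scale only reaches colours below some $\lambda_i<\lambda$, so no fixed coordinate can produce order-type $\lambda$. The colour must therefore be read off a coordinate that \emph{varies with} $\beta$: I would set $c(\eta,\beta):=f_\beta(i(\eta,\beta))$ for a carefully chosen assignment $i(\eta,\beta)<\cf(\lambda)$ which, as $\eta$ ranges over the $\lambda$ available narrow indices, sweeps the read-off across all coordinates so that the attained colours become cofinal in $\lambda$. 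Verifying that for \emph{every} unbounded $B$ some single index $\eta<\lambda$ realises order-type $\lambda$ with unbounded fibres — while keeping the colouring upper-regressive — is the crux, and it is precisely where the fine structure of a good (continuous, exact) scale on $\lambda^+$ must enter. This is the step that yields the paper's headline contrast: successors of singulars admit narrow colourings, whereas successors of regulars do not.
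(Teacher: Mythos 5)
Your global framework is sound and in fact matches the paper's: the reduction to $J^{\bd}[\kappa]$ via Proposition~\ref{prop614}, the translation of $\ubd^+$ into a statement about value-sets of a family $\langle f_\beta\mid\beta<\kappa\rangle$ (this is Lemma~\ref{lemma712a}), and your avoidance-set criterion for $\onto^+$ (this is exactly the $\sep$-machinery of Lemma~\ref{seplemma}). Your argument for clause (2) at $\kappa=\mathfrak b_\lambda$ is correct. But the $\mathfrak d_\lambda$ half of (2) already fails as written: a $<^*$-increasing \emph{dominating} family of length $\mathfrak d_\lambda$ exists if and only if $\mathfrak b_\lambda=\mathfrak d_\lambda$, so your ``dual'' object need not exist. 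The fix (Corollary~\ref{lemma712}(2)) is to take a dominating family $\vec f$ that is not assumed increasing and replace it by functions $g_\beta$ chosen so that $\neg(g_\beta<^*f_\alpha)$ for all $\alpha\le\beta$; then every cofinal subfamily is unbounded, which is all your pigeonhole needs.

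The deeper, recurring gap is your use of \emph{fixed} surjections to cut down the number of colours; this cannot work, and it is precisely where the paper must bring in varying collapses and club-guessing. For clause (5): any map $s:\theta^+\rightarrow\theta$ has a fibre of size $\theta^+$, hence of order-type $\theta^+=\lambda$, so an order-type-$\lambda$ set of good colours may lie inside a single fibre and $s$ collapses it to one colour; the paper (Corollary~\ref{ccor68}) instead encodes into the narrow index a pair $(\eta',\gamma)$ and composes with an \emph{injection} $e_\gamma:\gamma\rightarrow\theta$, where $\gamma$ is chosen after the fact so that each good-colour set meets $\gamma$ in a set of size $\theta$ --- a choice no single $s$ can simulate. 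The same fallacy kills clause (3): knowing that $\{f_\beta(i)\mid\beta\in B\}$ is unbounded in $\lambda_i$ does not make it surjective onto $\theta$ under a fixed $s_i$, since it may avoid a fibre of $s_i$ entirely; the paper's Theorem~\ref{thm811} needs Shelah's club-guessing sequences $\langle C^i_\delta\rangle$, with the guess $\delta$ encoded into $\eta$ and the colour read off as $\otp(C^i_\delta\cap f_\beta(i))$. For clauses (4) and (6) your recursion is not a proof: the number of patterns is $\theta^\lambda=2^\lambda$ no matter which cardinal-arithmetic hypothesis holds, a single $f_\beta$ can only be forced (by diagonalization) to meet $\lambda$ prescribed patterns, and bounded avoidance requires every pattern to be met by \emph{all} sufficiently late $f_\beta$'s, so the stock to be handled at late stages is too large; the paper instead invokes the Engelking--Kar{\l}owicz theorem (Theorem~\ref{ek}), Shelah's $\sep$ (Corollary~\ref{sepapp}), and almost-disjoint families plus projections (Corollary~\ref{cor73}). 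Finally, clause (1) --- which you yourself flag as unresolved --- is closed in the paper by Corollary~\ref{cor215}: one composes the colouring of Theorem~\ref{thm811} (with $\theta:=\cf(\lambda)$, upgraded to output pairs $(i,\gamma)$ hitting $\cf(\lambda)\times\Gamma$ for a stationary set $\Gamma$ of ordinals $\gamma<\kappa$) with a scale $\langle f_\gamma\mid\gamma<\kappa\rangle$, reading the colour as $f_\gamma(i)$; the idea you were missing is that the scale is evaluated at the guessed index $\gamma$ supplied by the auxiliary colouring, not at $\beta$ itself.
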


\subsection{Organization of this paper}
In Section~\ref{basics}, we recall the colouring principles from \cite[\S2]{paper47},
identify some inconsistent instances, and demonstrate the utility of narrow colourings.

In Section~\ref{subnormalrevisited}, we bring in walks on ordinals in order to improve the results of \cite[\S3]{paper47}.
The equivalence between Clauses (1) and (3) of Theorem~\ref{thmb} will be established there.

In Section~\ref{pumings}, we shall draw some implications between the various instances of our colouring principles.
In particular we prove pumping-up theorems for these principles using the concept of \emph{projections}, as well as establish a monotonicity result between them.
The equivalence between Clauses (1) and (2) of Theorem~\ref{thmb} will be established there.

In Section~\ref{partitions}, we compare our colouring principles with other well-studied principles such as $\kappa\nrightarrow[\kappa]^2_\theta$ and $\U(\kappa,\mu,\theta,\chi)$,
improving upon results from \cite{paper47} that were limited to \emph{subnormal} ideals.
It is proved that $\onto^{++}([\kappa]^\kappa,\mathcal J^\kappa_\kappa,\theta)$ holds for $\kappa=\theta^+$ with $\theta$ regular,
that $\ubd^{++}([\kappa]^\kappa,\mathcal J^\kappa_\kappa,\theta)$ holds for $\kappa=\theta^+$ with $\theta$ singular,
and that $\onto^+(\mathcal J^\kappa_\kappa,\omega)$ holds for every regular cardinal $\kappa\ge\mathfrak d$ that is not weakly compact.
Additional instances are shown to hold in the presence of Shelah's Strong Hypothesis (\ssh).
The proof of Theorem~\ref{thmc} will be found there.

In Section~\ref{sectionscales}, we obtain narrow colourings using scales for regular and singular cardinals.
Some of the clauses of Theorem~\ref{thmd} are proved there.

In Section~\ref{adsection}, we obtain more narrow colourings, this time using independent families, almost-disjoint families,
and trees. The remaining clauses of Theorem~\ref{thmd} are proven there.
We also show that in contrast with the monotonicity result of Section~\ref{pumings},
narrow colourings are not downwards nor upwards monotone.

In the Appendix, we provide a concise index for many of the results of this paper.
\subsection{Notation and conventions}\label{nandc}
The dual filter of an ideal $J$ over $\kappa$ is denoted by $J^*:=\{ \kappa\setminus X\mid X\in J\}$,
and the collection of $J$-positive sets is denoted by $J^+:=\mathcal P(\kappa)\setminus J$.
Let $\reg(\kappa)$ denote the collection of all infinite regular cardinals below $\kappa$.
Let $E^\kappa_\theta:=\{\alpha < \kappa \mid \cf(\alpha) = \theta\}$,
and define $E^\kappa_{\le \theta}$, $E^\kappa_{<\theta}$, $E^\kappa_{\ge \theta}$, $E^\kappa_{>\theta}$,  $E^\kappa_{\neq\theta}$ analogously.
For a set of ordinals $A$, we write $\ssup(A) := \sup\{\alpha + 1 \mid \alpha \in A\}$, $\acc^+(A) := \{\alpha < \ssup(A) \mid \sup(A \cap \alpha) = \alpha > 0\}$,
$\acc(A) := A \cap \acc^+(A)$, and $\nacc(A) := A \setminus \acc(A)$.
For a stationary $S\s \kappa$, we write
$\Tr(S):= \{\alpha \in E^\kappa_{>\omega}\mid  S\cap \alpha\text{ is stationary in }\alpha\}$.
A (\emph{$\xi$-bounded}) \emph{$C$-sequence} over $S$ is a sequence $\vec C=\langle C_\beta\mid\beta\in S\rangle$
such that, for every $\beta\in S$, $C_\beta$ is a closed subset of $\beta$ with $\sup(C_\beta)=\sup(\beta)$ (and $\otp(C_\beta)\le\xi$).
For $A,B$ sets of ordinals, we denote $A\circledast B:=\{(\alpha,\beta)\in A\times B\mid \alpha<\beta\}$
and we identify $[B]^2$ with $B\circledast B$.
In particular, we interpret the domain of a colouring $c:[\kappa]^2\rightarrow\theta$ as a collection of ordered pairs.
In scenarios in which we are given an unordered pair $p=\{\alpha,\beta\}$,
we shall write $c(\{\alpha,\beta\})$ for $c(\min(p),\max(p))$. We also agree to interpret $c(\{\alpha,\beta\})$ as $0$, whenever $\alpha=\beta$.
For $\theta\neq 2$, $[\kappa]^\theta$ stands for the collection of all subsets of $\kappa$ of size $\theta$.

The definitions of cardinal invariants such as $\mathfrak t,\mathfrak b,\mathfrak d$ and their higher generalizations may be found in \cite{MR1355135,ShSj:643,MR2768685}.
The definitions of square principles such as $\square(\kappa,{<}\mu)$ may be found in \cite[\S1]{paper29}.
Our trees conventions follow that of \cite[\S2.1]{paper23}.
Our \emph{walks on ordinals} notation follows that of \cite[\S4.2]{paper34},
and the reader is further referred to \cite{TodWalks} for a comprehensive treatment of the subject.

\section{Colouring principles and the utility of narrowness}\label{basics}
In this section, we recall the colouring principles from  \cite[\S2]{paper47},
identify a few inconsistent instances,
and demonstrate the utility of narrow colourings.

\begin{defn}\label{defonto} For a family $\mathcal A\s\mathcal P(\kappa)$ and
an ideal $J$ over $\kappa$:
\begin{itemize}
\item $\onto^{++}(\mathcal A,J,\theta)$ asserts
the existence of a colouring $c:[\kappa]^2\rightarrow\theta$ with the property that,
for every $A\in\mathcal A$ and every sequence $\langle B_\tau\mid \tau<\theta\rangle$ of elements of $J^+$,
there is an $\eta\in A$ such that $\{ \beta\in B_\tau\setminus(\eta+1)\mid c(\eta,\beta)=\tau\}\in J^+$ for every $\tau<\theta$;
\item $\onto^+(\mathcal A,J,\theta)$ asserts
the existence of a colouring $c:[\kappa]^2\rightarrow\theta$ with the property that,
for all $A\in\mathcal A$ and $B\in J^+$,
there is an $\eta\in A$ such that, for every $\tau<\theta$, $\{\beta\in B\setminus(\eta+1)\mid c(\eta,\beta)=\tau\}\in J^+$;
\item $\onto(\mathcal A,J,\theta)$ asserts
the existence of a colouring $c:[\kappa]^2\rightarrow\theta$ with the property that,
for all $A\in\mathcal A$ and $B\in J^+$,
there is an $\eta\in A$ such that $$c[\{\eta\}\circledast B]=\theta;$$
\item $\onto^-(\mathcal A,J,\theta)$ asserts
the existence of a colouring $c:[\kappa]^2\rightarrow\theta$ with the property that,
for all $A\in\mathcal A$ and $C\in J^*$,
and for every regressive map $f:C\rightarrow\kappa$,
there are $\eta\in A$ and $\bar\kappa<\kappa$ such that
$$c[\{\eta\}\circledast\{\beta\in C\mid f(\beta)=\bar\kappa\}]=\theta.$$
\end{itemize}
\end{defn}
\begin{conv}\label{colouringconventions}\begin{enumerate}
\item For a principle $\mathsf p\in\{\onto^{++},\onto^+,\onto\}$  and a collection of ideals $\mathcal J$ over $\kappa$, we write $\mathsf p(\mathcal A,\mathcal J,\theta)$
to assert the existence of a colouring simultaneously witnessing $\mathsf p(\mathcal A,J,\theta)$  for all $J\in\mathcal J$;
\item If we omit $\mathcal A$, then we mean that $\mathcal A:=\{\kappa\}$;
\item If we put $\self$ instead of $\mathcal A$, then we mean that the sets $A$ and $B$ in the definition of the principle $\mathsf p$ coincide.
So, for example,
$\onto(\self, J, \theta)$ asserts the existence of a colouring $c:[\kappa]^2\rightarrow \theta$ with the property that for every $B \in J^+$ there is an $\eta \in B$ such that
$c[\{\eta\}\circledast B]=\theta$.
\end{enumerate}
\end{conv}

Recall that a colouring $c:[\kappa]^2 \rightarrow \theta$ is \emph{upper-regressive} if $c(\alpha,\beta)<\beta$ for all $\alpha<\beta<\kappa$.
\begin{defn}\label{defubd} For a family $\mathcal A\s\mathcal P(\kappa)$ and an ideal $J$ over $\kappa$:
\begin{itemize}
\item $\ubd^{++}(\mathcal A,J,\theta)$ asserts
the existence of an upper-regressive colouring $c:[\kappa]^2\rightarrow\theta$ with the property that,
for every $A\in\mathcal A$ and every sequence $\langle B_\tau\mid \tau<\theta\rangle$ of elements of $J^+$,
there is an $\eta\in A$ and an injection $h:\theta\rightarrow\theta$ such that,
for every $\tau<\theta$, $\{ \beta\in B_\tau\setminus(\eta+1)\mid c(\eta,\beta)=h(\tau)\}\in J^+$;
\item $\ubd^+(\mathcal A,J,\theta)$ asserts
the existence of an upper-regressive colouring $c:[\kappa]^2\rightarrow\theta$ with the property that,
for all $A\in\mathcal A$ and $B\in J^+$,
there is an $\eta\in A$ such that
$$\otp(\{\tau<\theta\mid \{\beta\in B\setminus(\eta+1)\mid c(\eta,\beta)=\tau\}\in J^+\})=\theta;$$
\item $\ubd(\mathcal A,J,\theta)$ asserts
the existence of an upper-regressive colouring $c:[\kappa]^2\rightarrow\theta$ with the property that,
for all $A\in\mathcal A$ and $B\in J^+$,
there is an $\eta\in A$ such that
$$\otp(c[\{\eta\}\circledast B])=\theta.$$
\end{itemize}
\end{defn}
The conventions of Convention~\ref{colouringconventions} also apply to the principles of Definition~\ref{defubd}.
\begin{remark}
For $\mathsf{p} \in \{\onto, \onto^+,\ubd, \ubd^+\}$, $\mathsf{p}(J^+, J, \theta)$ implies $\mathsf{p}(\self, J, \theta)$ which implies $\mathsf{p}(J^*, J, \theta)$ which in turn implies $\mathsf{p}(J, \theta)$; in case $J$ extends $[\kappa]^{<\kappa}$ then also $\mathsf{p}([\kappa]^\kappa, J, \theta)$ implies $\mathsf{p}(J^+, J, \theta)$.
\end{remark}

By Ulam's celebrated theorem, $\ubd^+(\{\nu\},J^{\bd}[\kappa],\kappa)$ holds for every successor cardinal $\kappa=\nu^+$.
In Corollary~\ref{ulamext} below, we present a consistent strengthening of this result,
but let us first point out that Ulam's theorem is optimal in the sense that, by the next proposition,
$\ubd(\{\nu\},J^{\bd}[\kappa],\kappa)$ fails whenever $\kappa$ is a regular cardinal greater than $\nu^+$.

\begin{prop}\label{prop26} Suppose that $\ubd(\{\nu\},J^{\bd}[\kappa],\theta)$ holds for $\kappa$ regular. Then:

\begin{enumerate}[(1)]
\item $\theta\le\kappa$, and if $\nu<\cf(\theta)$, then $\theta=\kappa$;
\item $\cf(\theta)\le\nu^+$;
\item $\kappa\le 2^\nu$.
\end{enumerate}
\end{prop}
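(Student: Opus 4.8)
The plan is to fix a colouring $c:[\kappa]^2\to\theta$ witnessing $\ubd(\{\nu\},J^{\bd}[\kappa],\theta)$ and, in each clause, either read the colouring off the set $B=\kappa$ or manufacture an unbounded (i.e.\ $J^{\bd}[\kappa]$-positive) set $B$ that \emph{defeats} $c$. I call $B$ \emph{defeating} if for every $\eta<\nu$ there is some $\delta_\eta<\theta$ with $c[\{\eta\}\circledast B]\s\delta_\eta$; for such a $B$ one has $\otp(c[\{\eta\}\circledast B])\le\delta_\eta<\theta$ for all $\eta<\nu$, which contradicts the defining property of $c$ (it demands an $\eta<\nu$ achieving order type exactly $\theta$). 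So the whole argument reduces to producing defeating sets under the negations of the three conclusions, and the recurring difficulty is to control all $\nu$ candidate ordinals $\eta$ \emph{simultaneously}. For the first half of Clause~(1), applying the principle to $B=\kappa$ gives $\eta<\nu$ with $\otp(c[\{\eta\}\circledast\kappa])=\theta$; since $c$ is upper-regressive, each $c(\eta,\beta)<\beta<\kappa$, so $c[\{\eta\}\circledast\kappa]\s\kappa$ and hence $\theta\le\kappa$.

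For the second half of Clause~(1) I would assume $\nu<\cf(\theta)$ and set $h(\beta):=\sup_{\eta<\nu}c(\eta,\beta)$ for $\beta\in(\nu,\kappa)$; as $\nu<\cf(\theta)$, we have $h(\beta)<\theta$ throughout. Now split into two cases. If some $\delta<\theta$ has $\{\beta\mid h(\beta)<\delta\}$ unbounded, then this set is defeating (take the constant $\delta_\eta=\delta$), a contradiction. Otherwise every $\{\beta\mid h(\beta)<\delta\}$ ($\delta<\theta$) is bounded, so the map $b(\delta):=\ssup\{\beta\in(\nu,\kappa)\mid h(\beta)<\delta\}$ takes values below $\kappa$; it is nondecreasing and, since these sets increase with $\delta$ and union to $(\nu,\kappa)$, it is cofinal in $\kappa$. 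A monotone cofinal map $\theta\to\kappa$ forces $\cf(\kappa)\le\cf(\theta)\le\theta\le\kappa$ (using $\theta\le\kappa$ from the first half), and regularity of $\kappa$ then yields $\theta=\kappa$.

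For Clause~(2) I would argue by contradiction from $\cf(\theta)>\nu^+$. Then $\nu<\cf(\theta)$, so the dichotomy just established forces $\theta=\kappa$, whence $\cf(\theta)=\kappa>\nu^+$ and thus $\kappa\ge\nu^{++}$; in particular $E^\kappa_{>\nu}$ is stationary. On this set the map $\beta\mapsto\sup_{\eta<\nu}c(\eta,\beta)$ is regressive (each $c(\eta,\beta)<\beta$ and $\cf(\beta)>\nu$), so by Fodor's lemma it is constant, with some value $\gamma<\kappa$, on a stationary---hence unbounded---set $B\s E^\kappa_{>\nu}$. Then $c[\{\eta\}\circledast B]\s\gamma+1$ for every $\eta<\nu$, and since $\theta=\kappa>\gamma$ this makes $B$ defeating, the desired contradiction; hence $\cf(\theta)\le\nu^+$.

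For Clause~(3) I would assume $\kappa>2^\nu$ and build a defeating set using Clause~(2). Fix an increasing $\langle\delta_j\mid j<\cf(\theta)\rangle$ cofinal in $\theta$, and to each $\beta\in(\nu,\kappa)$ attach $r_\beta\in(\cf(\theta))^\nu$ where $r_\beta(\eta)$ is the least $j$ with $c(\eta,\beta)<\delta_j$. By Clause~(2) we have $\cf(\theta)\le\nu^+$, so there are at most $(\nu^+)^\nu=2^\nu<\kappa$ possible functions $r_\beta$; as $\kappa$ is regular, some fixed $r$ is realized as $r_\beta=r$ on an unbounded set $B$. For this $B$ and each $\eta<\nu$ one has $c[\{\eta\}\circledast B]\s\delta_{r(\eta)}$ with $\delta_{r(\eta)}<\theta$, so $B$ is defeating, contradicting the choice of $c$. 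The main obstacle throughout is exactly this simultaneous control of all $\nu$ coordinates: the two devices achieving it are Fodor's lemma (when $\theta=\kappa$, giving the sharp bound $\nu^+$) and the coarsening of $\theta^\nu$ to $(\cf(\theta))^\nu$ through a cofinal sequence (giving the $2^\nu$ bound), with Clause~(2) supplying the cardinal arithmetic needed in Clause~(3).
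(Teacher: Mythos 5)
Your proof is correct and takes essentially the same route as the paper's: all three clauses turn on the map $\beta\mapsto\sup_{\eta<\nu}c(\eta,\beta)$ together with a pigeonhole/Fodor argument — Clause (2) via Fodor's lemma on points of cofinality above $\nu$ (the paper uses $E^\kappa_{\nu^+}$), and Clause (3) via the count $(\nu^+)^\nu=2^\nu$ of functions into a cofinal subset of $\theta$ of size at most $\nu^+$. The only cosmetic difference is in Clause (1), where the paper assumes $\theta<\kappa$ and pigeonholes on the constant value of $\sup_{\eta<\nu}c(\eta,\beta)$, whereas you run a dichotomy and extract $\cf(\kappa)\le\cf(\theta)$ from a nondecreasing cofinal map $\theta\to\kappa$; the two arguments are interchangeable.
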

\begin{proof} Let $c:[\kappa]^2\rightarrow\theta$ be a colouring witnessing $\ubd(\{\nu\},J^{\bd}[\kappa],\theta)$.

(1) As $(J^{\bd}[\kappa])^+$ contains a set of size $\kappa$,
the choice of $c$ implies that $\theta\le\kappa$.
Suppose that $\nu<\cf(\theta)$.
Then, for every $\beta\in\kappa\setminus\nu$,
$\sigma_\beta:=\sup\{ c(\eta,\beta)\mid \eta<\nu\}$ is less than $\theta$.
Now, if $\theta<\kappa$, then $\nu<\theta<\kappa$, and for some $\sigma<\theta$,
$B:=\{\beta\in \kappa\setminus\nu\mid \sigma_\beta=\sigma\}$ is a $J^{\bd}[\kappa]$-positive
set that satisfies $\otp(c[\{\eta\}\circledast B])\le\sigma+1<\theta$ for every $\eta<\nu$.
This is a contradiction.

(2) Suppose that $\cf(\theta)>\nu^+$. Then by Clause~(1), $\kappa=\theta>\nu^+$, so that $S:=E^\kappa_{\nu^+}$ is stationary.
As $c$ is upper-regressive,
for every $\beta\in S$, $\sigma_\beta:=\sup\{ c(\eta,\beta)\mid \eta<\nu\}$ is less than $\beta$.
By Fodor's lemma, find some $\sigma<\kappa$ such that
$B:=\{\beta\in S\mid \sigma_\beta=\sigma\}$ is stationary. In particular, $\otp(c[\{\eta\}\circledast B])\le\sigma+1<\kappa=\theta$ for all $\eta<\nu$.
This is a contradiction.

(3) Let $T$ be a cofinal subset of $\theta$ of order-type $\cf(\theta)$.
By Clause~(2), $|T|\le\nu^+$.
As inferring $\kappa \leq 2^\nu$ from $\kappa\le\nu$ is trivial, we may assume that $\nu< \kappa$.
For every $\beta\in [\nu,\kappa)$,
define a function $g_\beta:\nu\rightarrow T$ via $g_\beta(\eta):=\min(T\setminus c(\eta,\beta))$.
Towards a contradiction, suppose that $\kappa>2^\nu$.
As $|{}^\nu T|=2^\nu$, we may pick a cofinal subset $B\s\kappa\setminus \nu$
on which the map $\beta\mapsto g_\beta$ is constant, and let $g \in {}^\nu\theta$ denote this constant value.
So, for every $\eta< \nu$, $\sup(c[\{\eta\}\circledast B])\le g(\eta)<\theta$.
In particular, for every $\eta< \nu$, $\otp(c[\{\eta\}\circledast B]) \leq g(\eta)+1 < \theta$.
This is a contradiction.
\end{proof}

\begin{prop}\label{prop912} Suppose that $J$ is an ideal over $\kappa$ such that $\kappa\notin J$.

For every $\nu\le\theta$, $\ubd^{++}(\{\nu\},J,\theta)$ fails.
\end{prop}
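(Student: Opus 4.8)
The plan is to show that \emph{no} colouring can witness the principle. So I fix an arbitrary $c:[\kappa]^2\to\theta$ and aim to produce a single sequence $\vec B=\langle B_\tau\mid\tau<\theta\rangle$ of $J^+$-sets defeating it, meaning that for every $\eta<\nu$ and every injection $h:\theta\to\theta$ there is some $\tau<\theta$ with $\{\beta\in B_\tau\setminus(\eta+1)\mid c(\eta,\beta)=h(\tau)\}\in J$. It is convenient to record, for a set $B$ and an ordinal $\eta<\kappa$, the \emph{positive colours of $B$ at level $\eta$}, namely $S_\eta(B):=\{\sigma<\theta\mid \{\beta\in B\setminus(\eta+1)\mid c(\eta,\beta)=\sigma\}\in J^+\}$; then ``$\vec B$ defeats $c$'' amounts to saying that for each $\eta<\nu$ the family $\langle S_\eta(B_\tau)\mid\tau<\theta\rangle$ admits no system of distinct representatives.

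The driving observation is that, since $\nu\le\theta$, there are at least as many indices as admissible levels, so I can dedicate a whole block of indices to sabotaging each individual level. First I would extract, for every $\eta<\nu$, a single set $B^\eta\in J^+$ with $|S_\eta(B^\eta)|\le 1$: if some colour $\sigma$ has $\{\beta\in\kappa\setminus(\eta+1)\mid c(\eta,\beta)=\sigma\}\in J^+$, take $B^\eta$ to be that fibre, on which $c(\eta,\cdot)$ is constantly $\sigma$, so that $S_\eta(B^\eta)=\{\sigma\}$; otherwise all fibres at level $\eta$ lie in $J$, and then $B^\eta:=\kappa$ has $S_\eta(B^\eta)=\varnothing$. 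Next, using $\omega\le\nu\le\theta$, I would fix a partition $\theta=\bigsqcup_{\eta<\nu}I_\eta$ into $\nu$-many blocks each of size $\theta$, and define $\vec B$ by setting $B_\tau:=B^\eta$ whenever $\tau\in I_\eta$. Each $B_\tau$ is then a member of $J^+$, as required.

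The verification is a pure cardinality count. Fixing $\eta<\nu$ and any injection $h:\theta\to\theta$, consider its restriction to the block $I_\eta$. Were $h(\tau)\in S_\eta(B^\eta)$ to hold for all $\tau\in I_\eta$, then $h\restriction I_\eta$ would inject the $\theta$-sized set $I_\eta$ into the set $S_\eta(B^\eta)$ of size at most $1$, which is absurd; hence there is some $\tau\in I_\eta$ with $h(\tau)\notin S_\eta(B^\eta)=S_\eta(B_\tau)$, and by the very definition of $S_\eta$ this says $\{\beta\in B_\tau\setminus(\eta+1)\mid c(\eta,\beta)=h(\tau)\}\in J$. As $\eta<\nu$ and $h$ were arbitrary, $\vec B$ defeats $c$, and as $c$ was arbitrary the principle fails.

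I expect no serious obstacle; the only point requiring care is the bookkeeping that makes the count work, and in particular the role of the hypothesis $\nu\le\theta$. This is exactly what supplies enough indices to assign a disjoint block to each of the $\nu$ levels, and it is the feature distinguishing this negative result from the positive statement of Theorem~\ref{thmc}, where one quantifies over all $\kappa$-many levels while having only $\theta<\kappa$ indices, leaving too few to sabotage every level. It is worth remarking that upper-regressiveness of $c$ is never invoked, so the argument in fact refutes the existence of any colouring, upper-regressive or not, whenever $\nu\le\theta$.
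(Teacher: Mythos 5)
Your proof is correct and is essentially the paper's own argument: the paper likewise fixes a map $f:\theta\rightarrow\nu$ whose fibres play the role of your blocks $I_\eta$, sets $B^\eta$ to be a $J$-positive fibre of $c(\eta,\cdot)$ when one exists and $\kappa$ otherwise, defines $B_\tau:=B^{f(\tau)}$, and contradicts injectivity of $h$ by finding two indices in the same block forced to the same colour. The only differences are cosmetic (the paper takes the least positive colour and uses merely infinite blocks, whereas two indices per block already suffice).
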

\begin{proof} Given infinite cardinals $\nu\le\theta$,
we fix a map $f:\theta\rightarrow\nu$ such that $f^{-1}\{\eta\}$ is infinite for every $\eta<\nu$.
Next, given a colouring $c:[\kappa]^2\rightarrow\theta$, for every $\eta<\nu$,
if there exists $\tau<\theta$ for which $\{\beta<\kappa\mid c(\eta,\beta)=\tau\}$ is in $J^+$,
then let $\xi_\eta$ be the least such $\tau$, and then let $B^\eta:=\{\beta<\kappa\mid c(\eta,\beta)=\xi_\eta\}$.
Otherwise, let $\xi_\eta:=0$ and $B^\eta:=\kappa$.
Finally, for every $\tau<\theta$, let $B_\tau:=B^{f(\tau)}$.

Towards a contradiction, suppose that there exists $\eta<\nu$
and an injection $h:\theta\rightarrow\theta$ such that, for every $\tau<\theta$, $\{ \beta\in B_\tau\mid c(\eta,\beta)=h(\tau)\}\in J^+$.
In particular, $B^\eta\in J^+\setminus\{\kappa\}$.
By the choice of $f$, let us fix $\tau_0\neq\tau_1$ in $\theta$ such that $f(\tau_0)=\eta=f(\tau_1)$. For each $i<2$:
$$\{\beta\in B^\eta\mid c(\eta,\beta)=h(\tau_i)\}=\{\beta\in B_{\tau_i}\mid c(\eta,\beta)=h(\tau_i)\}\in J^+,$$
and hence $h(\tau_i)=\xi_\eta$. So $h(\tau_0)=h(\tau_1)$,
contradicting the fact that $h$ is injective.
\end{proof}

\begin{prop} Suppose that $\kappa\in E^{\mathfrak t}_\omega$. Then $\onto(J^{\bd}[\kappa],\omega)$ fails.
\end{prop}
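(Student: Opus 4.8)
The plan is to show that \emph{every} colouring $c\colon[\kappa]^2\to\omega$ is defeated by some unbounded set, so fix an arbitrary such $c$ together with a strictly increasing sequence $\langle\kappa_n\mid n<\omega\rangle$ converging to $\kappa$ (available since $\cf(\kappa)=\omega$), and put $I_n:=[\kappa_n,\kappa_{n+1})$. First I would reduce the task to producing a single assignment of colours $\langle\epsilon_\xi\mid\xi<\kappa\rangle\in{}^{\kappa}\omega$ whose \emph{genuine} avoidance set $D:=\{\beta<\kappa\mid c(\xi,\beta)\ne\epsilon_\xi\text{ for all }\xi<\beta\}$ is unbounded. Indeed, any unbounded $B\subseteq D$ then refutes $\onto(J^{\bd}[\kappa],\omega)$: for each $\eta<\kappa$, every $\beta\in B$ above $\eta$ satisfies $c(\eta,\beta)\ne\epsilon_\eta$, so $\epsilon_\eta\notin c[\{\eta\}\circledast B]$ and hence $c[\{\eta\}\circledast B]\ne\omega$.

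To build $\langle\epsilon_\xi\rangle$ I would run a recursion of length $\kappa$, maintaining an infinite set $Q\subseteq\omega$ of indices together with \emph{witnesses} $\beta_j\in I_j$ for $j\in Q$, each $\beta_j$ avoiding every colour decided so far below it. The aim is to \emph{seal} witnesses into $D$: once $\epsilon_\xi$ has been chosen for all $\xi<\kappa_{j+1}$ while keeping $c(\xi,\beta_j)\ne\epsilon_\xi$, the point $\beta_j$ lies in $D$, and since any later $\xi>\beta_j$ imposes only a vacuous constraint on $\beta_j$, it remains in $D$ forever after. The decisive successor move is this: a surviving witness $\beta_j$ is killed by the choice of $\epsilon_\xi$ only when $\epsilon_\xi=c(\xi,\beta_j)$; as the sets $\{j\in Q\mid c(\xi,\beta_j)=v\}$ are pairwise disjoint over distinct colours $v$, at most one colour is shared by cofinitely many of the current witnesses, so I simply pick $\epsilon_\xi$ to avoid that colour, and also to avoid $c(\xi,\beta_{\min Q})$ so as to protect the witness presently being sealed. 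Because $Q$ stays infinite, its indices are unbounded in $\omega$, whence the sealed witnesses accumulate to an unbounded $B\subseteq D$.

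The hard part is the limit stages, where membership in $D$ is a genuine, not a mod-finite, intersection of constraints, so a priori the live witnesses could dwindle to a finite set. When a limit $\delta$ has uncountable cofinality this is harmless, since a $\subseteq$-decreasing sequence in $\mathcal P(\omega)$ admits only countably many strict drops and hence stabilizes below $\delta$; the genuinely delicate case is a limit of countable cofinality with $|\delta|$ uncountable, where the drops may be cofinal, and this is exactly where the hypothesis $\kappa<\mathfrak t$ enters. Maintaining the live sets as a $\subseteq^*$-decreasing sequence of length ${<}\,\mathfrak t$ in $[\omega]^\omega$, I can secure an infinite pseudo-intersection at every limit, and I would convert this mod-finite survival into \emph{genuine} survival by sealing each witness $\beta_j$ promptly: its membership in $D$ depends only on the colours $\epsilon_\xi$ with $\xi<\kappa_{j+1}$, a bounded initial segment of the recursion, so $\beta_j$ is committed to $D$ before the accumulating slack can dislodge it. The principal obstacle is precisely this reconciliation of genuine omission with the mod-finite nature of pseudo-intersections, and both $\cf(\kappa)=\omega$ (to seal cofinally many witnesses at bounded stages, keeping $B$ countable and unbounded) and $\kappa<\mathfrak t$ (to keep the witness pool infinite through the limits) are indispensable.
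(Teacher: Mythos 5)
Your reduction to producing $\langle\epsilon_\xi\mid\xi<\kappa\rangle$ with $D$ unbounded is sound, and your successor step is fine, but the construction genuinely breaks at limit stages of countable cofinality, and the ``prompt sealing'' device does not repair it. Membership of a witness $\beta_j$ in $D$ is an \emph{exact} condition: $c(\xi,\beta_j)\neq\epsilon_\xi$ for every single $\xi<\beta_j$. The hypothesis $\kappa<\mathfrak t$ only yields pseudo-intersections, i.e.\ mod-finite survival: at a limit $\delta$ of countable cofinality, an index $j$ lying in a pseudo-intersection $P$ of the live sets $\langle Q_\xi\mid\xi<\delta\rangle$ may well have been genuinely killed at some earlier stage, since $P\subseteq^* Q_\xi$ for all $\xi<\delta$ does not give $P\subseteq\bigcap_{\xi<\delta}Q_\xi$ (that intersection can even be empty). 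If you keep such resurrected indices live, your invariant is destroyed and a later ``sealed'' witness need not lie in $D$, so the final inclusion $B\subseteq D$ is unjustified. If instead you keep only the genuine survivors, the live pool can already be finite or empty at stage $\omega$, and nothing in $\kappa<\mathfrak t$ prevents this. Prompt sealing does not escape this dilemma: in the nontrivial case $\kappa>\aleph_0$, for all but finitely many $j$ the ordinal $\kappa_{j+1}$ is uncountable, so the ``bounded initial segment'' $[0,\kappa_{j+1})$ through which $\beta_j$ must \emph{genuinely} survive itself contains uncountably many limits of countable cofinality; boundedness is not shortness, and the slack accumulates before the sealing stage arrives, not after it. (Note also that once one pseudo-intersection has been taken, the live sets are only $\subseteq^*$-decreasing, so your stabilization argument at limits of uncountable cofinality collapses as well.)

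This is precisely the obstruction the paper's proof is designed to sidestep: it never demands exact avoidance of a single colour, but makes each stage's guarantee mod-finite-robust by missing \emph{infinitely many} colours. There one fixes an increasing $\pi:\omega\rightarrow\kappa$ with cofinal image and builds a single $\subseteq^*$-decreasing tower $\langle X_\eta\mid\eta\le\kappa\rangle$ of infinite subsets of $\omega$: at a successor, $X_{\eta+1}\in[X_\eta]^\omega$ is chosen so that $c[\{\eta\}\circledast\pi[X_{\eta+1}]]$ is co-infinite (either some colour recurs infinitely often, or all colours occur finitely often and one thins out accordingly), and at limits one takes pseudo-intersections, using $\eta\le\kappa<\mathfrak t$. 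Then $B:=\pi[X_\kappa]$ satisfies $B\subseteq^*\pi[X_{\eta+1}]$ for every $\eta$, so $c[\{\eta\}\circledast B]$ misses infinitely many colours up to a finite error and hence is never all of $\omega$. Because each stage forbids infinitely many colours rather than one, the finite slack introduced by every pseudo-intersection is harmless, and no pointwise genuine survival of any individual ordinal is ever needed. To salvage your scheme you would need genuine intersections at limits, and $\mathfrak t$ simply does not provide them; the fix is to replace exact one-colour omission by co-infinite omission mod finite, which turns your argument into the paper's.
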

\begin{proof}
Let $c:[\kappa]^2\rightarrow\omega$ be any colouring. Fix an injective map $\pi:\omega\rightarrow\kappa$ whose image is cofinal in $\kappa$.
We shall recursively construct a sequence of infinite subsets of $\omega$, $\langle X_\eta\mid \eta\le\kappa\rangle$, as follows.

$\br$ Let $X_0:=\omega$.

$\br$ For every $\eta<\kappa$ such that $X_\eta$ has already been defined,
pick $X_{\eta+1}\in[X_\eta]^\omega$
such that $\omega\setminus c[\{\eta\}\circledast \pi[X_{\eta+1}]]$ is infinite.

$\br$ For every $\eta\in\acc(\kappa+1)$ such that $\langle X_{\bar\eta}\mid \bar\eta<\eta\rangle$ has already been defined,
since $\eta\le\kappa<\mathfrak t$, we may find some set $X_\eta\in[\omega]^\omega$ such that
$X_\eta\setminus X_{\bar\eta}$ is finite, for all $\bar\eta<\eta$.

This completes the recursive construction. As $X_\kappa\in[\omega]^\omega$,
$B:=\pi[X_\kappa]$ is $J^{\bd}[\kappa]$-positive.
Now, for every $\eta<\kappa$, $B\setminus\pi[X_{\eta+1}]$ is finite,
and hence $c[\{\eta\}\circledast B]\neq\omega$.
\end{proof}

A similar idea shows that $\onto([\omega]^\omega,J^{\bd}[\kappa],2)$ fails for every infinite cardinal $\kappa<\mathfrak t$.
By the main result of \cite{Sh:949}, it is also consistent for $\onto([\lambda]^\lambda,J^{\bd}[\lambda^+],2)$ to fail at a singular cardinal $\lambda$.
In contrast, by Theorem~\ref{ek} below, $\onto(\{\lambda\},J^{\bd}[\lambda^+],2)$ does hold for every infinite cardinal $\lambda$.

\begin{defn}\label{defnnarrow}
A colouring witnessing a principle $\mathsf p(\mathcal A,J,\theta)$ as in Definitions \ref{defonto} and \ref{defubd} is said to be \emph{narrow} iff there is an $A\in\mathcal A$ with $|A|<|\bigcup J|$.
\end{defn}
By an argument from \cite[p.~291]{TodActa}, if $\kappa=\nu^+$ for an infinite regular cardinal $\nu$,
then for every $\mathsf p\in\{\ubd,\onto\}$, if $\mathsf p([\nu]^\nu,J^{\bd}[\kappa],\theta)$ holds, then so does $\mathsf p([\kappa]^\nu,J^{\bd}[\kappa],\theta)$.
The same implication remains valid after replacing the regularity of $\nu$ by $\square^*_\nu$.
The next proposition motivates our interest in narrow colourings.
\begin{prop}\label{prop614}  Suppose that $J\in\mathcal J^\kappa_{\nu^+}$ for an infinite cardinal $\nu<\kappa$,
and let $\mathcal A\s[\kappa]^{\le\nu}$ be nonempty.
Consider the collection $\mathcal J:=\{ I\in\mathcal J^\kappa_{\nu^+}\mid I\supseteq J\}$.
\begin{enumerate}[(1)]
\item For every $\theta\le\nu$, $\ubd(\mathcal A,J, \theta)$ implies $\ubd^+(\mathcal A,\mathcal J, \theta)$;
\item For every $\theta\le\kappa$, $\onto(\mathcal A,J, \theta)$ implies $\onto^+(\mathcal A,\mathcal J, \theta)$;
\item For every $\theta\le\kappa$, $\onto^{++}(\mathcal A,J, \theta)$	implies $\onto^{++}(\mathcal A,\mathcal J, \theta)$.
\end{enumerate}
\end{prop}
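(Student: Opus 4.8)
The plan is to verify all three implications with a single uniform idea: keep the witnessing colouring for the ``unprimed'' principle over $J$ unchanged, and obtain the upgrade to the $+$ (resp.\ $++$) principle over an arbitrary superideal $I\in\mathcal J$ by first discarding an $I$-null set of ``bad'' ordinals and then invoking the $J$-principle on what remains. Two elementary facts drive everything. First, since $I\supseteq J$ we have $I^+\s J^+$, so any $I$-positive set is automatically $J$-positive and the hypothesis for $J$ applies to it. Second, since each $I\in\mathcal J$ is $\nu^+$-complete and every $A\in\mathcal A$ has $|A|\le\nu$, any union of $I$-null sets indexed by $A$ is again $I$-null; this is exactly where the narrowness $\mathcal A\s[\kappa]^{\le\nu}$ gets consumed.

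For Clause~(2) I would argue by contradiction. Fix a witness $c$ for $\onto(\mathcal A,J,\theta)$, an ideal $I\in\mathcal J$, a set $A\in\mathcal A$, and $B\in I^+$, and suppose that for every $\eta\in A$ there is a colour $\tau_\eta<\theta$ whose fibre $S_\eta:=\{\beta\in B\setminus(\eta+1)\mid c(\eta,\beta)=\tau_\eta\}$ is $I$-null. By the second fact, $\bigcup_{\eta\in A}S_\eta$ is $I$-null, so $B':=B\setminus\bigcup_{\eta\in A}S_\eta$ lies in $I^+\s J^+$. Applying $\onto(\mathcal A,J,\theta)$ to $A$ and $B'$ yields some $\eta\in A$ with $c[\{\eta\}\circledast B']=\theta$; in particular the colour $\tau_\eta$ is attained by some $\beta\in B'\setminus(\eta+1)$, which places $\beta$ in $S_\eta$ and hence outside $B'$, a contradiction. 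Clause~(3) is identical in spirit: the hypothesis supplies, for each $\eta\in A$, a single index $\tau_\eta$ with $S_\eta:=\{\beta\in B_{\tau_\eta}\setminus(\eta+1)\mid c(\eta,\beta)=\tau_\eta\}\in I$, and one removes $S_\eta$ only from the $\tau_\eta$-th component, setting $B'_\tau:=B_\tau\setminus\bigcup\{S_\eta\mid \eta\in A,\ \tau_\eta=\tau\}$. Each $B'_\tau$ remains $I$-positive, so $\onto^{++}(\mathcal A,J,\theta)$ applies to $\langle B'_\tau\mid\tau<\theta\rangle$ and again forces an element of some $S_\eta$ to survive in $B'_{\tau_\eta}$, giving the same contradiction.

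Clause~(1) needs a variant, since its conclusion concerns the \emph{order type} of the set of $I$-positive colours, and deleting one bad fibre per $\eta$ does not control that order type. Here instead one removes, for each $\eta\in A$, \emph{all} $I$-null fibres at $\eta$. This is affordable precisely because now $\theta\le\nu$: for fixed $\eta$ the union $Z_\eta$ of the (at most $\theta\le\nu$ many) $I$-null fibres is $I$-null, whence $Z:=\bigcup_{\eta\in A}Z_\eta$ is $I$-null. With $B':=B\setminus Z\in I^+\s J^+$, the witness $c$ for $\ubd(\mathcal A,J,\theta)$ (which is upper-regressive, and stays so) gives $\eta\in A$ with $\otp(c[\{\eta\}\circledast B'])=\theta$. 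By construction, for this $\eta$ every colour occurring on $\{\eta\}\circledast B'$ has $I$-positive fibre in $B$ (otherwise the relevant $\beta$ would lie in $Z_\eta\s Z$); hence $c[\{\eta\}\circledast B']\s\{\tau<\theta\mid\{\beta\in B\setminus(\eta+1)\mid c(\eta,\beta)=\tau\}\in I^+\}$, so the latter has order type $\ge\theta$, thus exactly $\theta$.

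The only genuinely delicate point is managing the completeness budget, and the asymmetry between the bound $\theta\le\nu$ in Clause~(1) and $\theta\le\kappa$ in Clauses~(2)--(3) reflects exactly it: the order-type conclusion forces deletion of \emph{all} null fibres at each $\eta$, which is tolerable only when the total number of colours is $\le\nu$, whereas the ``some colour must be missing'' conclusion needs only a \emph{single} witnessing null fibre per $\eta$. In every case the discarded set is indexed by $A$, hence has size $\le\nu<\nu^+$, so it is the $\nu^+$-completeness of $I$ (not of $J$) that keeps it null.
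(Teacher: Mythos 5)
Your proposal is correct and matches the paper's own proof in all essentials: in each clause you delete a $\le\nu$-indexed union of $I$-null fibres (using the $\nu^+$-completeness of the superideal $I$, plus $\theta^+$-completeness in Clause~(1) where $\theta\le\nu$), observe that the remainder is $I$-positive and hence $J$-positive, and then invoke the $J$-principle on it to force an element of a deleted fibre, which is absurd. The only differences are cosmetic: the paper phrases all three clauses as contradiction arguments via the complements $E_\eta\in I^*$, whereas you run Clause~(1) directly, and in Clause~(3) you remove each bad fibre only from its matching component $B_{\tau_\eta}$ rather than from every $B_\tau$ — both variants work for the same reason.
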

\begin{proof} Suppose that we are given a colouring $c:[\kappa]^2\rightarrow\theta$.
For all $B\s\kappa$, $\eta<\kappa$ and $\tau<\theta$, denote $B^{\eta,\tau}:=\{\beta \in B\setminus(\eta+1) \mid c(\eta, \beta) = \tau\} $.

(1) Suppose that $c$ witnesses $\ubd(\mathcal A,J, \theta)$ for a given cardinal $\theta\le\nu$.
We claim that $c$ moreover witnesses $\ubd^+(\mathcal A,\mathcal J, \theta)$.
Suppose not, and fix $A\in\mathcal A$, $I\in \mathcal J$ and $B\in I^+$ such that, for every $\eta\in A$,  the set
$T_\eta:=\{\tau<\theta\mid B^{\eta,\tau}\in I^+\}$ has size less than $\theta$.
As $I$ is $\theta^+$-complete, for every $\eta \in A$, $E_\eta:=\kappa\setminus\bigcup_{\tau\in \theta\setminus T_\eta}B^{\eta,\tau}$ is in $I^*$.
As  $I$ is $|A|^+$-complete, $B':=B\cap\bigcap_{\eta\in A}E_\eta$ is in $I^+$.
In particular, $B'\in J^+$.
So, by the choice of $c$,
there is an $\eta\in A$ such that $c[\{\eta\}\circledast  B']$ has ordertype $\theta$. In particular, we may pick $\tau\in c[\{\eta\}\circledast  B']\setminus T_\eta$.
Fix $\beta \in B'$ above $\eta$ such that $c(\eta, \beta)=\tau$. Then $\beta\in B'\s E_\eta$.
On the other hand, as $\tau\in\theta\setminus T_\eta$, $E_\eta\cap B^{\eta,\tau}=\emptyset$.
This is a contradiction.

(2) Suppose that $c$ witnesses $\onto(\mathcal A,J, \theta)$ for a given cardinal $\theta\le\kappa$.
We claim that $c$ moreover witnesses $\onto^+(\mathcal A,\mathcal J, \theta)$.
Suppose not, and fix $A\in\mathcal A$, $I\in \mathcal J$ and $B\in I^+$ such that, for every $\eta\in A$,
there is $\tau_\eta<\theta$ such that $E_\eta:=\kappa\setminus B^{\eta,\tau_\eta}$ is in $I^*$.
As before $B':=B\cap\bigcap_{\eta\in A}E_\eta$ is in $I^+$,
and hence $B'\in J^+$.
By the choice of $c$,
there is an $\eta\in A$ such that $c[\{\eta\}\circledast  B']=\theta$. In particular, we may pick
$\beta \in B'$ above $\eta$ such that $c(\eta, \beta)=\tau_\eta$. But $\beta\in B'\s E_\eta$,
contradicting the fact that $E_\eta\cap B^{\eta,\tau_\eta}=\emptyset$.

(3) Suppose that $c$ witnesses  $\onto^{++}(\mathcal A,J, \theta)$ for a given cardinal $\theta\le\kappa$.
We claim that $c$ moreover witnesses $\onto^{++}(\mathcal A,\mathcal J, \theta)$.
Suppose not. Fix $A\in\mathcal A$, $I\in \mathcal J$
and a sequence $\langle B_\tau\mid \tau<\theta\rangle$ of $I^+$-sets such that, for every $\eta\in A$,
there is $\tau_\eta<\theta$ such that $E_\eta:=\kappa\setminus (B_{\tau_\eta})^{\eta,\tau_\eta}$ is in $I^*$.
As $I$ is $\nu^+$-complete, for every $\tau<\theta$, $B_\tau':=B_\tau\cap\bigcap_{\eta\in A}E_\eta$ is in $I^+$, and hence $J^+$ as well.
By the choice of $c$,
there is an $\eta\in A$ such that $(B'_\tau)^{\eta,\tau}$ is $J$-positive for all $\tau<\theta$.
In particular, we may pick $\beta\in (B_{\tau_\eta}')^{\eta,\tau_\eta}$.
Then $\beta\in E_\eta\cap (B_{\tau_\eta})^{\eta,\tau_\eta}$. This is a contradiction.
\end{proof}

In \cite[p.~12]{sierpinski1934hypothese}, Sierpi\'nski proved a theorem asserting that ``$\mathsf{H}\rightarrow\mathsf{P_3}$''.
A modern interpretation of this theorem reads as follows.

\begin{cor}[Sierpi\'nski]\label{ulamext} Assuming $\kappa=\nu^+=2^\nu$,
$\onto^+([\kappa]^\nu,\mathcal J^\kappa_{\kappa},\kappa)$ holds.
\end{cor}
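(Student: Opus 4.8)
The plan is to reduce to a single-ideal statement and then hand-craft a colouring by transfinite recursion, exploiting the cardinal arithmetic that $\kappa=\nu^+=2^\nu$ forces. First I would invoke Proposition~\ref{prop614}(2) with $J:=J^{\bd}[\kappa]$, $\mathcal A:=[\kappa]^\nu$, and $\theta:=\kappa$. Since $\kappa=\nu^+$, the bounded ideal is $\nu^+$-complete, so $J^{\bd}[\kappa]\in\mathcal J^\kappa_{\nu^+}$, and every member of $\mathcal J^\kappa_\kappa$ extends it; thus the collection $\mathcal J$ of that proposition is exactly $\mathcal J^\kappa_\kappa$. Consequently it suffices to prove $\onto([\kappa]^\nu,J^{\bd}[\kappa],\kappa)$, that is, to produce a single $c:[\kappa]^2\to\kappa$ such that for every $A\in[\kappa]^\nu$ and every unbounded $B\s\kappa$ there is $\eta\in A$ with $c[\{\eta\}\circledast B]=\kappa$.

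For the construction I would think of $c$ through its columns $g_\beta:=c(\cdot,\beta):\beta\to\kappa$. Negating the target and fixing for each $\eta\in A$ a colour $\tau_\eta$ omitted by $c(\eta,\cdot)$ on $B$, the goal reduces to the following property of the sequence $\langle g_\beta\mid\beta<\kappa\rangle$: for every function $p$ with $\dom(p)\in[\kappa]^\nu$ and $\im(p)\s\kappa$, all but boundedly many $\beta$ satisfy $g_\beta(\eta)=p(\eta)$ for some $\eta\in\dom(p)\cap\beta$. This is where the hypothesis enters, as $\kappa^\nu=(2^\nu)^\nu=2^\nu=\kappa$, so there are exactly $\kappa$ such functions $p$, and I would fix a surjective enumeration $\langle p_\gamma\mid\gamma<\kappa\rangle$ of them.

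I would then define $g_\beta$ by recursion on $\beta\in[\nu,\kappa)$ (so that $|\beta|=\nu$), arranging that $g_\beta$ \emph{meets} every \emph{active} requirement, namely every $p_\gamma$ with $\gamma<\beta$ and $\dom(p_\gamma)\s\beta$. There are at most $\nu$ active requirements; processing them in an order of type $\le\nu$, at step $j$ I would pick a point of $\dom(p_{\gamma_j})$, a set of full size $\nu$, avoiding the fewer than $\nu$ points already committed, set $g_\beta$ there to the prescribed value, and fill the remaining coordinates arbitrarily. A fresh point always exists because $j<\nu$ entails $|j|<\nu$ as $\nu$ is a cardinal; this is the one spot where a possibly singular $\nu$ needs care, and it is handled by the injective ``fresh-point'' selection rather than by counting a union.

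Finally I would verify the reduced statement. Each requirement $p_\gamma$ is met by every $\beta>\max(\gamma,\sup\dom(p_\gamma),\nu)$, so its exceptional set is bounded in $\kappa$. Given $A$ and $B$ as above together with the omitted colours $\tau_\eta$, the function $p:=\{(\eta,\tau_\eta)\mid\eta\in A\}$ is one of our requirements; choosing $\beta\in B$ past the corresponding bound yields $\eta\in A$ with $\eta<\beta$ and $c(\eta,\beta)=g_\beta(\eta)=\tau_\eta$, contradicting $\tau_\eta\notin c[\{\eta\}\circledast B]$. I expect the main obstacle to be the recursion step itself: meeting up to $\nu$ requirements at a single stage $\beta$ simultaneously and without conflict. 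This is precisely what the full size $\nu$ of every domain buys, and it is the crux through which $2^\nu=\kappa$ (via both $\kappa^\nu=\kappa$ and the resulting supply of large-domain functions to diagonalize against) does the work.
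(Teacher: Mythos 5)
Your proof is correct, and its first half coincides exactly with the paper's: both reduce the statement, via Proposition~\ref{prop614}(2) applied with $J:=J^{\bd}[\kappa]$, $\mathcal A:=[\kappa]^\nu$ and $\theta:=\kappa$ (noting that $\mathcal J^\kappa_{\nu^+}=\mathcal J^\kappa_\kappa$ when $\kappa=\nu^+$ and that every ideal in this class extends $J^{\bd}[\kappa]$ by definition), to producing a witness of $\onto([\kappa]^\nu,J^{\bd}[\kappa],\kappa)$. The difference lies in how that witness is obtained. The paper simply cites \cite[Lemma~8.3(1)]{paper47} for the implication $\kappa=\nu^+=2^\nu\implies\onto([\kappa]^\nu,J^{\bd}[\kappa],\kappa)$, whereas you construct the colouring from scratch by a Sierpi\'nski-style diagonalization: enumerate in order type $\kappa$ (legitimate, since $\kappa^\nu=(2^\nu)^\nu=2^\nu=\kappa$) all partial functions $p:\kappa\rightharpoonup\kappa$ with $\dom(p)\in[\kappa]^\nu$, and at each stage $\beta\in[\nu,\kappa)$ let the column $c(\cdot,\beta)$ meet every requirement indexed below $\beta$ whose domain is contained in $\beta$ --- at most $|\beta|=\nu$ many, each with a domain of full size $\nu$, so a greedy injective choice of representative points succeeds even when $\nu$ is singular. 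Your verification is sound: each requirement $p_\gamma$ can fail only for $\beta\le\max\{\gamma,\ssup(\dom(p_\gamma)),\nu\}$ (a bound below $\kappa$ because $\kappa=\nu^+$ is regular), and a putative counterexample $(A,B)$ with omitted colours $\tau_\eta$ produces the requirement $p=\{(\eta,\tau_\eta)\mid\eta\in A\}$, which is met by any $\beta\in B$ beyond the corresponding bound, a contradiction. What the two routes buy: the paper's citation keeps the corollary a two-line consequence of the prequel, while your argument is self-contained and makes explicit exactly where the hypothesis $2^\nu=\kappa$ enters (through $\kappa^\nu=\kappa$); in substance it reconstructs the classical argument lying behind the cited lemma, which is why the corollary carries Sierpi\'nski's name.
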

\begin{proof} By \cite[Lemma~8.3(1)]{paper47}, $\kappa=\nu^+=2^\nu$ implies $\onto([\kappa]^\nu,\allowbreak J^{\bd}[\kappa],\kappa)$,
and then, by Proposition~\ref{prop614}(2), $\onto^+([\kappa]^\nu,\mathcal J^\kappa_\kappa,\kappa)$ follows.
\end{proof}
\begin{remark} A simple elaboration on the proof of \cite[Lemma~8.3(2)]{paper47} shows that $\stick(\kappa)$ implies $\onto^+([\kappa]^\kappa,\mathcal J^\kappa_{\kappa},\kappa)$ for every infinite successor cardinal $\kappa$.
\end{remark}

\section{Subnormal ideals revisited}\label{subnormalrevisited}

By \cite[Lemma~4.4]{paper47}, $\ubd(\ns_\kappa,\kappa)$ implies $\ubd^+(\ns_\kappa,\kappa)$,
and it remained open whether likewise $\ubd(J^{\bd}[\kappa],\kappa)$ implies $\ubd^+(J^{\bd}[\kappa],\kappa)$.
In fact, in the original version of \cite{paper47}, we claimed to have also proved the second implication,
but the referee generously provided a counterexample to that proof. Here we establish the implication we were after.

We discuss first the difference between the two statements. Evidently, for any upper-regressive colouring $c:[\kappa]^2\rightarrow\kappa$ and any fixed $\eta<\kappa$,
the induced one-dimensional map $c_\eta:\kappa\setminus(\eta+1)\rightarrow\kappa$ defined via $c_\eta(\beta):=c(\eta,\beta)$ is regressive.
So, to put our finger on the difference between the two tasks, note that in proving the first implication, given a stationary $B\s\kappa$,
if one can find an $\eta$ such that $c_\eta$ attains $\kappa$ many colours over $B$,
then Fodor's lemma will readily ensure that some of these colours will be repeated stationarily often;
on the other hand, in trying to prove the second implication, one may face a witness $c$ to $\ubd(J^{\bd}[\kappa],\kappa)$ and an unbounded  $B\s\kappa$
such that $c_\eta$ is \emph{injective} over $B$ for all $\eta<\kappa$.
It follows that to compensate for the lack of Fodor's feature, there is a need to give more sincere weight to the fact that the colourings under considerations are two-dimensional.
We do this by incorporating walks on ordinals.

In this section, we study the behavior of our colouring principles over the class $\mathcal S^\kappa_\nu$ of all $\nu$-complete subnormal ideals over $\kappa$ extending $J^{\bd}[\kappa]$.
\begin{defn}[{\cite[\S2]{paper47}}]\label{defsubnormal} An ideal $J$ over $\kappa$ is said to be \emph{subnormal} if for every sequence $\langle E_\eta\mid \eta<\kappa\rangle$ of sets from $J^*$,
the following two hold:
\begin{enumerate}
\item  for every $B\in J^+$, there exists $B'\s B$ in $J^+$ such that, for every $(\eta,\beta)\in [B']^2$,
$\beta\in E_\eta$;
\item  for all $A,B\in J^+$, there exist $A'\s A$ and $B'\s B$ with $A',B'\in J^+$ such that, for every $(\eta,\beta)\in A'\circledast B'$,
$\beta\in E_\eta$.
\end{enumerate}
\end{defn}
As examples, note that $J^\bd[\kappa]$ is always a subnormal ideal, and in case $\kappa$ is an uncountable regular cardinal, then every normal ideal is subnormal as well.

The main result of this section reads as follows:

\begin{cor}\label{cor8a} Suppose that $\kappa$ is a regular uncountable cardinal.
\begin{enumerate}[(1)]
\item  For $\theta\in[3,\kappa)$, $\ubd( J^{\bd}[\kappa],\theta)$ implies $\ubd^+(J^{\bd}[\kappa],\theta)$;
\item  For $\theta<\kappa$, $\ubd(\self,J^{\bd}[\kappa],\theta)$ implies $\ubd^+(\self, J^{\bd}[\kappa],\theta)$;
\item  For $\theta=\kappa$, $\ubd( J^{\bd}[\kappa],\theta)$ implies $\ubd^+(\self, J^{\bd}[\kappa],\theta)$.
\end{enumerate}
\end{cor}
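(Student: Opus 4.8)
The plan is to first observe that all three clauses are instances of a single transfer for the \emph{subnormal} ideal $J^{\bd}[\kappa]$, and only then to carry out that transfer by feeding the given $\ubd$-witness into a walk. Since $J^{\bd}[\kappa]$ is subnormal, the clauses differ merely in the diagonal class $\mathcal A$ (either $\{\kappa\}$ or $\self$) and in the range of $\theta$, so I would isolate a lemma asserting that, for $J^{\bd}[\kappa]$, any upper-regressive colouring witnessing $\ubd(\mathcal A,J^{\bd}[\kappa],\theta)$ can be converted into one witnessing $\ubd^+$, and then read off (1)--(3) by specializing $\mathcal A$ and $\theta$. The reason subnormality is the pertinent hypothesis is that it supplies a diagonal thinning: given cobounded $\langle E_\eta\mid\eta<\kappa\rangle$ and an unbounded $B$, one obtains an unbounded $B'\s B$ with $\beta\in E_\eta$ for all $(\eta,\beta)\in[B']^2$, which is exactly the device needed to control walks along $B'$.

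For the core transfer, fix a $C$-sequence $\vec C=\langle C_\beta\mid\beta<\kappa\rangle$ and let $c$ witness $\ubd(\mathcal A,J^{\bd}[\kappa],\theta)$. The difficulty flagged in the discussion opening this section is that $c_\eta$ may be injective on $B$, so $c$ realizes $\theta$ colours over $B$ without repeating any of them unboundedly, and Fodor's lemma is unavailable because $B$ is merely unbounded. To manufacture the missing repetition, the natural attempt is to define the new colouring two-dimensionally through the walk, setting
\[
d(\alpha,\beta):=c(\alpha,\last{\alpha}{\beta}),
\]
where $\last{\alpha}{\beta}$ denotes the last step before $\alpha$ of the walk from $\beta$ down to $\alpha$. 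This $d$ is again upper-regressive, since $c(\alpha,\last{\alpha}{\beta})<\last{\alpha}{\beta}\le\beta$, and, crucially, for a fixed base $\eta$ the $d$-colour of $\beta$ depends on $\beta$ only through the single ordinal $\last{\eta}{\beta}$: all $\beta$ sharing a common last step $\gamma$ (necessarily with $\eta\in C_\gamma$) receive the identical colour $c(\eta,\gamma)$. Thus the colours $d$ realizes over $B$ at base $\eta$ are precisely those $c$ realizes over the set of last steps $\{\last{\eta}{\beta}\mid\beta\in B\setminus(\eta+1)\}$, so the order-type-$\theta$ richness guaranteed for $c$ is inherited by $d$, while equal last steps collapse many $\beta$ into one colour.

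It then remains to upgrade ``a colour is realized'' to ``a colour is realized unboundedly'', for $\theta$-many colours at once. For a base $\eta$, the last-step map partitions $B\setminus(\eta+1)$ into fibers $B_\gamma:=\{\beta\in B\mid \last{\eta}{\beta}=\gamma\}$, each painted the single $d$-colour $c(\eta,\gamma)$; what is wanted is a base admitting $\theta$-many $\gamma$ with pairwise distinct $c(\eta,\gamma)$ and cofinal $B_\gamma$. The plan is to feed into the $\ubd$-property of $c$ an unbounded set manufactured from $B$ through the walk (essentially a set of last steps), extract a base $\eta$ whose $c$-row there has order-type $\theta$, and then use the subnormal thinning to pass to an unbounded $B'\s B$ along which this \emph{same} $\eta$ has its pertinent last-step fibers cofinal. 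Marrying the order-type-$\theta$ richness at $\eta$ with the cofinality of the fibers then produces $\theta$-many simultaneously unbounded colour classes of $d$, i.e. $\ubd^+$.

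The main obstacle I anticipate is exactly this marriage. A single cofinal fiber is free by the regularity of $\kappa$, but one must secure $\theta$-many \emph{distinct} colours carried by \emph{simultaneously} cofinal fibers, and one must arrange that the base $\eta$ handed back by the $\ubd$-property of $c$ is the very base below which those fibers are cofinal; this runs squarely into the tension between repetition (few last-step values) and richness (many last-step values), and it is here that the two-dimensionality is indispensable and that the subnormal diagonal thinning must be synchronized with the choice of $\vec C$. The same tension accounts for the side conditions: with fewer than three colours there is no room to diagonalize while retaining richness (whence $\theta\in[3,\kappa)$ in (1)), whereas for $\theta=\kappa$ the walk draws its last steps cofinally from $B'$ itself, allowing the base to be taken inside $B'$ and thereby yielding the stronger $\self$-conclusion of (3).
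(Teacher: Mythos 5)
There is a genuine gap, and it is the one you yourself flag: the ``marriage'' step is not a technicality you can defer, it is the entire content of the theorem, and the approach as described cannot deliver it because it is circular. To apply the $\ubd$-property of $c$ to ``the set of last steps'' you must first fix the base $\eta$ of the walk, since $\{\last{\eta}{\beta}\mid\beta\in B\setminus(\eta+1)\}$ depends on $\eta$; but the base is exactly what the $\ubd$-property hands back, and there is no mechanism forcing the returned base to coincide with (or bear any relation to) the $\eta$ at which you walked. Moreover, nothing guarantees that the last-step set at a fixed base is unbounded in $\kappa$, so the $\ubd$-property of $c$ need not even be applicable to it; and subnormal thinning acts on the $\beta$-side of pairs from a positive set, so it cannot repair a mismatch on the $\eta$-side when the hypothesis is the non-$\self$ form of $\ubd$ (this is precisely why the non-$\self$ clauses are the hard ones). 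Your explanations of the side conditions are also not the operative ones: $\theta\ge3$ in (1) is needed because $\ubd(J^{\bd}[\kappa],3)$ implies that $\kappa$ is not weakly compact (a result quoted from the prequel), not because of room to diagonalize; and for finite $\theta$ the argument uses no walks whatsoever.

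For contrast, the paper never composes the given witness with a walk. Clause (2) is pure subnormality (Lemma~\ref{lemma43b}): for $\theta<\kappa$, since $J^{\bd}[\kappa]$ is $\theta^+$-complete and subnormal, a diagonal thinning produces $B'\s B$ such that the $\self$-form of $\ubd$ applied to $B'$ returns a base $\eta\in B'$ --- landing inside the thinned set is what kills the circularity, and it is unavailable in the non-$\self$ setting. Clause (1) is a prequel citation for infinite $\theta$, plus weak-compactness considerations for finite $\theta\ge3$. The genuine walk argument is Clause (3), via Theorem~\ref{strongamenfirstrepair} and Corollary~\ref{cor28}: from $\ubd(J^{\bd}[\kappa],\kappa)$ one extracts only the combinatorial object $\kappa\in\sa_\kappa$, i.e.\ a strongly amenable $C$-sequence $\vec C$, hence $\chi(\vec C)>1$ and, by Todor\v{c}evi\'c, a colouring $o$ witnessing $\kappa\nrightarrow[\kappa]^2_\omega$; one then builds a \emph{fresh} colouring $c(\eta,\beta):=\Tr(\eta,\beta)(o(\eta,\beta)+1)$ whose values are ordinals on the walk, not colours of the original witness. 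Given a bad set $B$, the contradiction is obtained by fixing a single $\beta\in B$ above a carefully chosen $\delta$, setting $\gamma:=\last{\delta}{\beta}$, and producing cofinally many $\eta$ below some $\alpha\in\delta\setminus C_\gamma$ with $c(\eta,\beta)=\min(C_\gamma\setminus\eta)$, so that $C_\gamma\cap\alpha$ is cofinal in $\alpha$ --- contradicting the closedness of $C_\gamma$ and the choice $\alpha\notin C_\gamma$. This closure-against-a-missed-point mechanism, powered by strong amenability, is what replaces the richness-versus-repetition ``marriage'' your sketch needs but does not supply.
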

\begin{proof}
(1) The case that $\theta$ is an infinite cardinal less than $\kappa$ is covered by \cite[Proposition~2.26(3)]{paper47}.
By Theorem~\ref{ek} below, $\onto^{++}(J^{\bd}[\kappa],\theta)$ holds for every finite $\theta$, provided that $\kappa$ is not strongly inaccessible.
So suppose that $\kappa$ is strongly inaccessible and $2<\theta<\omega$.
By \cite[Corollary~10.8]{paper47}, $\ubd( J^{\bd}[\kappa],\theta)$  implies that $\kappa$ is not weakly compact,
and then the fact that $\kappa$ is strongly inaccessible together with \cite[Theorem~10.2]{paper47}
imply that moreover $\onto^+(J^{\bd}[\kappa],\omega)$ holds.

(2) By Lemma~\ref{lemma43b} below.

(3) By Corollary~\ref{cor28} below.
\end{proof}

In this section, $\kappa$ denotes a regular uncountable cardinal.
The utility of subnormal ideals is demonstrated in the following series of lemmas.
Loosely speaking, the first lemma shows that an upper-regressive colouring that is $({\ge}2)$-to-$1$ over positive sets of an ideal $J\in\mathcal S^\kappa_\kappa$ will witness $\ubd^+$ over all $I\in\mathcal S^\kappa_\kappa$ extending $J$.

\begin{lemma}\label{lemma113} Suppose that $J\in\mathcal S^\kappa_\kappa$.

For $\mathcal A\s\mathcal P(\kappa)$ and a colouring $c:[\kappa]^2\rightarrow\theta$, the following are equivalent:
\begin{enumerate}[(1)]
\item $c$ witnesses $\ubd^+(\mathcal A,J,\theta)$;
\item for all $A\in\mathcal A$ and $B\in J^+$,
there is an $\eta\in A$ such that
$$\otp(\{\tau < \theta\mid |\{\beta \in B\setminus(\eta+1) \mid c(\eta, \beta) = \tau\}|\ge2\})=\theta;$$
\item $c$ witnesses $\ubd^+(\mathcal A,I,\theta)$ for every $I\in\mathcal S^\kappa_\kappa$ extending $J$.
\end{enumerate}
\end{lemma}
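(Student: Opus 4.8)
The plan is to establish the cycle $(3)\Rightarrow(1)\Rightarrow(2)\Rightarrow(3)$, with all of the substance concentrated in the last implication. For $B\s\kappa$, $\eta<\kappa$ and $\tau<\theta$ I abbreviate $B^{\eta,\tau}:=\{\beta\in B\setminus(\eta+1)\mid c(\eta,\beta)=\tau\}$, as in the proof of Proposition~\ref{prop614}. The implication $(3)\Rightarrow(1)$ is immediate upon instantiating (3) with $I:=J$, since $J$ is itself a member of $\mathcal S^\kappa_\kappa$ extending $J$. For $(1)\Rightarrow(2)$ I would invoke that $J$ extends $J^{\bd}[\kappa]$, so that every set in $J^+$ is unbounded, hence infinite; thus $B^{\eta,\tau}\in J^+$ entails $|B^{\eta,\tau}|\ge 2$, and for the $\eta\in A$ furnished by (1) the set $\{\tau<\theta\mid B^{\eta,\tau}\in J^+\}$ is contained in $\{\tau<\theta\mid |B^{\eta,\tau}|\ge 2\}$; as both are subsets of $\theta$, the latter must then also have order-type $\theta$.

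For $(2)\Rightarrow(3)$, fix $I\in\mathcal S^\kappa_\kappa$ extending $J$, together with $A\in\mathcal A$ and $B\in I^+$ (so that $B\in J^+$ as well), and assume toward a contradiction that $T_\eta:=\{\tau<\theta\mid B^{\eta,\tau}\in I^+\}$ has order-type $<\theta$ for every $\eta\in A$. The decisive move is to build an $I^*$-sequence indexed not by the pivot but by the ordinals of $\kappa$ themselves, each term absorbing every pivot beneath it: for $\delta<\kappa$, using that $|A\cap\delta|\le|\delta|<\kappa$ and that $I$ is $\kappa$-complete, set
\[
E_\delta:=\kappa\setminus\bigcup\{\,B^{\eta,c(\eta,\delta)}\mid \eta\in A\cap\delta,\ c(\eta,\delta)\notin T_\eta\,\},
\]
which lies in $I^*$, because each set in this union is an $I$-null fiber (its colour lies outside $T_\eta$) and there are fewer than $\kappa$ of them. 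Feeding $\langle E_\delta\mid\delta<\kappa\rangle$ into the subnormality of $I$ (Definition~\ref{defsubnormal}(1)) applied to $B$, I obtain $B^*\s B$ in $I^+$ with $\beta\in E_\alpha$ for every $(\alpha,\beta)\in[B^*]^2$.

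The payoff is the following claim: for every $\eta\in A$, any colour realized at least twice by $c(\eta,\cdot)$ on $B^*\setminus(\eta+1)$ lies in $T_\eta$. Indeed, if $\alpha<\beta$ both lie in $B^*\setminus(\eta+1)$ with $c(\eta,\alpha)=c(\eta,\beta)=\tau$, then $\eta\in A\cap\alpha$ and $(\alpha,\beta)\in[B^*]^2$, so $\beta\in E_\alpha$; were $\tau\notin T_\eta$, the fiber $B^{\eta,\tau}=B^{\eta,c(\eta,\alpha)}$ would be one of the sets deleted in forming $E_\alpha$ while containing $\beta$, contradicting $\beta\in E_\alpha$. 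With the claim in hand, $B^*\in J^+$ lets me apply (2) to $A$ and $B^*$, producing $\eta^*\in A$ with $\otp(\{\tau<\theta\mid |(B^*)^{\eta^*,\tau}|\ge 2\})=\theta$; the claim places this set inside $T_{\eta^*}$, whence $\otp(T_{\eta^*})\ge\theta$, the desired contradiction.

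The step I expect to be the crux—and the source of the difficulty that walks on ordinals are brought in to handle elsewhere—is the indexing of $E_\delta$ by the \emph{smaller} witness $\alpha$ of a repeated colour rather than by the pivot $\eta$. This is what lets a single instance of subnormal pair-control over $[B^*]^2$ simultaneously discipline \emph{all} pivots $\eta\in A$ below $\alpha$, including those lying outside $B^*$; it is precisely here that the hypothesis of being realized \emph{at least twice}, rather than merely realized, is indispensable, since it is exactly what secures a genuine pair inside $B^*$ to which subnormality can be applied. A pleasant feature of this design is that the completeness bookkeeping is insensitive to $\theta$: the union defining $E_\delta$ ranges over fewer than $\kappa$ pivots rather than over colours, so the argument goes through verbatim for every $\theta\le\kappa$ and for an arbitrary $\mathcal A$, in particular for $\mathcal A=\{\kappa\}$.
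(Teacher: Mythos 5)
Your proposal is correct, and its architecture coincides with the paper's: the same cycle of implications with all of the substance in $(2)\Rightarrow(3)$, where $\kappa$-completeness of $I$ together with clause (1) of subnormality (Definition~\ref{defsubnormal}) thins $B$ to a positive $B^*$, and a monochromatic pair inside $B^*$ then yields the contradiction, with the smaller element of the pair serving as the index of the co-null set and the larger element realizing the forbidden colour. The one point where you genuinely deviate is the definition of the co-null sets. The paper takes $E_\alpha:=\kappa\setminus\bigcup_{\eta\in A\cap\alpha}\bigcup_{\tau\in(\theta\cap\alpha)\setminus T_\eta}B^{\eta,\tau}$, i.e., it removes every null fiber whose colour lies below $\alpha$, and must therefore invoke upper-regressivity at the final step (so that $\tau=c(\eta,\alpha)<\alpha$, ensuring the repeated colour is among those actually removed). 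Your diagonal definition, which for each $\eta\in A\cap\delta$ deletes only the single fiber $B^{\eta,c(\eta,\delta)}$ when its colour falls outside $T_\eta$, makes that step automatic: the repeated colour is by construction the colour whose fiber was excised from $E_\alpha$. Consequently your core argument never uses upper-regressivity of $c$ (it enters only through the definition of $\ubd^+$ itself), whereas the paper's proof does; both versions handle $\theta=\kappa$ equally well, since in each case the union defining the co-null set runs over fewer than $\kappa$ sets. So: same route and same key lemma, with a leaner bookkeeping device that slightly weakens what the combinatorial step demands of the colouring.
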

\begin{proof} Only the implication $(2)\implies(3)$ requires an argument,
so suppose that (2) holds.
For all $B\s\kappa$, $\eta<\nu$ and $\tau<\theta$, denote $B^{\eta,\tau}:=\{\beta \in B\setminus(\eta+1) \mid c(\eta, \beta) = \tau\} $.
Towards a contradiction, suppose that $I$ is a $\kappa$-complete subnormal ideal over $\kappa$ extending $J$
and yet for some $A\in\mathcal A$, $B \in I^+$, for every $\eta\in A$,
$T_\eta:=\{ \tau<\theta\mid B^{\eta,\tau}\in I^+\}$ has order-type less than $\theta$.
As $I$ is $\kappa$-complete, for every $\alpha<\kappa$, $E_\alpha:=\kappa\setminus\bigcup_{\eta\in A\cap\alpha}\bigcup_{\tau\in \theta\cap \alpha\setminus T_\eta}B^{\eta,\tau}$ is in $I^*$.
As $I$ is subnormal, we may fix $B' \s B$ in $I^+$ such that, for every $(\alpha, \beta)\in [B']^2$,
$\beta\in E_\alpha$.
As $B'\in I^+$, in particular, $B'\in J^+$. Now pick some $\eta\in A$ such that the following set has order-type $\theta$:
$$T:=\{\tau < \theta\mid |\{\beta \in B'\setminus(\eta+1) \mid c(\eta, \beta) = \tau\}|\ge2\}.$$
Fix $\tau \in T\setminus T_\eta$, and find $(\alpha,\beta)\in[B'\setminus(\eta+1)]^2$ such that $c(\eta,\alpha)=\tau=c(\eta,\beta)$.
As $c$ is upper-regressive, $\tau=c(\eta, \alpha) < \alpha$.
Altogether, $\eta\in A\cap\alpha$ and $\tau\in\theta\cap\alpha\setminus T_\eta$.
As $\beta\in E_\alpha$, it follows that $\beta\in \kappa\setminus B^{\eta,\tau}$,
contradicting the fact that $c(\eta, \beta) = \tau$.
\end{proof}

For a cardinal $\theta<\kappa$, we get a free upgrade from $\ubd$ to $\ubd^+$, while settling for $\theta^+$-completeness, as follows.

\begin{lemma}\label{lemma114} Suppose that $J\in\mathcal S^\kappa_{\theta^+}$ with $\theta<\kappa$.

For a colouring $c:[\kappa]^2\rightarrow\theta$, the following are equivalent:
\begin{enumerate}[(1)]
\item $c$ witnesses  $\ubd(J^+,J,\theta)$;
\item $c$ witnesses $\ubd^{+}(I^+,I, \theta)$ for every $I\in\mathcal S^\kappa_{\theta^+}$ extending $J$.
\end{enumerate}
\end{lemma}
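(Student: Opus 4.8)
The plan is to dispose of the direction $(2)\Rightarrow(1)$ immediately and to concentrate on $(1)\Rightarrow(2)$. For the trivial direction, observe that $J$ itself belongs to $\mathcal S^\kappa_{\theta^+}$ and extends $J$, so (2) yields in particular that $c$ witnesses $\ubd^+(J^+,J,\theta)$. Writing $B^{\eta,\tau}:=\{\beta\in B\setminus(\eta+1)\mid c(\eta,\beta)=\tau\}$, any $J$-positive fibre is nonempty, so for a witnessing $\eta$ the set $\{\tau<\theta\mid B^{\eta,\tau}\in J^+\}$ is contained in $c[\{\eta\}\circledast B]$; hence the latter has order-type at least $\theta$, and being a subset of $\theta$, exactly $\theta$. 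Thus $c$ witnesses $\ubd(J^+,J,\theta)$, giving (1).

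For $(1)\Rightarrow(2)$, I would fix $I\in\mathcal S^\kappa_{\theta^+}$ extending $J$ and suppose towards a contradiction that $c$ fails to witness $\ubd^+(I^+,I,\theta)$, fixing $A,B\in I^+$ such that for every $\eta\in A$ the set $T_\eta:=\{\tau<\theta\mid B^{\eta,\tau}\in I^+\}$ has order-type less than $\theta$. The idea is to use subnormality to thin $A$ and $B$ so that, along the thinned sets, $c$ only ever takes colours from the ``small'' sets $T_\eta$, and then to contradict the $\ubd$-hypothesis, which forces $\theta$-many colours to appear. To this end, define a $\kappa$-indexed sequence of $I^*$-sets by putting $E_\eta:=\kappa\setminus\bigcup_{\tau\in\theta\setminus T_\eta}B^{\eta,\tau}$ for $\eta\in A$, and $E_\eta:=\kappa$ for $\eta\notin A$. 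The crucial point is that, for $\eta\in A$, the set removed from $\kappa$ is a union of at most $\theta$ members of $I$ (one for each $\tau\in\theta\setminus T_\eta$), so the $\theta^+$-completeness of $I$ delivers $E_\eta\in I^*$.

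Since $I$ is subnormal and $A,B\in I^+$, I would then invoke clause~(2) of Definition~\ref{defsubnormal} to obtain $A'\s A$ and $B'\s B$, both in $I^+$, such that $\beta\in E_\eta$ for every $(\eta,\beta)\in A'\circledast B'$. By the definition of $E_\eta$ for $\eta\in A'\s A$, this means that for each $\eta\in A'$ and each $\beta\in B'$ above $\eta$ one has $c(\eta,\beta)\in T_\eta$, that is, $c[\{\eta\}\circledast B']\s T_\eta$. Finally, as $I\supseteq J$ gives $I^+\s J^+$, both $A'$ and $B'$ are $J$-positive, so the hypothesis $\ubd(J^+,J,\theta)$ supplies some $\eta\in A'$ with $\otp(c[\{\eta\}\circledast B'])=\theta$; but $c[\{\eta\}\circledast B']\s T_\eta$ and $\otp(T_\eta)<\theta$, a contradiction.

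The one place demanding care is the weak completeness: since $I$ is only $\theta^+$-complete rather than $\kappa$-complete, one cannot, as in the proof of Lemma~\ref{lemma113}, absorb all of $A\cap\alpha$ into a single cumulative $I^*$-set $E_\alpha$. This is exactly why the two-variable clause~(2) of subnormality, rather than clause~(1), is the right tool: it thins $A$ and $B$ \emph{simultaneously}, so that the colour-trapping inclusion $c[\{\eta\}\circledast B']\s T_\eta$ holds for the very $\eta\in A'$ produced by the $\ubd$-hypothesis, whereas clause~(1) would only constrain pairs lying inside $B'$ and would leave the chosen $\eta$ unconstrained. I expect this to be the main subtlety; notably, upper-regressivity of $c$ plays no role here beyond being inherited from its witnessing $\ubd$.
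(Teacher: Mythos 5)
Your proof is correct and takes essentially the same route as the paper's: failure of $\ubd^+(I^+,I,\theta)$ yields the small sets $T_\eta$, $\theta^+$-completeness of $I$ gives $E_\eta\in I^*$, clause~(2) of subnormality thins $A$ and $B$ simultaneously to $A',B'\in I^+\s J^+$, and the hypothesis $\ubd(J^+,J,\theta)$ applied to $A',B'$ produces the contradicting $\eta\in A'$. The paper likewise treats $(2)\Rightarrow(1)$ as immediate, and your closing remark about why the two-variable clause of subnormality (rather than the one-variable clause) is the right tool matches the paper's design exactly.
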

\begin{proof} Only the forward implication requires an argument.
For any $B\s\kappa$ and for ordinals $\eta<\kappa$ and $\tau<\theta$,
denote $B^{\eta,\tau}:=\{\beta \in B\setminus(\eta+1) \mid c(\eta, \beta) = \tau\} $.

Suppose that $c:[\kappa]^2\rightarrow\theta$ witnesses $\ubd(J^+,\allowbreak J,\theta)$.
Fix $I\in\mathcal S^\kappa_{\theta^+}$ extending $J$,
and we shall show that $c$ witnesses $\ubd^+(I^+, I, \theta)$.
To this end, fix $A, B \in I^+$.
Towards a contradiction suppose that for every $\eta \in A$ there is some $T_\eta \in [\theta]^{<\theta}$ such that,
for every $\tau \in \theta\setminus T_\eta$, $B^{\eta, \tau} \in I$.
As $I$ is $\theta^+$-complete, for every $\eta \in A$, $E_\eta:= \kappa \setminus (\bigcup_{\tau \in \theta \setminus T_\eta} B^{\eta, \tau})$ is in $I^*$.
As $I$ is subnormal, let us fix $A' \s A$ and $B'\s B$, both in $I^+$, such that $\beta\in E_\eta$ for all $(\eta,\beta)\in A' \circledast B'$.
As $A',B'$ are both in particular in $J^+$, let us now fix $\eta\in A'$ such that $\otp(c[\{\eta\}\circledast B'])=\theta$.
In particular, we may pick $\beta\in B'$ above $\eta$ such that $c(\eta, \beta) \notin T_\eta$. So, $\beta\notin E_\eta$, contradicting the fact that $(\eta,\beta) \in A' \circledast B'$.
\end{proof}

The proofs of Lemmas \ref{lemma113} and \ref{lemma114} make it clear that the following analogous results hold, as well.
\begin{lemma}\label{lemma43} For $J\in\mathcal S^\kappa_\kappa$,
a colouring $c:[\kappa]^2\rightarrow\theta$
witnesses $\ubd^+(\self,J,\theta)$
iff it witnesses $\ubd^+(\self,I,\theta)$ for every $I\in\mathcal S^\kappa_\kappa$ extending $J$.\qed
\end{lemma}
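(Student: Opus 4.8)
The plan is to prove the biconditional by dispatching the easy direction and then adapting the implication $(2)\implies(3)$ of Lemma~\ref{lemma113} to the diagonal ($\self$) setting. For the backward direction, observe that $J$ itself belongs to $\mathcal S^\kappa_\kappa$ and extends $J$, so taking $I:=J$ shows that $\ubd^+(\self,I,\theta)$ for all such $I$ trivially specializes to $\ubd^+(\self,J,\theta)$. All the content is in the forward direction: assuming $c$ witnesses $\ubd^+(\self,J,\theta)$, I would show it witnesses $\ubd^+(\self,I,\theta)$ for every $I\in\mathcal S^\kappa_\kappa$ extending $J$. The only substantive difference from Lemma~\ref{lemma113} is that the witnessing ordinal $\eta$ must now be located inside the positive set itself rather than in a separate $A$, which dictates that the \emph{diagonal} clause~(1) of subnormality (acting on $[B']^2$) is the correct tool, in place of the two-set clause~(2).

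Concretely, writing $B^{\eta,\tau}:=\{\beta\in B\setminus(\eta+1)\mid c(\eta,\beta)=\tau\}$, I would argue by contradiction: fix $I\in\mathcal S^\kappa_\kappa$ extending $J$ and $B\in I^+$ such that, for every $\eta\in B$, the set $T_\eta:=\{\tau<\theta\mid B^{\eta,\tau}\in I^+\}$ has order-type less than $\theta$. Using that $I$ is $\kappa$-complete, for every $\alpha<\kappa$ the set $E_\alpha:=\kappa\setminus\bigcup_{\eta\in B\cap\alpha}\bigcup_{\tau\in(\theta\cap\alpha)\setminus T_\eta}B^{\eta,\tau}$ lies in $I^*$. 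Applying clause~(1) of subnormality to $\langle E_\alpha\mid\alpha<\kappa\rangle$ and $B$, I obtain $B'\s B$ in $I^+$ with $\beta\in E_\alpha$ for every $(\alpha,\beta)\in[B']^2$. Since $I$ extends $J$, we have $B'\in J^+$, so the hypothesis $\ubd^+(\self,J,\theta)$ supplies an $\eta\in B'$ for which $S:=\{\tau<\theta\mid (B')^{\eta,\tau}\in J^+\}$ has order-type $\theta$.

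The contradiction is then extracted exactly as in Lemma~\ref{lemma113}. Because $\otp(S)=\theta$ while $\otp(T_\eta)<\theta$, and $\theta$ is a cardinal, $S\setminus T_\eta$ is nonempty; fix $\tau$ in it. As $(B')^{\eta,\tau}\in J^+$ is infinite, choose $(\alpha,\beta)\in[B'\setminus(\eta+1)]^2$ with $c(\eta,\alpha)=\tau=c(\eta,\beta)$. Upper-regressivity gives $\tau=c(\eta,\alpha)<\alpha$, so $\eta\in B\cap\alpha$ and $\tau\in(\theta\cap\alpha)\setminus T_\eta$; hence $B^{\eta,\tau}$ is one of the sets removed in the definition of $E_\alpha$. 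But $(\alpha,\beta)\in[B']^2$ yields $\beta\in E_\alpha$, whereas $c(\eta,\beta)=\tau$ with $\beta\in B'\setminus(\eta+1)$ puts $\beta\in B^{\eta,\tau}$, a contradiction. The main obstacle is bookkeeping rather than new ideas: one must ensure the index alignment $\eta<\alpha$ and $\tau<\alpha$ that makes $B^{\eta,\tau}$ genuinely appear in the union defining $E_\alpha$, which is precisely where upper-regressivity and the diagonal form of subnormality pull their weight.
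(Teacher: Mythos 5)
Your proposal is correct and is precisely the argument the paper intends: the lemma is stated with a \qed because it is the evident adaptation of the proof of Lemma~\ref{lemma113} to the $\self$ setting, which is exactly what you carry out — replacing the external set $A$ by $B$ itself, invoking clause~(1) of subnormality for the sets $E_\alpha$, and recovering the two-element colour class from $J$-positivity (positive sets are infinite since $J\supseteq J^{\bd}[\kappa]$) rather than assuming it separately. The index alignment via upper-regressivity ($\tau=c(\eta,\alpha)<\alpha$, $\eta<\alpha$) matches the paper's argument verbatim, and the backward direction is trivial as you note.
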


\begin{lemma}\label{lemma43b} Suppose that $J\in\mathcal S^\kappa_{\theta^+}$ with $\theta<\kappa$.

For a colouring $c:[\kappa]^2\rightarrow\theta$, the following are equivalent:
\begin{enumerate}[(1)]
\item $c$ witnesses  $\ubd(\self,J,\theta)$;
\item $c$ witnesses $\ubd^{+}(\self,I, \theta)$ for every $I\in\mathcal S^\kappa_{\theta^+}$ extending $J$.\qed
\end{enumerate}
\end{lemma}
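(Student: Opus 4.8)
The backward implication is immediate. Instantiating~(2) with $I:=J$ shows that $c$ witnesses $\ubd^+(\self,J,\theta)$: for every $B\in J^+$ there is an $\eta\in B$ with $\otp(\{\tau<\theta\mid B^{\eta,\tau}\in J^+\})=\theta$, where $B^{\eta,\tau}:=\{\beta\in B\setminus(\eta+1)\mid c(\eta,\beta)=\tau\}$. Since any $\tau$ with $B^{\eta,\tau}\in J^+$ in particular lies in $c[\{\eta\}\circledast B]$, the latter set contains a subset of order-type $\theta$, whence $\otp(c[\{\eta\}\circledast B])=\theta$; thus the same $\eta$ witnesses $\ubd(\self,J,\theta)$.

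For the forward implication my plan is to run the argument of Lemma~\ref{lemma114} almost verbatim, merely substituting clause~(1) of subnormality (Definition~\ref{defsubnormal}) for clause~(2) and identifying the two sets $A$ and $B$. Assume then that $c$ witnesses $\ubd(\self,J,\theta)$, fix $I\in\mathcal S^\kappa_{\theta^+}$ extending $J$ together with some $B\in I^+$, and suppose towards a contradiction that for every $\eta\in B$ the set $T_\eta:=\{\tau<\theta\mid B^{\eta,\tau}\in I^+\}$ has order-type strictly below $\theta$.

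The first step is a completeness computation. As $\theta$ is a cardinal, $\otp(T_\eta)<\theta$ yields $|T_\eta|<\theta$, so the union $\bigcup_{\tau\in\theta\setminus T_\eta}B^{\eta,\tau}$ is taken over at most $\theta$ indices, each summand lying in $I$; by the $\theta^+$-completeness of $I$ this union is in $I$, whence $E_\eta:=\kappa\setminus\bigcup_{\tau\in\theta\setminus T_\eta}B^{\eta,\tau}$ is in $I^*$. Setting $E_\eta:=\kappa$ for $\eta\in\kappa\setminus B$ produces a sequence $\langle E_\eta\mid\eta<\kappa\rangle$ of sets from $I^*$. Applying clause~(1) of the subnormality of $I$ to this sequence and to $B$ furnishes some $B'\s B$ in $I^+$ such that $\beta\in E_\eta$ for every $(\eta,\beta)\in[B']^2$.

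Finally, since $B'\in I^+\s J^+$, the choice of $c$ provides $\eta\in B'$ with $\otp(c[\{\eta\}\circledast B'])=\theta$. Because $|T_\eta|<\theta$, we may select $\tau\in c[\{\eta\}\circledast B']\setminus T_\eta$ and then $\beta\in B'$ above $\eta$ with $c(\eta,\beta)=\tau$. As $\beta\in B'\s B$ and $\tau\in\theta\setminus T_\eta$, we have $\beta\in B^{\eta,\tau}$ and hence $\beta\notin E_\eta$, contradicting the defining property of $B'$ applied to the pair $(\eta,\beta)\in[B']^2$. The sole point demanding care is the completeness bookkeeping in the third paragraph, namely that $\otp(T_\eta)<\theta$ forces $|T_\eta|<\theta$ and that the ensuing union ranges over at most $\theta$ indices, so that $\theta^+$-completeness is exactly what is needed; everything else is a transcription of the proof of Lemma~\ref{lemma114}.
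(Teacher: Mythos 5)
Your proof is correct and follows exactly the route the paper intends: the paper proves this lemma by noting it is the $\self$-analogue of Lemma~\ref{lemma114}, and your argument is precisely that adaptation — replacing clause~(2) of subnormality by clause~(1), padding the sequence $\langle E_\eta\mid\eta<\kappa\rangle$ with $E_\eta:=\kappa$ off $B$, and noting $B'\in I^+\s J^+$ so that $\ubd(\self,J,\theta)$ applies to $B'$ with the witness $\eta\in B'$. The completeness bookkeeping ($|T_\eta|<\theta$, union over at most $\theta$ sets in $I$) and the backward direction are also handled correctly.
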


\begin{lemma} Suppose that $\theta$ is infinite.
Consider $\mathcal J=\{ I\in\mathcal S^\kappa_\kappa\mid I\supseteq J\}$
for a given $J\in\mathcal S^\kappa_\kappa$.
Then:
\begin{enumerate}[(1)]
\item $\onto(J,\theta)$ implies $\onto^+(\mathcal J,\theta)$;
\item $\onto([\kappa]^\kappa,J,\theta)$ implies $\onto^+([\kappa]^\kappa,\mathcal J,\theta)$.
\end{enumerate}
\end{lemma}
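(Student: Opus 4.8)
The plan is to adapt the arguments of Lemmas~\ref{lemma113} and~\ref{lemma114}, recasting them so as to derive a \emph{direct} contradiction with the onto hypothesis rather than to exhibit a good $\eta$. Fix a colouring $c$ witnessing the relevant onto hypothesis, and for $B\s\kappa$, $\eta<\kappa$ and $\tau<\theta$ write $B^{\eta,\tau}:=\{\beta\in B\setminus(\eta+1)\mid c(\eta,\beta)=\tau\}$. In each clause I would fix an offending ideal $I\in\mathcal J$ together with the sets witnessing a failure of the displayed instance of $\onto^+$, attach to every prospective witness $\eta$ a colour $\tau_\eta<\theta$ with $B^{\eta,\tau_\eta}\in I$, and then use the completeness of $I$ together with subnormality to manufacture a single positive set $B^\ast$ on which \emph{no} admissible $\eta$ realises its colour $\tau_\eta$. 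Since then $c[\{\eta\}\circledast B^\ast]\neq\theta$ for every admissible $\eta$, while $B^\ast\in I^+\s J^+$, this contradicts the onto hypothesis. Throughout I use that $I\supseteq J^\bd[\kappa]$ and that $\kappa$ is regular, so every $I^+$-set has size $\kappa$.

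Clause~(2) would go exactly as in Lemma~\ref{lemma114}, the family $[\kappa]^\kappa$ being precisely what renders it clean. Assuming the failure, fix $A\in[\kappa]^\kappa$ and $B\in I^+$, so that $\tau_\eta$ exists for every $\eta\in A$; set $E_\eta:=\kappa\setminus B^{\eta,\tau_\eta}\in I^*$ (and $E_\eta:=\kappa$ for $\eta\notin A$). Clause~(2) of Definition~\ref{defsubnormal} returns $A'\s A$ and $B'\s B$, both in $I^+$, with $\beta\in E_\eta$ for every $(\eta,\beta)\in A'\circledast B'$. Thus for each $\eta\in A'$ and each $\beta\in B'$ above $\eta$ we get $c(\eta,\beta)\neq\tau_\eta$, so $c[\{\eta\}\circledast B']\neq\theta$ for all $\eta\in A'$. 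As $A'\in[\kappa]^\kappa$ and $B'\in J^+$, this contradicts $\onto([\kappa]^\kappa,J,\theta)$, which would provide some $\eta\in A'$ with $c[\{\eta\}\circledast B']=\theta$. The decisive feature is that the onto hypothesis is entitled to seek its witness inside the positive index set $A'$, so Clause~(2) governs the relevant pairs directly, with no interpolation required.

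Clause~(1) is the delicate one, since $\mathcal A=\{\kappa\}$ offers no positive index set in which to trap the witness. Here I would thin via Clause~(1) of Definition~\ref{defsubnormal}, applied to the family $E_\alpha:=\kappa\setminus\bigcup_{\eta<\alpha}B^{\eta,\tau_\eta}$, each of which lies in $I^*$ by $\kappa$-completeness (being the complement of a union of fewer than $\kappa$ many members of $I$); this produces $B'\s B$ in $I^+$ such that $c(\eta,\beta)\neq\tau_\eta$ whenever $(\alpha,\beta)\in[B']^2$ and $\eta<\alpha$. In words, $c(\eta,\beta)\neq\tau_\eta$ as soon as some element of $B'$ lies strictly between $\eta$ and $\beta$. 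The natural candidate for the bad set is then $B^\ast:=\acc(B')$: if $\beta\in\acc(B')$ then $B'\cap\beta$ is cofinal in $\beta$, so for \emph{every} $\eta<\beta$ there is an element of $B'$ in the interval $(\eta,\beta)$, whence $c(\eta,\beta)\neq\tau_\eta$. Consequently $c[\{\eta\}\circledast B^\ast]\neq\theta$ for every $\eta<\kappa$, and feeding $B^\ast$ into $\onto(J,\theta)$ yields the desired contradiction.

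The step I expect to be the main obstacle is exactly this passage to $B^\ast$: one must ensure that $\acc(B')$ — or some positive subset of $B'$ all of whose points are approached from below within $B'$ — is itself $I$-positive. Subnormality furnishes only the one-sided pairwise control along $[B']^2$, not the diagonal density enjoyed by a normal ideal, so for accumulation-poor positive sets (say, a set of successor ordinals relative to $J^\bd[\kappa]$) the least element of $B'$ above a prospective witness $\eta$ stays unguarded, and a single colour $\tau_\eta$ could be realised there and nowhere else over $B'$. Overcoming this — securing a positive accumulation set, or otherwise taming the least-point boundary, perhaps by choosing the colours $\tau_\eta$ and the thinned set $B'$ by a simultaneous recursion so that $\tau_\eta$ avoids the colour forced at that least point — is where the real substance of Clause~(1) lies.
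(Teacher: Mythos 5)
Your clause (2) has a gap before the step you flag: in $\onto^+([\kappa]^\kappa,I,\theta)$ the index set $A$ ranges over all of $[\kappa]^\kappa$, not over $I^+$, so the failing pair $(A,B)$ you fix only gives you a \emph{cofinal} $A$. Clause (2) of Definition~\ref{defsubnormal} needs both $A,B\in I^+$ --- this is exactly what makes Lemma~\ref{lemma114} clean, since there the family genuinely is $I^+$ --- and a cofinal set can perfectly well be $I$-null (a non-stationary cofinal set with $I=\ns_\kappa$, say). For such an $A$ you are thrown back on clause (1) of subnormality, i.e.\ onto precisely the least-point obstruction that you correctly identify as blocking your clause (1): after thinning to $B'$, the minimal element of $B'$ above a given $\eta$ is unguarded, and your repair $B^\ast:=\acc(B')$ need not be $J$-positive --- it is empty whenever $B$ consists of successor ordinals, a case your own example shows is realized by a subnormal $\kappa$-complete ideal. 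So neither clause is proved. Moreover, any approach that insists on $c$ itself witnessing the conclusion is doomed in principle: $\onto$ only guarantees that each colour is \emph{attained}, not attained $I$-positively often, and nothing rules out a colour being realized exactly once above each $\eta$, precisely at the unguarded least point.

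The paper's fix is one extra idea, and it is where the hypothesis ``$\theta$ infinite'' (which you never use) enters: do not reuse $c$; fix a $2$-to-$1$ map $\pi:\theta\rightarrow\theta$ and take $d:=\pi\circ c$ as the witness, with the fibers $B^{\eta,\tau}$ computed from $d$. Assuming $d$ fails for some $I\in\mathcal J$, one gets colours $\tau_\eta$ and sets $E_\eta:=\kappa\setminus B^{\eta,\tau_\eta}\in I^*$, and thins $B$ to $B'\in I^+$ using only clause (1) of subnormality plus $\kappa$-completeness, so that $\beta\in\bigcap_{\eta<\alpha}E_\eta$ for all $(\alpha,\beta)\in[B']^2$. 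Now the onto hypothesis for $c$ is applied to $B'$ (in clause (2), to the pair $(A,B')$, which requires no positivity of $A$): it yields an $\eta$ (resp.\ $\eta\in A$) with $c[\{\eta\}\circledast B']=\theta$, hence \emph{both} $\pi$-preimages of $\tau_\eta$ are realized by $c$, giving $\alpha<\beta$ in $B'$ above $\eta$ with $d(\eta,\alpha)=d(\eta,\beta)=\tau_\eta$. The larger point $\beta$ now has an element of $B'$, namely $\alpha$, strictly between $\eta$ and itself, so it is guarded: $\beta\in E_\eta$, contradicting $d(\eta,\beta)=\tau_\eta$. This ``every colour hit twice'' device is exactly what tames the least-point boundary you ran into, and it handles both clauses uniformly, with no case split on whether $A$ is $I$-positive.
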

\begin{proof} Let $c:[\kappa]^2\rightarrow\theta$ be any colouring. Fix a $2$-to-$1$ map $\pi:\theta\rightarrow\theta$.
Set $d:=\pi\circ c$.
For all $B\s\kappa$, $\eta<\kappa$, and $\tau<\theta$, denote $B^{\eta,\tau}:=\{\beta \in B\setminus(\eta+1) \mid d(\eta, \beta) = \tau\} $.

(1) 		Suppose that $c$ witnesses $\onto(J, \theta)$.
Let $I\in \mathcal J$.
Towards a contradiction, suppose that $B\in I^+$ is such that for every $\eta<\kappa$, there is a $\tau_\eta<\theta$
such that $E_\eta:=\kappa\setminus B^{\eta,\tau_\eta}$ is in $I^*$.
As $I$ is subnormal and $\kappa$-complete, we can find $B'\s B$ in $I^+$ such that, for all $(\alpha,\beta)\in[B']^2$,
$\beta\in\bigcap_{\eta<\alpha} E_\eta$.
As $B'$ is in particular in $J^+$, we may fix an $\eta<\kappa$ such that $c[\{\eta\}\circledast  B']=\theta$.
We may pick $(\alpha,\beta)\in[B']^2$ above $\eta$ such that $d(\eta, \alpha)=\tau_\eta=d(\eta,\beta)$. As $\beta \in B'$ and $\beta > \alpha>\eta$, we have that $\beta \in E_\eta$,
contradicting the fact that $E_\eta\cap B^{\eta,\tau_\eta}=\emptyset$.

(2) The proof is similar.
\end{proof}
\begin{remark} In the special case of $\mathcal A=\self$, in the above proof, the $\kappa$-completeness of $I$ won't play any role. Namely,
for an infinite $\theta$ and $J\in\mathcal S^\kappa_\omega$,
$\onto(\self,J,\theta)$ implies $\onto^+(\self,\mathcal J,\theta)$
for $\mathcal J:=\{ I\in\mathcal S^\kappa_\omega\mid I\supseteq J\}$.
\end{remark}

\begin{defn}[{\cite[\S3]{paper47}}] Let $S\s\kappa$. A $C$-sequence $\vec C=\langle C_\beta\mid\beta\in S\rangle$
is \emph{strongly amenable in $\kappa$} if for every club $D$ in $\kappa$,
the set $\{ \beta\in S \mid D\cap\beta\subseteq C_\beta\}$ is bounded in $\kappa$.
Let $\sa_\kappa:=\{S\s\kappa\mid S\text{ carries a }C\text{-sequence strongly amenable in }\kappa\}$.
\end{defn}

Recall that for a set of ordinals $S$, $J^{\bd}[S]$ stands for the ideal of bounded subsets of $S$.
By \cite[Lemma~3.4]{paper47}, $\sa_\kappa\cap[\kappa]^\kappa=\{ S\in[\kappa]^\kappa\mid \ubd(J^{\bd}[S],\kappa)\text{ holds}\}$.
In this section, we shall be interested in the stronger principle $\ubd^+$.
To appreciate the difference, note that  even if $\ubd(J^{\bd}[\kappa],\kappa)$ fails,
for the set $S$ of all successor ordinals below $\kappa$,
it is easy to construct a colouring $c$ witnessing $\ubd([\kappa]^\kappa,J^{\bd}[S],\kappa)$
that does not satisfy the proposition of Lemma~\ref{lemma113}(2) with $\theta:=\kappa$,
whereas, $\ubd^+$ enjoys the following equivalency:

\begin{prop} $\ubd^+(J^{\bd}[S],\theta)$ holds for some $S\in[\kappa]^\kappa$
iff it holds for all $S\in[\kappa]^\kappa$.
\end{prop}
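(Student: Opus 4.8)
The forward implication that ``all'' gives ``some'' is trivial (and $[\kappa]^\kappa\neq\emptyset$ since $\kappa$ is regular), so the real task is to show that a single witness transfers to every $S\in[\kappa]^\kappa$. Since $J^{\bd}[\kappa]=J^{\bd}[S]$ for $S=\kappa$, the plan is to factor this through $\ubd^+(J^{\bd}[\kappa],\theta)$. First I would observe that \emph{any} colouring $c$ witnessing $\ubd^+(J^{\bd}[\kappa],\theta)$ automatically witnesses $\ubd^+(J^{\bd}[S],\theta)$ for every $S\in[\kappa]^\kappa$: a $J^{\bd}[S]$-positive set $B$ has $B\cap S$ unbounded in $\kappa$, and applying $c$ to $B\cap S$ yields an $\eta$ with $\otp(\{\tau\mid \{\beta\in (B\cap S)\setminus(\eta+1)\mid c(\eta,\beta)=\tau\}\text{ is unbounded}\})=\theta$; as each such fiber is contained in $S$, being unbounded in $\kappa$ coincides with being $J^{\bd}[S]$-positive, so the very same $\eta$ works for $B$. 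It thus remains to prove that if $\ubd^+(J^{\bd}[S_0],\theta)$ holds for some $S_0\in[\kappa]^\kappa$, then $\ubd^+(J^{\bd}[\kappa],\theta)$ holds.

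For this last step I would fix a witness $c_0$ and let $g_0\colon\kappa\to S_0$ be the increasing enumeration of $S_0$, so that $g_0$ is an order isomorphism carrying unbounded subsets of $\kappa$ onto unbounded subsets of $S_0$, with $g_0(\beta)\ge\beta$. The idea is to pull $c_0$ back along $g_0$ in the second coordinate. Ignoring upper-regressivity for the moment, put $c(\eta,\beta):=c_0(\eta,g_0(\beta))$ for $\eta<\beta<\kappa$. Given an unbounded $B\s\kappa$, set $B_0:=g_0[B]\s S_0$, which is unbounded, and let $\eta_0$ be furnished by $c_0$ for $B_0$. Since $g_0$ is an order isomorphism, for each $\tau$ the set $\{\beta\in B\mid c(\eta_0,\beta)=\tau\}$ is unbounded exactly when $\{\beta_0\in B_0\mid c_0(\eta_0,\beta_0)=\tau\}$ is, the two truncations at $\eta_0+1$ differing by only boundedly many points; hence the same $\eta_0$ works for $B$ and the $\ubd^+$ property transports.

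The one genuine obstacle is upper-regressivity, and it arises only when $\theta=\kappa$: there $c_0(\eta,g_0(\beta))$ is merely ${<}\,g_0(\beta)$, which may exceed $\beta$ whenever $g_0$ stretches. When $\theta<\kappa$ there is nothing to do, as the colours lie below $\theta$, giving $c(\eta,\beta)<\beta$ for $\beta\ge\theta$, and one resets the boundedly many values with $\beta\le\theta$ to $0$. To repair the case $\theta=\kappa$, I would compress the colours back into $\kappa$ along $S_0$ via $\phi_0(\gamma):=\otp(S_0\cap\gamma)$, setting
$$c(\eta,\beta):=\begin{cases}\phi_0(c_0(\eta,g_0(\beta))) & \text{if this value is }<\beta,\\ 0 & \text{otherwise.}\end{cases}$$
Because $\otp(S_0\cap g_0(\beta))=\beta$, the compressed value never exceeds $\beta$, and the exceptional ``diagonal'' branch yields the value $\beta$ for only boundedly many $\beta$ per colour, so resetting those is harmless; thus $c$ is upper-regressive.

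The step I expect to be delicate is verifying that this colour-merging does not shrink the relevant order type. Since $\phi_0$ is monotone and cofinal in $\kappa$, if $T_0:=\{\tau\mid \{\beta_0\in B_0\mid c_0(\eta_0,\beta_0)=\tau\}\text{ is unbounded}\}$ has order type $\kappa$, then $\phi_0[T_0]$ is an unbounded subset of $\kappa$, hence of order type $\kappa$. Moreover, each $\tau\in\phi_0[T_0]\setminus\{0\}$ retains an unbounded $c$-fiber for $\eta_0$: its fiber contains the preimage of some old unbounded fiber, and for $\beta>\tau$ that preimage is left untouched by the diagonal reset. Consequently $\otp(\{\tau\mid \{\beta\in B\setminus(\eta_0+1)\mid c(\eta_0,\beta)=\tau\}\text{ is unbounded}\})=\kappa$, which is exactly what is required, completing the implication and hence the proposition.
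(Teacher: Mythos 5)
Your proposal is correct and takes essentially the same route as the paper: the paper also reduces everything to the single implication ``some $S$ implies $S=\kappa$'' and pulls the witness back along the inverse collapse $\pi:\kappa\rightarrow S$, declaring $d(\eta,\beta):=c(\eta,\pi(\beta))$ whenever that value is less than $\beta$ (and anything upper-regressive, say $0$, otherwise). The only divergence is your $\phi_0$-compression of colours in the case $\theta=\kappa$, which is harmless but unnecessary: since each colour $\tau$ is a fixed ordinal below $\kappa$, the plain truncation discards only the boundedly many $\beta\le\tau$ from each pulled-back fibre, so the fibres remain unbounded and the set of good colours still contains the original set of order type $\kappa$ without any recoding.
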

\begin{proof} Given $c:[\kappa]^2\rightarrow\theta$ witnessing $\ubd^+(J^{\bd}[S],\theta)$ with $S\in[\kappa]^\kappa$,
let $\pi:\kappa\rightarrow S$ denote the inverse collapse.
Then pick an upper-regressive colouring $d:[\kappa]^2\rightarrow\theta$ such that $d(\eta,\beta):=c(\eta,\pi(\beta))$, provided that the latter is less than $\beta$.
It is not hard to verify that $d$ witnesses $\ubd^+(J^{\bd}[\kappa],\theta)$.
\end{proof}

\begin{defn}[{\cite[\S4]{paper35}}]\label{defnchi} \hfill
\begin{itemize}
\item Given a $C$-sequence $\vec{C} = \langle C_\beta \mid \beta \in S \rangle$ over a subset $S$ of $\kappa$,
$\chi(\vec{C})$ stands for the least cardinal $\chi \leq \kappa$ such that there exist
$\Delta \in [\kappa]^\kappa$ and $b:\kappa\rightarrow[S]^\chi$ with
$\Delta\cap\alpha\s\bigcup_{\beta\in b(\alpha)}C_\beta$
for every $\alpha<\kappa$.
\item $\cspec(\kappa):= \{\chi(\vec C) \mid \vec C$ is a $C$-sequence over $\kappa\}\setminus \omega$.
\item If $\kappa$ is weakly compact, then $\chi(\kappa):=0$.  Otherwise, $\chi(\kappa):=\max(\{1\}\cup\cspec(\kappa))$.
\end{itemize}
\end{defn}
\begin{remark}
The third bullet is actually a \emph{claim} established as \cite[Theorem~4.7]{paper35}, where the original definition of $\chi(\kappa)$ is slightly different \cite[Definition~1.6]{paper35}.
By \cite[Lemma~2.12]{paper35}, if $\chi(\kappa)\le 1$, then
$\kappa$ is a greatly Mahlo cardinal that is weakly compact in $L$,
and for every sequence $\langle S_i\mid i<\kappa\rangle$ of stationary subsets of $\kappa$,
there exists an inaccessible $\alpha<\kappa$ such that $S_i\cap\alpha$ is stationary in $\alpha$ for every $i<\alpha$.
\end{remark}

\begin{lemma}\label{strongstrongamen} Suppose that $\vec C=\langle C_\gamma \mid \gamma \in S\rangle$ is a $C$-sequence over a subset $S\s\kappa$.
Then the following are equivalent:
\begin{enumerate}[(1)]
\item $\vec C$ is strongly amenable in $\kappa$;
\item $\chi(\vec C)>1$;
\item For every club $D \s \kappa$ there are club many $\delta< \kappa$ such that $\sup((D \cap \delta)\setminus C_\gamma) = \delta$ for every $\gamma \in S$.
\end{enumerate}
\end{lemma}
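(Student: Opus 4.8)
The plan is to establish the cyclic chain $(1)\Rightarrow(3)\Rightarrow(2)\Rightarrow(1)$, in which the links $(3)\Rightarrow(2)$ and $(2)\Rightarrow(1)$ are soft while $(1)\Rightarrow(3)$ carries the real content. For $(2)\Rightarrow(1)$ I would argue by contraposition: if $\vec C$ is not strongly amenable, fix a club $D$ for which $T:=\{\beta\in S\mid D\cap\beta\s C_\beta\}$ is unbounded, and then read off a witness to $\chi(\vec C)=1$ by taking $\Delta:=D$ and $b(\alpha):=\min(T\setminus\alpha)$. Since $b(\alpha)\in T$ with $b(\alpha)\ge\alpha$, we get $\Delta\cap\alpha\s D\cap b(\alpha)\s C_{b(\alpha)}$ for every $\alpha<\kappa$, so $\chi(\vec C)\le 1$, that is, $\neg(2)$.

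For $(3)\Rightarrow(2)$ I would again contrapose. Suppose $\chi(\vec C)=1$, as witnessed by some $\Delta\in[\kappa]^\kappa$ and $b:\kappa\rightarrow S$, and set $D:=\acc^+(\Delta)$, a club. For $\alpha\in\acc^+(\Delta)$ the inclusion $\Delta\cap\alpha\s C_{b(\alpha)}\s b(\alpha)$ forces $\alpha\le b(\alpha)$, and since $C_{b(\alpha)}$ is closed below $b(\alpha)$, every limit point of $\Delta$ below $\alpha$ also lands in $C_{b(\alpha)}$; hence $D\cap\alpha\s C_{b(\alpha)}$ for every $\alpha\in\acc^+(\Delta)$. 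Feeding this $D$ into (3) would demand some $\delta\in\acc^+(\Delta)$ with $\sup((D\cap\delta)\setminus C_\gamma)=\delta$ for all $\gamma\in S$; but taking $\gamma:=b(\delta)\in S$ gives $(D\cap\delta)\setminus C_{b(\delta)}=\emptyset$, so this supremum is $0\neq\delta$, a contradiction.

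The heart of the matter is $(1)\Rightarrow(3)$, which I would prove as $\neg(3)\Rightarrow\neg(1)$. Fix a club $D$ witnessing $\neg(3)$, so that the set of \emph{bad} $\delta$ --- those for which $\sup((D\cap\delta)\setminus C_\gamma)<\delta$ for some $\gamma\in S$ --- is stationary; intersecting with the club $\acc^+(D)$ yields a stationary $B$ of bad points each lying in $\acc^+(D)$. For $\delta\in B$ choose $\xi_\delta<\delta$ and $\gamma_\delta\in S$ with $D\cap[\xi_\delta,\delta)\s C_{\gamma_\delta}$. Because $\delta\in\acc^+(D)$ makes $D$ cofinal in $\delta$, a witness with $\gamma_\delta<\delta$ is impossible, so $\gamma_\delta\ge\delta$. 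After applying Fodor to freeze $\xi_\delta=\xi^*$ on a stationary $B^*\s B$, the aim is to build one club $D^*$ together with unboundedly many $\beta\in S$ obeying $D^*\cap\beta\s C_\beta$, contradicting (1). The construction is a gap-avoiding recursion: set $\rho_0:=\min(D\setminus(\xi^*+1))$; given $\rho_i$, pick $\delta_i\in B^*$ with $\delta_i>\rho_i$ and let $\rho_{i+1}:=\min(D\setminus\gamma_{\delta_i})$; take sups at limits. Then $D^*:=\{\rho_i\mid i<\kappa\}$ skips the interval $[\delta_i,\gamma_{\delta_i})$, so every element of $D^*$ below $\gamma_{\delta_i}$ is $\le\rho_i<\delta_i$, giving $D^*\cap\gamma_{\delta_i}\s D^*\cap\delta_i\s D\cap[\xi^*,\delta_i)\s C_{\gamma_{\delta_i}}$; thus each $\beta:=\gamma_{\delta_i}\in S$ is as required, and these are cofinal in $\kappa$.

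The main obstacle will be confirming that this recursion really produces a club sitting inside $D$. Strict increase is immediate from $\rho_{i+1}\ge\gamma_{\delta_i}>\delta_i>\rho_i$, but at limit stages one must invoke the closedness of $D$ to see that $\rho_j=\sup_{l<j}\rho_l$ remains in $D$ (its predecessors $\rho_{l+1}$ lie in $D$ and are cofinal below $\rho_j$). Once $D^*\s D$ is secured, the containment $D^*\cap[\xi^*,\delta_i)\s C_{\gamma_{\delta_i}}$ is inherited directly from $D\cap[\xi^*,\delta_i)\s C_{\gamma_{\delta_i}}$, and this is precisely where freezing the lower cut-off to a single $\xi^*$ is indispensable: a common tail $D\setminus(\xi^*+1)$ can then service all the targets simultaneously, whereas lower cut-offs $\xi_\delta$ creeping toward $\delta$ would leave uncontrolled elements of $D^*$ below the targets and break the argument.
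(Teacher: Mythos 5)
Your proposal is correct, but it takes a genuinely different route from the paper: you close the cycle as $(1)\Rightarrow(3)\Rightarrow(2)\Rightarrow(1)$, whereas the paper proves $(1)\Rightarrow(2)\Rightarrow(3)\Rightarrow(1)$, observing that $(3)\Rightarrow(1)$ is immediate. The paper thus places all the work in the two implications involving clause (2): for $(1)\Rightarrow(2)$ it converts a witness $(\Delta,b)$ to $\chi(\vec C)\le1$ into a club of closure points of $\epsilon\mapsto b(\epsilon+1)$, and for $(2)\Rightarrow(3)$ it converts a stationary set of bad points into a witness to $\chi(\vec C)\le1$ by one application of Fodor followed by the explicit definitions $\Delta:=D\setminus(\epsilon+1)$ and $b(\alpha):=\gamma_{\min(T'\setminus\alpha)}$; no recursion is needed there, because a $\chi$-witness only requires an \emph{unbounded} set $\Delta$ together with an arbitrary function $b$, so the function absorbs all of the gap-skipping. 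In your arrangement, the two $\chi$-related implications become soft (your $(2)\Rightarrow(1)$ via $b(\alpha):=\min(T\setminus\alpha)$ and your $(3)\Rightarrow(2)$ via $D:=\acc^+(\Delta)$ are both fine), and the entire burden falls on $\neg(3)\Rightarrow\neg(1)$, where the object to be produced is a \emph{club} rather than an unbounded set; this is exactly why you need the interval-skipping recursion, the regularity of $\kappa$ at limit stages, and the closure verification for $D^*$. In effect, you have unrolled the composition of the paper's two constructions into a single one. Both proofs invoke Fodor exactly once, for the same reason you identify (only the lower cut-offs $\xi_\delta$ need freezing, since the targets $\gamma_\delta\ge\delta$ are handled by the gap-skipping); the paper's factorization through $\chi(\vec C)$ buys shorter individual steps, while your route buys a direct, self-contained equivalence of (1) and (3) in which clause (2) enters only through trivialities.
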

\begin{proof} $(1)\implies(2)$: Suppose that $\vec C$ is strongly amenable in $\kappa$,
and yet, $\chi(\vec C)=1$.
Fix a set $\Delta\in[\kappa]^\kappa$ and a map $b:\kappa\rightarrow S$ such that $\Delta\cap\alpha\s C_{b(\alpha)}$ for every $\alpha<\kappa$.
For each $\epsilon<\kappa$, let $\beta_\epsilon:=b(\epsilon+1)$.
Now, consider the club $D:=\{\delta\in \acc^+(\Delta)\mid \forall\epsilon<\delta~(\beta_\epsilon<\delta)\}$.
The next claim yields the desired contradiction.
\begin{claim} For every $\epsilon\in D$, $D\cap\beta_\epsilon\s C_{\beta_\epsilon}$.

So, $\{\beta\in S\mid D\cap\beta\s C_\beta\}$ covers $\{ \beta_\epsilon\mid \epsilon\in D\}$,
which is a cofinal subset of $\kappa$.
\end{claim}
\begin{why} Let $\epsilon\in D$. As $\Delta\cap(\epsilon+1)\s C_{\beta_\epsilon}$ and $C_{\beta_\epsilon}$ is closed,
$\acc^+(\Delta)\cap(\epsilon+1)\s C_{\beta_\epsilon}$. In particular, $D\cap(\epsilon+1)\s C_{\beta_\epsilon}$.
Thus, it suffices to prove that $D\cap[\epsilon+1,\beta_\epsilon)=\emptyset$.
Let $\delta$ be any element of $D$ above $\epsilon$,
then, by the definition of $D$, $\delta>\beta_\epsilon$, as sought.
\end{why}

$(2)\implies(3)$: Suppose that $\chi(\vec C)>1$, and yet,
we are given a club $D \s \kappa$ and a stationary set $T\s\kappa$ such that, for every $\delta\in T$ there are an $\epsilon_\delta < \delta$ and a $\gamma_\delta \in S$ such that $\sup((D \cap \delta)\setminus C_{\gamma_\delta}) = \epsilon_\delta$.
As the map $\delta \mapsto \epsilon_\delta$ is regressive, for some stationary subset $T' \s  T$ and $\epsilon<\kappa$ we have that $\delta \in T'$ implies $\epsilon_\delta = \epsilon$.
Set $\Delta:= D\setminus(\epsilon+1)$ and define a function $b:\kappa\rightarrow S$ via $b(\alpha):=\gamma_{\min(T'\setminus\alpha)}$. Then, for every $\alpha<\kappa$, $\Delta\cap\alpha\s C_{b(\alpha)}$,
contradicting the fact that $\chi(\vec C)>1$.

\medskip

$(3)\implies(1)$:  This is immediate.
\end{proof}

Recall that $\kappa\nrightarrow[\kappa]^2_\theta$ asserts the existence of a colouring $c:[\kappa]^2\rightarrow\theta$ such that, for every $B\in[\kappa]^\kappa$, $c``[B]^2=\theta$.

\begin{lemma}\label{claim411} Suppose that $c$ witnesses $\kappa\nrightarrow[\kappa]^2_\theta$.
Let $B\in[\kappa]^\kappa$. Then there exists $\epsilon<\kappa$ such that, for all $\beta\in B\setminus\epsilon$ and $\tau<\min\{\epsilon,\theta\}$,
there exists $\eta\in B\cap\epsilon$ such that $c(\eta,\beta)=\tau$.
\end{lemma}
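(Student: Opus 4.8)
The plan is to fix the witnessing colouring $c$ and the set $B$, and to analyse, for each colour $\tau<\theta$ separately, the \emph{least} left-endpoint in $B$ that realizes $\tau$. Concretely, for $\tau<\theta$ and $\beta\in B$ I would set $f_\tau(\beta):=\min\{\eta\in B\cap\beta\mid c(\eta,\beta)=\tau\}$ when this set is nonempty, and $f_\tau(\beta):=\kappa$ otherwise. The target $\epsilon$ will be produced as a closure point of a single function assembled from the $f_\tau$'s, so that every colour below $\min\{\epsilon,\theta\}$ is already realized, with \emph{every} right-endpoint $\beta\ge\epsilon$, by some left-endpoint lying in the small initial block $B\cap\epsilon$. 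The whole point is that $\min R_\tau(\beta)<\epsilon$ (where $R_\tau(\beta)$ is the set of realizers) is exactly the condition ``some realizer is in $B\cap\epsilon$'', so controlling the least realizers $f_\tau(\beta)$ suffices.

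First I would dispose of the question of whether $\tau$ is realized at all. For fixed $\tau<\theta$, the set $U_\tau:=\{\beta\in B\mid f_\tau(\beta)=\kappa\}$ must be bounded in $\kappa$: otherwise $U_\tau$ is an unbounded, hence $\kappa$-sized, subset of $B$ on which $\tau$ is omitted, since any $\eta<\beta$ taken from $U_\tau$ lies in $B\cap\beta$ and therefore satisfies $c(\eta,\beta)\ne\tau$ by the defining property of $\beta\in U_\tau$; this contradicts $\kappa\nrightarrow[\kappa]^2_\theta$ applied to $U_\tau$. Writing $\rho_\tau:=\sup(U_\tau)+1<\kappa$, every $\beta\in B\setminus\rho_\tau$ then has $f_\tau(\beta)$ equal to a genuine ordinal below $\beta$.

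The crux, and the step I expect to be the main obstacle, is the \emph{uniform} boundedness of these least realizers: $L_\tau:=\sup\{f_\tau(\beta)\mid\beta\in B\setminus\rho_\tau\}<\kappa$. I would prove this again by contradiction against $\kappa\nrightarrow[\kappa]^2_\theta$. If $L_\tau=\kappa$, then $\{\beta\in B\setminus\rho_\tau\mid f_\tau(\beta)>\delta\}$ is nonempty for every $\delta<\kappa$, so one can recursively choose an increasing sequence $\langle\beta_i\mid i<\kappa\rangle$ in $B\setminus\rho_\tau$ with $f_\tau(\beta_j)>\sup_{i<j}\beta_i$ for every $j<\kappa$; this is legitimate because each partial supremum $s_j:=\sup_{i<j}\beta_i$ is below $\kappa$ by regularity, and because $f_\tau(\beta)<\beta$ forces the chosen $\beta$ to exceed $s_j$. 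Along $B':=\{\beta_i\mid i<\kappa\}\in[\kappa]^\kappa$ the colour $\tau$ is omitted: if $i<j$ and $c(\beta_i,\beta_j)=\tau$, then $\beta_i\in B\cap\beta_j$ would witness $f_\tau(\beta_j)\le\beta_i\le s_j$, contradicting $f_\tau(\beta_j)>s_j$. This contradicts the choice of $c$, so $L_\tau<\kappa$.

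Finally, setting $\Phi(\tau):=\max(\rho_\tau,L_\tau+1)$ for $\tau<\theta$, I would take $\epsilon$ to be any nonzero member of $C:=\{\epsilon<\kappa\mid \Phi(\tau)\le\epsilon\text{ for all }\tau<\min\{\epsilon,\theta\}\}$. This single formulation covers both regimes: when $\theta<\kappa$, regularity gives $m:=\sup_{\tau<\theta}\Phi(\tau)<\kappa$ and $C$ contains the entire tail above $m$; when $\theta=\kappa$, $C$ is the usual club of closure points of $\Phi\colon\kappa\to\kappa$, which is nonempty since a countable iteration stays below the uncountable regular $\kappa$. For such an $\epsilon$, given $\beta\in B\setminus\epsilon$ and $\tau<\min\{\epsilon,\theta\}$, one has $\rho_\tau\le\epsilon\le\beta$ and $L_\tau<\epsilon$; hence $\beta\in B\setminus\rho_\tau$ guarantees $f_\tau(\beta)\le L_\tau<\epsilon$, so $\eta:=f_\tau(\beta)$ lies in $B\cap\epsilon$ and satisfies $c(\eta,\beta)=\tau$, as required.
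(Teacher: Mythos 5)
Your proof is correct, but it takes a genuinely different route from the paper's. The paper argues by contradiction: assuming every $\epsilon<\kappa$ fails, it picks witnesses $\beta_\epsilon\in B\setminus\epsilon$ and $\tau_\epsilon<\min\{\epsilon,\theta\}$, observes that $\epsilon\mapsto\tau_\epsilon$ is regressive and applies Fodor's lemma to stabilize the omitted colour $\tau$ on a stationary set $E$, then builds a single sparse cofinal $B'\s B$ interleaved with $E$ so that one application of $\kappa\nrightarrow[\kappa]^2_\theta$ to $B'$ yields the contradiction. You instead work colour by colour: for each $\tau$ you introduce the least-realizer function $f_\tau$, and you invoke the partition relation twice per colour --- once to bound the set $U_\tau$ of right-endpoints realizing no $\tau$ at all, and once (via the strictly increasing sequence with $f_\tau(\beta_j)>\sup_{i<j}\beta_i$) to bound the least realizers $L_\tau$ --- and then you uniformize over colours by taking a closure point of $\tau\mapsto\max(\rho_\tau,L_\tau+1)$. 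Both arguments are sound; the trade-off is that the paper's proof is shorter and uses the partition hypothesis only once, at the cost of invoking stationarity and Fodor, whereas yours avoids pressing-down entirely (only regularity and uncountability of $\kappa$ are used), produces explicit uniform bounds, and shows directly that a tail (when $\theta<\kappa$) or a club (when $\theta=\kappa$) of ordinals $\epsilon$ witness the conclusion --- which is incidentally exactly the form in which the lemma is consumed in Claim~\ref{strongamenfirstrepairclaim}, where the paper has to take closure points of $i\mapsto\epsilon_i$ anyway.
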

\begin{proof} Suppose not.
For each $\epsilon<\kappa$, pick $\beta_\epsilon\in B\setminus\epsilon$ and $\tau_\epsilon<\min\{\epsilon,\theta\}$ such that,
for no $\eta\in B\cap\epsilon$, $c(\eta,\beta_\epsilon)=\tau_\epsilon$.
Fix $\tau$ for which $E:=\{\epsilon<\kappa\mid \tau_\epsilon=\tau\}$ is stationary in $\kappa$.
Then, fix a sparse enough cofinal subset $B'$ of $B$ such that for every $(\eta,\beta)\in[B']^2$ there exists $\epsilon\in E$ such that $\eta<\epsilon\le\beta=\beta_\epsilon$.
Finally, as $c$ witnesses $\kappa\nrightarrow[\kappa]^2_\theta$, we may find $(\eta,\beta)\in[B']^2$ such that $c(\eta,\beta)=\tau$.
Fix $\epsilon\in E$ such that $\eta<\epsilon\le\beta=\beta_\epsilon$. Then $c(\eta,\beta_\epsilon)=\tau_\epsilon$,
contradicting the fact that $\eta\in B\cap\epsilon$.
\end{proof}

\begin{theorem}\label{strongamenfirstrepair}
Suppose that $\kappa \in \sa_\kappa$. Then $\ubd^+(\self,J^\bd[\kappa],\kappa)$ holds.
\end{theorem}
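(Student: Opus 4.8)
The plan is to first collapse the target $\ubd^+$ to a far weaker ``collision'' statement, and then to manufacture the collisions using walks along a strongly amenable $C$-sequence. Since $\kappa$ is regular, $J^\bd[\kappa]$ is $\kappa$-complete, and it is subnormal, so $J^\bd[\kappa]\in\mathcal S^\kappa_\kappa$; this makes the machinery of Lemma~\ref{lemma113} available. Running its proof in the $\self$-variant (cf.\ Lemma~\ref{lemma43}), it suffices to produce a single upper-regressive colouring $c:[\kappa]^2\rightarrow\kappa$ such that for every unbounded $B\s\kappa$ there is an $\eta\in B$ with
$$\otp(\{\tau<\kappa\mid |\{\beta\in B\setminus(\eta+1)\mid c(\eta,\beta)=\tau\}|\ge 2\})=\kappa.$$
The gain is decisive: instead of asking each of $\kappa$-many colours to be realised $J^\bd[\kappa]$-positively (i.e.\ unboundedly) often, I only need each realised twice. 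Converting ``twice'' back into ``positively'' is exactly what the subnormality argument of Lemma~\ref{lemma113} does for free, and it is the point at which the two-dimensionality of $c$ is spent, compensating for the absence of Fodor's lemma flagged in the preamble to this section.

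For the colouring itself, I would fix $\vec C=\langle C_\beta\mid\beta<\kappa\rangle$ witnessing $\kappa\in\sa_\kappa$ and let $c$ read off a feature of the walk $\Tr(\eta,\beta)$ from $\beta$ down to $\eta$. The naive choice $c(\eta,\beta):=\last{\eta}{\beta}$ is too coarse: for an $\eta$ lying in few $C_\gamma$'s (e.g.\ a ``rarely covered'' $\eta$, where every walk is funneled through $\eta+1$ before the final step to $\eta$), the last step is essentially constant, so the section $c(\eta,\cdot)$ would miss almost all colours. I would therefore record a step taken higher in the trace --- morally the step leaving the largest limit ordinal visited before the final descent to $\eta$ --- chosen so that (i) along any unbounded $B$ some section $c(\eta,\cdot){\restriction}B$ has cofinal range, which already yields $\ubd(\self,J^\bd[\kappa],\kappa)$ from $\kappa\in\sa_\kappa$, while (ii) that same section is forced to be heavily non-injective.

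Given unbounded $B$, I would set $D:=\acc^+(B)$ and feed it to Lemma~\ref{strongstrongamen}(3): there are club-many $\delta<\kappa$ with $\sup((D\cap\delta)\setminus C_\gamma)=\delta$ for \emph{every} $\gamma$. In words, below such heights no single $C_\gamma$ swallows a tail of $D$; this is the precise nontriviality of $\vec C$ forbidding the walks emanating from distinct elements of $B$ from being pried apart, and it is what should produce the equal-colour pairs demanded by clause~(ii). Sweeping $\delta$ through the club of good heights, while keeping $\eta\in B$ fixed, should then accumulate order-type-$\kappa$ many colours each realised by two members of $B$. The hard part --- the technical heart of the theorem --- is to secure clause~(ii) in tandem with clause~(i): to arrange that a \emph{cofinal} set of colours repeats along a \emph{single} section $c(\eta,\cdot)$, rather than finding only isolated repetitions or repetitions that erode the cofinal range. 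This is precisely the injectivity trap described at the start of the section, and strong amenability, in the guise of Lemma~\ref{strongstrongamen}(3), is what I expect to spring it.
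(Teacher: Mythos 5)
Your opening reduction is sound: by the $\self$-analogue of Lemma~\ref{lemma113} (whose proof adapts verbatim, as noted before Lemma~\ref{lemma43}), and since $J^{\bd}[\kappa]$ is a $\kappa$-complete subnormal ideal, it would indeed suffice to produce an upper-regressive $c$ with the stated collision property; you have also correctly named the two main tools, walks along $\vec C$ and Lemma~\ref{strongstrongamen}(3). But from that point on the proposal is a plan rather than a proof: the colouring is never defined (``morally the step leaving the largest limit ordinal visited before the final descent to $\eta$'' is not something one can compute with or verify anything about), and the verification is explicitly deferred (``is what I expect to spring it''). That deferred step is the entire content of the theorem, and no fixed, intrinsic step of the walk is likely to deliver it, since for a given $B$ you have no control over which step of which walks lands inside a single $C_\gamma$.

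The missing idea is the paper's composition of the walk with an \emph{auxiliary} colouring that selects the step to be read. Since $\chi(\vec C)>1$ (Lemma~\ref{strongstrongamen}), one may fix, by \cite[Theorem~8.1.11]{TodWalks}, a colouring $o:[\kappa]^2\rightarrow\omega$ witnessing $\kappa\nrightarrow[\kappa]^2_\omega$, and then set $c(\eta,\beta):=\Tr(\eta,\beta)(o(\eta,\beta)+1)$. The argument then runs by contradiction, and with the roles of the variables opposite to what you envision: assuming $\ubd^+(\self,J^{\bd}[\kappa],\kappa)$ fails for $B$, with bounds $\sigma_\eta$, one builds a club $D$ (Lemma~\ref{claim411}, i.e.\ the property of $o$, is what secures its third clause), picks $\delta\in\acc(D)$ with $\sup((D\cap\delta)\setminus C_\gamma)=\delta$ for every $\gamma$, fixes a \emph{single} $\beta\in B$ above $\delta$, sets $\gamma:=\last{\delta}{\beta}$, $n:=\rho_2(\gamma,\beta)$, and picks $\alpha\in(D\cap\delta)\setminus C_\gamma$ above $\Lambda:=\lambda(\gamma,\beta)$. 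The point of $o$ is that $\hat A:=\{\eta\in B\cap(\Lambda,\alpha)\mid o(\eta,\beta)=n\}$ is cofinal in $\alpha$, and every $\eta\in\hat A$ reads off $c(\eta,\beta)=\min(C_\gamma\setminus\eta)$, which the negation hypothesis squeezes into $[\eta,\sigma_\eta]\s[\eta,\alpha)$. Hence $\{c(\eta,\beta)\mid\eta\in\hat A\}$ is a cofinal subset of $\alpha$ contained in the closed set $C_\gamma$, forcing $\alpha\in C_\gamma$ --- a contradiction. So $\beta$ is fixed while $\eta$ varies, the bounds $\sigma_\eta$ supplied by the negation are essential, and no collisions are ever exhibited: the contradiction is with the closure of $C_\gamma$ at the point $\alpha\notin C_\gamma$. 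Your intended scheme (fix $\eta$, sweep $\delta$, accumulate repeated colours) inverts this and runs straight into the injectivity trap you yourself flag, with no mechanism offered to escape it.
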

\begin{proof}
Let $\vec C= \langle C_\gamma \mid \gamma< \kappa\rangle$ witness that $\kappa \in \sa_\kappa$.
We shall conduct walks on ordinals along $\vec C$.
By Clause~(2) of Lemma~\ref{strongstrongamen} together with \cite[Theorem~8.1.11]{TodWalks},
we may also fix a colouring $o:[\kappa]^2\rightarrow\omega$ witnessing $\kappa\nrightarrow[\kappa]^2_\omega$.

Now, define an upper-regressive colouring $c:[\kappa]^2\rightarrow\kappa$, as follows. Given $\eta<\beta<\kappa$,
let $c(\eta,\beta):=\Tr(\eta,\beta)(o(\eta,\beta)+1)$ which is a well-defined ordinal in the interval $[\eta,\beta)$.

Towards a contradiction, suppose that $c$ fails to witness $\ubd^+(\self,J^\bd[\kappa],\kappa)$.
In this case, we may pick a set $B\in (J^\bd[\kappa])^+$ such that for every $\eta\in B$ there is a $\sigma_\eta < \kappa$ such that for every $\tau \in \kappa \setminus \sigma_\eta$, 
the set $\{\beta \in B \setminus (\eta+1) \mid c(\eta, \beta) = \tau\}$ is in $J^\bd[\kappa]$.
Let $D$ be the collection of all $\delta\in\acc(\kappa)$ such that all of the following hold:
\begin{enumerate}
\item for every $\eta < \delta$, $\sigma_\eta < \delta$;
\item for every $\eta < \delta$ and $\tau \in \delta \setminus\sigma_\eta$, $\sup\{\beta \in B \mid c(\eta, \beta) = \tau\} < \delta$;
\item  for all $\beta\in B\setminus\delta$ and $n< \omega$, $\sup\{\eta \in B \cap \delta \mid o(\eta,\beta) = n\}= \delta$.
\end{enumerate}
\begin{claim}\label{strongamenfirstrepairclaim}
$D$ is a club in $\kappa$.
\end{claim}
\begin{why} We can restrict our attention to Clause (iii) as the set of $\delta\in \acc(\kappa)$ which satisfy Clauses (i) and (ii) is clearly a club.

For every $i<\kappa$, let $\epsilon_i$ be given by Lemma~\ref{claim411} when fed with the set $B\setminus i$, using $\theta:=\omega$.
Clearly, any $\delta<\kappa$ above $\omega$ which forms a closure point of the map $i\mapsto\epsilon_i$
satisfies the requirement of Clause~(iii), and the set of closure points of this map is a club in $\kappa$.
\end{why}

By Lemma~\ref{strongstrongamen}, we may now let $\delta\in\acc(D)$ be such that $\sup((D \cap \delta)\setminus C_\gamma)= \delta$ for every $\gamma< \kappa$.
Fix $\beta \in B$ above $\delta$ and then let $\gamma:=\last{\delta}{\beta}$ in the sense of \cite[Definition~2.10]{paper44},
so that $\delta\le \gamma\le\beta$ and $\sup(C_\gamma\cap\delta)=\delta$, the latter holding since $\delta$ is a limit ordinal.
By \cite[Lemma~2.11]{paper44},   $\Lambda:=\lambda(\gamma,\beta)$ is less than $\delta$.
Set $n:=\rho_2(\gamma,\beta)$.
Then, for every ordinal $\eta$ with $\Lambda<\eta<\gamma$, $\Tr(\eta,\beta)(n)=\gamma$.
By the choice of $\delta$, we may now fix an $\alpha \in (D\cap\delta)\setminus C_\gamma$ above $\Lambda$.
As $\alpha \in D\setminus(\Lambda+1)$, the set $\hat A:= \{\eta\in B\cap(\Lambda, \alpha) \mid o(\eta,\beta)= n\}$ is cofinal in $\alpha$.

Let $\eta \in \hat A$. As $\Lambda< \eta< \alpha<\delta\le \gamma\le\beta$, it is the case that $\Tr(\eta,\beta)(n)=\gamma$ and hence $c(\eta, \beta)= \Tr(\eta,\beta)(n+1)=\min (C_\gamma \setminus \eta)$.
In particular, since $\sup(C_\gamma\cap\delta)=\delta$, for every $\eta \in \hat A$, $c(\eta, \beta) < \delta$.
So, as $\delta\in D$ and $\beta>\delta$, it follows that for every $\eta \in \hat A$, $\eta\leq c(\eta, \beta) \le \sigma_\eta$. Since $\alpha\in D$, for any $\eta\in \hat A$, $\sigma_\eta< \alpha$.
Altogether, $\{ c(\eta,\beta)\mid \eta\in \hat A\}$ is a cofinal subset of $\alpha$, consisting of elements of the set $C_\gamma$ which is closed below $\delta$, contradicting the fact that $\alpha\in\delta\setminus C_\gamma$.
\end{proof}

Recall that $\kappa\nrightarrow[\kappa; \kappa]^2_\theta$ asserts the existence of a colouring $c:[\kappa]^2\rightarrow\theta$ such that, for all $A,B\in[\kappa]^\kappa$, $c[A \circledast B]=\theta$.

\begin{lemma}\label{claim422}  Suppose that $c$ witnesses $\kappa\nrightarrow[\kappa;\kappa]^2_\theta$.
Let $A\in[\kappa]^\kappa$. Then there exists $\epsilon<\kappa$ such that, for all $\beta\in \kappa\setminus\epsilon$ and $\tau<\min\{\epsilon,\theta\}$,
there exists $\eta\in A\cap\epsilon$ such that $c(\eta,\beta)=\tau$.
\end{lemma}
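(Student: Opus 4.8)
The plan is to mirror the proof of Lemma~\ref{claim411}, adapting its symmetric construction to the asymmetric setting in which $\eta$ is drawn from $A$ while $\beta$ is allowed to range over all of $\kappa$. Accordingly, I argue by contradiction: assuming the conclusion fails, for each $\epsilon<\kappa$ I pick a witness $\beta_\epsilon\in\kappa\setminus\epsilon$ and a colour $\tau_\epsilon<\min\{\epsilon,\theta\}$ such that $c(\eta,\beta_\epsilon)\neq\tau_\epsilon$ for every $\eta\in A\cap\epsilon$. Since $\tau_\epsilon<\epsilon$, the map $\epsilon\mapsto\tau_\epsilon$ is regressive, so by Fodor's lemma (recall $\kappa$ is regular uncountable) there is a single colour $\tau$ for which $E:=\{\epsilon<\kappa\mid\tau_\epsilon=\tau\}$ is stationary in $\kappa$.

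The heart of the argument is to produce a cofinal $A'\s A$ and a cofinal $B'\s\{\beta_\epsilon\mid\epsilon\in E\}$ that are separated well enough to feed into $\kappa\nrightarrow[\kappa;\kappa]^2_\theta$. I would build them by a single recursion of length $\kappa$, maintaining an increasing sequence of thresholds $\langle\delta_i\mid i<\kappa\rangle$ (with suprema taken at limit stages). At stage $i$, having bounded all previous data below $\delta_i$, pick $\eta_i\in A$ above $\delta_i$ (possible as $A$ is cofinal), then pick $\epsilon_i\in E$ above $\eta_i$ (possible as $E$ is stationary, hence cofinal), set $\delta_{i+1}:=\beta_{\epsilon_i}+1$, and enroll $\eta_i$ into $A'$ and $\beta_{\epsilon_i}$ into $B'$. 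Both sequences are then strictly increasing and cofinal, so $A',B'\in[\kappa]^\kappa$.

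The key separation property is that whenever $\eta_i<\beta_{\epsilon_j}$ one must have $i\le j$: otherwise $i>j$ would give $\eta_i>\delta_i\ge\delta_{j+1}=\beta_{\epsilon_j}+1$, contradicting $\eta_i<\beta_{\epsilon_j}$. Consequently $\eta_i\le\eta_j<\epsilon_j$, so every pair $(\eta,\beta)\in A'\circledast B'$ comes equipped with an $\epsilon\in E$ satisfying $\beta=\beta_\epsilon$ and $\eta<\epsilon$. Finally, since $c$ witnesses $\kappa\nrightarrow[\kappa;\kappa]^2_\theta$ and $A',B'\in[\kappa]^\kappa$, we have $c[A'\circledast B']=\theta\ni\tau$, so we may fix $(\eta,\beta)\in A'\circledast B'$ with $c(\eta,\beta)=\tau$. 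Fixing the matching $\epsilon\in E$, this yields $\eta\in A\cap\epsilon$ with $c(\eta,\beta_\epsilon)=\tau=\tau_\epsilon$, directly contradicting the choice of $\beta_\epsilon$ and $\tau_\epsilon$.

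The main obstacle, and the only place where the asymmetry genuinely matters, is verifying that the interleaved recursion delivers exactly the right separation: one must guarantee that the $\eta$-side lands below the \emph{matching threshold} $\epsilon$, not merely below $\beta_\epsilon$. This is precisely what forcing $\epsilon_i>\eta_i$ (rather than just $\beta_{\epsilon_i}>\eta_i$) at each stage secures, and it is the sole modification needed relative to the symmetric bookkeeping of Lemma~\ref{claim411}.
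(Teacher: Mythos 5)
Your proof is correct and follows essentially the same route as the paper's: argue by contradiction, use Fodor's lemma to stabilise the colour $\tau_\epsilon$ on a stationary set $E$, run an interleaved recursion of length $\kappa$ producing cofinal sets $A'\s A$ and $B'\s\{\beta_\epsilon\mid\epsilon\in E\}$ in which each element of $A'$ sits below the $\epsilon$ matched to every later element of $B'$, and then apply $\kappa\nrightarrow[\kappa;\kappa]^2_\theta$ to $A'\circledast B'$ to contradict the choice of the witnesses. The only differences from the paper are bookkeeping (your thresholds $\delta_i$ versus the paper's three maps $f,g,h$ defined via minima), and your separation statement $i\le j$ is exactly the inequality the argument needs.
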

\begin{proof}  Suppose not.
For each $\epsilon<\kappa$, pick $\beta_\epsilon\in \kappa\setminus\epsilon$ and $\tau_\epsilon<\min\{\epsilon,\theta\}$ such that,
for no $\eta\in A\cap\epsilon$, $c(\eta,\beta_\epsilon)=\tau_\epsilon$.
Fix $\tau<\theta$ for which $E:=\{ \epsilon<\kappa\mid \tau_\epsilon=\tau\}$ is stationary in $\kappa$.
Define three strictly increasing maps $f,g,h:\kappa\rightarrow \kappa$ as follows.
Let:
\begin{itemize}
\item $f(0):=\min(A)$;
\item $g(0):=\min(E\setminus(f(0)+1))$;
\item $h(0):=\beta_{g(0)}$.
\end{itemize}
Now, for every $i<\kappa$ such that $f\restriction i,g\restriction i,h\restriction i$ have already been defined,
let:
\begin{itemize}
\item $f(i):=\min(A\setminus\ssup(\im(h\restriction i)))$;
\item $g(i):=\min(E\setminus(f(i)+1))$;
\item $h(i):=\beta_{g(i)}$.
\end{itemize}

Note that for every $i<j<\kappa$, $f(i)<g(i)\le h(i)<f(j)$.
Set $A':=\im(f)$ and $B':=\im(h)$. By the choice of $c$, we may now pick $(\eta,\beta)\in A'\circledast B'$ such that $c(\eta,\beta)=\tau$.
Pick $i,j$ such that $\eta=f(i)$ and $\beta=h(j)$.
As $\eta<\beta$, it must be the case that $i<j$.
Set $\epsilon:=g(j)$. Then $\eta=f(i)<g(i)\le g(j)=\epsilon\le\beta_\epsilon=h(j)=\beta$.
So, $c(\eta,\beta_\epsilon)=\tau_\epsilon$ contradicting the fact that $\eta\in A\cap\epsilon$.
\end{proof}

\begin{theorem}	\label{strongamensecondrepair}
Suppose that $\kappa \in \sa_\kappa$.

If $\kappa \nrightarrow [\kappa; \kappa]^2_\omega$ holds, then so does $\ubd^+([\kappa]^\kappa,J^\bd[\kappa],\kappa)$.
\end{theorem}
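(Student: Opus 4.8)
The plan is to mirror the proof of Theorem~\ref{strongamenfirstrepair}, replacing its diagonal ingredients by their rectangular counterparts. First I would fix a $C$-sequence $\vec C=\langle C_\gamma\mid \gamma<\kappa\rangle$ witnessing $\kappa\in\sa_\kappa$ and conduct walks along it. Whereas in the $\self$ case the colouring $o:[\kappa]^2\rightarrow\omega$ witnessing $\kappa\nrightarrow[\kappa]^2_\omega$ came for free from $\chi(\vec C)>1$, here I would instead take $o$ to be a colouring witnessing the hypothesis $\kappa\nrightarrow[\kappa;\kappa]^2_\omega$. I then define the same upper-regressive colouring $c(\eta,\beta):=\Tr(\eta,\beta)(o(\eta,\beta)+1)$, which lands in the interval $[\eta,\beta)$.

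Suppose towards a contradiction that $c$ fails to witness $\ubd^+([\kappa]^\kappa,J^\bd[\kappa],\kappa)$. Then there are $A\in[\kappa]^\kappa$ and an unbounded $B$ together with a map $\eta\mapsto\sigma_\eta$ (for $\eta\in A$) such that, for every $\eta\in A$ and every $\tau\in\kappa\setminus\sigma_\eta$, the set $\{\beta\in B\setminus(\eta+1)\mid c(\eta,\beta)=\tau\}$ is bounded. I would then define a club $D\s\acc(\kappa)$ by the rectangular analogues of the three clauses: (i) $\eta\in A\cap\delta$ implies $\sigma_\eta<\delta$; (ii) for $\eta\in A\cap\delta$ and $\tau\in\delta\setminus\sigma_\eta$, $\sup\{\beta\in B\mid c(\eta,\beta)=\tau\}<\delta$; (iii) for all $\beta\in\kappa\setminus\delta$ and $n<\omega$, $\sup\{\eta\in A\cap\delta\mid o(\eta,\beta)=n\}=\delta$. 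Clauses (i) and (ii) evidently cut out a club; for (iii) I would feed $A\setminus i$ to Lemma~\ref{claim422} (with $\theta:=\omega$) to obtain $\epsilon_i$, and take $D$ inside the closure points of the map $i\mapsto\epsilon_i$. This is precisely the step where the rectangular partition relation replaces the appeal to Lemma~\ref{claim411}.

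With $D$ in hand, I would invoke Lemma~\ref{strongstrongamen} (the equivalence $(1)\Leftrightarrow(3)$, available since $\vec C$ is strongly amenable) to pick $\delta\in\acc(D)$ with $\sup((D\cap\delta)\setminus C_\gamma)=\delta$ for every $\gamma<\kappa$. Fixing $\beta\in B$ above $\delta$, I set $\gamma:=\last{\delta}{\beta}$, $\Lambda:=\lambda(\gamma,\beta)<\delta$ and $n:=\rho_2(\gamma,\beta)$, so that $\Tr(\eta,\beta)(n)=\gamma$ for every $\eta\in(\Lambda,\gamma)$ and $\sup(C_\gamma\cap\delta)=\delta$. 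Choosing $\alpha\in(D\cap\delta)\setminus C_\gamma$ above $\Lambda$ and applying Clause (iii) \emph{at the point $\alpha\in D$} to $\beta>\delta>\alpha$, the set $\hat A:=\{\eta\in A\cap(\Lambda,\alpha)\mid o(\eta,\beta)=n\}$ is cofinal in $\alpha$. For each such $\eta$ one computes $c(\eta,\beta)=\min(C_\gamma\setminus\eta)\in C_\gamma$, and Clauses (ii) and (i) (used at $\delta$ and at $\alpha$ respectively) force $\eta\leq c(\eta,\beta)\leq\sigma_\eta<\alpha$. Hence $\{c(\eta,\beta)\mid\eta\in\hat A\}$ is a cofinal subset of $\alpha$ lying in the closed set $C_\gamma$, whence $\alpha\in C_\gamma$, contradicting $\alpha\notin C_\gamma$.

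The main obstacle is conceptual rather than computational: the only genuinely new input compared with Theorem~\ref{strongamenfirstrepair} is that the diagonal relation $\kappa\nrightarrow[\kappa]^2_\omega$ (automatic from strong amenability) no longer suffices to separate the roles of $A$ and $B$, so one must assume the strictly stronger rectangular relation $\kappa\nrightarrow[\kappa;\kappa]^2_\omega$ and route the verification of Clause (iii) through Lemma~\ref{claim422}. Once that single substitution is made, the walks computation in the final paragraph is verbatim the same as in Theorem~\ref{strongamenfirstrepair}.
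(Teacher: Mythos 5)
Your proposal is correct and follows essentially the same route as the paper: the paper's own proof of this theorem is literally "run the proof of Theorem~\ref{strongamenfirstrepair}, but take $o$ to witness $\kappa\nrightarrow[\kappa;\kappa]^2_\omega$, verify Clause~(iii) of the club $D$ via Lemma~\ref{claim422} in place of Lemma~\ref{claim411}, and define $\hat A:=\{\eta\in(\Lambda,\alpha)\cap A\mid o(\eta,\beta)=n\}$", which is exactly the substitution you make (your phrasing of Clause~(iii) for all $\beta\in\kappa\setminus\delta$ rather than $\beta\in B\setminus\delta$ is an inessential strengthening that Lemma~\ref{claim422} delivers anyway). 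The concluding walks computation, the choice of $\delta\in\acc(D)$ via Lemma~\ref{strongstrongamen}, and the final contradiction at $\alpha\notin C_\gamma$ all coincide with the paper's argument.
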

\begin{proof}  We define an upper-regressive colouring $c:[\kappa]^2\rightarrow\kappa$ as in the proof of Theorem~\ref{strongamenfirstrepair} except that this time we assume
that the auxiliary colouring $o:[\kappa]^2\rightarrow\omega$ moreover witnesses $\kappa\nrightarrow[\kappa;\kappa]^2_\omega$.

Towards a contradiction, suppose that $c$ fails to witness $\ubd^+([\kappa]^\kappa, J^\bd[\kappa],\allowbreak\kappa)$.
In this case, we may pick sets $A,B\in[\kappa]^\kappa$ such that for every $\eta\in A$ there is a $\sigma_\eta < \kappa$ such that for every $\tau \in \kappa \setminus \sigma_\eta$, 
the set $\{\beta \in B \setminus (\eta+1) \mid c(\eta, \beta) = \tau\}$ is in $J^\bd[\kappa]$.

Let $D$ be the collection of all $\delta \in \acc(\kappa)$ such that all of the following hold:
\begin{enumerate}
\item for every $\eta < \delta$, $\sigma_\eta < \delta$, and
\item for every $\eta < \delta$ and $\tau \in \delta \setminus\sigma_\eta$, $\sup\{\beta \in B \mid c(\eta, \beta) = \tau\} < \delta$, and
\item for every $\beta \in B \setminus \delta$ and every $n< \omega$, $\sup\{\eta \in A \cap \delta \mid o(\eta, \beta) = n\}= \delta$.
\end{enumerate}

A proof similar to that of Claim~\ref{strongamenfirstrepairclaim} except that we use Lemma~\ref{claim422} instead of Lemma~\ref{claim411},
establishes that $D$ is a club in $\kappa$.
Now, the rest of the proof is exactly the same as that of Theorem~\ref{strongamenfirstrepair} except that (following the notation of Theorem~\ref{strongamenfirstrepair}), 
we define $\hat A:= \{\eta\in (\Lambda, \alpha)\cap A \mid o(\eta, \beta)= n\}$.
\end{proof}

The following yields the equivalency $(1)\iff(3)$ of Theorem~\ref{thmb}.
\begin{cor}\label{cor28} The following are equivalent:
\begin{enumerate}[(1)]
\item $\ubd(J^\bd[\kappa],\kappa)$ holds;
\item $\kappa\in\sa_\kappa$;
\item $\chi(\kappa)>1$;
\item $\ubd^+(\self,\mathcal S^\kappa_\kappa,\kappa)$ holds.
\end{enumerate}
\end{cor}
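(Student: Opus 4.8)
The plan is to prove Corollary~\ref{cor28} by assembling a cycle of implications from the machinery developed earlier in this section, relying especially on Theorem~\ref{strongamenfirstrepair} and Lemma~\ref{strongstrongamen}. The four clauses naturally split: the equivalence $(2)\iff(3)$ is essentially already in hand, while the nontrivial content is linking the plain principle $\ubd$ in (1) to the membership $\kappa\in\sa_\kappa$ in (2), and then bootstrapping all the way up to the $\ubd^+$-over-all-subnormal-ideals statement in (4). I would establish the cycle $(1)\implies(2)\implies(3)\implies(4)\implies(1)$.

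For $(1)\implies(2)$: by the cited \cite[Lemma~3.4]{paper47}, we have $\sa_\kappa\cap[\kappa]^\kappa=\{S\in[\kappa]^\kappa\mid \ubd(J^{\bd}[S],\kappa)\text{ holds}\}$. Applying this with $S=\kappa$ gives immediately that $\ubd(J^{\bd}[\kappa],\kappa)$ holds iff $\kappa\in\sa_\kappa$, so this implication (indeed the equivalence $(1)\iff(2)$) is essentially a direct citation. For $(2)\iff(3)$: unwinding the definition of $\sa_\kappa$, the statement $\kappa\in\sa_\kappa$ asserts precisely that there is a $C$-sequence $\vec C=\langle C_\gamma\mid\gamma<\kappa\rangle$ over $\kappa$ that is strongly amenable in $\kappa$; by Lemma~\ref{strongstrongamen}, Clause~(1) there is equivalent to Clause~(2), namely $\chi(\vec C)>1$. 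The only subtlety is passing between ``there exists such a $\vec C$'' and the cardinal invariant $\chi(\kappa)$, but recalling that $\chi(\kappa)=\max(\{1\}\cup\cspec(\kappa))$ when $\kappa$ is not weakly compact, the existence of a $C$-sequence with $\chi(\vec C)>1$ is exactly $\chi(\kappa)>1$, and weak compactness (where $\chi(\kappa)=0$) is incompatible with $\kappa\in\sa_\kappa$, so the two coincide.

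The key new implication is $(2)\implies(4)$. This is where Theorem~\ref{strongamenfirstrepair} does the heavy lifting: from $\kappa\in\sa_\kappa$ it produces a single upper-regressive colouring $c:[\kappa]^2\rightarrow\kappa$ witnessing $\ubd^+(\self,J^{\bd}[\kappa],\kappa)$. To upgrade this to $\ubd^+(\self,\mathcal S^\kappa_\kappa,\kappa)$—i.e.\ the same colouring simultaneously working over \emph{every} $\kappa$-complete subnormal ideal extending $J^{\bd}[\kappa]$—I would invoke Lemma~\ref{lemma43}, which states precisely that a colouring witnesses $\ubd^+(\self,J,\theta)$ for $J=J^{\bd}[\kappa]\in\mathcal S^\kappa_\kappa$ iff it witnesses $\ubd^+(\self,I,\theta)$ for every $I\in\mathcal S^\kappa_\kappa$ extending $J^{\bd}[\kappa]$. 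Since every ideal in $\mathcal S^\kappa_\kappa$ extends $J^{\bd}[\kappa]$ by definition, this gives (4) with $\theta=\kappa$. Finally, $(4)\implies(1)$ is the trivial direction: specializing the $\self$-principle over the particular ideal $J^{\bd}[\kappa]$ and forgetting the $\self$-restriction (a $\ubd^+$-witness is in particular a $\ubd$-witness), we recover $\ubd(J^{\bd}[\kappa],\kappa)$.

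The main obstacle—already resolved by the preceding lemmas—is the jump from $\ubd$ to $\ubd^+$, precisely the difficulty flagged in the opening discussion of this section: a $\ubd$-witness may be injective on each fibre $c_\eta$, so one cannot appeal to Fodor's lemma to repeat colours, and the two-dimensional walks-on-ordinals structure must be exploited. That work is contained in Theorem~\ref{strongamenfirstrepair}, so the proof of the corollary itself reduces to threading together the citations above; the only care needed is to confirm that the subnormality and $\kappa$-completeness hypotheses of Lemma~\ref{lemma43} are met by all members of $\mathcal S^\kappa_\kappa$, which holds by the very definition of that class.
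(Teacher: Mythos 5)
Your proposal is correct and follows essentially the same route as the paper: $(1)\iff(2)$ by the cited \cite[Lemma~3.4]{paper47}, $(2)\iff(3)$ by Lemma~\ref{strongstrongamen}, $(2)\implies(4)$ by combining Theorem~\ref{strongamenfirstrepair} with the upgrade Lemma~\ref{lemma43}, and $(4)\implies(1)$ trivially. Your extra remarks (the weak-compactness caveat in $(2)\iff(3)$ and the check that every ideal in $\mathcal S^\kappa_\kappa$ extends $J^{\bd}[\kappa]$) are sound and, if anything, slightly more explicit than the paper's own write-up.
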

\begin{proof} $(1)\iff(2)$:  By \cite[Lemma~3.4]{paper47}.

$(2)\iff(3)$: By Lemma~\ref{strongstrongamen}.

$(2)\implies(4)$: Suppose that $\kappa\in\sa_\kappa$. Then, by Theorem~\ref{strongamenfirstrepair},
we may fix an upper-regressive colouring $c:[\kappa]^2\rightarrow\kappa$ witnessing $\ubd^+(\self,J^{\bd}[\kappa],\kappa)$.
Then, by Lemma~\ref{lemma43}, $c$ witnesses  $\ubd^+(\self,\allowbreak\mathcal S^\kappa_\kappa,\kappa)$.

$(4)\implies(1)$: This is clear.
\end{proof}

\begin{thm}\label{thm814}
Suppose that $\kappa=\kappa^{\aleph_0}$. For every colouring $c:[\kappa]^2\rightarrow2$,
there exists a corresponding colouring $d:[\kappa]^2\rightarrow\omega$ satisfying the following.
For every $J\in\mathcal S^\kappa_{\omega_1}$
such that $c[A\circledast B]=2$ for all $A,B\in J^+$, $d$ witnesses $\onto^{++}(J,\omega)$.
\end{thm}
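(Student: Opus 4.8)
The plan is to build $d$ directly from $c$ by a coding that turns a single first coordinate $\eta$ into an $\omega$-indexed list of ``queries'' to be answered by $c$. Since $\kappa=\kappa^{\aleph_0}$ we have $\cf(\kappa)>\omega$ (by K\"onig), and we may fix a surjection $e\colon\kappa\to{}^\omega\kappa$, writing $e(\eta)=\langle\delta^\eta_i\mid i<\omega\rangle$, arranged so that every countable sequence of ordinals equals $e(\eta)$ for cofinally many $\eta$. I then define $d\colon[\kappa]^2\to\omega$ by letting $d(\eta,\beta)$ be the least $i<\omega$ such that $\delta^\eta_i<\beta$ and $c(\delta^\eta_i,\beta)=1$, and $d(\eta,\beta):=0$ if there is no such $i$. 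Note that $d$ depends only on $c$ (and the fixed $e$), as the statement demands.

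The crux is the following strengthening of the hypothesis, and it is the one place where subnormality is used: for every $B\in J^+$ and every $i<2$, the set $G^B_i:=\{\delta<\kappa\mid\{\beta\in B\setminus(\delta+1)\mid c(\delta,\beta)=i\}\in J^+\}$ belongs to $J^*$. To see this, suppose towards a contradiction that $A_0:=\kappa\setminus G^B_i$ were $J$-positive. For $\delta\in A_0$ the set $\{\beta\in B\setminus(\delta+1)\mid c(\delta,\beta)=i\}$ lies in $J$, so $E_\delta:=\kappa\setminus\{\beta\in B\setminus(\delta+1)\mid c(\delta,\beta)=i\}$ is in $J^*$; setting $E_\delta:=\kappa$ for $\delta\notin A_0$ and invoking clause~(2) of subnormality (Definition~\ref{defsubnormal}) on the positive sets $A_0,B$, I obtain $A'\s A_0$ and $B'\s B$, both positive, with $\beta\in E_\delta$ for all $(\delta,\beta)\in A'\circledast B'$. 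But then $c(\delta,\beta)=1-i$ throughout $A'\circledast B'$, so $c[A'\circledast B']=\{1-i\}$, contradicting the standing assumption applied to the positive sets $A',B'$. Hence $G^B_i\in J^*$.

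With this in hand the verification is a fusion that uses only the $\omega_1$-completeness of $J$. Given a sequence $\langle B_\tau\mid\tau<\omega\rangle$ of $J^+$-sets, I recursively choose ordinals $\delta_n$ and positive sets, starting from $B_\tau^{(0)}:=B_\tau$: at stage $n$ the set $G^{B_n^{(n)}}_1\cap\bigcap_{\tau>n}G^{B_\tau^{(n)}}_0$ is a countable intersection of members of $J^*$, hence in $J^*$ and nonempty, so I pick $\delta_n$ in it and put $C_n:=\{\beta\in B_n^{(n)}\mid c(\delta_n,\beta)=1\}\in J^+$ and $B_\tau^{(n+1)}:=\{\beta\in B_\tau^{(n)}\mid c(\delta_n,\beta)=0\}\in J^+$ for $\tau>n$. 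Then for each $\tau$ and each $\beta\in C_\tau\s B_\tau^{(\tau)}$ one has $c(\delta_i,\beta)=0$ for all $i<\tau$ and $c(\delta_\tau,\beta)=1$. Since $\cf(\kappa)>\omega$, the countable sup $s:=(\sup_n\delta_n)+1$ is below $\kappa$, and by the choice of $e$ I may fix $\eta\ge s$ with $\delta^\eta_i=\delta_i$ for all $i$; for every $\beta\in C_\tau\setminus(\eta+1)$ the least $i$ with $c(\delta^\eta_i,\beta)=1$ is exactly $\tau$, so $d(\eta,\beta)=\tau$. As $J\supseteq J^{\bd}[\kappa]$, each $C_\tau\setminus(\eta+1)$ is still $J$-positive, so this single $\eta$ witnesses the required instance of $\onto^{++}(J,\omega)$.

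The main obstacle, and the reason the raw hypothesis $c[A\circledast B]=2$ does not by itself suffice, is the need to serve all $\omega$ colours with one $\eta$; this is precisely what the crux lemma overcomes by upgrading ``onto on products of positive sets'' to ``$J^*$-many good columns,'' after which $\cf(\kappa)>\omega$ together with $\omega_1$-completeness renders the simultaneous choice of the queries $\delta_n$ routine.
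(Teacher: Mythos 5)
Your proposal is correct and takes essentially the same route as the paper's proof: the same colouring $d$ obtained by coding $\omega$-sequences of ordinals below $\kappa$ (via $\kappa^{\aleph_0}=\kappa$) and letting $d(\eta,\beta)$ be the least index where $c$ returns $1$, the same use of clause~(2) of subnormality together with the rectangular hypothesis to produce good columns, and the same countable fusion to serve all colours with a single $\eta$. The only differences are organizational: your crux lemma asserts $J^*$-membership of the set of good columns for a single positive set and a single colour (a formally sharper statement from which the paper's Claim~\ref{wcontoplusclaim} follows by $\omega_1$-completeness), and your direct recursion with the shrinking sets $B_\tau^{(n)}$ replaces the paper's tree $\tree(\vec B)$ of finite approximations.
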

\begin{proof}
As $\kappa^{\aleph_0}=\kappa$, we may fix an enumeration $\langle x_\eta \mid \eta< \kappa\rangle$ of all the elements in ${}^\omega \kappa$.
Now given a colouring $c:[\kappa]^2\rightarrow2$,
derive a corresponding colouring $d: [\kappa]^2 \rightarrow \omega$ by letting $d(\eta, \beta)$ be the least $n$ such that $c(x_\eta(n), \beta) =1$ if such an $n$ exists, and if not, $d(\eta, \beta) := 0$.

To see that $d$ is as sought,
suppose that we are given a countably-complete subnormal ideal $J$ over $\kappa$ extending $J^{\bd}[\kappa]$ such that $c[A\circledast B]=2$ for all $A,B\in J^+$.
\begin{claim} \label{wcontoplusclaim}
Suppose that $\langle B_m \mid m< \omega\rangle$ is a sequence of sets in $J^+$.
Then there exists an $\eta < \kappa$ such that, for all $m< \omega$ and  $i < 2$,
the set	$\{\beta \in B_m \mid c(\eta, \beta) = i\}$
is in $J^+$.
\end{claim}
\begin{why}
Suppose not. Then for all $\eta < \kappa$ there are $m_\eta < \omega$, $i_\eta< 2$ and $E_\eta \in J^*$ such that $i_\eta \notin c[\{\eta\} \circledast (B_{m_\eta} \cap E_\eta)]$.
As $J$ is $\omega_1$-complete, we can then find $m^* < \omega$ and $i^* < 2$ for which  $A:=\{\eta<\kappa\mid m_\eta = m^*\text{ and }i_\eta = i^*\}$ is in $J^+$.
Using the subnormality of $J$, we can then find two subsets $A' \s A$ and $B' \s B_{m^*}$ in $J^+$ such that for every $(\eta, \beta) \in A' \circledast B'$, $\beta \in E_{\eta}$. In particular, $i^* \notin c[A' \circledast B']$. This contradicts the hypothesis on $J^+$.
\end{why}
Now suppose that we are given a sequence $\vec B = \langle B_m \mid m< \omega\rangle$ of sets in $J^+$.
For $n<\omega$, $x:n\rightarrow\kappa$, $y:n\rightarrow 2$ and $m< \omega$ denote:
$$\vec  B_{x,y}[m]:=\{\beta\in B_m\mid \forall i<n( c(x(i),\beta)=y(i))\},$$
and also
$$\tree(\vec B):=\{ (x, y) \in{}^n\kappa \times {}^n2\mid n<\omega \ \&\ \forall m < \omega\,[\vec B_{x,y}[m]\in J^+]\}.$$
Then it is clear that $\tree(\vec B)$ is a subset of ${}^{<\omega}\kappa\times {}^{<\omega}2$ consisting of pairs of tuples of the same length and closed under initial segments.
That is, it is a subtree of $\bigcup_{n<\omega}{}^{n}\kappa\times {}^{n}2$.
Since $\vec B$ consists of elements of $J^+$ which extend $J^\bd[\kappa]$, it is clear that $(\emptyset,\emptyset)\in \tree(\vec B)$,
so in particular $\tree(\vec B)$ is nonempty.

\begin{claim}\label{wcontoclaim}
Let $n<\omega$ and let $x\in{}^n\kappa$ and $y \in{}^n2$ be such that $(x,y) \in \tree(\vec B)$.
Then there exists $\eta<\kappa$ such that, for all $i<2$, $(x{}^\smallfrown\langle\eta\rangle, y{}^\smallfrown\langle i\rangle)\in \tree(\vec B)$.
\end{claim}
\begin{why} Since $(x,y) \in \tree(\vec B)$, for each $m< \omega$, the set $\vec B_{x,y}[m]$ is in $J^+$.
So we can apply Claim~\ref{wcontoplusclaim} to the sequence $\langle\vec B_{x,y}[m] \mid m< \omega\rangle$ to obtain an $\eta < \kappa$ such that for all $m< \omega$ and all $i< 2$,
the set $\{\beta \in \vec B_{x,y}[m] \mid c(\eta, \beta) = i\}$ is in $J^+$.
It follows that $(x{}^\smallfrown\langle\eta\rangle, y{}^\smallfrown\langle i\rangle)\in \tree(\vec B)$ for all $i< 2$.
\end{why}
Using this claim, we can recursively construct an  $x\in{}^\omega\kappa$ such that for every $n<\omega$,
the constant function $y:n\rightarrow\{0\}$ satisfies
$(x\restriction (n+1),y{}^\smallfrown\langle1\rangle)\in \tree(\vec B)$.
Note that since $\tree(\vec B)$ is closed under initial segments this implies that for every $n<\omega$ and the constant function $y:n\rightarrow\{0\}$, it is also the case that $(x\restriction n,y)\in \tree(\vec B)$.
Pick $\eta < \kappa$ such that $x = x_\eta$.
Then, for every $n< \omega$ the set $\vec B_{{x_\eta \restriction(n+1), y_n{}^\smallfrown\langle1\rangle}}[n]$ is in $J^+$.
For every $\beta$ in this set, $c(x_\eta(n) , \beta) =1$ and for every $i< n$ it is the case that $c(x_\eta(i) , \beta) =0$.
So for every $\beta \in \vec B_{{x_\eta \restriction (n+1), y_n{}^\smallfrown\langle1\rangle}}[n]$ above $\eta$, $d(\eta,\beta) = n$. As $B_{{x_\eta \restriction (n+1), y_n{}^\smallfrown\langle1\rangle}}[n]\s B_n$ we are done.
\end{proof}

\begin{cor} Suppose that $\kappa=\kappa^{\aleph_0}$.

For every $J\in\mathcal S^\kappa_{\omega_1}$,
if $\onto(J^+,J,2)$ holds, then so does $\onto^{++}(J,\omega)$.\qed
\end{cor}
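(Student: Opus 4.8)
The plan is to read this off Theorem~\ref{thm814}, whose hypothesis is phrased in terms of a two-sided unboundedness condition on a colouring into $2$ rather than in terms of the principle $\onto$. So essentially all the content lies in a short reduction: first I would fix a colouring $c:[\kappa]^2\rightarrow2$ witnessing $\onto(J^+,J,2)$, and then verify that this very $c$ satisfies the hypothesis ``$c[A\circledast B]=2$ for all $A,B\in J^+$'' demanded by Theorem~\ref{thm814}.

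To check this, fix arbitrary $A,B\in J^+$. Applying the definition of $\onto(J^+,J,2)$ with this particular $A$ and $B$ furnishes an $\eta\in A$ with $c[\{\eta\}\circledast B]=2$. Since $\{\eta\}\circledast B\s A\circledast B$ whenever $\eta\in A$, the image only grows, and hence $c[A\circledast B]=2$. Thus $c$ meets the hypothesis of Theorem~\ref{thm814}.

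Now, by definition $J\in\mathcal S^\kappa_{\omega_1}$ is an $\omega_1$-complete (equivalently, countably-complete) subnormal ideal over $\kappa$ extending $J^{\bd}[\kappa]$, so Theorem~\ref{thm814} applies to $c$ and produces a corresponding colouring $d:[\kappa]^2\rightarrow\omega$ witnessing $\onto^{++}(J,\omega)$, which is exactly what is wanted. I do not anticipate a genuine obstacle: the sole step requiring any thought is the monotonicity observation $\{\eta\}\circledast B\s A\circledast B$ used to pass from the one-sided formulation in $\onto(J^+,J,2)$ to the two-sided formulation needed by Theorem~\ref{thm814}, and this is immediate. This is why the statement is recorded as a corollary whose proof reduces to a citation of the preceding theorem.
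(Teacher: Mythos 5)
Your proposal is correct and matches the paper's intent exactly: the corollary is stated with an immediate \qed precisely because it follows from Theorem~\ref{thm814} by the reduction you give. The only nontrivial observation is the one you make, namely that a witness $c$ to $\onto(J^+,J,2)$ satisfies $c[A\circledast B]=2$ for all $A,B\in J^+$ since $\{\eta\}\circledast B\s A\circledast B$ for the $\eta\in A$ produced by the principle, and the range of $c$ is $2$.
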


\section{Pumping-up results}\label{pumings}
In this section we establish various implications between our colouring principles. An important role is played by the \emph{projections} that will be introduced in Definition~\ref{projdefn}.
In Theorem~\ref{uparrowlemma} and Theorem~\ref{thm42} we obtain colourings that have stronger partition properties than we have heretofore seen. The motivation for partitioning positive sets of $\kappa$-complete ideals $J$ that weakly project to $J^{\bd}[\kappa]$ in the sense of the upcoming theorems comes from \cite[\S 4.1]{paper46}.

Clause (3) of the following theorem (using $\nu=\kappa=\theta$) yields the equivalency $(1)\iff(2)$ of Theorem~\ref{thmb}.
\begin{thm}\label{uparrowlemma} Suppose that $\kappa$ is regular, $\nu\le\kappa$, and $\mathcal A\s [\kappa]^{\le\nu}$.
\begin{enumerate}[(1)]
\item Suppose that $\onto(\mathcal A,J^{\bd}[\kappa],\theta)$ holds, with $\theta$ infinite.
Set $\sigma:=\min\{\kappa,\nu^+\}$.
Then $\onto^+(\mathcal A,\mathcal J^\kappa_\sigma,\theta)$ holds.
Furthermore, there exists a colouring $c:[\kappa]^2\rightarrow\theta$
such that for every $\sigma$-complete ideal $J$ and every map $\psi:\bigcup J\rightarrow\kappa$ satisfying $\sup(\psi[B])=\kappa$ for all $B\in J^+$,
the following holds.
For all $A\in\mathcal A$ and $B\in J^+$, there exists an $\eta\in A$ such that
$$\{ \tau<\theta\mid \{ \beta\in B\mid \eta<\psi(\beta)\ \&\ c(\eta,\psi(\beta))=\tau\}\in J^+\}=\theta.$$
\item Suppose that $\ubd^+(\mathcal A,J^{\bd}[\kappa],\theta)$ holds. Set $\sigma:=\min\{\kappa,\max\{\nu,\theta\}^+\}$.
Then $\ubd^+(\mathcal A,\mathcal J^\kappa_\sigma,\theta)$ holds.
Furthermore, there exists a colouring $c:[\kappa]^2\rightarrow\theta$
such that for every $\sigma$-complete ideal $J$ and every map $\psi:\bigcup J\rightarrow\kappa$ satisfying $\sup(\psi[B])=\kappa$ for all $B\in J^+$,
the following holds.
For all $A\in\mathcal A$ and $B\in J^+$, there exists an $\eta\in A$ such that
$$\otp(\{ \tau<\theta\mid \{ \beta\in B\mid \eta<\psi(\beta)\ \&\ c(\eta,\psi(\beta))=\tau\}\in J^+\})=\theta.$$
\item Suppose that $\ubd(\{\nu\},J^{\bd}[\kappa],\theta)$ holds, with $\theta>2$. Set $\sigma:=\min\{\kappa,\allowbreak\max\{\nu,\theta\}^+\}$.
Then $\ubd^+(\{\nu\},\mathcal J^\kappa_\sigma,\theta)$ holds.
Furthermore, there exists a colouring $c:[\kappa]^2\rightarrow\theta$
such that for every $\sigma$-complete ideal $J$ and every map $\psi:\bigcup J\rightarrow\kappa$ satisfying $\sup(\psi[B])=\kappa$ for all $B\in J^+$,
the following holds.
For every $B\in J^+$, there exists an $\eta<\nu$ such that
$$\otp(\{ \tau<\theta\mid \{ \beta\in B\mid \eta<\psi(\beta)\ \&\ c(\eta,\psi(\beta))=\tau\}\in J^+\})=\theta.$$
\end{enumerate}
\end{thm}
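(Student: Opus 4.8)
The plan is to fix, once and for all, a colouring $c_0:[\kappa]^2\to\theta$ witnessing the hypothesis $\ubd(\{\nu\},J^{\bd}[\kappa],\theta)$, and to produce from it, via the projections of Definition~\ref{projdefn}, a colouring $c$ with the stated property. I would first isolate the assertion that really needs proving, namely the principle $\ubd^+(\{\nu\},\mathcal J^\kappa_\sigma,\theta)$, and then derive the ``furthermore'' clause from it by a pushforward. Indeed, suppose $c$ witnesses $\ubd^+(\{\nu\},\mathcal J^\kappa_\sigma,\theta)$, and we are handed a $\sigma$-complete ideal $J$, a weak projection $\psi\colon\bigcup J\to\kappa$ (meaning $\sup(\psi[B'])=\kappa$ for all $B'\in J^+$), and a set $B\in J^+$. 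Passing to the restricted ideal $J\restriction B:=\{X\s B\mid X\in J\}$, for which $\psi\restriction B$ is still a weak projection, I may assume $B=\bigcup J$. Pushing forward along $\psi$ to $\bar J:=\{Y\s\kappa\mid \psi^{-1}[Y]\in J\}$, a short computation shows that $\bar J\in\mathcal J^\kappa_\sigma$: it is $\sigma$-complete, it is proper because $\bigcup J\in J^+$, and it extends $J^{\bd}[\kappa]$ exactly because $\psi$ is a weak projection (a bounded set cannot pull back to a positive set). Moreover, for each $\eta,\tau$ the set $\{\beta\in B\mid \eta<\psi(\beta)\ \&\ c(\eta,\psi(\beta))=\tau\}$ is $J$-positive iff $\{\gamma<\kappa\mid \eta<\gamma\ \&\ c(\eta,\gamma)=\tau\}$ is $\bar J$-positive. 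So the whole-space instance of $\ubd^+(\{\nu\},\bar J,\theta)$ delivers the desired $\eta<\nu$, and the furthermore clause reduces to the plain principle.

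The bulk of the work then splits according to how $\sigma$ compares to $\kappa$. When $\sigma<\kappa$ — equivalently $\nu<\kappa$ \emph{and} $\theta<\kappa$, so that $\sigma=\max\{\nu,\theta\}^+$ and every $J\in\mathcal J^\kappa_\sigma$ is simultaneously $\nu^+$- and $\theta^+$-complete — I claim the \emph{unmodified} colouring $c:=c_0$ already works, and here I would argue by contradiction. Fix $J$ and $B\in J^+$, and suppose that for every $\eta<\nu$ the set $T_\eta:=\{\tau<\theta\mid \{\beta\in B\setminus(\eta+1)\mid c_0(\eta,\beta)=\tau\}\in J^+\}$ has order-type below $\theta$. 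By $\theta^+$-completeness, the ``bad part'' $E_\eta:=\{\beta\in B\setminus(\eta+1)\mid c_0(\eta,\beta)\notin T_\eta\}$ lies in $J$, being a union of at most $\theta$ sets from $J$; by $\nu^+$-completeness, $E:=\bigcup_{\eta<\nu}E_\eta\in J$, so $B':=B\setminus E$ is in $J^+$ and hence unbounded in $\kappa$. Feeding $B'$ into the choice of $c_0$ produces some $\eta<\nu$ with $\otp(c_0[\{\eta\}\circledast B'])=\theta$; but every colour realised on $B'$ at $\eta$ lies in $T_\eta$, whence $\otp(c_0[\{\eta\}\circledast B'])\le\otp(T_\eta)<\theta$, a contradiction. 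I note in passing that this argument uses neither $\theta>2$ nor the narrowness of $\mathcal A$, and would run verbatim for any $\mathcal A\s[\kappa]^{\le\nu}$; the restriction to $\mathcal A=\{\nu\}$ is forced only by the remaining case.

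The genuinely hard case — and the one that powers the equivalence $(1)\!\iff\!(2)$ of Theorem~\ref{thmb} via $\nu=\kappa=\theta$ — is $\sigma=\kappa$, i.e. $\max\{\nu,\theta\}=\kappa$, where the ideals are merely $\kappa$-complete. The removal argument now collapses: one must discard $\kappa$-many bad colours at each level (if $\theta=\kappa$), or take a union over $\kappa$-many levels (if $\nu=\kappa$), and $\kappa$-completeness cannot absorb a union of $\kappa$-many null sets. Worse still, for an arbitrary $\kappa$-complete ideal a positive set need not meet a prescribed club, so the club-driven localisation of Section~\ref{subnormalrevisited} — which rests on the fact that for $J^{\bd}[\kappa]$ the notions ``null'' and ``bounded'' coincide — does not transfer. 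This is the main obstacle. My plan is to build $c$ from scratch by walking along a $C$-sequence that is strongly amenable in $\kappa$ (such a sequence exists because the hypothesis is equivalent, via Corollary~\ref{cor28} and Lemma~\ref{strongstrongamen}, to $\kappa\in\sa_\kappa$ and $\chi(\kappa)>1$), precisely as in Theorem~\ref{strongamenfirstrepair}, taking $c(\eta,\beta):=\Tr(\eta,\beta)(o(\eta,\beta)+1)$ with $o$ a witness to $\kappa\nrightarrow[\kappa]^2_\omega$ (as exploited in Lemma~\ref{claim411}), but reading the trace off through an auxiliary projection of the colour space so that the bad colour-classes are forced to localise below the non-absorption level $\delta$ furnished by Lemma~\ref{strongstrongamen} even when positivity is measured against an arbitrary $\kappa$-complete ideal. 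Here I would use $\theta>2$ to supply the two-fold redundancy that upgrades ``the colour-class is unbounded'' to ``the colour-class is $J$-positive'', and narrowness ($\eta<\nu$) to keep the witnessing level inside $\nu$ throughout. The step I expect to fight hardest with is exactly this last point: engineering the projection so that the localisation clause ``$\sup\{\beta\in B\mid c(\eta,\psi(\beta))=\tau\}<\delta$ for every bad $\tau$'' survives for \emph{every} $\kappa$-complete ideal, and not only for the ideals whose positive sets already reflect the club structure of $\kappa$.
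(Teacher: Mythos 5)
Your reduction of the ``furthermore'' clause to the plain principle is correct and clean: the pushforward ideal $\bar J:=\{Y\s\kappa\mid \psi^{-1}[Y]\in J\}$ does lie in $\mathcal J^\kappa_\sigma$, and positivity transfers exactly as you say, so the weak-projection formulation is equivalent to $\ubd^+(\{\nu\},\mathcal J^\kappa_\sigma,\theta)$; this is a nice repackaging of what the paper handles inline by carrying $\psi$ through the argument. Your removal argument is also correct, and it is essentially the paper's first case. Note, however, that your case boundary is mis-drawn: $\sigma<\kappa$ is \emph{not} equivalent to ``$\nu<\kappa$ and $\theta<\kappa$'' (take $\kappa=\lambda^+$ and $\nu=\theta=\lambda$: then $\sigma=\kappa$ but $\max\{\nu,\theta\}<\kappa$). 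The removal argument in fact covers all of $\max\{\nu,\theta\}<\sigma$, including $\sigma=\kappa=\max\{\nu,\theta\}^+$; as written, that configuration falls through both of your cases, though this is a fixable slip.

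The genuine gap is that in the only hard case, $\max\{\nu,\theta\}=\kappa$, you offer a plan rather than a proof -- you yourself flag its key step as unresolved -- and the plan is aimed at the wrong mechanism on two counts. First, your claim that the hypothesis is ``equivalent, via Corollary~\ref{cor28} and Lemma~\ref{strongstrongamen}, to $\kappa\in\sa_\kappa$ and $\chi(\kappa)>1$'' is only correct when $\nu=\theta=\kappa$. In the narrow sub-case $\nu<\theta=\kappa$, Proposition~\ref{prop26}(2) shows the hypothesis \emph{forces} $\kappa=\nu^+$, and then Ulam matrices already give $\ubd^+(\{\nu\},J^{\bd}[\nu^+],\nu^+)$ outright; no walks construction is needed, which is fortunate, since nothing in $\Tr(\eta,\beta)(o(\eta,\beta)+1)$ confines the witnessing $\eta$ below $\nu$, and ``engineering narrowness'' into the trace is exactly what you admit you cannot do. Second, and centrally, the transfer from $J^{\bd}[\kappa]$ to an arbitrary $\kappa$-complete ideal never touches the walks construction: the paper proves Clause~(2) -- whose hypothesis is $\ubd^+$ over the \emph{bounded} ideal -- by a recursion that, writing $B^{\eta,\tau,\psi}:=\{\beta\in B\mid \eta<\psi(\beta)\ \&\ c(\eta,\psi(\beta))=\tau\}$ and $E_\epsilon:=E\setminus\bigcup_{\eta\in A\cap\epsilon}\bigcup_{\tau\in\theta\cap(\epsilon\setminus T_\eta)}B^{\eta,\tau,\psi}$ (a $J$-measure-one set, by $\kappa$-completeness, since only pairs below $\epsilon$ are removed), builds $B':=\im(f)$ via $f(j):=\min(\psi[B\cap E_{\epsilon_j}]\setminus\epsilon_j)$. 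Applying the $\ubd^+(J^{\bd}[\kappa])$ hypothesis to the cofinal set $B'$ produces a colour $\tau\notin T_\eta$ realised by \emph{two} elements $\alpha'<\beta'$ of $B'$; upper-regressivity gives $\tau<\alpha'$, so both $\eta$ and $\tau$ lie below $\epsilon_j$, whence $\beta'\in\psi[B\cap E_{\epsilon_j}]\s\psi[B\setminus B^{\eta,\tau,\psi}]$ -- a contradiction. This pair-plus-regressivity device is precisely what substitutes for the Fodor/club-reflection feature whose absence you identify as ``the main obstacle''; Clause~(3) then reduces to Clause~(2) via the $\ubd\Rightarrow\ubd^+$ upgrades over $J^{\bd}[\kappa]$ (Corollary~\ref{cor8a} when $\nu=\kappa$; Proposition~\ref{prop26}(2) plus Ulam when $\nu<\theta=\kappa$). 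Finally, your proposal never addresses Clauses~(1) and~(2) of the theorem at all, even though Clause~(2) is exactly the engine your own reduction would need.
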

\begin{proof} (1) Let $d:[\kappa]^2\rightarrow\theta$ be a colouring witnessing $\onto(\mathcal A,J^{\bd}[\kappa],\theta)$.
As $\theta$ is infinite, we may fix a $2$-to-$1$ map $\pi:\theta\rightarrow\theta$.
We claim that $c:=\pi\circ d$ is as sought.
To this end, let $J$ be a $\sigma$-complete ideal admitting a map $\psi:\bigcup J\rightarrow\kappa$ such that $\sup(\psi[B])=\kappa$ for every $B\in J^+$.
Denote $E:=\bigcup J$.
For all $B\s E$, $\eta<\kappa$, and $\tau<\theta$, denote
$$B^{\eta,\tau, \psi}:=\{\beta \in B\mid \eta<\psi(\beta)\ \&\ c(\eta, \psi(\beta)) = \tau\}.$$
Towards a contradiction, suppose that we are given $A\in\mathcal A$ and $B\in J^+$ such that for every $\eta\in A$,
for some $\tau_\eta<\theta$, $E_\eta:=E\setminus B^{\eta,\tau_\eta,\psi}$ is in $J^*$.

$\br$ Suppose first that $\nu<\kappa$.
As $J$ is $\nu^+$-complete, $B':=\psi[B\cap\bigcap_{\eta\in A} E_\eta]$ is cofinal in $\kappa$.
So by the choice of $d$, we may fix an $\eta\in A$ such that $d[\{\eta\}\circledast  B']=\theta$.
In particular, $c[\{\eta\}\circledast  B']=\theta$,
and we may fix a $\beta'\in B'$ above $\eta$ such that $c(\eta,\beta')=\tau_\eta$.
Pick $\beta\in B\cap E_\eta$ such that $\beta'=\psi(\beta)$.
Then $\eta<\beta'=\psi(\beta)$ and $c(\eta,\psi(\beta))=c(\eta,\beta')=\tau_\eta$,
so $\beta$ belongs to $B^{\eta,\tau_\eta,\psi}\cap E_\eta$. This is a contradiction.

$\br$ Suppose that $\nu=\kappa$.
As $J$ is $\kappa$-complete, for every $\epsilon<\kappa$, $\psi[B\cap\bigcap_{\eta\in A\cap\epsilon} E_\eta]$ is cofinal in $\kappa$.
Define a strictly increasing function $f:\kappa\rightarrow\kappa$ by recursion, as follows.
For every $j<\kappa$ such that $f\restriction j$ has already been defined,
let $\epsilon_j:=\sup(\im(f\restriction j))+1$, and then let
$f(j):=\min(\psi[B\cap\bigcap_{\eta\in A\cap \epsilon_j} E_{\eta}]\setminus \epsilon_{j})$.
Since $f$ is strictly increasing, $B':=\im(f)$ is cofinal in $\kappa$,
so by the choice of $d$, we may fix an $\eta\in A$ such that $d[\{\eta\}\circledast  B']=\theta$.
By the definition of $c$, we may now pick $(\alpha',\beta')\in[B']^2$ above $\eta$ such that $c(\eta,\alpha')=\tau_\eta=c(\eta,\beta')$.
Let $(i,j)\in[\kappa]^2$ be the unique pair to satisfy $\alpha'=f(i)$ and $\beta'=f(j)$.
Then $\eta\in A\cap\alpha'$ and $\alpha'=f(i)<\epsilon_j\le f(j)=\beta'$,
so that $\beta'\in\psi[B\cap E_\eta]$.
Pick $\beta\in B\cap E_\eta$ such that $\beta'=\psi(\beta)$.
Then $\eta<\beta'=\psi(\beta)$ and $c(\eta,\psi(\beta))=c(\eta,\beta')=\tau_\eta$,
so $\beta$ belongs to $B^{\eta,\tau_\eta,\psi}\cap E_\eta$. This is a contradiction.

(2) Fix any colouring $c$ witnessing $\ubd^+(\mathcal A,J^{\bd}[\kappa],\theta)$.
Let $J$ be a $\sigma$-complete ideal admitting a map $\psi:\bigcup J\rightarrow\kappa$ such that $\sup(\psi[B])=\kappa$ for every $B\in J^+$.
Denote $E:=\bigcup J$.
We shall use the notation $B^{\eta,\tau, \psi}$ coined earlier.
Towards a contradiction, suppose that we are given $A\in\mathcal A$ and $B \in J^+$ such that, for every $\eta\in A$,
$T_\eta:=\{ \tau<\theta\mid B^{\eta,\tau,\psi}\in J^+\}$ has order-type less than $\theta$.

$\br$ Suppose first that $\max\{\nu,\theta\}<\sigma$.
As $J$ is $\sigma$-complete,
it follows that the following set is in $J^*$:
$$E':=E\setminus\bigcup_{\eta\in A}\bigcup_{\tau\in \theta\setminus T_\eta}B^{\eta,\tau,\psi}.$$
In particular, $B':=\psi[B\cap E']$ is cofinal in $\kappa$.
Now, as $c$ in particular witnesses $\ubd(\mathcal A,J^{\bd}[\kappa],\theta)$,
we may find an $\eta\in A$ such that $T:=c[\{\eta\}\circledast B']$ has order-type $\theta$.
Fix $\tau \in T\setminus T_\eta$, and then find $\beta'\in B'$ above $\eta$ such that $c(\eta,\beta')=\tau$.
Pick $\beta\in B\cap E'$ such that $\beta'=\psi(\beta)$.
Then $\eta<\beta'=\psi(\beta)$ and $c(\eta,\psi(\beta))=c(\eta,\beta')=\tau$,
so $\beta$ belongs to $B^{\eta,\tau,\psi}\cap E'$, contradicting the fact that $\tau\in\theta\setminus T_\eta$.

$\br$ Suppose that $\max\{\nu,\theta\}=\sigma$. In particular, $\sigma=\kappa$.
As $J$ is $\kappa$-complete, it follows that for every $\epsilon<\kappa$, the following set is in $J^*$:
$$E_\epsilon:=E\setminus\bigcup_{\eta\in A\cap\epsilon}\bigcup_{\tau\in \theta\cap (\epsilon\setminus T_\eta)}B^{\eta,\tau,\psi}.$$
In particular, for every $\epsilon<\kappa$, $\psi[B\cap E_\epsilon]$ is cofinal in $\kappa$.
Recursively, define a function $f:\kappa\rightarrow\kappa$ as follows.
For every $j<\kappa$ such that $f\restriction j$ has already been defined,
let $\epsilon_j:=\sup(\im(f\restriction j))+1$, and then let
$f(j):=\min(\psi[B\cap E_{\epsilon_j}]\setminus \epsilon_{j})$.
Since $B':=\im(f)$ is cofinal in $\kappa$,
the choice of $c$ yields an $\eta\in A$ such that the following set has order-type $\theta$:
$$T:=\{\tau < \theta\mid \sup\{\beta' \in B'\setminus(\eta+1) \mid c(\eta, \beta') = \tau\}=\kappa\}.$$
Fix $\tau \in T\setminus T_\eta$, and find $(\alpha',\beta')\in[B'\setminus(\eta+1)]^2$ such that $c(\eta,\alpha')=\tau=c(\eta,\beta')$.
As $c$ is upper-regressive, $\tau=c(\eta, \alpha') < \alpha'$.
Altogether, $\eta\in A\cap\alpha'$ and $\tau\in\theta\cap(\alpha'\setminus T_\eta)$.

Let $(i,j)\in[\kappa]^2$ be the unique pair to satisfy $\alpha'=f(i)$ and $\beta'=f(j)$.
Then $\alpha'<\epsilon_j\le \beta'$, so that
$\beta'\in\psi[B\cap E_{\epsilon_j}]\s\psi[B\setminus B^{\eta,\tau,\psi}]$.
Pick $\beta\in B\setminus B^{\eta,\tau,\psi}$ such that $\psi(\beta)=\beta'$.
Then $c(\eta,\psi(\beta))=c(\eta,\beta')=\tau$,
contradicting the fact that $\beta\notin B^{\eta,\tau,\psi}$.

(3) If $\max\{\nu,\theta\}<\sigma$, then the conclusion follows from the above proof of Clause~(2).
Thus, suppose that $\max\{\nu,\theta\}=\sigma=\kappa$. There are three options here:

$\br$ If $\nu=\kappa$, then this follows from Clause~(2), using Corollary~\ref{cor8a}(1) if $\theta< \kappa$, and Corollary~\ref{cor8a}(3) if $\theta = \kappa$.

$\br$ If $\theta\le\nu<\kappa$, then this follows from Clause~(2), using Proposition~\ref{prop614}(1).

$\br$ If $\nu<\theta=\kappa$, then by Proposition~\ref{prop26}(2), $(\nu,\kappa,\theta)=(\nu,\nu^+,\nu^+)$.
By \cite[Fact~5.1]{paper47}, that is, Ulam's matrices, $\ubd^+(\{\nu\},J^{\bd}[\nu^+],\nu^+)$ provably holds.
Now, appeal to Clause~(2).

This completes the proof.
\end{proof}
\begin{remark} In \cite[\S3]{kun78}, Kunen constructed a model with a $\kappa$-saturated ideal $J\in\mathcal J^\kappa_\kappa$
at some strongly inaccessible cardinal $\kappa$ which is not weakly compact. Footnote~7 from \cite[p.~180]{TodWalks} points out an observation by Donder and K{\"o}nig that $\chi(\kappa)\le1$ (or that ``there is no nontrivial $C$-sequence on $\kappa$'' in the language of \cite[Definition~6.3.1]{TodWalks}) in this model. Our results show that this is nothing specific to the model constructed by Kunen with its nice additional features. Indeed, by Clause~(3) of the preceding, if $\kappa$ carries a $\kappa$-saturated ideal $J\in\mathcal J^\kappa_\kappa$, then $\ubd^+(J^{\bd}[\kappa],\kappa)$ fails, and hence $\chi(\kappa)\le1$ by Corollary~\ref{cor28}.
\end{remark}

\begin{thm}\label{thm42} Suppose that $\kappa$ is regular.
\begin{enumerate}[(1)]
\item Suppose that $\onto(\self,J^{\bd}[\kappa],\theta)$ holds, with $\theta$ infinite.
Then $\onto^+(\self,\mathcal J^\kappa_\kappa,\theta)$ holds.
Furthermore, there exists a colouring $c:[\kappa]^2\rightarrow\theta$
such that for every $\kappa$-complete ideal $J$ and every map $\psi:\bigcup J\rightarrow\kappa$ satisfying $\sup(\psi[B])=\kappa$ for all $B\in J^+$,
the following holds.
For every $B\in J^+$, there exists an $\eta\in B$ such that
$$\{ \tau<\theta\mid \{ \beta\in B\mid \psi(\eta)<\psi(\beta)\ \&\ c(\psi(\eta),\psi(\beta))=\tau\}\in J^+\}=\theta.$$
\item Suppose that $\ubd(\self,J^{\bd}[\kappa],\theta)$ holds.
Then $\ubd^+(\self,\mathcal J^\kappa_\kappa,\theta)$ holds.
Furthermore, there exists a colouring $c:[\kappa]^2\rightarrow\theta$
such that for every $\kappa$-complete ideal $J$ and every map $\psi:\bigcup J\rightarrow\kappa$ satisfying $\sup(\psi[B])=\kappa$ for all $B\in J^+$,
the following holds.
For every $B\in J^+$, there exists an $\eta\in B$ such that
$$\otp(\{ \tau<\theta\mid \{ \beta\in B\mid \psi(\eta)<\psi(\beta)\ \&\ c(\psi(\eta),\psi(\beta))=\tau\}\in J^+\})=\theta.$$
\end{enumerate}
\end{thm}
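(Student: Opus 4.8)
The plan is to prove the two ``furthermore'' assertions, from which the displayed principles $\onto^+(\self,\mathcal J^\kappa_\kappa,\theta)$ and $\ubd^+(\self,\mathcal J^\kappa_\kappa,\theta)$ follow at once by feeding in $\psi:=\mathrm{id}$ (legitimate since every member of $J^+$ is cofinal in $\kappa$ for $J\in\mathcal J^\kappa_\kappa$). For the colouring itself, in Clause~(1) I would take $c$ to witness $\onto(\self,J^{\bd}[\kappa],\theta)$ as given; in Clause~(2) I would first upgrade the hypothesis $\ubd(\self,J^{\bd}[\kappa],\theta)$ to $\ubd^+(\self,J^{\bd}[\kappa],\theta)$ by appealing to Corollary~\ref{cor8a}(2) when $\theta<\kappa$, and, when $\theta=\kappa$, by passing to $\ubd(J^{\bd}[\kappa],\kappa)$ and invoking Corollary~\ref{cor8a}(3); let $c$ be the resulting upper-regressive colouring. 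Throughout, the key structural observation is that the set $B^{\gamma,\tau}:=\{\beta\in B\mid \gamma<\psi(\beta)\ \&\ c(\gamma,\psi(\beta))=\tau\}$, and hence the bad-colour set $T_\gamma:=\{\tau<\theta\mid B^{\gamma,\tau}\in J^+\}$, depends on the witness $\eta$ only through the value $\gamma=\psi(\eta)$.

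So fix a $\kappa$-complete $J$, a map $\psi$ as hypothesized, and $B\in J^+$, and suppose toward a contradiction that the conclusion fails: in Clause~(1), for each $\gamma\in\psi[B]$ there is a least $\tau_\gamma<\theta$ with $B^{\gamma,\tau_\gamma}\in J$, so that $E_\gamma:=(\bigcup J)\setminus B^{\gamma,\tau_\gamma}\in J^*$; in Clause~(2), $\otp(T_\gamma)<\theta$ for each $\gamma\in\psi[B]$. First I would recursively build an ``error-avoiding'' set $B'=\{b_j\mid j<\kappa\}\s B$ along which $j\mapsto\psi(b_j)$ is strictly increasing, so that $X:=\psi[B']$ is cofinal in $\kappa$. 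At stage $j$, having put $\epsilon_j:=\sup\{\psi(b_i)+1\mid i<j\}$, I would choose $b_j$ with $\psi(b_j)\ge\epsilon_j$ inside $B\cap E^{(j)}$, where $E^{(j)}$ avoids all offending sets of the earlier hubs: in Clause~(1), $E^{(j)}:=\bigcap_{i<j}E_{\psi(b_i)}$, and in Clause~(2), $E^{(j)}:=(\bigcup J)\setminus\bigcup_{i<j}\bigcup_{\tau\in\theta\cap(\epsilon_j\setminus T_{\psi(b_i)})}B^{\psi(b_i),\tau}$. In either case $E^{(j)}$ is the complement, within $\bigcup J$, of a union of fewer than $\kappa$ members of $J$, so $\kappa$-completeness yields $E^{(j)}\in J^*$, whence $B\cap E^{(j)}\in J^+$ and, by the hypothesis on $\psi$, $\psi[B\cap E^{(j)}]$ is cofinal in $\kappa$; thus the recursion proceeds. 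Note that indexing the excluded hubs by the previously chosen $b_i$, rather than by a level of $\psi$, is what keeps their number below $\kappa$ despite the possibly large fibres of $\psi$; in Clause~(2) the colours must additionally be truncated at $\epsilon_j$ to cope with the case $\theta=\kappa$.

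Finally I would apply the self-property of $c$ to the cofinal set $X$. In Clause~(1) this yields $\gamma=\psi(b_{j^*})\in X$ with $c[\{\gamma\}\circledast X]=\theta$; since $\eta:=b_{j^*}\in B'\s B$, the witnessing ordinal lies in $B$ as required. Here the ``buffer'' that the general case of Theorem~\ref{uparrowlemma} secured via a $2$-to-$1$ map is automatic: any $x\in X$ above $\gamma$ is $\psi(b_j)$ for some $j>j^*$, so the stage-$j$ exclusion gives $b_j\in E_\gamma$, i.e.\ $b_j\notin B^{\gamma,\tau_\gamma}$. Picking $x>\gamma$ in $X$ with $c(\gamma,x)=\tau_\gamma$ then produces such a $b_j$ with $c(\gamma,\psi(b_j))=\tau_\gamma$, i.e.\ $b_j\in B^{\gamma,\tau_\gamma}$ --- a contradiction. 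In Clause~(2), the self-property gives $\gamma=\psi(b_{j^*})$ for which the set $T$ of colours attained cofinally often along $\{\gamma\}\circledast X$ has order-type $\theta$; as $|T|=\theta>|T_\gamma|$, I may fix $\tau\in T\setminus T_\gamma$.

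The hard part will be the last step of Clause~(2): to exploit the stage exclusion I need the offending colour $\tau$ to lie below the truncation level, yet a single target only yields $\tau<\psi(b_j)$ through upper-regressiveness, which is too weak. This is exactly why the upgrade to $\ubd^+$ is made: since $\tau$ is attained cofinally, I can choose two targets $x_0<x_1$ in $X$ above $\gamma$, say $x_0=\psi(b_{j_0})$ and $x_1=\psi(b_{j_1})$ with $j^*<j_0<j_1$, both of colour $\tau$. Upper-regressiveness applied to the smaller one gives $\tau<x_0=\psi(b_{j_0})<\epsilon_{j_1}$, so that, with $\gamma=\psi(b_{j^*})$ and $j^*<j_1$, the colour $\tau$ lies in $\theta\cap(\epsilon_{j_1}\setminus T_\gamma)$ and the stage-$j_1$ exclusion forces $b_{j_1}\notin B^{\gamma,\tau}$; but $c(\gamma,\psi(b_{j_1}))=\tau$ gives $b_{j_1}\in B^{\gamma,\tau}$, the desired contradiction.
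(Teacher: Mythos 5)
Your proposal is correct, and it is essentially the argument the paper has in mind: the paper gives no details for Theorem~\ref{thm42}, saying only that the proof is similar to that of Theorem~\ref{uparrowlemma}, and your recursion (choose $b_j\in B\cap E^{(j)}$ with $\psi(b_j)$ above all earlier values, apply the $\self$-witness to the cofinal set $X=\psi[B']$, then contradict the stage exclusions) is exactly that adaptation; likewise, routing Clause~(2) through Corollary~\ref{cor8a}(2) when $\theta<\kappa$ and Corollary~\ref{cor8a}(3) when $\theta=\kappa$ mirrors precisely how the paper itself handles the upgrade from $\ubd$ to $\ubd^+$ in the proof of Theorem~\ref{uparrowlemma}(3), and your two-target-plus-upper-regressiveness step to defeat the truncation at $\epsilon_{j_1}$ is the same device as in the second bullet of Theorem~\ref{uparrowlemma}(2). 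The one genuine (and welcome) departure is in Clause~(1): Theorem~\ref{uparrowlemma}(1) indexes its exclusions by ordinal value ($\eta\in A\cap\epsilon_j$) and therefore must compose the witness with a $2$-to-$1$ map to manufacture a small ``buffer'' target, whereas you index exclusions by the stage $i<j$ of the recursion; since in the $\self$-setting the hub $\gamma=\psi(b_{j^*})$ is itself one of the constructed elements, every later $b_j$ automatically avoids $B^{\gamma,\tau_\gamma}$, so a single target suffices and the $2$-to-$1$ composition can be dropped. Both bookkeeping schemes work (the value-indexed one also goes through here, since $\psi[B]\cap\epsilon_j$ has size below $\kappa$), but your observation that the self-structure makes the buffer automatic is a real simplification, and your write-up correctly isolates the only point where it does \emph{not} help, namely the truncated exclusions of Clause~(2), which is exactly where the $\ubd^+$ upgrade is indispensable.
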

\begin{proof} The proof is similar to that of Theorem~\ref{uparrowlemma} and is left to the reader.
\end{proof}

\begin{lemma}\label{lemma39} Let $\theta< \kappa$, $I\in\mathcal J^\kappa_{\theta^+}$ and $J\in\mathcal J^\kappa_\omega$.
\begin{enumerate}[(1)]
\item Every witness to $\onto^+(I^+,J,\theta)$ witnesses $\onto^{++}(I^+,J,\theta)$;
\item Every witness to $\ubd^+(I^+,J,\theta)$ witnesses $\ubd^{++}(I^+,J,\theta)$.
\end{enumerate}
\end{lemma}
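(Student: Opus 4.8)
The plan is to derive both clauses from a single mechanism: for a fixed witnessing colouring $c$ and each index $\tau<\theta$, the set of ``bad'' $\eta$'s for the $\tau$-th coordinate lies in $I$, and then to exploit the $\theta^+$-completeness of $I$ to locate a single $\eta$ that is simultaneously good for all $\theta$-many coordinates. Throughout, for $B\s\kappa$, $\eta<\kappa$ and $\rho<\theta$, I write $B^{\eta,\rho}:=\{\beta\in B\setminus(\eta+1)\mid c(\eta,\beta)=\rho\}$.

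For Clause (1), suppose $c$ witnesses $\onto^+(I^+,J,\theta)$, and fix $A\in I^+$ together with a sequence $\langle B_\tau\mid\tau<\theta\rangle$ of $J^+$-sets. For each $\tau<\theta$ set $A_\tau:=\{\eta\in A\mid (B_\tau)^{\eta,\tau}\in J\}$. First I would verify that $A_\tau\in I$: otherwise, applying $\onto^+(I^+,J,\theta)$ to $A_\tau\in I^+$ and $B_\tau\in J^+$ would produce some $\eta\in A_\tau$ with $(B_\tau)^{\eta,\rho}\in J^+$ for \emph{every} $\rho<\theta$, in particular for $\rho=\tau$, contradicting $\eta\in A_\tau$. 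Since $I$ is $\theta^+$-complete, $\bigcup_{\tau<\theta}A_\tau\in I$, so $A\setminus\bigcup_{\tau<\theta}A_\tau\in I^+$ is in particular nonempty; any $\eta$ in it satisfies $(B_\tau)^{\eta,\tau}\in J^+$ for all $\tau<\theta$, which is precisely what $\onto^{++}(I^+,J,\theta)$ demands.

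For Clause (2), the same skeleton applies, but the conclusion now asks for an injection $h:\theta\to\theta$ with $(B_\tau)^{\eta,h(\tau)}\in J^+$ for all $\tau$, so I must first secure a single $\eta$ of \emph{full degree} at every coordinate. Writing $T^\tau_\eta:=\{\rho<\theta\mid (B_\tau)^{\eta,\rho}\in J^+\}$, set $A_\tau:=\{\eta\in A\mid \otp(T^\tau_\eta)<\theta\}$. As before $A_\tau\in I$: were it $I$-positive, $\ubd^+(I^+,J,\theta)$ applied to $A_\tau$ and $B_\tau$ would yield some $\eta\in A_\tau$ with $\otp(T^\tau_\eta)=\theta$, a contradiction. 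By $\theta^+$-completeness I may then pick $\eta\in A\setminus\bigcup_{\tau<\theta}A_\tau$, for which $\otp(T^\tau_\eta)=\theta$, and hence $|T^\tau_\eta|=\theta$, holds for every $\tau<\theta$ simultaneously. The colouring $c$ is unchanged and so remains upper-regressive, as required.

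It then remains to build the injection greedily. Enumerating the coordinates in increasing order, at stage $\tau<\theta$ only $|\tau|<\theta$ colours have been committed, whereas $|T^\tau_\eta|=\theta$, so I can choose $h(\tau)\in T^\tau_\eta$ distinct from all earlier values; this yields an injection $h$ with $(B_\tau)^{\eta,h(\tau)}\in J^+$ for every $\tau$, witnessing $\ubd^{++}(I^+,J,\theta)$. I expect the only genuinely nonroutine point to be the demand for an \emph{injection} rather than a fixed colour: this is what forces the detour through ``full degree at every coordinate'', and it is here that the distinction between order type $\theta$ (what $\ubd^+$ supplies) and cardinality $\theta$ (what the greedy matching consumes) is used, with the $\theta^+$-completeness of $I$ doing all the remaining work.
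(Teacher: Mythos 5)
Your proof is correct and is essentially the paper's own argument in dual form: where you show that each set of bad indices $A_\tau$ lies in $I$ (by applying the hypothesis to $A_\tau$ itself as an $I$-positive set), the paper shows that the corresponding sets of good indices $O(B_\tau)$, resp.\ $U(B_\tau)$, lie in $I^*$ (because every $I$-positive set must meet them), and both arguments then use the $\theta^+$-completeness of $I$ to handle all $\theta$ coordinates at once. Your concluding greedy construction of the injection $h$ from the fact that order type $\theta$ gives cardinality $\theta$ is exactly the step the paper leaves as ``easy,'' so there is no substantive difference between the two proofs.
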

\begin{proof}
Let $c:[\kappa]^2\rightarrow\theta$ be any colouring. For a subset $B\s\kappa$,
denote:
\begin{itemize}
\item $B^{\eta,\tau}:=\{\beta \in B\setminus(\eta+1) \mid c(\eta, \beta) = \tau\}$,
\item  $O(B):=\{\eta<\kappa\mid \forall\tau<\theta\,(B^{\eta,\tau}\in J^+)\}$, and
\item  $U(B):=\{\eta<\kappa\mid \otp(\{\tau<\theta\mid B^{\eta,\tau}\in J^+\})=\theta\}$.
\end{itemize}

(1) Suppose that $c$ witnesses $\onto^+(I^+,J, \theta)$.
Then, for all $A\in I^+$ and $B\in J^+$, $A\cap O(B)$ is nonempty.
It follows that for every $B\in J^+$, $O(B)\in I^*$.
Since $I$ is $\theta^+$-complete, given $A\in I^+$ and a sequence $\langle B_\tau \mid \tau< \theta\rangle$ of sets in $J^+$,
we may pick $\eta\in A\cap\bigcap_{\tau<\theta}O(B_\tau)$.
In particular, for every $\tau<\theta$, $B_\tau^{\eta,\tau}\in J^+$.

(2) Suppose that $c$ witnesses $\ubd^+(I^+,J, \theta)$.
Then, for every $B\in J^+$, $U(B)\in I^*$.
Since $I$ is $\theta^+$-complete, given $A\in I^+$ and a sequence $\langle B_\tau \mid \tau< \theta\rangle$ of sets in $J^+$,
we may pick $\eta\in A\cap\bigcap_{\tau<\theta}U(B_\tau)$.
Now, it is easy to find an injection $h:\theta\rightarrow\theta$ such that, for every $\tau<\theta$,
$B_\tau^{\eta,h(\tau)}\in J^+$.
\end{proof}

\begin{cor}\label{tripleupgrade} For $\kappa$ a regular uncountable cardinal and $\theta<\kappa$,
$\ubd([\kappa]^\kappa,\allowbreak J^{\bd}[\kappa],\theta)$ implies $\ubd^{++}([\kappa]^\kappa,\mathcal J^\kappa_\kappa,\theta)$.
\end{cor}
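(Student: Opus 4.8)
The plan is to obtain the desired colouring in a single stroke by chaining together three of the pumping-up results already established, taking care to preserve the witnessing family $[\kappa]^\kappa$ at every stage. The crucial observation throughout is that $[\kappa]^\kappa=(J^\bd[\kappa])^+$, so that $[\kappa]^\kappa$ can be read simultaneously as a family of positive sets (the ``$\mathcal A$'' slot) and as the collection of positive sets of the specific ideal $J^\bd[\kappa]$. Since $J^\bd[\kappa]$ is $\kappa$-complete and subnormal, and $\theta<\kappa$, it lies in both $\mathcal S^\kappa_{\theta^+}$ and $\mathcal J^\kappa_{\theta^+}$, which is exactly what is needed to feed it into the lemmas below. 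I would fix once and for all a colouring $c:[\kappa]^2\to\theta$ witnessing the hypothesis $\ubd([\kappa]^\kappa,J^\bd[\kappa],\theta)$ and argue that this \emph{very same} colouring witnesses the conclusion.

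First I would upgrade $\ubd$ to $\ubd^+$ without leaving $J^\bd[\kappa]$. Applying Lemma~\ref{lemma114} with $J:=J^\bd[\kappa]$ (legitimate since $J^\bd[\kappa]\in\mathcal S^\kappa_{\theta^+}$ and $(J^\bd[\kappa])^+=[\kappa]^\kappa$) and reading off its conclusion for the particular ideal $I:=J^\bd[\kappa]$, the colouring $c$ already witnesses $\ubd^+([\kappa]^\kappa,J^\bd[\kappa],\theta)$. Next I would spread this from $J^\bd[\kappa]$ to the whole class $\mathcal J^\kappa_\kappa$ via Theorem~\ref{uparrowlemma}(2) with $\nu:=\kappa$ and $\mathcal A:=[\kappa]^\kappa$ (permissible since $[\kappa]^\kappa\s[\kappa]^{\le\kappa}$). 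Here $\sigma=\min\{\kappa,\kappa^+\}=\kappa$, so the conclusion concerns $\mathcal J^\kappa_\kappa$; moreover, feeding each $J\in\mathcal J^\kappa_\kappa$ together with $\psi:=\mathrm{id}_\kappa$ (which satisfies $\sup(\psi[B])=\kappa$ for every $B\in J^+$, as $J\supseteq J^\bd[\kappa]$) into the ``furthermore'' clause collapses $B^{\eta,\tau,\psi}$ to the ordinary $B^{\eta,\tau}$, showing that $c$ witnesses $\ubd^+([\kappa]^\kappa,\mathcal J^\kappa_\kappa,\theta)$.

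Finally I would convert $\ubd^+$ into $\ubd^{++}$ one ideal at a time through Lemma~\ref{lemma39}(2): for each $J\in\mathcal J^\kappa_\kappa\s\mathcal J^\kappa_\omega$, applying the lemma with $I:=J^\bd[\kappa]\in\mathcal J^\kappa_{\theta^+}$ (so that $I^+=[\kappa]^\kappa$) shows that any witness to $\ubd^+([\kappa]^\kappa,J,\theta)$ is automatically a witness to $\ubd^{++}([\kappa]^\kappa,J,\theta)$. Since $c$ witnesses the former for every $J\in\mathcal J^\kappa_\kappa$ simultaneously, it witnesses $\ubd^{++}([\kappa]^\kappa,\mathcal J^\kappa_\kappa,\theta)$, as required.

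The proof is thus entirely a matter of bookkeeping rather than new combinatorics, and I do not expect a genuine obstacle — only the need for vigilance on two points. The first is that the family $[\kappa]^\kappa$ must be transported intact through all three applications; this is precisely why one cannot substitute the simpler $\self$-versions (such as Theorem~\ref{thm42}) for the middle step. The second is that $J^\bd[\kappa]$ must be checked to meet the completeness and subnormality hypotheses demanded by each of Lemma~\ref{lemma114}, Theorem~\ref{uparrowlemma}, and Lemma~\ref{lemma39}, which it does since $\theta<\kappa$ forces $\theta^+\le\kappa$.
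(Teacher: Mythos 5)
Your proposal is correct and follows essentially the same route as the paper's own proof, which chains exactly the same three results in the same order: Lemma~\ref{lemma114} to upgrade $\ubd([\kappa]^\kappa,J^{\bd}[\kappa],\theta)$ to $\ubd^+([\kappa]^\kappa,J^{\bd}[\kappa],\theta)$, then Theorem~\ref{uparrowlemma}(2) (with $\nu=\kappa$, so $\sigma=\kappa$) to spread this to $\mathcal J^\kappa_\kappa$, and finally Lemma~\ref{lemma39}(2) with $I:=J^{\bd}[\kappa]$ to pass to $\ubd^{++}$. Your additional bookkeeping (that a single colouring survives all three steps, via the ``furthermore'' clause with $\psi=\mathrm{id}_\kappa$) is sound but not needed, since the bare implications between the principles already yield the corollary.
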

\begin{proof} By Lemma~\ref{lemma114}, $\ubd([\kappa]^\kappa,J^{\bd}[\kappa],\theta)$ implies $\ubd^+([\kappa]^\kappa,J^{\bd}[\kappa],\theta)$.
By Theorem~\ref{uparrowlemma}(2), the latter implies $\ubd^{+}([\kappa]^\kappa,\mathcal J^\kappa_{\kappa},\theta)$ holds.
Now, appeal to Lemma~\ref{lemma39}(2).
\end{proof}
\begin{lemma}\label{increasecolours}  Suppose that $\ubd^+(\mathcal J,\theta^+)$ holds
for a nonempty $\mathcal J\s\mathcal J^\kappa_{\omega}$.

Each of the following  hypotheses imply that $\onto^+(\mathcal J,\theta^+)$ holds:
\begin{enumerate}[(1)]
\item $\onto^+(\{\theta\},\mathcal J,\theta)$ holds;
\item $\onto(\{\theta\},\mathcal J,\theta)$ 	holds and $\mathcal J\s \mathcal J^\kappa_{\theta^+}$;
\item $\onto^+(I^+,\mathcal J,\theta)$ 	holds for some $I\in\mathcal J^\kappa_{\theta^{++}}$;
\item $\kappa\nrightarrow[\kappa;\kappa]^2_\theta$ holds, $\theta^+<\kappa$ and $\mathcal J\s\mathcal J^\kappa_\kappa$.
\end{enumerate}
\end{lemma}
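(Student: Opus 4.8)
The plan is to produce, in each clause, a single colouring $e:[\kappa]^2\rightarrow\theta^+$ witnessing $\onto^+(\mathcal J,\theta^+)$, assembled from the given witness $c$ of $\ubd^+(\mathcal J,\theta^+)$ (upper-regressive, into $\theta^+$) together with the colouring $d$ furnished by the clause-specific hypothesis. First I would reduce the number of genuine cases. In Clause~(2), since $\mathcal J\s\mathcal J^\kappa_{\theta^+}$, Proposition~\ref{prop614}(2) (with $\nu:=\theta$ and $\mathcal A:=\{\theta\}$) promotes any witness of $\onto(\{\theta\},\mathcal J,\theta)$ to a witness of $\onto^+(\{\theta\},\mathcal J,\theta)$, so Clause~(2) collapses to the hypothesis of Clause~(1). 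In Clause~(3), Lemma~\ref{lemma39}(1) turns the witness of $\onto^+(I^+,\mathcal J,\theta)$ into one of $\onto^{++}(I^+,\mathcal J,\theta)$, which is the form I expect to use. Thus the work splits into a \emph{bounded-witness} package (Clauses~(1)/(2)), an \emph{$I^+$-witness} package (Clause~(3)), and a \emph{Ramsey} package (Clause~(4)).

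Before building $e$ it pays to isolate the obstruction, since it dictates the entire shape of the argument. Identify $\theta^+$ with $\theta^+\times\theta$ and consider the naive pairing $e:=\langle c,d\rangle$. The principle $\ubd^+(\mathcal J,\theta^+)$ only guarantees, at the good $\eta$, that the set of colours $\tau<\theta^+$ with a $J$-positive fiber has \emph{order-type} $\theta^+$; such a set is merely cofinal in $\theta^+$ and may omit unboundedly many values. Since the $\theta^+$-valued coordinate of any product colouring is read directly off $c$, the colours lying in the omitted blocks are never realised, and this failure is insensitive to which coordinate carries $c$ and which carries $d$. Consequently $e$ cannot be a one-shot product: realising \emph{every} colour forces one to re-index the $\theta^+$-reservoir of positive fibers by their \emph{rank} in the hit set $T$, and that rank depends on $B$. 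The only uniform ways to compute it are to let the witness encode an enumeration (a recursion, as in the proof of Theorem~\ref{thm814}) or to compute $e$ from several evaluations of $c,d$ along a walk (as in Theorems~\ref{strongamenfirstrepair} and~\ref{strongamensecondrepair}). This re-indexing, and the preservation of $J$-positivity throughout it, is the main obstacle.

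For Clause~(3) the completeness is exactly what powers this. Given $J\in\mathcal J$ and $B\in J^+$, use $c$ to expose at a single ordinal a family $\langle B_\rho\mid\rho<\theta^+\rangle$ of $J$-positive fibers, enumerated in increasing order of colour, and run a recursion of length $\theta^+$ along it; at each node the branching step — splitting the current positive set into its $\theta$ immediate successors while keeping all of them positive — is supplied by $\onto^{++}(I^+,\mathcal J,\theta)$, and the $\theta^{++}$-completeness of $I$ legitimises intersecting the $\le\theta^+$ sets from $I^*$ that accumulate across the levels (precisely the completeness budget exploited in Lemma~\ref{lemma39}), with positivity transferring back to $J$. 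A single $\eta$ reading the resulting branch is the sought witness. Clause~(4) I would instead model on Theorems~\ref{strongamenfirstrepair}/\ref{strongamensecondrepair}: take the colouring witnessing $\kappa\nrightarrow[\kappa;\kappa]^2_\theta$ as the auxiliary, conduct walks, and invoke Lemma~\ref{claim422} together with the $\kappa$-completeness from $\mathcal J\s\mathcal J^\kappa_\kappa$ to force that, at the distinguished step-ordinal of the walk, every one of the $\theta^+$ colours lands in a $J$-positive fiber; here $\theta^+<\kappa$ is what lets the colour space sit below the length of the walk.

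The subtlest package is Clauses~(1)/(2), because the ideals in $\mathcal J\s\mathcal J^\kappa_\omega$ carry essentially no completeness, so no $\theta^+$-recursion with limit intersections is available. The resource that replaces completeness is the \emph{bounded} witness of $\onto^+(\{\theta\},\mathcal J,\theta)$: for each of the $\theta^+$ positive fibers $B_\rho$ produced by $c$, the witness realising all $\theta$ colours of $d$ lies below $\theta$, so by a pigeonhole (as $\theta^+$ is regular above $\theta$) a single $j^*<\theta$ serves $\theta^+$-many of the fibers simultaneously, with \emph{no} intersection incurred. The plan is to feed this uniformity into the definition of $e$ so that one ordinal points to both the $c$-witness and $j^*$, and to arrange the re-indexing of the fibers economically enough that the verification remains one-shot; making this economical re-indexing cover \emph{all} of $\theta^+$ — rather than only the cofinally-many colours that $c$ hands over — without any completeness is exactly where I expect the real difficulty to lie, and it is the crux that the bounded witness of $d$ must be leveraged to resolve.
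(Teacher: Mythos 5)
Your diagnosis of the obstruction is exactly right, and several of your ingredients do appear in the paper's proof: Clause~(2) is indeed reduced to Clause~(1) via Proposition~\ref{prop614}(2); the pigeonhole on bounded witnesses (a single $\eta_0<\theta$ serving $\theta^+$-many fibers) is the key step for Clause~(1); and the $\theta^{++}$-completeness of $I$ is used in Clause~(3) precisely to get one ordinal good for all fibers at once. But the proposal stops exactly where the proof must begin: you flag the ``re-indexing'' crux and leave it unresolved. The paper's resolution is a device absent from your plan: fix \emph{in advance} a surjection $e_\alpha:\theta\rightarrow\alpha+1$ for every $\alpha<\theta^+$ and a bijection $\pi:\kappa\leftrightarrow\kappa\times\kappa$, and (writing $d$ for the $\ubd^+(\mathcal J,\theta^+)$ witness and $c$ for the clause-specific $\theta$-valued witness) define $f(\eta,\beta):=e_{d(\{\eta_1,\beta\})}(c(\{\eta_0,\beta\}))$, where $(\eta_0,\eta_1):=\pi(\eta)$. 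No rank computation is ever needed: given a target $\tau<\theta^+$, cofinality of the hit set supplies a hit colour $\alpha\ge\tau$ with $J$-positive fiber $B^{\eta_1,\alpha}:=\{\beta\in B\setminus(\eta_1+1)\mid d(\eta_1,\beta)=\alpha\}$; then $\tau=e_\alpha(i)$ for some $i<\theta$, the clause-specific witness makes $\{\beta\in B^{\eta_1,\alpha}\mid c(\eta_0,\beta)=i\}$ $J$-positive, and every sufficiently large $\beta$ in that set satisfies $f(\eta,\beta)=e_\alpha(i)=\tau$. So a ``one-shot'' colouring works after all; the loophole your impossibility argument misses is that one may collapse the value $d(\eta_1,\beta)$ through a surjection indexed by that very value, rather than reading it off directly.

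Your fallback strategies would not close the gap either. The $\theta^+$-length recursion proposed for Clause~(3) takes place after $B$ and $J$ are given, so for ``a single $\eta$ reading the resulting branch'' the colouring would have to encode $\theta^+$-sequences of ordinals below $\kappa$; that is the mechanism of Theorem~\ref{thm814}, which already costs $\kappa=\kappa^{\aleph_0}$ for mere $\omega$-sequences, and no such cardinal arithmetic is assumed here --- nor is any recursion needed, since completeness of $I$ yields one $\eta_0$ simultaneously good for all $\theta^+$ fibers. The walks plan for Clause~(4) is likewise unavailable: the arguments of Theorems~\ref{strongamenfirstrepair} and~\ref{strongamensecondrepair} require a strongly amenable $C$-sequence (i.e.\ $\kappa\in\sa_\kappa$), which is not among the hypotheses of Clause~(4). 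The paper instead derives (4) from (3) by Theorem~\ref{prop46}(2): $\kappa\nrightarrow[\kappa;\kappa]^2_\theta$ yields $\onto^{++}([\kappa]^\kappa,\mathcal J^\kappa_\kappa,\theta)$, and one takes $I:=J^{\bd}[\kappa]$, which lies in $\mathcal J^\kappa_{\theta^{++}}$ precisely because $\theta^+<\kappa$.
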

\begin{proof} Fix a colouring $d:[\kappa]^2\rightarrow\theta^+$ witnessing $\ubd^+(\mathcal J,\theta^+)$.
For all $B\s\kappa$ and $\eta,\alpha<\kappa$, denote $B^{\eta,\alpha}:=\{\beta\in B\setminus(\eta+1)\mid d(\eta,\beta)=\alpha\}$.
For every $\alpha<\theta^+$, fix a surjection $e_\alpha:\theta\rightarrow\alpha+1$.
Also, fix a bijection $\pi:\kappa\leftrightarrow\kappa\times\kappa$.

(1) Suppose that $c:[\kappa]^2\rightarrow\theta$ is a colouring witnessing $\onto^+(\{\theta\},\mathcal J,\theta)$.
Define $f:[\kappa]^2\rightarrow\theta^+$ as follows.
Given $\eta<\beta<\kappa$, let $(\eta_0,\eta_1):=\pi(\eta)$ and then set $f(\eta,\beta):=e_{d(\{\eta_1,\beta\})}(c(\{\eta_0,\beta\}))$.

To see this works, let $J\in\mathcal J$ and $B\in J^+$.
By the choice of $d$, we may fix an $\eta_1<\kappa$ such that $A_{\eta_1}(B):=\{\alpha<\theta^+\mid B^{\eta_1,\alpha}\in J^+\}$ has size $\theta^+$.
For each $\alpha\in A_{\eta_1}(B)$, as $B^{\eta_1,\alpha}$ is in $J^+$,
we may find $\eta^\alpha<\theta$ such that, for every $i<\theta$,
$$\{ \beta\in B^{\eta_1,\alpha}\setminus(\eta^\alpha+1)\mid c(\eta^\alpha,\beta)=i\}\in J^+.$$
Pick $\eta_0<\theta$ for which $A:=\{\alpha\in A_{\eta_1}(B)\mid \eta^\alpha=\eta_0\}$ has size $\theta^+$.
Fix $\eta<\kappa$ such that $\pi(\eta)=(\eta_0,\eta_1)$.
Given any colour $\tau<\theta^+$, pick $\alpha\in A\setminus\tau$.
Then pick $i<\theta$ such that $e_\alpha(i)=\tau$.
Then pick $\beta\in B^{\eta_1,\alpha}$ above $\max\{\eta_0,\eta_1,\eta\}$ such that $c(\eta_0,\beta)=i$.
Then $f(\eta,\beta)=e_{d(\eta_1,\beta)}(c(\eta_0,\beta))=e_\alpha(i)=\tau$, as sought.

(2) By Clause~(1) together with Proposition~\ref{prop614}(2).

(3) Suppose that $I\in\mathcal J^\kappa_{\theta^{++}}$, and we are given a colouring $c:[\kappa]^2\rightarrow\theta$
such that, for all $J\in\mathcal J$ and $B\in J^+$, the set $\{\eta<\kappa\mid \forall i<\theta\,\{ \beta\in B\setminus(\eta+1)\mid c(\eta,\beta)=i\}\in J^+\}$ is in $I^*$.
Define $f:[\kappa]^2\rightarrow\theta^+$ as follows.
Given $\eta<\beta<\kappa$, let $(\eta_0,\eta_1):=\pi(\eta)$ and then set $f(\eta,\beta):=e_{d(\{\eta_1,\beta\})}(c(\{\eta_0,\beta\}))$.

To see this works, let $J\in\mathcal J$ and $B\in J^+$.
By the choice of $d$, we may fix an $\eta_1<\kappa$ such that $A_{\eta_1}(B):=\{\alpha<\theta^+\mid B^{\eta_1,\alpha}\in J^+\}$ has size $\theta^+$.
For each $\alpha\in A_{\eta_1}(B)$, $B^{\eta_1,\alpha}$ is in $J^+$. So, since $I$ is $\theta^{++}$-complete,
we may find $\eta_0<\kappa$ such that, for every $\alpha\in A_{\eta_1}(B)$,
for every $i<\theta$,
$$\{ \beta\in B^{\eta_1,\alpha}\setminus(\eta_0+1)\mid c(\eta_0,\beta)=i\}\in J^+.$$
Fix $\eta<\kappa$ such that $\pi(\eta)=(\eta_0,\eta_1)$.
Given any colour $\tau<\theta^+$, pick $\alpha\in A_{\eta_1}(B)\setminus\tau$.
Then pick $i<\theta$ such that $e_\alpha(i)=\tau$.
Then pick $\beta\in B^{\eta_1,\alpha}$ above $\max\{\eta_0,\eta_1,\eta\}$ such that $c(\eta_0,\beta)=i$.
Then $f(\eta,\beta)=e_{d(\eta_1,\beta)}(c(\eta_0,\beta))=e_\alpha(i)=\tau$, as sought.

(4) By Clause~(3) together with Theorem~\ref{prop46}(2) below.
\end{proof}

We now present the definition of projections.
Unlike the colouring principles of the previous section that asserts something about sets of the form $\{\eta\}\circledast B$,
projections have to do with sets of the form $\{\eta\}\times B$.

\begin{defn}\label{projdefn} $\proj(\nu,\kappa,\theta,{<}\mu)$ asserts the existence of a map $p:\nu\times\kappa\rightarrow\theta$
with the property that for every family $\mathcal B\s[\kappa]^\kappa$ of size less than $\mu$, there exists an $\eta<\nu$ such that
$p[\{\eta\}\times B]=\theta$ for all $B\in\mathcal B$.

We write $\proj(\nu,\kappa,\theta,\mu)$ for $\proj(\nu,\kappa,\theta,{<}\mu^+)$.
\end{defn}
\begin{remark}\label{rmk313}
Note that $\onto(\{\nu\},[\kappa]^{<\kappa},\theta)$ implies $\proj(\nu,\kappa,\theta,1)$,
and that $\onto^{++}(\{\nu\},\allowbreak[\kappa]^{<\kappa},\theta)$ implies $\proj(\nu,\kappa,\theta,\theta)$.
\end{remark}
We encourage the reader to determine the monotonicity properties of the above principle. Among them, let us mention the following whose proof is obvious and which we will use later.
\begin{prop}\label{tukey}
$\proj(\nu,\cf(\kappa),\theta,{<}\mu)$ implies $\proj(\nu,\kappa,\theta,{<}\mu)$. \qed
\end{prop}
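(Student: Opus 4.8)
The plan is to pull the witness for $\cf(\kappa)$ back to $\kappa$ through an interval partition. Writing $\lambda:=\cf(\kappa)$, I would first fix a map $q:\nu\times\lambda\rightarrow\theta$ witnessing $\proj(\nu,\lambda,\theta,{<}\mu)$, together with a strictly increasing cofinal sequence $\langle\kappa_i\mid i<\lambda\rangle$ in $\kappa$ with $\kappa_0=0$. Since this sequence is strictly increasing with supremum $\kappa$, the half-open intervals $[\kappa_i,\kappa_{i+1})$ for $i<\lambda$ partition $\kappa$, so there is a well-defined \emph{index map} $\iota:\kappa\rightarrow\lambda$ sending each $\beta<\kappa$ to the unique $i<\lambda$ with $\beta\in[\kappa_i,\kappa_{i+1})$. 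The candidate witness for $\proj(\nu,\kappa,\theta,{<}\mu)$ is then simply $p:\nu\times\kappa\rightarrow\theta$ defined by $p(\eta,\beta):=q(\eta,\iota(\beta))$.

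To verify that $p$ works, I would take an arbitrary family $\mathcal B\s[\kappa]^\kappa$ of size less than $\mu$ and pass to the family $\mathcal B':=\{\iota[B]\mid B\in\mathcal B\}$, which has size at most $|\mathcal B|<\mu$. The crucial point is that $\iota[B]\in[\lambda]^\lambda$ for each $B\in\mathcal B$: since $|B|=\kappa$, the set $B$ is unbounded in $\kappa$, and then for every $i<\lambda$ some $\beta\in B$ lies above $\kappa_{i+1}$, forcing $\iota(\beta)>i$; hence $\iota[B]$ is unbounded in $\lambda$, and as $\lambda=\cf(\kappa)$ is regular this gives $|\iota[B]|=\cf(\lambda)=\lambda$. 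Once this is in hand, applying the choice of $q$ to $\mathcal B'$ yields a single $\eta<\nu$ with $q[\{\eta\}\times B']=\theta$ for all $B'\in\mathcal B'$, and the identity $p[\{\eta\}\times B]=\{q(\eta,\iota(\beta))\mid\beta\in B\}=q[\{\eta\}\times\iota[B]]$ shows $p[\{\eta\}\times B]=\theta$ for every $B\in\mathcal B$, as required.

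There is essentially no obstacle here, which matches the paper's assessment that the proof is obvious; the only step meriting a line of justification is the claim $\iota[B]\in[\lambda]^\lambda$, where the regularity of $\cf(\kappa)$ is used to promote ``unbounded in $\lambda$'' to ``of size $\lambda$''. Everything else is the routine observation that $p=q\circ(\mathrm{id}_\nu\times\iota)$ transports the projection property along $\iota$, together with the bookkeeping that the image family $\mathcal B'$ does not grow in cardinality.
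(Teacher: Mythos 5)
Your approach---pulling the witness back along a collapse of $\kappa$ onto its cofinality---is the intended argument here; the paper records the proposition as obvious and gives no proof, and your verification (the image family does not grow in size, images of unbounded sets are unbounded, and regularity of $\cf(\kappa)$ promotes unboundedness to full cardinality) is exactly the right content.

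There is, however, one step that is false as literally written, though it is trivially repaired. From a strictly increasing cofinal sequence $\langle\kappa_i\mid i<\lambda\rangle$ with $\kappa_0=0$ it does \emph{not} follow that the intervals $[\kappa_i,\kappa_{i+1})$, $i<\lambda$, partition $\kappa$: at a limit $j<\lambda$ the sequence may jump, i.e.\ $\sup_{i<j}\kappa_i<\kappa_j$, and then no ordinal in $[\sup_{i<j}\kappa_i,\kappa_j)$ lies in any of your intervals. (Concretely, take $\kappa=\lambda=\omega_1$ and $\kappa_i:=\omega\cdot i+1$ for $i>0$; the ordinal $\omega\cdot\omega$ is covered by no interval.) So your index map $\iota$ need not be total, and the asserted well-definedness fails. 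Two one-line fixes: either require the sequence to be continuous, meaning $\kappa_j=\sup_{i<j}\kappa_i$ at limit $j$, which can always be arranged; or simply define $\iota(\beta):=\min\{i<\lambda\mid\beta<\kappa_{i+1}\}$, which is well-defined by cofinality alone. Under either fix, your key computation---that $\beta\ge\kappa_{i+1}$ forces $\iota(\beta)>i$, whence $\iota[B]$ is unbounded in $\lambda$ and so of size $\lambda$---goes through verbatim, and the rest of the proof is correct as you wrote it.
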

The next proposition demonstrates that for every triple of cardinals $\theta\le\mu\le\kappa$,
there exists a cardinal $\nu$ such that $\proj(\nu,\kappa,\theta,\mu)$ holds.
By the preceding remark, more optimal values for $\nu$ may be calculated using the results of Section~\ref{adsection}.

\begin{prop}\label{bruteforce} Assuming $\theta\le\mu\le\kappa$, $\proj(\kappa^\mu,\kappa,\theta,\mu)$ holds.
\end{prop}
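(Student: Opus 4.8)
The plan is to establish this by a direct coding (``brute-force'') argument that exploits the abundance of rows: $\nu=\kappa^\mu$ is large enough to index one candidate assignment per ``target pattern''. Since $\mu$ is infinite and $\theta\le\mu$, we have $|\mu\times\theta|=\mu$, and therefore the set ${}^{\mu\times\theta}\kappa$ of all functions $s\colon\mu\times\theta\to\kappa$ has cardinality $\kappa^{|\mu\times\theta|}=\kappa^\mu$. I would thus fix a bijection between the index cardinal $\kappa^\mu$ and ${}^{\mu\times\theta}\kappa$, so that it suffices to produce a map $p\colon{}^{\mu\times\theta}\kappa\times\kappa\to\theta$ having the property of Definition~\ref{projdefn}, recalling that $\proj(\kappa^\mu,\kappa,\theta,\mu)$ asks precisely that every $\mathcal B\s[\kappa]^\kappa$ of size $\le\mu$ be handled by a single row.

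Next I would fix a well-ordering of $\mu\times\theta$ and define $p$ by reading off a coordinate of a preimage: for $s\in{}^{\mu\times\theta}\kappa$ and $\beta<\kappa$, if $\beta\in\im(s)$ let $(i,\tau)$ be the least pair (in the fixed well-order) with $s(i,\tau)=\beta$ and set $p(s,\beta):=\tau$; otherwise set $p(s,\beta):=0$. The guiding idea is that a row $s$ which is \emph{injective} and whose values $s(i,\tau)$ lie in the prescribed sets will force $p(s,\cdot)$ to be onto $\theta$ on each of those sets, since for injective $s$ the preimage of $s(i,\tau)$ is exactly $(i,\tau)$, so $p(s,s(i,\tau))=\tau$.

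Finally, given $\mathcal B\s[\kappa]^\kappa$ with $|\mathcal B|\le\mu$ (and nonempty, the empty case being trivial), I would enumerate it, with repetitions if needed, as $\langle B_i\mid i<\mu\rangle$, and then construct an injective $s\in{}^{\mu\times\theta}\kappa$ with $s(i,\tau)\in B_i$ for every $(i,\tau)\in\mu\times\theta$ by a recursion of length $\mu$ along some enumeration of $\mu\times\theta$. At each stage the set $C$ of values chosen so far has size $<\mu\le\kappa$, hence size $<\kappa$; since each $B_i$ has size $\kappa$ and removing a set of size $<\kappa$ from a set of size $\kappa$ leaves a set of the same size $\kappa$ (a fact valid for every infinite cardinal $\kappa$, with no appeal to regularity), the set $B_i\setminus C$ is nonempty and the recursion never halts prematurely. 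For the resulting $s$, each $B_i$ contains the points $s(i,\tau)$ for $\tau<\theta$, and $p(s,s(i,\tau))=\tau$ by injectivity, so $p[\{s\}\times B_i]=\theta$ for all $i<\mu$; transporting $s$ back through the fixed bijection yields the required $\eta<\kappa^\mu$. The only point that needs genuine checking is that the recursion does not get stuck, which is exactly the cardinal-arithmetic observation above; beyond this bookkeeping there is no real obstacle, which is consistent with the proposition's ``brute-force'' flavour.
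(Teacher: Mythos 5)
Your proof is correct and is essentially the paper's own argument in different packaging: the paper indexes rows by injections $f_\eta:\mu\to\kappa$ and reads colours off a fixed surjection $s:\mu\to\theta$ with $\mu$-sized fibers (which is the same thing as your identification $\mu\cong\mu\times\theta$ followed by projection to the $\theta$-coordinate), and its verification likewise hinges on producing an injective selector hitting each $B_i$ in every colour class. Your transfinite recursion just spells out the construction of that selector, which the paper compresses into the choice of the injections $g$ and $h$.
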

\begin{proof} Denote $\nu:=\kappa^\mu$.
Let $\langle f_\eta\mid\eta<\nu\rangle$ enumerate all injections from $\mu$ to $\kappa$.
Fix a surjection $s:\mu\rightarrow\theta$ such that the preimage of any singleton has size $\mu$.
Define a function $p:\nu\times\kappa\rightarrow\theta$ as follows.
For every $(\eta,\beta)\in\nu\times\kappa$, if there exists $\xi<\mu$ such that $f_\eta(\xi)=\beta$,
then let $p(\eta,\beta):=s(\xi)$ for this unique $\xi$. Otherwise, let $p(\eta,\beta):=0$.
To see this works, let $\langle B_i\mid i<\mu\rangle$ be  a given list of elements of $[\kappa]^\kappa$.
Find an injection $g:\theta\times\mu\rightarrow\kappa$ such that $g(\tau,i)\in B_i$ for all $(\tau,i)\in\theta\times\mu$.
Then pick an injection $h:\im(g)\rightarrow\mu$ such that $s(h(g(\tau,i)))=\tau$ for all $(\tau,i)\in\theta\times\mu$,
and finally find $\eta<\nu$ such that $h\circ f_\eta$ is the identity map.
We claim that $\eta$ is as sought.

Let $i<\mu$. To see that $p[\{\eta\}\times B_i]=\theta$, let $\tau<\theta$.
By the definition of $g$, $\beta:=g(\tau,i)$ is in $B_i$.
Set $\xi:=h(\beta)$. By the definition of $h$, $s(\xi)=\tau$.
Since $h$ is injective, $\xi=h(\beta)$ and $h(f_\eta(\xi))=\xi$, it is the case that $f_\eta(\xi)=\beta$.
So $p(\eta,\beta)=s(\xi)=\tau$, as sought.
\end{proof}
Recall that $\mathcal C(\kappa,\theta)$ denotes the least size of a family $\mathcal X\s[\kappa]^\theta$
with the property that for every club $C$ in $\kappa$, there is $X\in\mathcal X$ with $X\s C$.

\begin{lemma}\label{bsicproj}  $\proj(\mathcal C(\cf(\kappa),\theta),\kappa,\theta,{<}\mu)$ holds in any of the following cases:
\begin{enumerate}[(1)]
\item $\theta<\cf(\kappa)=\mu$;
\item $\theta = \cf(\kappa)$ and $\mu=\cf(\kappa)^+$.
\end{enumerate}
\end{lemma}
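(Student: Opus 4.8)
The plan is to reduce to the regular cardinal $\lambda:=\cf(\kappa)$ and then manufacture a single colouring out of a witness to $\mathcal C(\lambda,\theta)$. By Proposition~\ref{tukey}, it suffices to prove $\proj(\nu,\lambda,\theta,{<}\mu)$ for $\nu:=\mathcal C(\lambda,\theta)$. So I would fix a family $\langle X_\eta\mid \eta<\nu\rangle$ of elements of $[\lambda]^\theta$ such that every club in $\lambda$ contains some $X_\eta$ as a subset. Since every $B$ in the given family $\mathcal B\s[\lambda]^\lambda$ is unbounded in $\lambda$, the set $\acc^+(B)$ is a club in $\lambda$. The colouring $p:\nu\times\lambda\to\theta$ I propose records, for each row $\eta$, a label of the least element of $X_\eta$ lying strictly above $\beta$: put $p(\eta,\beta):=\phi_\eta(\min(X_\eta\setminus(\beta+1)))$ whenever that minimum exists and $p(\eta,\beta):=0$ otherwise, where $\phi_\eta\colon X_\eta\to\theta$ is a labelling chosen differently in the two cases.

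The crux is the following observation, which I would establish first: if $x\in\nacc(X_\eta)$ and $x\in\acc^+(B)$, then some $\beta\in B$ has $p(\eta,\beta)=\phi_\eta(x)$. Indeed, writing $x^-:=\sup(X_\eta\cap x)$, being a non-accumulation point of $X_\eta$ gives $x^-<x$, while $x\in\acc^+(B)$ furnishes a $\beta\in B\cap(x^-,x)$; since $X_\eta$ has no point in $(x^-,x)$, the least element of $X_\eta$ above $\beta$ is exactly $x$, whence $p(\eta,\beta)=\phi_\eta(x)$. I would also record that $|\nacc(X_\eta)|=\theta$ and that $\nacc(X_\eta)$ is cofinal in $\sup(X_\eta)$.

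In the first case ($\theta<\lambda=\mu$, so $|\mathcal B|<\lambda$) I would take the genuine intersection $C:=\bigcap_{B\in\mathcal B}\acc^+(B)$, a club as $\lambda$ is regular uncountable and fewer than $\lambda$ clubs are being intersected. Picking $\eta$ with $X_\eta\s C$ and choosing $\phi_\eta$ so that its restriction to $\nacc(X_\eta)$ is a bijection onto $\theta$, every $x\in\nacc(X_\eta)$ lies in $\acc^+(B)$ for all $B\in\mathcal B$ at once, so the observation yields $p[\{\eta\}\times B]=\theta$ for every $B\in\mathcal B$. In the second case ($\theta=\lambda$, $\mu=\lambda^+$, so $|\mathcal B|\le\lambda$) the genuine intersection is too coarse, so I would enumerate $\mathcal B=\langle B_i\mid i<\lambda\rangle$ and use the diagonal intersection $C:=\triangle_{i<\lambda}\acc^+(B_i)$. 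Here $X_\eta\s C$ has order type $\lambda$, hence $\nacc(X_\eta)$ is cofinal in $\lambda$, and I would choose $\phi_\eta$ so that each of its fibres meets $\nacc(X_\eta)$ cofinally. Then, given $i<\lambda$ and $\tau<\theta$, I can find $x\in\nacc(X_\eta)$ with $\phi_\eta(x)=\tau$ and $x>i$; membership of $x$ in the diagonal intersection gives $x\in\acc^+(B_i)$, and the observation delivers a $\beta\in B_i$ of colour $\tau$, so $p[\{\eta\}\times B_i]=\theta$ for every $i$.

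The main obstacle is the tension that dictates the two choices of $\phi_\eta$. If a colour were attached to a single element of $X_\eta$, it could be missed by those $B_i$ entering the diagonal intersection only above that element; the remedy in the second case is to spread every colour cofinally through $\nacc(X_\eta)$, which is possible precisely because there $|\nacc(X_\eta)|=\theta=\lambda$, whereas in the first case $\nacc(X_\eta)$ is bounded and one leans instead on the intersection being genuine. A subtler point, already built into the design, is that an element of $X_\eta$ that accumulates $X_\eta$ can never be the next element above any $\beta$ lying just below it; this is exactly why the colouring reads off the next element \emph{above} and why only non-accumulation points are exploited. The degenerate case $\cf(\kappa)=\omega$, in which $\acc^+(B)$ is empty, falls outside this scheme, and I would dispose of it by a direct elementary argument.
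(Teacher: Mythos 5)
Your proposal is correct and follows essentially the same route as the paper's proof: reduce to $\lambda:=\cf(\kappa)$ via Proposition~\ref{tukey}, fix a guessing family $\langle X_\eta\mid\eta<\nu\rangle$ for $\nu=\mathcal C(\lambda,\theta)$, colour $\beta$ according to the gap of $X_\eta$ into which it falls, and then intersect the clubs $\acc^+(B)$ genuinely in case (1) and diagonally in case (2). The differences are minor. First, the paper gets by with a single colouring for both cases, $p(\eta,\beta)=\pi(\otp(X_\eta\cap\beta))$, where one fixed surjection $\pi:\theta\rightarrow\theta$ with $\{\sigma\mid\pi(\sigma+1)=\tau\}$ cofinal for every $\tau$ plays the role of both of your labellings $\phi_\eta$; your version, reading off $\min(X_\eta\setminus(\beta+1))$ and labelling right endpoints of gaps per row, is an equivalent bookkeeping of the same mechanism (the paper picks an element of $X_\eta$ and takes the next element of $B$ above it, you pick a point of $\nacc(X_\eta)\cap\acc^+(B)$ and take an element of $B$ just below it). Second, and this is the one real omission: the case $\cf(\kappa)=\omega$, which under clause (2) means $\theta=\omega$ and is a genuine instance of the lemma, is only promised ``a direct elementary argument'' in your write-up. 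Your accumulation-point machinery does fail there, and the paper handles it by an explicit interleaving construction: a strictly increasing $f:\omega\rightarrow\omega$ such that every $B_i$ meets every gap $(f(n),f(n+1))$ with $i<n$, so that $\im(f)$ stands in for the unavailable diagonal intersection and the same gap-colouring argument goes through. That repair is routine and fits your scheme verbatim, but it is an idea the proof needs and should be supplied; everything else in your argument checks out.
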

\begin{proof}
By Proposition~\ref{tukey}, it suffices to prove the following two:
\begin{itemize}
\item $\proj(\mathcal C(\cf(\kappa),\theta),\cf(\kappa),\allowbreak\theta,{<}\cf(\kappa))$
holds whenever $\theta<\cf(\kappa)$, and
\item $\proj(\mathcal C(\cf(\kappa),\cf(\kappa)),\cf(\kappa),\cf(\kappa),\cf(\kappa))$ holds.
\end{itemize}
Thus, for notational simplicity we may assume that $\kappa=\cf(\kappa)$.
By Proposition~\ref{bruteforce}, we may also assume that $\theta$ is infinite.

The proofs of both cases are similar and there is some overlap in the case analysis so we prove them together.
Let $\theta\le\kappa$. Fix a surjection $\pi:\theta\rightarrow\theta$ such that,
for every $\tau<\theta$, $\{ \sigma<\theta\mid \pi(\sigma+1)=\tau\}$ is cofinal in $\theta$.
Denote $\nu:=\mathcal C(\kappa,\theta)$.
Fix a sequence $\langle X_\eta\mid \eta<\nu\rangle$ of subsets of $\kappa$,
each of order-type $\theta$, such that, for every club $C$ in $\kappa$,
for some $\eta<\nu$, $X_\eta\s C$.
Then, define a map $p:\nu\times\kappa\rightarrow\theta$ via $p(\eta,\beta):=\pi(\otp(X_\eta\cap\beta))$.
We will show that this colouring works for both cases. So, let $\mathcal B\s[\kappa]^\kappa$ be given of the appropriate size. Our analysis splits into three:

$\br$ If $\kappa=\theta=\omega$, then fix an injective enumeration $\langle B_i\mid i<\omega\rangle$ of $\mathcal B$.
Then, fix a strictly increasing map $f:\omega\rightarrow\omega$ such that for every $n<\omega$,
for every $i<n$ there exists $\beta_{i,n}\in B_i$ with $f(n)<\beta_{i,n}<f(n+1)$.
Trivially, $C:=\im(f)$ is a club in $\kappa$,
so we may fix an $\eta<\nu$ such that $X_\eta\s C$.
Let $i<\omega$, and we shall show that $p[\{\eta\}\times B_i]=\theta$.
To this  end, let $\tau<\theta$ be any prescribed colour.
Find $\sigma<\theta$ above $f(i)$ such that $\pi(\sigma+1)=\tau$.
Let $\xi$ be the unique element of $X_\eta$ to satisfy $\otp(X_\eta\cap\xi)=\sigma$.
Let $n$ be the unique integer such that $\xi=f(n)$. As $f(n)=\xi\ge\sigma>f(i)$, we infer that $n>i$.
So $\beta_{i,n}$ is an element of $B_i$ lying in between $\xi$ and $\min(X_\eta\setminus(\xi+1))$.
Consequently, $\otp(X_\eta\cap\beta_{i,n})=\otp(X_\eta\cap(\xi+1))=\sigma+1$,
and hence $p(\eta,\beta_{i,n})=\pi(\otp(X_\eta\cap\beta_{i,n}))=\pi(\sigma+1)=\tau$, as sought.

$\br$ If $\kappa=\theta>\omega$,
then since $|\mathcal B|\le\kappa$, we may fix a club $C$ in $\kappa$ such that, for every $B\in\mathcal B$,
$C\setminus \acc^+(B)$ is bounded in $\kappa$. Fix $\eta<\nu$ such that $X_\eta\s C$.
Let $B\in\mathcal B$, and we shall show that $p[\{\eta\}\times B]=\theta$.
To this end, let $\tau<\theta$ be any prescribed colour.
As $C\setminus \acc^+(B)$ is bounded in $\kappa$,
$\epsilon:=\ssup(X_\eta\setminus\acc^+(B))$ is less than $\kappa$.
Find $\sigma\in[\epsilon,\kappa)$ such that $\pi(\sigma+1)=\tau$.
Let $\xi$ be the unique element of $X_\eta$ to satisfy $\otp(X_\eta\cap\xi)=\sigma$.
Evidently, $\xi\ge\sigma\ge\epsilon$.
Let $\beta:=\min(B\setminus(\xi+1))$.
As $X_\eta\setminus\epsilon\s \acc^+(B)$, $\otp(X_\eta\cap\beta)=\otp(X_\eta\cap(\xi+1))=\sigma+1$,
and hence $p(\eta,\beta)=\pi(\otp(X_\eta\cap\beta))=\pi(\sigma+1)=\tau$.

$\br$ Otherwise, so that $\kappa>\theta\ge\omega$.
In this case $|\mathcal B|<\kappa$, so that $C:=\bigcap\{\acc^+(B)\mid B\in\mathcal B\}$ is a club in $\kappa$.
Fix $\eta<\nu$ such that $X_\eta\s C$.
By now it should be clear that $p[\{\eta\}\times B]=\theta$ for every $B\in\mathcal B$.
\end{proof}

\begin{cor}\label{regularpairs} Suppose that $\theta$ is an infinite cardinal less than $\kappa$.
\begin{enumerate}[(1)]
\item If $\theta\in\reg(\kappa)$, then $\proj(\kappa,\kappa,\theta,1)$ holds;
\item If $\theta\in\reg(\cf(\kappa))$, then $\proj(\kappa,\kappa,\theta,\theta)$ holds;
\item If $([\theta]^{\cf(\theta)},{\s})$ has cofinality or density $\theta^+$, then $\proj(\kappa,\kappa,\theta,\theta)$ holds.
\end{enumerate}
\end{cor}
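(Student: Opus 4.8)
The plan is to derive all three clauses from Lemma~\ref{bsicproj} together with the (obvious) monotonicity of $\proj$ in its first coordinate: enlarging $\nu$ only enlarges the pool of candidate ordinals $\eta$, so $\proj(\nu,\kappa,\theta,{<}\mu)$ entails $\proj(\nu',\kappa,\theta,{<}\mu)$ whenever $\nu\le\nu'$. Hence, once I know the covering number $\mathcal C(\cf(\kappa),\theta)$ is $\le\kappa$, the colouring produced by Lemma~\ref{bsicproj} witnesses the desired $\proj(\kappa,\kappa,\theta,\cdot)$. So the real content is a collection of estimates of the form $\mathcal C(\cf(\kappa),\theta)\le\kappa$, each paired with the clause of Lemma~\ref{bsicproj} that delivers the required family size $\mu$. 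Concretely, when $\theta<\cf(\kappa)$ I use Lemma~\ref{bsicproj}(1) (which covers all families of size $<\cf(\kappa)$, in particular those of size $1$ or $\le\theta$), and when $\theta=\cf(\kappa)$ I use Lemma~\ref{bsicproj}(2).

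For clause (2), $\theta$ is regular and $\theta<\cf(\kappa)$, so Lemma~\ref{bsicproj}(1) applies with $\mu=\cf(\kappa)$ and yields $\proj(\mathcal C(\cf(\kappa),\theta),\kappa,\theta,\theta)$; it then suffices to verify $\mathcal C(\cf(\kappa),\theta)\le\cf(\kappa)\le\kappa$. For clause (1) with $\theta<\cf(\kappa)$ the same input suffices, since only $\mu=1$ is required. In both cases the covering bound comes from a \emph{club-guessing} sequence on $E^{\cf(\kappa)}_\theta$: a sequence $\langle C_\delta\mid\delta\in E^{\cf(\kappa)}_\theta\rangle$ of order-type-$\theta$ sets guessing clubs provides a club-hitting family $\{C_\delta\}$ of size $\le\cf(\kappa)$. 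For clause (3), where $\theta$ may be singular, the pcf hypothesis is exactly what replaces club guessing: to locate an order-type-$\theta$ subset of a given club one must cover the cofinal structure of $\theta$, a task measured by the poset $([\theta]^{\cf(\theta)},\s)$, and from a cofinal or dense family for it of size $\theta^+$ one threads the pieces through a guessing sequence for the \emph{regular} cofinality $\cf(\theta)$ to assemble a club-hitting family of size $\le\kappa$, which is then fed into the appropriate clause of Lemma~\ref{bsicproj}.

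The delicate case is clause (1) when $\kappa$ is singular and $\theta\ge\cf(\kappa)$, which falls outside the range $\theta\le\cf(\kappa)$ of Lemma~\ref{bsicproj}. Here I would argue directly, exploiting that the ground set is the full $\kappa$ rather than $\cf(\kappa)$: fix an increasing cofinal sequence of regular blocks $\langle\lambda_i\mid i<\cf(\kappa)\rangle$ with each $\lambda_i>\theta$. Since a single $B\in[\kappa]^\kappa$ has $\sum_{i<\cf(\kappa)}|B\cap[\lambda_i,\lambda_{i+1})|=\kappa$ over only $\cf(\kappa)$ blocks, the block-traces are cofinal in $\kappa$; in particular some trace has size at least a regular cardinal $\chi$ with $\theta<\chi$. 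Projecting that trace onto $\theta$ \emph{inside} its regular block is then an off-diagonal problem, handled by a within-block instance of the same construction with covering number $\mathcal C(\chi,\theta)\le\chi\le\kappa$. The crucial point is that working with a single large $B$ keeps us strictly off the diagonal $\theta=\chi$.

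The main obstacle is precisely the suite of covering-number estimates $\mathcal C(\cf(\kappa),\theta)\le\kappa$. For regular $\theta$ this rests on club guessing, whose sensitive boundary is the successor case $\cf(\kappa)=\theta^+$; note that the naive diagonal estimate is hopeless, as $\mathcal C(\theta,\theta)$ can be as large as $2^\theta$ (every two elements of a maximal almost disjoint family force distinct witnesses), which is exactly why clause (1) is restricted to $\mu=1$ and is driven to a single positive set so that the block argument can extract an off-diagonal trace. For singular $\theta$ the estimate is where the hypothesis of clause (3) is entirely spent, and the genuinely technical step is checking that the $([\theta]^{\cf(\theta)},\s)$-data, combined with guessing at $E^{\cf(\kappa)}_{\cf(\theta)}$, really does assemble into a club-hitting family of size $\le\kappa$.
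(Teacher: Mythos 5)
Your skeleton (Lemma~\ref{bsicproj} plus monotonicity of $\proj$ in its first coordinate, with special treatment of the boundary cases) is the same as the paper's, but the argument breaks down at exactly the two points where the paper has to work hardest. The first and most serious gap is the case $\cf(\kappa)=\theta^+$ with $\theta$ regular, which clauses (1) and (2) must cover: you propose to get $\mathcal C(\cf(\kappa),\theta)\le\cf(\kappa)$ from a club-guessing sequence on $E^{\cf(\kappa)}_\theta$, but Shelah's club-guessing theorem requires the gap $\theta^{++}\le\cf(\kappa)$; at gap one, club guessing (equivalently, the estimate $\mathcal C(\theta^+,\theta)=\theta^+$) is independent of \zfc\ --- for instance, club guessing on $E^{\omega_1}_\omega$ can consistently fail, and when $\kappa=\omega_1$, $\theta=\omega$ there is no slack in the first coordinate to absorb a larger covering number --- yet the corollary is a \zfc\ theorem, so this case provably cannot be obtained along your lines. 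You flag this boundary as ``sensitive'' but offer no argument for it. The paper abandons covering numbers there entirely: by \cite[Corollary~7.3]{paper47} (ultimately Todor\v{c}evi\'c-style strong colourings at successors of regulars), $\onto^{++}(J^{\bd}[\cf(\kappa)],\theta)$ holds, Remark~\ref{rmk313} converts this into $\proj(\cf(\kappa),\cf(\kappa),\theta,\theta)$, and Proposition~\ref{tukey} lifts it to $\kappa$. The same comment applies to clause (3): under the cofinality hypothesis $\cf([\theta]^{\cf(\theta)},{\s})=\theta^+$ the paper again does not build a club-hitting family but routes through $\cf(\kappa)\nrightarrow[\cf(\kappa);\cf(\kappa)]^2_\theta$ (Todor\v{c}evi\'c plus \cite{paper13}) and \cite[Proposition~6.6]{paper47}; your ``threading'' sketch is only plausible for the density hypothesis, where the paper cites \cite[Lemma~3.1]{paper38}.

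The second gap is your ``delicate case'' of clause (1), $\cf(\kappa)\le\theta<\kappa$. A witness to $\proj$ only shatters sets of full size in the ambient cardinal, but the block trace you extract merely has size at least some regular $\chi>\theta$, and it need not have full size in its block: for example $B$ may satisfy $|B\cap[\lambda_i,\lambda_{i+1})|=\lambda_i$ for every $i$, with $\lambda_{i+1}\ge\lambda_i^{++}$, so no trace is of size (or cofinal in) $\lambda_{i+1}$, and your ``within-block instance of the same construction'' is not an instance of anything established; you also never specify how the within-block maps are assembled into a single colouring $p:\kappa\times\kappa\rightarrow\theta$ fixed in advance of $B$. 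The paper's argument for this configuration solves both problems simultaneously: set $\varkappa:=\theta^{++}$, fix a $\varkappa$-bounded $C$-sequence $\langle C_\delta\mid\delta\in E^\kappa_\varkappa\rangle$ and a witness $p$ to $\proj(\varkappa,\varkappa,\theta,1)$ (available by the gap-two case); given $B$, the \emph{least} $\delta$ with $\otp(B\cap\delta)=\varkappa$ automatically lies in $E^\kappa_\varkappa$, the collapse $\beta\mapsto\otp(C_\delta\cap\beta)$ maps $B\cap\delta$ onto a full-size subset of $\varkappa$, and the pair $(\delta,i)$ is coded by the single index $\delta+i<\kappa$, yielding a uniform colouring. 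Nothing in your sketch supplies this anchoring-and-coding step, and without it the off-diagonal reduction does not go through.
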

\begin{proof} There are five cases to consider:
\begin{enumerate}
\item  If $\cf(\kappa)>\theta^+$ for $\theta$ regular,
then Shelah's club-guessing theorem implies that $\mathcal C(\cf(\kappa),\theta)=\cf(\kappa)$.
So, in this case, Case~(1) of Lemma~\ref{bsicproj} states that
$\proj(\cf(\kappa),\kappa,\theta,{<}\cf(\kappa))$ holds.
In particular, $\proj(\kappa,\kappa,\theta,\theta)$ holds.

\item  If $\cf(\kappa)=\theta^+$ for $\theta$ singular satisfying $\cf([\theta]^{\cf(\theta)},{\supseteq})=\theta^+$,
then by appealing to \cite[Lemma~3.1]{paper38} with $\nu:=\cf(\kappa)$,
we infer that $\mathcal C(\cf(\kappa),\theta)=\cf(\kappa)$.
The rest of the proof is now identical to that of Clause~(i).

\item  If $\cf(\kappa)=\theta^+$ for $\theta$ singular satisfying $\cf([\theta]^{\cf(\theta)},{\s})=\theta^+$,
then by a theorem of Todor\v{c}evi\'c (see \cite[Proposition~2.5]{paper_s02}), $\cf(\kappa)\nrightarrow[\cf(\kappa)]^2_{\theta}$ holds.
So, by the main result of \cite{paper13}, $\cf(\kappa)\nrightarrow[\cf(\kappa);\cf(\kappa)]^2_{\theta}$ holds.
Then, by \cite[Proposition~6.6]{paper47},
$\onto^{++}(J^{\bd}[\cf(\kappa)],\allowbreak\theta)$ holds,
so by Remark~\ref{rmk313},  $\proj(\cf(\kappa),\cf(\kappa),\theta,\theta)$ holds.

\item  If $\cf(\kappa)=\theta^+$ for $\theta$ regular,
then by \cite[Corollary~7.3]{paper47}, $\onto^{++}(J^{\bd}[\cf(\kappa)],\allowbreak\theta)$ holds,
so by Remark~\ref{rmk313},  $\proj(\cf(\kappa),\cf(\kappa),\theta,\theta)$ holds.

\item  If $\cf(\kappa)<\theta^+<\kappa$,
then set $\varkappa:=\theta^{++}$
and fix a $\varkappa$-bounded $C$-sequence $\langle C_\delta\mid \delta\in E^{\kappa}_{\varkappa}\rangle$.
Using Case~(i), fix a map $p:\varkappa\times\varkappa\rightarrow\theta$ witnessing $\proj(\varkappa,\varkappa,\theta,1)$.
Pick a map $q:\kappa\times\kappa\rightarrow\theta$ such that for all $\delta\in E^\kappa_{\varkappa}$, $i<\varkappa$ and $\beta<\delta$,
$q(\delta+i,\beta)=p(i,\otp(C_\delta\cap\beta))$. To see this works, let $B\in[\kappa]^\kappa$.
Find the least ordinal $\delta$ such that $\otp(B\cap\delta)=\varkappa$.
Then $\delta\in E^\kappa_\varkappa$ and $A:=\{ \otp(C_\delta\cap\beta)\mid \beta\in B\cap\delta\}$ is in $[\varkappa]^\varkappa$.
By the choice of $p$, fix $i<\varkappa$ such that $p[\{i\}\times A]=\theta$.
Then $q[\{\delta+i\}\times B]=\theta$.\qedhere
\end{enumerate}
\end{proof}

It is not hard to see that for every regular uncountable cardinal $\theta$,
$\mathcal C(\theta,\theta)=\mathfrak d_\theta$.
It thus follows from Case~(2) of Lemma~\ref{bsicproj} that $\proj(\mathfrak d_\theta,\theta,\theta,\theta)$ holds for every regular uncountable cardinal $\theta$.
The next result improves this, covering the case $\theta=\omega$ and showing that the $4^{\text{th}}$ parameter can consistently be bigger than $\theta$.

\begin{lemma}\label{d1}
$\proj(\mathfrak d_\theta,\theta,\theta,{<}\mathfrak b_\theta)$ holds for every infinite regular cardinal $\theta$.
\end{lemma}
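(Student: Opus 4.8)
The plan is to imitate the construction in Case~(2) of the proof of Lemma~\ref{bsicproj}, but to replace the club/diagonal-intersection device (which forces $|\mathcal B|\le\theta$ and breaks down when $\theta=\omega$) by a genuine domination argument. This single change both covers $\theta=\omega$ and allows families $\mathcal B$ of any size below $\mathfrak b_\theta$, since such a family of functions is automatically $\le^*$-bounded.

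First I would fix a $\le^*$-dominating family $\langle g_\eta\mid\eta<\mathfrak d_\theta\rangle$ in ${}^\theta\theta$, where after a harmless modification each $g_\eta$ is strictly increasing. For each $\eta<\mathfrak d_\theta$, let $X_\eta=\{x^\eta_\sigma\mid\sigma<\theta\}$ be the closure set generated by iterating $g_\eta$, i.e.\ $x^\eta_0:=g_\eta(0)$, $x^\eta_{\sigma+1}:=g_\eta(x^\eta_\sigma)$, and $x^\eta_\lambda:=\sup_{\sigma<\lambda}x^\eta_\sigma$ at limits. Since $\theta$ is regular, this is a strictly increasing continuous enumeration of a subset of $\theta$ of order type $\theta$ (note $x^\eta_\sigma\ge\sigma$, so $\sup_\sigma x^\eta_\sigma=\theta$). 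Exactly as in Lemma~\ref{bsicproj}, fix a surjection $\pi:\theta\rightarrow\theta$ such that $\{\sigma<\theta\mid\pi(\sigma+1)=\tau\}$ is cofinal in $\theta$ for every $\tau<\theta$, and define $p:\mathfrak d_\theta\times\theta\rightarrow\theta$ by $p(\eta,\beta):=\pi(\otp(X_\eta\cap\beta))$. Crucially, $p$ is fixed once and for all, independently of any $\mathcal B$.

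The domination step is the heart of the matter, and the only place the new hypothesis enters. Given $\mathcal B\s[\theta]^\theta$ with $|\mathcal B|<\mathfrak b_\theta$, associate to each $B\in\mathcal B$ its successor function $d_B(\xi):=\min(B\setminus(\xi+1))$, which is well defined since $B$ is unbounded in the regular cardinal $\theta$. As $|\{d_B\mid B\in\mathcal B\}|<\mathfrak b_\theta$, this family is $\le^*$-bounded, hence $\le^*$-dominated by a single function, which is in turn $\le^*$-dominated by some member $g_\eta$ of our dominating family; thus there is an $\eta<\mathfrak d_\theta$ with $d_B\le^* g_\eta$ for every $B\in\mathcal B$ simultaneously. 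Fixing $B\in\mathcal B$, once $x^\eta_\sigma$ exceeds the threshold beyond which $d_B\le g_\eta$ holds pointwise, we get $d_B(x^\eta_\sigma)\le g_\eta(x^\eta_\sigma)=x^\eta_{\sigma+1}$, and since $d_B(x^\eta_\sigma)>x^\eta_\sigma$, the half-open interval $(x^\eta_\sigma,x^\eta_{\sigma+1}]$ contains the $B$-point $d_B(x^\eta_\sigma)$. Hence, for every $B\in\mathcal B$, cofinally many of the intervals $(x^\eta_\sigma,x^\eta_{\sigma+1}]$ meet $B$.

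Finally I would verify $p[\{\eta\}\times B]=\theta$ for each $B\in\mathcal B$. Fix a colour $\tau<\theta$. Using that $\{\sigma\mid\pi(\sigma+1)=\tau\}$ is cofinal, choose such a $\sigma$ beyond the $B$-threshold and pick $\beta\in B\cap(x^\eta_\sigma,x^\eta_{\sigma+1}]$. Then the elements of $X_\eta$ strictly below $\beta$ are precisely $x^\eta_0,\dots,x^\eta_\sigma$ (the boundary case $\beta=x^\eta_{\sigma+1}$ being included because $x^\eta_{\sigma+1}\not<\beta$), so $\otp(X_\eta\cap\beta)=\sigma+1$ and $p(\eta,\beta)=\pi(\sigma+1)=\tau$. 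The main obstacle is conceptual rather than technical: recognizing that the $\mathcal C(\theta,\theta)$-based closure sets should be regenerated internally by an iterated dominating function, so that $\le^*$-boundedness below $\mathfrak b_\theta$ (rather than $\theta$-completeness of the club filter) supplies the uniform index $\eta$. The remaining verifications — the reduction to strictly increasing $g_\eta$ and the boundary check in the order-type computation — are routine.
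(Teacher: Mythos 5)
Your proposal is correct and takes essentially the same route as the paper's proof: the paper likewise fixes a dominating family $\langle f_\eta\mid\eta<\mathfrak d_\theta\rangle$, builds sets $X_\eta=\im(g_\eta)$ whose consecutive elements are spread out by $f_\eta$ (your iteration of the dominating function is the same device, just with $X_\eta$ additionally closed), defines $p(\eta,\beta):=\pi(\otp(X_\eta\cap\beta))$ with the same surjection $\pi$, and uses $|\mathcal B|<\mathfrak b_\theta$ to bound the successor functions $\xi\mapsto\min(B\setminus(\xi+1))$ and then dominate them all by a single index $\eta$. One small point: your ``harmless modification'' must ensure $g_\eta(\xi)>\xi$ for all $\xi$ (strict monotonicity alone allows fixed points, at which the iteration would stall and make $X_\eta$ bounded), but this is achieved by the equally harmless replacement $g_\eta(\xi)\mapsto\max\{g_\eta(\xi),\xi+1\}$.
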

\begin{proof} Suppose that $\theta$ is an infinite regular cardinal. We follow the proof of Lemma~\ref{bsicproj}.
Fix a surjection $\pi:\theta\rightarrow\theta$ such that,
for every $\tau<\theta$, $\{ \sigma<\theta\mid \pi(\sigma+1)=\tau\}$ is cofinal in $\theta$.
Denote $\nu:=\mathfrak d_\theta$.
Fix a sequence $\langle f_\eta\mid\eta<\nu\rangle$ that is cofinal in $({}^\theta\theta,{<^*})$.
For each $\eta<\nu$, set $X_\eta:=\im(g_\eta)$, where $g_\eta:\theta\rightarrow\theta$ is some strictly increasing function
satisfying $g_\eta(\sigma+1) > f_\eta(g_\eta(\sigma))$ for all $\sigma<\theta$.
Finally, define a map $p:\nu\times\kappa\rightarrow\theta$ via $p(\eta,\beta):=\pi(\otp(X_\eta\cap\beta))$.

To see this works, fix $\mathcal B\s [\theta]^\theta$ with $0<|\mathcal B|<\mathfrak b_\theta$.
For each $B\in\mathcal B$, define a function $f^B:\theta\rightarrow B$ via $$f^B(\xi):=\min(B\setminus(\xi+1)).$$
As $|\mathcal B|<\mathfrak b_\theta$, we may pick an $\eta<\nu$ such that, for every $B\in\mathcal B$,
$f^B<^*f_\eta$. We claim that $p[\{\eta\}\times B]=\theta$ for all $B\in\mathcal B$.
To this end, fix $B\in\mathcal B$ and a prescribed colour $\tau<\theta$.
Fix $\epsilon<\theta$ such that $f^B(\xi)<f_\eta(\xi)$ whenever $\epsilon<\xi<\theta$.
Find $\sigma<\theta$ above $\epsilon$ such that $\pi(\sigma+1)=\tau$.
Evidently, $$\epsilon<\sigma\le g_\eta(\sigma)<f^B(g_\eta(\sigma))<f_\eta(g_\eta(\sigma))<g_\eta(\sigma+1).$$
So $\beta:=f^B(g_\eta(\sigma))$ is an element of $B$ satisfying $g_\eta(\sigma)<\beta<g_\eta(\sigma+1)$,
and hence $p(\eta,\beta)=\pi(\otp(X_\eta\cap\beta))=\pi(\sigma+1)=\tau$, as sought.
\end{proof}
\begin{lemma}\label{lemma15}
Suppose that:
\begin{itemize}
\item $\theta\le\varkappa\le\kappa$;
\item $\mu\le\nu\le\kappa$;
\item $\mathcal J\s\mathcal J^\kappa_\omega$;
\item $\proj(\nu,\varkappa,\theta,\theta)$ holds.
\end{itemize}

\begin{enumerate}[(1)]
\item If $\ubd^{++}(\{\nu\},\mathcal J,\varkappa)$ holds, then so does $\onto^{++}(\{\nu\},\mathcal J,\theta)$;
\item If $\ubd^{+}(I,\mathcal J,\varkappa)$ holds with $I\in\mathcal J^\nu_{\theta^+}$, then so does $\onto^{++}(\{\nu\},\allowbreak\mathcal J,\theta)$;
\item If $\ubd^+(\{\mu\},\mathcal J,\varkappa)$ holds, $\mathcal J\s\mathcal J^\kappa_{\varkappa^+}$,
$\varkappa\in\reg(\kappa)$, $\cov(\mu,\varkappa,\theta^+,2)\le\nu$,
then $\onto^{++}(\{\nu\},\mathcal J,\theta)$ holds.
\end{enumerate}
\end{lemma}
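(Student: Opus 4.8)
The plan is to prove all three clauses with a single colouring template and to isolate the real difficulty in Clause~(3). Fix a map $p:\nu\times\varkappa\rightarrow\theta$ witnessing $\proj(\nu,\varkappa,\theta,\theta)$, a bijection $\pi:\nu\leftrightarrow\nu\times\nu$, and write $\pi(\eta)=(\eta_0,\eta_1)$ for $\eta<\nu$. In each clause I will be handed an upper-regressive colouring $d:[\kappa]^2\rightarrow\varkappa$ witnessing the relevant $\ubd$-principle, and I define $c:[\kappa]^2\rightarrow\theta$ by $c(\eta,\beta):=p(\eta_0,d(\eta_1,\beta))$ for $\eta<\nu$ (and $c(\eta,\beta):=0$ for $\eta\ge\nu$); since $\onto^{++}(\{\nu\},\cdots)$ only asks for a witness $\eta<\nu$, this is harmless. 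The back-end is always the same: given $J\in\mathcal J$ and $\langle B_\tau\mid\tau<\theta\rangle$ in $J^+$, I will produce a single row $\eta_1<\nu$ and, for each $\tau$, a set $A_\tau\in[\varkappa]^\varkappa$ of colours $\gamma$ with $\{\beta\in B_\tau\setminus(\eta_1+1)\mid d(\eta_1,\beta)=\gamma\}\in J^+$. Feeding the family $\{A_\tau\mid\tau<\theta\}$, which has size $\le\theta$, to $p$ yields $\eta_0<\nu$ with $p[\{\eta_0\}\times A_\tau]=\theta$ for all $\tau$, and then $\eta:=\pi^{-1}(\eta_0,\eta_1)$ works: for each $\tau$ pick $\gamma\in A_\tau$ with $p(\eta_0,\gamma)=\tau$; the set $\{\beta\in B_\tau\setminus(\eta+1)\mid c(\eta,\beta)=\tau\}$ differs from the $J^+$-set $\{\beta\in B_\tau\setminus(\eta_1+1)\mid d(\eta_1,\beta)=\gamma\}$ by a bounded (hence, as $J\supseteq J^{\bd}[\kappa]$, $J$-null) set, so it is $J$-positive. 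Thus everything reduces to securing the common front-end $\eta_1$ and the sets $A_\tau$.

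For Clause~(1) I obtain $\eta_1$ directly from $\ubd^{++}$. Fix a surjection $g:\varkappa\rightarrow\theta$ with $|g^{-1}\{\tau\}|=\varkappa$ for every $\tau<\theta$ (possible as $\theta\le\varkappa$), and apply $\ubd^{++}(\{\nu\},\mathcal J,\varkappa)$ to the sequence $\langle B_{g(\zeta)}\mid\zeta<\varkappa\rangle$ to obtain $\eta_1<\nu$ and a witnessing injection $h:\varkappa\rightarrow\varkappa$. Then $A_\tau:=h[g^{-1}\{\tau\}]$ has size $\varkappa$ and consists of colours realized positively on $B_\tau$ through the row $\eta_1$, exactly as needed.

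For Clause~(2) I obtain $\eta_1$ from completeness, reading the hypothesis as $\ubd^{+}(I^+,\mathcal J,\varkappa)$. As in the proof of Lemma~\ref{lemma39}, for every single $B\in J^+$ the set $U(B)$ of rows $\eta<\nu$ for which $\otp(\{\gamma<\varkappa\mid\{\beta\in B\setminus(\eta+1)\mid d(\eta,\beta)=\gamma\}\in J^+\})=\varkappa$ lies in $I^*$. Since $I\in\mathcal J^\nu_{\theta^+}$ is $\theta^+$-complete, $\bigcap_{\tau<\theta}U(B_\tau)\in I^*$ is nonempty, and any $\eta_1$ in it is simultaneously good for all the $B_\tau$, delivering the $A_\tau$ as above.

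Clause~(3) is where the template breaks and where the work lies. Now $\ubd^{+}(\{\mu\},\mathcal J,\varkappa)$ supplies, for each $B_\tau$, only \emph{some} good row $\eta_\tau<\mu$, and these may differ, so there is no ready-made common $\eta_1$. The role of $\cov(\mu,\varkappa,\theta^+,2)\le\nu$ is to localize them: a witnessing family $\mathcal P\subseteq[\mu]^{<\varkappa}$ of size $\le\nu$ puts the whole set $\{\eta_\tau\mid\tau<\theta\}\in[\mu]^{\le\theta}$ inside a single $P\in\mathcal P$ with $|P|<\varkappa$. Since the final row index ranges over $\nu\ge\cov(\mu,\varkappa,\theta^+,2)$, the row can encode both an index for such a $P$ and a proj-row, while the $<\varkappa$ rows of $P$ are spread across the colour space via a bijection $\varphi_P:P\times\varkappa\leftrightarrow\varkappa$. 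The main obstacle — and the crux of the whole lemma — is to \emph{merge} the rows $\{d(\rho,\cdot)\mid\rho\in P\}$ into the single row that $\onto^{++}$ permits, in such a way that each $B_\tau$ still exhibits $\varkappa$-many positively-realized colours. A naive, data-independent merge provably fails, since any fixed routing $\beta\mapsto\rho(\beta)\in P$ can divert an entire $B_\tau$ away from its good row $\eta_\tau$, collapsing the colour count. I therefore expect to deploy the two as-yet-unused hypotheses, namely that $\varkappa$ is regular and that every $J\in\mathcal J$ is $\varkappa^+$-complete: these should permit a recursion of length $<\varkappa$ that selects pairwise-disjoint $J$-positive pieces of the $B_\tau$ routed to the respective rows $\eta_\tau\in P$, so that on the merged row each $B_\tau$ recovers a set $A_\tau\in[\varkappa]^\varkappa$; once these are in hand the argument closes as in Clauses~(1) and~(2) through $p$. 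Checking that $\varkappa^+$-completeness together with the regularity of $\varkappa$ genuinely makes this merge positivity-preserving is the delicate point, and I anticipate it being cleanest to package the outcome as an auxiliary $\theta^+$-complete ideal supported on an enumeration of $\mathcal P$, so that Clause~(2) can be invoked to finish.
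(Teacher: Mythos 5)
Clauses (1) and (2) of your proposal are correct and follow essentially the paper's own route: the same composed colouring $\beta\mapsto p(\eta_0,d(\eta_1,\beta))$ keyed to a bijection $\pi:\nu\leftrightarrow\nu\times\nu$, and the same back-end through the projection $p$. Your explicit repetition trick for Clause (1) --- a surjection $g:\varkappa\rightarrow\theta$ with $\varkappa$-sized fibres, applied so that $A_\tau:=h[g^{-1}\{\tau\}]$ has size $\varkappa$ --- is exactly how the paper's terse ``by the choice of $c$ we may pick $\eta'<\nu$ such that each $X_\tau$ has order-type $\varkappa$'' is meant to be filled in. Likewise, your reading of the hypothesis of Clause (2) as $\ubd^+(I^+,\mathcal J,\varkappa)$ is the intended one (the paper later invokes the clause with $I^+=[\nu]^\nu$), and your Lemma~\ref{lemma39}-style argument ($U(B_\tau)\in I^*$ for each $\tau$, then $\theta^+$-completeness of $I$ to intersect) is the paper's.

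Clause (3), however, is a genuine gap, as you yourself flag: the merge is never carried out, and the repair you sketch cannot work as described. The recursion you propose ``selects pairwise-disjoint $J$-positive pieces of the $B_\tau$ routed to the respective rows $\eta_\tau\in P$'', but any such selection depends on $J$ and on the sequence $\langle B_\tau\mid\tau<\theta\rangle$, whereas the colouring --- and hence any routing $\beta\mapsto\rho(\beta)\in P$ wired into one of its rows --- must be fixed before the data appears; the only thing allowed to depend on the data is the choice of row index $\eta<\nu$, i.e.\ of the pair $(\alpha,i)$. So within the class of merges you consider (for each $\beta$, copy the colour of one selected row of $P$), the obstruction you identify is fatal, and no $<\varkappa$-length recursion, no auxiliary ideal on an enumeration of $\mathcal P$, escapes it.

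The paper escapes it with a merge of a different kind: a supremum rather than a routing. For $\alpha<\nu$ and $\beta<\kappa$ set $f(\alpha,\beta):=\sup(\{0\}\cup\{c(\eta,\beta)\mid\eta\in x_\alpha\cap\beta\})$, where $c$ is the $\ubd^+(\{\mu\},\mathcal J,\varkappa)$ witness and $\langle x_\alpha\mid\alpha<\nu\rangle\s[\mu]^{<\varkappa}$ is the covering family; since $|x_\alpha|<\varkappa$ and $\varkappa$ is regular, $f(\alpha,\beta)<\varkappa$, so $f(\alpha,\cdot)$ is a legitimate merged row, defined with no reference to the data. Given the data, choose $\alpha$ with $x_\alpha\supseteq\{\eta_\tau\mid\tau<\theta\}$. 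For every $\tau<\theta$ and $\epsilon<\varkappa$, the row $\eta_\tau$ positively realizes on $B_\tau$ some colour $\sigma\ge\epsilon$ (its set of positively realized colours has order-type $\varkappa$, hence is cofinal), and every $\beta$ in that $J^+$-set above $\max\{\alpha,\eta_\tau\}$ satisfies $f(\alpha,\beta)\ge c(\eta_\tau,\beta)=\sigma\ge\epsilon$. Thus $\{\beta\in B_\tau\setminus(\alpha+1)\mid f(\alpha,\beta)\ge\epsilon\}$ is $J$-positive, and it is the union of the $\varkappa$-many fibres $\{\beta\in B_\tau\setminus(\alpha+1)\mid f(\alpha,\beta)=\xi\}$ for $\xi\in[\epsilon,\varkappa)$; this is precisely where the $\varkappa^+$-completeness of $J$ enters, extracting a single positive fibre $\xi\ge\epsilon$. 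Hence the set $X_\tau$ of positively realized $f(\alpha,\cdot)$-colours is cofinal in $\varkappa$, so of size $\varkappa$, and feeding $\langle X_\tau\mid\tau<\theta\rangle$ to $p$ closes the proof exactly as in your Clauses (1) and (2). Note that this merge is data-independent and yet positivity-preserving in the only sense needed (cofinally many positive colours, not fibre-by-fibre fidelity to the row $\eta_\tau$); your impossibility intuition applies only to routing-type merges.
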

\begin{proof} Let $p:\nu\times\varkappa\rightarrow\theta$ be a function witnessing $\proj(\nu,\varkappa,\theta,\theta)$.
Fix a bijection $\pi:\nu\leftrightarrow\nu\times\nu$.

(1) Let $c:[\kappa]^2\rightarrow\varkappa$ be a colouring witnessing $\ubd^{++}(\{\nu\},\mathcal J,\varkappa)$.
Then, pick any colouring $d:[\kappa]^2\rightarrow\theta$ such that
for all $\eta<\beta<\kappa$, if $\eta<\nu$ and $\pi(\eta)=(\eta',i)$, then $d(\eta,\beta)=p(i,c(\{\eta',\beta\}))$.

To see that $d$ is as sought, let $\langle B_\tau\mid\tau<\theta\rangle$ be a sequence of sets in $J^+$, for a given $J\in\mathcal J$.
By the choice of $c$, we may pick an $\eta'<\nu$ such that, for every $\tau<\theta$, the following set has order-type $\varkappa$:
$$X_\tau:=\{\xi<\varkappa\mid \{ \beta\in B_\tau\setminus(\eta'+1)\mid c(\eta',\beta)=\xi\}\in J^+\}.$$

Let $i<\nu$ be such that $p[\{i\}\times X_\tau]=\theta$ for all $\tau<\theta$.
Let $\eta<\nu$ be such that $\pi(\eta)=(\eta',i)$.
\begin{claim}\label{claim3201} Let $\tau<\theta$.
Then $\{ \beta\in B_\tau\setminus(\eta+1)\mid d(\eta,\beta)=\tau\}$ is in $J^+$.
\end{claim}
\begin{why}
Fix $\xi\in X_\tau$ such that $p(i,\xi)=\tau$.
As $\xi\in X_\tau$,
the set	$B':=\{ \beta\in B_\tau\setminus(\eta'+1)\mid c(\eta',\beta)=\xi\}$ is in $J^+$.
As $J$ extends $J^{\bd}[\kappa]$, so is $B'\setminus(\max\{\eta',\eta\}+1)$.
Now, for every $\beta\in B'\setminus(\max\{\eta',\eta\}+1)$,
$$d(\eta,\beta)=p(i,c(\eta',\beta))=p(i,\xi)=\tau,$$
as sought.
\end{why}

(2) Let $c:[\kappa]^2\rightarrow\varkappa$ be a colouring witnessing $\ubd^{++}(I,\mathcal J,\varkappa)$ for a given $I\in\mathcal J^\nu_{\theta^+}$.
As $I$ is $\theta^+$-complete, by the same proof of Lemma~\ref{lemma39},
for every sequence $\langle B_\tau\mid\tau<\theta\rangle$ of $J$-positive sets for some $J\in\mathcal J$,
there exists an $\eta'<\nu$ such that, for every $\tau<\theta$, the following set has order-type $\varkappa$:
$$X_\tau:=\{\xi<\varkappa\mid \{ \beta\in B_\tau\setminus(\eta'+1)\mid c(\eta',\beta)=\xi\}\in J^+\}.$$
Thus, a proof nearly identical to that of Clause~(1) establishes that $\onto^{++}(\{\nu\},\allowbreak\mathcal J,\theta)$ holds.

(3) Let $c:[\kappa]^2\rightarrow\varkappa$ be a colouring witnessing $\ubd^{+}(\{\mu\},\mathcal J,\varkappa)$
for a collection $\mathcal J\s\mathcal J^\kappa_{\varkappa^+}$.
Assuming $\cov(\mu,\varkappa,\theta^+,2)\le\nu$,
we may fix a sequence $\langle x_\alpha\mid\alpha<\nu\rangle$ of elements $[\mu]^{<\varkappa}$
such that, for every $y\in[\mu]^\theta$, there exists $\alpha<\nu$ such that $y\s x_\alpha$.
Assuming that $\varkappa$ is regular, we may define a function $f:\nu\times\kappa\rightarrow\varkappa$ via:
$$f(\alpha,\beta):=\sup\{0,c(\eta,\beta)\mid \eta\in x_\alpha\cap\beta\}.$$
Then, pick any colouring $d:[\kappa]^2\rightarrow\theta$ such that for all $\eta<\beta<\kappa$,
if $\eta<\nu$ and $\pi(\eta)=(\alpha,i)$, then $d(\eta,\beta)=p(i,f(\alpha,\beta))$.

To see that $d$ is as sought, let $\langle B_\tau\mid\tau<\theta\rangle$ be a sequence of sets in $J^+$,
for a given $J\in\mathcal J$.
By the choice of $c$, for each $\tau<\theta$, we may fix an $\eta_\tau<\mu$ for which
$$\otp(\{\sigma<\varkappa\mid \{\beta\in B_\tau\setminus(\eta_\tau+1)\mid c(\eta_\tau,\beta)=\sigma\}\in J^+\})=\varkappa.$$
Find $\alpha<\nu$ such that $x_\alpha\supseteq\{ \eta_\tau\mid\tau<\theta\}$.
\begin{claim} Let $\tau<\theta$ and $\epsilon<\varkappa$. There exists $\xi\in[\epsilon,\varkappa)$ for which
$$\{\beta\in B_\tau\setminus(\alpha+1)\mid f(\alpha,\beta)=\xi\}\in J^+.$$
\end{claim}
\begin{why} By the choice of $\eta_\tau$ and as $J$ extends $J^{\bd}[\kappa]$,
we may fix $\sigma\in[\epsilon,\varkappa)$ for which
$B':=\{\beta\in B_\tau\setminus(\max\{\alpha,\eta_\tau\}+1)\mid c(\eta_\tau,\beta)=\sigma\}$ is in $J^+$.
Evidently, $\{ \beta\in B_\tau\setminus(\alpha+1)\mid f(\alpha,\beta)\ge\epsilon\}$ covers $B'$.
As $J$ is $\varkappa^+$-complete, there must exist some $\xi\in[\epsilon,\varkappa)$ as sought.
\end{why}

By the preceding claim, for each $\tau<\theta$, the following set is cofinal in $\varkappa$:
$$X_\tau:=\{\xi<\varkappa\mid \{\beta\in B_\tau\setminus(\alpha+1)\mid f(\alpha,\beta)=\xi\}\in J^+\}.$$
Fix $i<\nu$ such that $p[\{i\}\times X_\tau]=\theta$ for all $\tau<\theta$.
Let $\eta<\nu$ be such that $\pi(\eta)=(\alpha,i)$.
Then a verification similar to the proof of Claim~\ref{claim3201}
implies that, for every $\tau<\theta$,
$\{\beta\in B_\tau\setminus(\eta+1)\mid d(\eta,\beta)=\tau\}$ is in $J^+$.
\end{proof}

\begin{cor}\label{cor415} Suppose that $\ubd^+([\kappa]^\kappa,\mathcal J,\varkappa)$ holds for a given $\mathcal J\s \mathcal J^\kappa_\omega$ and $\varkappa\le \kappa$.
For every $\theta\in\reg(\cf(\varkappa))$, $\onto^{++}(\mathcal J,\theta)$ holds.
\end{cor}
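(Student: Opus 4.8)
The plan is to feed the hypothesis into Lemma~\ref{lemma15}(2) with $\nu:=\kappa$, after two preparations. For the projection parameter, note that $\theta\in\reg(\cf(\varkappa))$ forces $\theta<\varkappa$, so Corollary~\ref{regularpairs}(2), applied with $\varkappa$ in the role of $\kappa$, gives $\proj(\varkappa,\varkappa,\theta,\theta)$. Enlarging the first parameter is harmless: a witnessing map $p:\varkappa\times\varkappa\rightarrow\theta$ extends (define it arbitrarily on $[\varkappa,\kappa)\times\varkappa$) to a witness for $\proj(\kappa,\varkappa,\theta,\theta)$, since the relevant $\eta$ is still produced below $\varkappa\le\kappa$. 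This supplies the projection hypothesis $\proj(\nu,\varkappa,\theta,\theta)$ of Lemma~\ref{lemma15} with $\nu=\kappa$.

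The second preparation is to recast the hypothesis in the $I^+$-form that Lemma~\ref{lemma15}(2) operates with (its proof uses $U(B)\in I^*$ together with the $\theta^+$-completeness of $I$, so the operative assumption is that the colouring witnesses $\ubd^+(I^+,\mathcal J,\varkappa)$ for some $I\in\mathcal J^\kappa_{\theta^+}$). Put $I:=[\kappa]^{<\kappa}$. Then $I^+=[\kappa]^\kappa$, so the assumed $\ubd^+([\kappa]^\kappa,\mathcal J,\varkappa)$ is literally $\ubd^+(I^+,\mathcal J,\varkappa)$. Moreover $I$ extends $J^{\bd}[\kappa]$, and provided $\theta<\cf(\kappa)$ a union of at most $\theta$ many sets of size ${<}\kappa$ again has size ${<}\kappa$, so $I$ is $\theta^+$-complete; hence $I\in\mathcal J^\kappa_{\theta^+}$ as an ideal over $\nu=\kappa$. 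The inequality $\theta<\cf(\kappa)$ is immediate whenever $\kappa$ is regular (then $\theta<\varkappa\le\kappa=\cf(\kappa)$), and also whenever $\varkappa=\kappa$ (then $\cf(\kappa)=\cf(\varkappa)>\theta$).

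With $\nu:=\kappa$, $\mu:=\theta$, and this $I$ — the remaining standing requirements $\theta\le\varkappa\le\kappa$ and $\mathcal J\s\mathcal J^\kappa_\omega$ being given — I would invoke Lemma~\ref{lemma15}(2) to obtain $\onto^{++}(\{\kappa\},\mathcal J,\theta)$, which by Convention~\ref{colouringconventions}(2) is exactly the desired $\onto^{++}(\mathcal J,\theta)$. The main obstacle is the bookkeeping around the index family: the hypothesis is phrased over the \emph{large} family $[\kappa]^\kappa$, whereas Lemma~\ref{lemma15}(2) demands an $I^+$ for a $\theta^+$-complete ideal $I$ over $\nu$. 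This is what pins down $\nu=\kappa$ (an $I$ over any $\nu<\kappa$ would have $I^+\not\s[\kappa]^\kappa$) and what makes the $\theta^+$-completeness of $[\kappa]^{<\kappa}$, i.e.\ the inequality $\theta<\cf(\kappa)$, the real content of the reduction. In the regular case, and when $\varkappa=\kappa$, this inequality is automatic and the argument goes through verbatim; it is the one point to confirm under the ambient hypotheses in any remaining case.
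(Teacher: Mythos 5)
Your argument is exactly the paper's proof: the paper, too, derives $\proj(\varkappa,\varkappa,\theta,\theta)$ from Corollary~\ref{regularpairs}(2) and then invokes Lemma~\ref{lemma15}(2), which amounts precisely to your choices $\nu:=\kappa$ and $I:=[\kappa]^{<\kappa}$ (so that $I^+=[\kappa]^\kappa$ and the stated hypothesis is literally the $I^+$-form that the proof of Lemma~\ref{lemma15}(2), via Lemma~\ref{lemma39}, operates with), together with the harmless extension of the witnessing map to $\proj(\kappa,\varkappa,\theta,\theta)$. The one caveat you flag --- that $[\kappa]^{<\kappa}\in\mathcal J^\kappa_{\theta^+}$ requires $\theta<\cf(\kappa)$, which is automatic when $\kappa$ is regular or $\varkappa=\kappa$ but not when $\kappa$ is singular with $\cf(\kappa)\le\theta<\cf(\varkappa)\le\varkappa<\kappa$ --- is equally present and unaddressed in the paper's own one-line proof (and is harmless in the paper's application, Corollary~\ref{cor52}, where $\varkappa=\kappa$ is regular), so it is not a defect of your reconstruction but an implicit hypothesis of the argument exactly as the paper gives it.
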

\begin{proof}
Let $c$ be a colouring witnessing $\ubd^+([\kappa]^\kappa,\mathcal J,\varkappa)$.
Let $\theta\in\reg(\cf(\varkappa))$.
By Corollary~\ref{regularpairs}(2), $\proj(\varkappa,\varkappa,\theta,\theta)$ holds.
So, by Lemma~\ref{lemma15}(2), $\onto^{++}(\mathcal J,\allowbreak\theta)$ holds.
\end{proof}

\begin{cor}\label{lemma16}
Suppose that $\ubd^+(\{\mu\},\mathcal J,\theta)$ holds for a given collection $\mathcal J\s\mathcal J^\kappa_{\theta^+}$
and $\aleph_0\le\theta\le\mu\le\kappa$.
Then $\onto^{++}(\{\mu\},\mathcal J,n)$ holds  for every positive integer $n$.\qed
\end{cor}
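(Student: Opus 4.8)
The plan is to reduce the number of colours to the regular cardinal $\lambda:=\cf(\theta)$ and then feed the result into Lemma~\ref{lemma15}(3). If $\mathcal J=\emptyset$ the statement is vacuous, so fix $J\in\mathcal J$. As $J$ is a proper, $\theta^+$-complete ideal extending $J^{\bd}[\kappa]$, and $\kappa$ is the union of $\cf(\kappa)$-many bounded (hence $J$-null) sets, $J$ fails to be $\cf(\kappa)^+$-complete; therefore $\theta^+\le\cf(\kappa)$, so that $\lambda=\cf(\theta)\le\theta<\cf(\kappa)\le\kappa$ and in particular $\lambda\in\reg(\kappa)$.

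Next I would carry out the colour reduction. Let $c:[\kappa]^2\rightarrow\theta$ witness $\ubd^+(\{\mu\},\mathcal J,\theta)$, fix a strictly increasing continuous sequence $\langle\delta_\zeta\mid\zeta<\lambda\rangle$ that is cofinal in $\theta$ with $\delta_0=0$, and define an upper-regressive colouring $c':[\kappa]^2\rightarrow\lambda$ by setting $c'(\eta,\beta)$ to be the unique $\zeta<\lambda$ with $\delta_\zeta\le c(\eta,\beta)<\delta_{\zeta+1}$. Upper-regressivity persists, since $c'(\eta,\beta)\le\delta_{c'(\eta,\beta)}\le c(\eta,\beta)<\beta$. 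Given $J\in\mathcal J$ and $B\in J^+$, choose $\eta<\mu$ for which the set $G$ of colours $\sigma<\theta$ with $\{\beta\in B\setminus(\eta+1)\mid c(\eta,\beta)=\sigma\}\in J^+$ has order-type $\theta$. Then $G$ is cofinal in $\theta$, so the set of indices $\zeta$ for which $G$ meets $[\delta_\zeta,\delta_{\zeta+1})$ is cofinal in $\lambda$; every such $\zeta$ is a $c'$-colour that is $J^+$ over $B$, and since $\lambda$ is regular this set of good $c'$-colours has order-type $\lambda$. Hence $c'$ witnesses $\ubd^+(\{\mu\},\mathcal J,\lambda)$.

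It then remains to check the hypotheses of Lemma~\ref{lemma15}(3) under the substitution $(\varkappa,\theta,\mu,\nu):=(\lambda,n,\mu,\mu)$ (the remaining size constraints $n\le\lambda\le\kappa$ and $\mathcal J\s\mathcal J^\kappa_\omega$ being immediate). The principle $\ubd^+(\{\mu\},\mathcal J,\lambda)$ was just obtained; $\mathcal J\s\mathcal J^\kappa_{\theta^+}\s\mathcal J^\kappa_{\lambda^+}$ because $\lambda\le\theta$; and $\lambda\in\reg(\kappa)$ was noted above. Since $n$ is finite we have $n<\lambda$, so $[\mu]^{\le n}\s[\mu]^{<\lambda}$, and this family of size $\mu$ already covers $[\mu]^{\le n}$, giving $\cov(\mu,\lambda,n^+,2)\le\mu$. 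For the standing requirement $\proj(\mu,\lambda,n,n)$, Proposition~\ref{bruteforce} supplies $\proj(\lambda^n,\lambda,n,n)$, and as $\lambda^n=\lambda\le\mu$, enlarging the first coordinate (padding the projection with a constant row) yields $\proj(\mu,\lambda,n,n)$.

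With every hypothesis verified, Lemma~\ref{lemma15}(3) returns $\onto^{++}(\{\mu\},\mathcal J,n)$, which is what was wanted. I expect the only non-routine ingredient to be the colour reduction of the second paragraph: collapsing $\theta$ onto a \emph{fixed} $\omega$-partition would fail once $\cf(\theta)>\omega$, as an order-type-$\theta$ set of good colours could be absorbed into a single block; collapsing along a cofinal $\lambda$-sequence is what both preserves upper-regressivity and, via the regularity of $\lambda$, keeps order-type $\lambda$ of good colours alive. The remaining steps are parameter bookkeeping matching Lemma~\ref{lemma15}(3) and Proposition~\ref{bruteforce}.
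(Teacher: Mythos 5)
Your overall strategy is sound, and it is surely the intended one: the paper states this corollary without proof because it is meant to fall out of Lemma~\ref{lemma15}(3) once one has (a) a reduction of the number of colours to a \emph{regular} cardinal, (b) the covering bound, and (c) the relevant projection instance. Your reduction from $\theta$ to $\lambda=\cf(\theta)$ colours is correct and genuinely needed (Lemma~\ref{lemma15}(3) requires $\varkappa\in\reg(\kappa)$, and its proof needs the sets $X_\tau$ to lie in $[\varkappa]^\varkappa$, which is why one cannot run it with a singular $\varkappa$); the derivation of $\theta^+\le\cf(\kappa)$ is correct (and your appeal to properness is harmless, since any $J\in\mathcal J$ with $\kappa\in J$ has $J^+=\emptyset$, for which the conclusion is vacuous); and $\cov(\mu,\lambda,n^+,2)\le\mu$ via the family of finite subsets of $\mu$ is correct.

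The step that does not hold up as written is the justification of $\proj(\mu,\lambda,n,n)$. Proposition~\ref{bruteforce} cannot be invoked with $(\theta,\mu,\kappa):=(n,n,\lambda)$: under the paper's standing conventions the middle parameter $\mu$ there is an \emph{infinite} cardinal, and this is not cosmetic --- the proof of Proposition~\ref{bruteforce} fixes a surjection $s:\mu\rightarrow\theta$ all of whose fibers have size $\mu$, and an injection $h$ of a set of size $\theta\cdot\mu$ into $\mu$, both of which are impossible when $\mu=\theta=n\ge2$ is finite. Nor can you repair this by applying the proposition legitimately with $\mu:=\aleph_0$, since that yields $\proj(\lambda^{\aleph_0},\lambda,n,\aleph_0)$, and $\proj$ is not downward monotone in its first parameter, so $\lambda^{\aleph_0}$ (which may well exceed $\mu$) cannot be shrunk to $\lambda$. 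Fortunately the instance you need is true and easy, so the gap is local and fixable: fix a surjection $e:\lambda\rightarrow[\lambda]^n$, and for $e(\eta)=\{a_0<\dots<a_{n-1}\}$ let $p(\eta,\beta):=|\{j<n\mid a_j\le\beta\}|\bmod n$; given $n$ sets in $[\lambda]^\lambda$ (each cofinal, as $\lambda$ is regular), recursively choose $a_0<\dots<a_{n-1}$ so that every one of the $n$ induced intervals meets every one of the $n$ sets, and any $\eta$ with $e(\eta)=\{a_0,\dots,a_{n-1}\}$ witnesses $\proj(\lambda,\lambda,n,n)$; then pad to $\proj(\mu,\lambda,n,n)$ exactly as you describe. (Equivalently, rerun the proof of Proposition~\ref{bruteforce} with injections from $\theta\times\mu=n^2$ rather than from $\mu$.) With this replacement, your argument goes through.
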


A proof similar to that of Lemma~\ref{lemma15} establishes:
\begin{lemma} \label{ubdtoonto}
Suppose that $\theta\le\varkappa\le\kappa$ and $\mu\le\nu\le\kappa$ are cardinals, $\mathcal J\s \mathcal J^\kappa_\omega$,
and $\proj(\nu,\varkappa,\theta,1)$ holds.
\begin{enumerate}[(1)]
\item If $\ubd(\{\mu\},\mathcal J,\varkappa)$ holds and either $\mu<\kappa$ or $\mathcal J$ consists of subnormal ideals, then $\onto(\{\nu\},\mathcal J,\theta)$ holds;
\item If $\ubd^{+}(\{\nu\},\mathcal J,\varkappa)$ holds, then so does $\onto^{+}(\{\nu\},\mathcal J,\theta)$.\qed
\end{enumerate}
\end{lemma}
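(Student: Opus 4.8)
The plan is to imitate the proof of Lemma~\ref{lemma15}. Fix a colouring $c:[\kappa]^2\rightarrow\varkappa$ witnessing the relevant $\ubd$-principle, a map $p:\nu\times\varkappa\rightarrow\theta$ witnessing $\proj(\nu,\varkappa,\theta,1)$, and a bijection $\pi:\nu\leftrightarrow\nu\times\nu$. Define $d:[\kappa]^2\rightarrow\theta$ by letting, for $\eta<\beta<\kappa$ with $\eta<\nu$ and $\pi(\eta)=(\eta',i)$ with $\eta'<\mu$, $d(\eta,\beta):=p(i,c(\{\eta',\beta\}))$, and $d(\eta,\beta):=0$ otherwise. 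In both clauses, given $J\in\mathcal J$ and a relevant $B\in J^+$, the $\ubd$-witness $c$ supplies some $\eta'<\mu$ (in Clause~(2), some $\eta'<\nu$) for which the set $X$ of colours realized by $\{\eta'\}\circledast B$ (in Clause~(2): realized with a $J$-positive fibre) has order-type $\varkappa$, hence size $\varkappa$; feeding the single set $X$ into $\proj(\nu,\varkappa,\theta,1)$ yields an $i<\nu$ with $p[\{i\}\times X]=\theta$, and I set $\eta:=\pi^{-1}(\eta',i)<\nu$. For each target colour $\tau<\theta$ one then locates some $\sigma\in X$ with $p(i,\sigma)=\tau$ and a $\beta\in B$ with $c(\eta',\beta)=\sigma$, so that $d(\eta,\beta)=\tau$.

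For Clause~(2) this already finishes, and the argument runs exactly as in the verification of Claim~\ref{claim3201}: since $c$ witnesses $\ubd^+$, each $\sigma\in X$ has a $J$-positive fibre $\{\beta\in B\setminus(\eta'+1)\mid c(\eta',\beta)=\sigma\}$, and as $J$ extends $J^{\bd}[\kappa]$, deleting the bounded initial segment below $\eta$ keeps this fibre $J$-positive. Thus for every $\tau<\theta$ the set $\{\beta\in B\setminus(\eta+1)\mid d(\eta,\beta)=\tau\}$ contains a $J$-positive set and is therefore $J$-positive, which is exactly $\onto^+(\{\nu\},J,\theta)$.

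The real work is in Clause~(1). Here $c$ only witnesses $\ubd$, so $X$ records colours that are merely \emph{realized} by $\{\eta'\}\circledast B$ and carries no positivity of fibres; consequently a colour $\sigma\in X$ with $p(i,\sigma)=\tau$ may be realized only by ordinals $\beta$ lying below the eventual index $\eta=\pi^{-1}(\eta',i)$, in which case the computation $d(\eta,\beta)=\tau$ is unavailable. Producing, for each $\tau$, a realizer \emph{above} $\eta$ is the main obstacle, and it is precisely here that the extra hypothesis enters. When $\mu<\kappa$ one breaks the apparent circularity by applying the $\ubd$-witness to a tail: the clean instance is $\nu<\kappa$, where $B\setminus\nu$ is still in $J^+$, so $c$ furnishes $\eta'<\mu$ with $\otp(c[\{\eta'\}\circledast(B\setminus\nu)])=\varkappa$, and now every colour of the resulting $X$ is realized at height $\ge\nu>\eta$, making the computation above go through verbatim. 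The general $\mu<\kappa$ case is obtained by pigeonholing the value $\eta'<\mu$ across a cofinal family of tails $B\setminus\zeta$ and passing to the colours whose $c(\eta',\cdot)$-fibres are unbounded in $\kappa$.

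When instead every $J\in\mathcal J$ is subnormal, I would argue by contradiction as in the subnormal lemmas of Section~\ref{subnormalrevisited}: assuming $d$ fails $\onto(\{\nu\},J,\theta)$ over some $B\in J^+$, one uses Definition~\ref{defsubnormal} to thin $B$ to a $J$-positive $B'$ on which the realizers relevant to $p(i,\cdot)$ are pushed up, so that the onto-ness of $c$ on $B'$ (delivered by the $\ubd$-witness) forces a realizer of the missing colour above $\eta$, a contradiction. I expect this subnormal relocation step — playing the role that positivity of fibres plays in Clause~(2) and that the tail plays in the $\mu<\kappa$ case — to be the most delicate point, since subnormality governs the two-dimensional behaviour of pairs rather than the one-dimensional fibres of $c(\eta',\cdot)$ directly, and reconciling this with the need to keep $X$ of order-type $\varkappa$ is the crux of the verification.
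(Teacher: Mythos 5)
Your Clause (2) argument is correct and is precisely the intended adaptation of Lemma~\ref{lemma15}(1): positivity of the fibres, together with the fact that every $J\in\mathcal J$ extends $J^{\bd}[\kappa]$, is exactly what licenses discarding the initial segment below $\eta$, as in Claim~\ref{claim3201}. Your tail trick likewise settles Clause (1) whenever $\nu<\kappa$: applying the $\ubd$-witness to $B\setminus\nu\in J^+$ forces every realizer to lie above every possible code $\eta<\nu$. The gaps are in the two devices you propose for the remaining case $\nu=\kappa$.

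First, the pigeonhole-plus-unbounded-fibres plan for $\mu<\kappa$ fails. The paper's own cautionary discussion at the opening of Section~\ref{subnormalrevisited} describes a witness $c$ to $\ubd(J^{\bd}[\kappa],\kappa)$ and an unbounded $B$ such that $c_{\eta}$ is \emph{injective} over $B$ for every $\eta$; for such a pair, every tail of $B$ still realizes a set of colours of order-type $\kappa$ with every $\eta$ (so your pigeonhole is vacuously satisfied), yet every fibre is a singleton, so the set of colours with unbounded fibres is empty. Passing from ``$\varkappa$ many realized colours'' to ``$\varkappa$ many colours with unbounded (or positive) fibres'' is exactly the nontrivial implication $\ubd\Rightarrow\ubd^+$, which that section obtains only through walks on ordinals and the hypothesis $\chi(\kappa)>1$; it is not available by a pigeonhole argument. (In addition, extracting a single $\eta'<\mu$ good for cofinally many tails requires $\mu<\cf(\kappa)$, which does not follow from $\mu<\kappa$ since $\kappa$ is not assumed regular here.)

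Second, the subnormal case is not proved, and the sketch cannot succeed as described. A failure of the plain principle $\onto(\{\nu\},J,\theta)$ produces \emph{empty} exceptional sets $\{\beta\in B\setminus(\eta+1)\mid d(\eta,\beta)=\tau_\eta\}$, whose complements are all of $\kappa$; Definition~\ref{defsubnormal} then yields no thinning with any content, so there is nothing for subnormality to ``push up''. In the paper's subnormal upgrade arguments (Lemma~\ref{lemma114}, and the lemma following Lemma~\ref{lemma43b}) the sets fed into subnormality come from the failure of a positive-fibre ($^{+}$) statement, and realizers are relocated upwards by a $({\ge}2)$-to-$1$ composition, so that of two realizers of one colour the larger lies above the smaller; moreover, those arguments form intersections of the $E_\eta$'s indexed by ordinals that either lie in $B'$ itself or require $\varkappa^+$- or $\kappa$-completeness, neither of which is available for $\mathcal J\s\mathcal J^\kappa_\omega$ when the relevant index $\eta$ is a code below $\nu$ rather than an element of $B$. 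Your sketch contains neither mechanism, and you yourself flag this step as unresolved; as it stands, your proposal establishes Clause (2) in full but Clause (1) only under the additional assumption $\nu<\kappa$.
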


By putting together the tools established so far, we obtain the following nontrivial monotonicity result.

\begin{cor}[monotonicity]\label{cmonotone} Suppose that $\ubd^+(\mathcal J,\varkappa)$ holds for a given $\mathcal J\s \mathcal J^\kappa_\omega$ and $\varkappa\le \kappa$.
\begin{enumerate}[(1)]
\item For every $\theta<\varkappa$, $\ubd^+(\mathcal J,\theta)$ holds;
\item For every regular $\theta<\varkappa$, $\onto^+(\mathcal J,\theta)$ holds;
\item For every $\theta\le\varkappa$ such that $2^\theta\le\kappa$, $\onto^+(\mathcal J,\theta)$ holds;
\item For every regular $\theta<\varkappa$ such that $\cf([\kappa]^\theta,{\s})=\kappa$, $\onto^{++}(\mathcal J^\kappa_{\theta^{++}}\cap\mathcal J,\theta)$ holds.
\end{enumerate}
\end{cor}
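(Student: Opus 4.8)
The plan is to derive all four clauses from the single hypothesis $\ubd^+(\mathcal J,\varkappa)=\ubd^+(\{\kappa\},\mathcal J,\varkappa)$ by collapsing the $\varkappa$ colours of a fixed witness down to $\theta$ (or first to $\theta^+$) colours via a projection in the sense of Definition~\ref{projdefn}, and feeding the outcome into the transfer lemmas of this section. Two elementary observations will be used throughout. First, since every $J\in\mathcal J$ extends $J^{\bd}[\kappa]$, any witness $c$ to $\onto^+(\mathcal J,\theta)$ may be truncated on the bounded part (put $c(\eta,\beta):=0$ whenever $c(\eta,\beta)\ge\beta$) to become upper-regressive without changing positivity; as then all $\theta$ colours remain positive for the relevant $\eta$, such a truncated witness also witnesses $\ubd^+(\mathcal J,\theta)$. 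Thus an upper-regressive $\onto^+(\mathcal J,\theta)$ always yields $\ubd^+(\mathcal J,\theta)$. Second, $\proj$ is monotone in its first parameter: a witness on $\nu\times\varkappa$ extends to $\nu'\times\varkappa$ for $\nu'\ge\nu$, so it suffices to produce a projection whose first coordinate is $\le\kappa$.

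For Clause~(2), let $\theta<\varkappa$ be regular, so $\theta\in\reg(\varkappa)$. Applying Corollary~\ref{regularpairs}(1) with $\varkappa$ as the ambient cardinal gives $\proj(\varkappa,\varkappa,\theta,1)$, hence $\proj(\kappa,\varkappa,\theta,1)$ by monotonicity. Now Lemma~\ref{ubdtoonto}(2), with $\nu:=\kappa$ and middle parameter $\varkappa$, turns $\ubd^+(\{\kappa\},\mathcal J,\varkappa)$ into $\onto^+(\mathcal J,\theta)$; truncating as above settles simultaneously Clause~(2) and the regular instances of Clause~(1). For Clauses~(3) and~(4) I first use these regular instances to replace $\varkappa$ by the regular cardinal $\theta^+$: indeed $\ubd^+(\mathcal J,\theta^+)$ holds (it is the hypothesis when $\theta^+=\varkappa$, and the previous step when $\theta^+<\varkappa$). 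For Clause~(3), if $\theta=\varkappa$ I work directly with middle $\theta$; otherwise I pass to middle $\theta^+$. Since $(\theta^+)^\theta=2^\theta\le\kappa$, Proposition~\ref{bruteforce} (ambient $\theta^+$, with its $\mu:=\theta$) yields $\proj(2^\theta,\theta^+,\theta,\theta)$, hence $\proj(\kappa,\theta^+,\theta,1)$, and Lemma~\ref{ubdtoonto}(2) produces $\onto^+(\mathcal J,\theta)$. For Clause~(4), set $\mathcal J':=\mathcal J^\kappa_{\theta^{++}}\cap\mathcal J$ and invoke Lemma~\ref{lemma15}(3) with its middle parameter equal to $\theta^+$ and $\mu:=\nu:=\kappa$: the completeness requirement becomes $\mathcal J'\s\mathcal J^\kappa_{(\theta^+)^+}$, which holds by definition; the covering requirement $\cov(\kappa,\theta^+,\theta^+,2)\le\kappa$ is precisely the assumption $\cf([\kappa]^\theta,\s)=\kappa$; and $\proj(\kappa,\theta^+,\theta,\theta)$ comes from Corollary~\ref{regularpairs}(2) (ambient $\theta^+$, so $\theta\in\reg(\theta^+)$). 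The conclusion is $\onto^{++}(\mathcal J',\theta)$. The degenerate case $\theta^+=\kappa$ is vacuous, as $\mathcal J^\kappa_{\theta^{++}}$ then contains no ideal disjoint from the singletons.

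The one genuinely hard case is Clause~(1) for singular $\theta$, where the reduction cannot route through $\onto^+$: the projections of Corollary~\ref{regularpairs} require $\theta$ regular, and brute force (Proposition~\ref{bruteforce}) would cost $\varkappa^\theta$ (or, after reducing to middle $\theta^+$, still $2^\theta$) candidate indices, which need not be $\le\kappa$ absent the arithmetic assumption of Clause~(3). The plan is to exploit that $\ubd^+$ demands only \emph{order type} $\theta$ — equivalently \emph{size} $\theta$ — among the positive colours, which is strictly weaker than surjectivity. After reducing to middle $\theta^+$ via the regular case of Clause~(1), it then suffices to construct $p:\kappa\times\theta^+\rightarrow\theta$ such that every $X\in[\theta^+]^{\theta^+}$ admits an $\eta<\kappa$ with $|p[\{\eta\}\times X]|=\theta$; composing such a $p$ with the $\theta^+$-colouring and truncating yields $\ubd^+(\mathcal J,\theta)$, since for the set $X$ of positive $\theta^+$-colours produced by $\ubd^+(\mathcal J,\theta^+)$ the image $p[\{\eta\}\times X]$ is a set of $\theta$ positive $\theta$-colours.

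I expect the construction of this size-$\theta$ projection to be the crux. The intended route follows Lemma~\ref{bsicproj} and the scale methods of Section~\ref{sectionscales}: from a scale for $\theta$ together with club guessing on $E^{\theta^+}_{\cf(\theta)}$ one arranges, for a guessing set $X_\eta\s\acc^+(X)$, that the ordinals $\otp(X_\eta\cap\beta)$ with $\beta\in X$ fill out a subset of $\theta$ of size $\theta$. The delicate point — and the main obstacle — is that the straightforward onto-version at the successor $\theta^+$ of a singular is exactly the borderline instance where the club-guessing number $\mathcal C(\theta^+,\theta)$ need not equal $\theta^+$ (and hence may exceed $\kappa$); so, in contrast with the regular case, the argument must extract a \emph{size}-$\theta$ image (rather than a cofinal or surjective one) directly from the scale, keeping the index set of size $\theta^+\le\kappa$.
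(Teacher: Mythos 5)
Your Clauses (2)--(4) are correct. Clause (2) coincides with the paper's proof (Corollary~\ref{regularpairs}(1) at ambient $\varkappa$, monotonicity of $\proj$ in the first parameter, then Lemma~\ref{ubdtoonto}(2) with $\nu:=\kappa$). Your Clause (4) is also the paper's proof verbatim: Corollary~\ref{regularpairs}(2) at ambient $\theta^+$, then Lemma~\ref{lemma15}(3) with $(\varkappa,\mu,\nu):=(\theta^+,\kappa,\kappa)$, the covering hypothesis $\cov(\kappa,\theta^+,\theta^+,2)\le\kappa$ being a restatement of $\cf([\kappa]^\theta,{\s})=\kappa$. Your Clause (3) deviates slightly and advantageously: by passing to the middle cardinal $\theta^+$ and using $\proj(2^\theta,\theta^+,\theta,\theta)$ from Proposition~\ref{bruteforce} at ambient $\theta^+$, you need only the \emph{regular} instances of Clause (1), whereas the paper quotes Clause (1) at $\theta$ itself, which for singular $\theta$ rests on the hard case discussed next.

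The genuine gap is Clause (1) for singular $\theta$. Your reduction is sound: it suffices to produce $p:\theta^+\times\theta^+\rightarrow\theta$ such that every $X\in[\theta^+]^{\theta^+}$ admits a row $\eta$ with $|p[\{\eta\}\times X]|=\theta$, since a subset of $\theta$ of cardinality $\theta$ has order type $\theta$, and composing $p$ with a $\ubd^+(\mathcal J,\theta^+)$-witness (coding pairs via a bijection $\kappa\leftrightarrow\kappa\times\kappa$, then truncating) yields $\ubd^+(\mathcal J,\theta)$. But you never construct $p$: you ``expect the construction \ldots to be the crux,'' sketch an ``intended route,'' and flag ``the main obstacle'' without resolving it, so as written the proposal proves nothing for singular $\theta$. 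Two further points. First, the subcase $\theta^+<\varkappa$ needs none of this machinery: $\theta^+$ is regular and below $\varkappa$, so Clause (2) gives $\onto^+(\mathcal J,\theta^+)$, and since an $\onto^+$-witness makes \emph{every} colour positive, composing with any surjection $\theta^+\rightarrow\theta$ gives $\onto^+(\mathcal J,\theta)$, which truncates to $\ubd^+(\mathcal J,\theta)$; your assertion that ``the reduction cannot route through $\onto^+$'' is wrong there, and only the case $\theta^+=\varkappa$ is genuinely hard. Second, for that remaining case the paper invokes the proof of \cite[Lemma~6.2]{paper47}, while the object you are after does exist and is in this very paper: Corollary~\ref{cor215}, proved in Section~\ref{sectionscales} by scales and club-guessing with no dependence on Corollary~\ref{cmonotone} (so no circularity), gives $\ubd(\{\theta\},J^{\bd}[\theta^+],\theta)$ for singular $\theta$, i.e.\ an upper-regressive $c:[\theta^+]^2\rightarrow\theta$ such that every $X\in[\theta^+]^{\theta^+}$ admits $\eta<\theta$ with $\otp(c[\{\eta\}\circledast X])=\theta$; this is exactly a size-$\theta$ projection adequate for your composition. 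Supplying that step would close the gap, but as it stands the proposal is incomplete precisely at the point it identifies as the crux.
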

\begin{proof}
(1) In case that $\theta^+=\varkappa$, $\ubd^+(\mathcal J,\theta)$ holds by the same proof of \cite[Lemma~6.2]{paper47}.
So, we are left with handling the case that $\theta$ is finite or $\theta^+<\varkappa$, both of which follow from the monotonicity of $\onto^+(\ldots)$ in the number of colours together with the next clause.

(2) Let $\theta\in\reg(\varkappa)$.
By Corollary~\ref{regularpairs}(1), $\proj(\varkappa,\varkappa,\theta,1)$ holds.
In particular, $\proj(\nu,\varkappa,\theta,1)$ holds for $\nu:=\kappa$.
So, by Lemma~\ref{ubdtoonto}(2), $\onto^+(\mathcal J,\theta)$ holds, as well.

(3) Given $\theta\le\varkappa$, by Clause~(1), $\ubd^+(\mathcal J,\theta)$ holds.
In addition, by Proposition~\ref{bruteforce}, $\proj(2^\theta,\theta,\theta,1)$ holds.
So, assuming that $2^\theta\le\kappa$, Proposition~\ref{ubdtoonto}(2) implies that $\onto^+(\mathcal J,\theta)$ holds, as well.

(4) Suppose that $\theta\in\reg(\varkappa)$ is such that $\cf([\kappa]^\theta,{\s})=\kappa$.
By Clause~(1), $\ubd^+(\mathcal J,\theta^+)$ holds.

By Corollary~\ref{regularpairs}(2),
$\proj(\theta^+,\theta^+,\theta,\theta)$ holds. In particular,
$\proj(\kappa,\theta^+,\allowbreak\theta,\theta)$ holds.
So by Lemma~\ref{lemma15}(3), using $(\varkappa,\mu,\nu):=(\theta^+,\kappa,\kappa)$, $\onto^{++}(\mathcal J^\kappa_{\theta^{++}}\cap\mathcal J,\theta)$ holds.
\end{proof}

\begin{cor}\label{cor52} Suppose that $\kappa$ is a regular uncountable cardinal.

Then $(1)\implies(2)\implies(3)\implies(4)$:
\begin{enumerate}[(1)]
\item There exists a stationary subset of $\kappa$ that does not reflect at regulars;
\item $\chi(\kappa)>1$ and $\kappa\nrightarrow[\kappa;\kappa]^2_\omega$ holds;
\item $\ubd^{+}([\kappa]^\kappa,\mathcal J^\kappa_\kappa,\kappa)$ holds;
\item $\onto^{++}(\mathcal J^\kappa_\kappa,\theta)$ holds for all $\theta\in\reg(\kappa)$.
\end{enumerate}
\end{cor}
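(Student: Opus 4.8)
The plan is to prove the three implications in turn, with essentially all of the work concentrated in $(1)\implies(2)$; the other two implications are direct applications of theorems already established.

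For $(1)\implies(2)$, I would first secure $\chi(\kappa)>1$ by contraposition. Assume $\chi(\kappa)\le1$ and let $S$ be the non-reflecting stationary set furnished by~(1). Applying the conclusion of the remark following Definition~\ref{defnchi} to the constant sequence $\langle S\mid i<\kappa\rangle$, we obtain an inaccessible $\alpha<\kappa$ with $S\cap\alpha$ stationary in $\alpha$; as an inaccessible cardinal is regular, this says $S$ reflects at a regular, contradicting~(1). Hence $\chi(\kappa)>1$. For the rectangular relation $\kappa\nrightarrow[\kappa;\kappa]^2_\omega$ I would proceed in two stages. First, $\chi(\kappa)>1$ gives $\kappa\in\sa_\kappa$ by Corollary~\ref{cor28}, hence a strongly amenable $C$-sequence $\vec C$ with $\chi(\vec C)>1$ by Lemma~\ref{strongstrongamen}; then \cite[Theorem~8.1.11]{TodWalks} yields a colouring witnessing $\kappa\nrightarrow[\kappa]^2_\omega$, exactly as in the opening of the proof of Theorem~\ref{strongamenfirstrepair}. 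Second, the main result of \cite{paper13} upgrades this to the rectangular $\kappa\nrightarrow[\kappa;\kappa]^2_\omega$, precisely as invoked in Corollary~\ref{regularpairs}(iii).

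For $(2)\implies(3)$, the two conjuncts of~(2) are exactly the hypotheses of Theorem~\ref{strongamensecondrepair}: from $\chi(\kappa)>1$ we again read off $\kappa\in\sa_\kappa$ via Corollary~\ref{cor28}, and together with $\kappa\nrightarrow[\kappa;\kappa]^2_\omega$ that theorem produces a witness to $\ubd^+([\kappa]^\kappa,J^\bd[\kappa],\kappa)$. I would then feed this into Theorem~\ref{uparrowlemma}(2) with $\mathcal A:=[\kappa]^\kappa$ and $\nu=\theta:=\kappa$; here $\sigma=\min\{\kappa,\max\{\kappa,\kappa\}^+\}=\min\{\kappa,\kappa^+\}=\kappa$, so the conclusion is $\ubd^+([\kappa]^\kappa,\mathcal J^\kappa_\kappa,\kappa)$, which is~(3).

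For $(3)\implies(4)$, this is an immediate instance of Corollary~\ref{cor415} taken with $\varkappa:=\kappa$ and $\mathcal J:=\mathcal J^\kappa_\kappa$, noting $\mathcal J^\kappa_\kappa\s\mathcal J^\kappa_\omega$: since $\kappa$ is regular, $\reg(\cf(\kappa))=\reg(\kappa)$, and the corollary delivers $\onto^{++}(\mathcal J^\kappa_\kappa,\theta)$ for every $\theta\in\reg(\kappa)$. The main obstacle is the rectangular relation in $(1)\implies(2)$: the $\chi(\kappa)>1$ half is a soft contraposition, but obtaining $\kappa\nrightarrow[\kappa;\kappa]^2_\omega$ rests on chaining the walks-based colouring of \cite[Theorem~8.1.11]{TodWalks} with the rectangle-to-square transfer of \cite{paper13}, and one must confirm that the latter applies at $\theta=\omega$ --- which is unproblematic here since $\kappa$ is uncountable and regular, so $\omega<\kappa$.
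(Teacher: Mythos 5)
Your implications $(2)\implies(3)$ and $(3)\implies(4)$ are correct and coincide with the paper's own proof: the former is exactly Theorem~\ref{strongamensecondrepair} (via Corollary~\ref{cor28} to pass from $\chi(\kappa)>1$ to $\kappa\in\sa_\kappa$) followed by Theorem~\ref{uparrowlemma}(2) with $\nu=\theta=\kappa$, and the latter is exactly Corollary~\ref{cor415} with $\varkappa:=\kappa$. Your contrapositive derivation of $\chi(\kappa)>1$ from (1), applying the remark after Definition~\ref{defnchi} to the constant sequence $\langle S\mid i<\kappa\rangle$, is also sound. The genuine gap is in the second half of $(1)\implies(2)$: the claim that the main result of \cite{paper13} upgrades $\kappa\nrightarrow[\kappa]^2_\omega$ to $\kappa\nrightarrow[\kappa;\kappa]^2_\omega$ for an arbitrary regular uncountable $\kappa$. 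That transformation theorem concerns successors of \emph{singular} cardinals --- indeed, in this paper it is invoked (in Corollary~\ref{regularpairs}) only at $\cf(\kappa)=\theta^+$ with $\theta$ singular --- and no such square-to-rectangle transfer is available in ZFC at inaccessible cardinals. Since the inaccessible case is precisely the one in which hypothesis (1) carries content (for successor cardinals clause (2) holds outright in ZFC), your argument fails exactly where it is needed. Your closing remark, that one need only check applicability at $\theta=\omega$, misidentifies the constraint: the obstruction is the type of the cardinal $\kappa$, not the number of colours.

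The paper's proof of $(1)\implies(2)$ instead splits into cases. If $\kappa=\mu^+$ is a successor cardinal, then $\chi(\kappa)\ge\cf(\mu)>1$, and $\kappa\nrightarrow[\kappa;\kappa]^2_\omega$ holds in ZFC by works of Moore and Shelah (see the introduction to \cite{paper14}); no use of (1) is needed here beyond regularity. If $\kappa$ is inaccessible, then the stationary set that does not reflect at regulars yields $\chi(\kappa)>1$, and --- crucially --- the rectangular relation is obtained from \cite[Lemmas 4.17 and 4.7]{Sh:365}, which exploit the non-reflecting stationary set itself rather than merely the square relation $\kappa\nrightarrow[\kappa]^2_\omega$ that you extracted from strong amenability via \cite[Theorem~8.1.11]{TodWalks}. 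To repair your proof, replace the appeal to \cite{paper13} by this case analysis.
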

\begin{proof}
$(1)\implies(2)$: If $\kappa=\mu^+$ is a successor cardinal,
then  $\chi(\kappa)\ge\cf(\mu)>1$ and by works of Moore and Shelah (see the introduction to \cite{paper14}) $\kappa\nrightarrow[\kappa;\kappa]^2_\omega$ holds.

If $\kappa$ is an inaccessible cardinal admitting a stationary subset of $\kappa$ that does not reflect at regulars,
then $\chi(\kappa)>1$, and by \cite[Lemmas 4.17 and 4.7]{Sh:365}, $\kappa\nrightarrow[\kappa;\kappa]^2_\omega$ holds.

$(2)\implies(3)$: By Lemma~\ref{strongamensecondrepair} and Theorem~\ref{uparrowlemma}(2).

$(3)\implies(4)$: By Corollary~\ref{cor415}.
\end{proof}

\section{Partition relations}\label{partitions}

In this section, we improve upon results from \cite[\S6]{paper47} that were limited to subnormal ideals.
The main two results of this section read as follows:\footnote{The definition of $\U(\ldots)$ may be found in Definition~\ref{defUp} below; $\cspec(\ldots)$ was defined in Definition~\ref{defnchi}.}
\begin{cor} For every pair $\theta<\kappa$ of infinite regular cardinals:
\begin{enumerate}[(1)]
\item $\square(\kappa)$ implies $\kappa\nrightarrow[\kappa;\kappa]^2_\theta$ which in turn implies $\onto^{++}([\kappa]^\kappa,\mathcal J^\kappa_\kappa, \theta)$;
\item $\U(\kappa,2,\theta,3)$ implies $\ubd^{++}([\kappa]^\kappa,\allowbreak \mathcal J^\kappa_\kappa, \theta)$.
\end{enumerate}
\end{cor}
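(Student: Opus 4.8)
The plan is to treat both clauses by the same template: each partition hypothesis is first converted into a \emph{base} colouring for the bounded ideal on the family $[\kappa]^\kappa$, and then the upgrade machinery of the two preceding sections promotes that base colouring to the full-strength conclusion for all of $\mathcal J^\kappa_\kappa$. The concrete, verifiable part of the argument is this assembly; the combinatorial core (producing the base colouring from the partition relation) is where I expect the difficulty to sit.

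For Clause~(1), I would first dispose of $\square(\kappa)\implies\kappa\nrightarrow[\kappa;\kappa]^2_\theta$. By Todor\v{c}evi\'c's theorem (see \cite{TodWalks}), $\square(\kappa)$ yields $\kappa\nrightarrow[\kappa]^2_\kappa$, and composing the witnessing colouring with a surjection of $\kappa$ onto $\theta$ gives $\kappa\nrightarrow[\kappa]^2_\theta$; feeding this into the main result of \cite{paper13} (exactly as in the proof of Corollary~\ref{regularpairs}(iii)) produces $\kappa\nrightarrow[\kappa;\kappa]^2_\theta$. For the second implication, the heart of the matter is to manufacture from a witness to $\kappa\nrightarrow[\kappa;\kappa]^2_\theta$ a colouring witnessing $\onto([\kappa]^\kappa,J^{\bd}[\kappa],\theta)$, i.e.\ one for which a \emph{single} row over any prescribed $B\in[\kappa]^\kappa$ is already onto $\theta$. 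Granting this base step, I would apply Theorem~\ref{uparrowlemma}(1) with $\nu:=\kappa$ and $\mathcal A:=[\kappa]^\kappa\s[\kappa]^{\le\kappa}$ (so that $\sigma=\kappa$) to obtain $\onto^+([\kappa]^\kappa,\mathcal J^\kappa_\kappa,\theta)$, and finally invoke Lemma~\ref{lemma39}(1) with the $\theta^+$-complete ideal $I:=[\kappa]^{<\kappa}$, whose positive sets are exactly $[\kappa]^\kappa$: since one colouring witnesses $\onto^+([\kappa]^\kappa,J,\theta)$ for \emph{every} $J\in\mathcal J^\kappa_\kappa$, that same colouring witnesses $\onto^{++}([\kappa]^\kappa,J,\theta)$ for every such $J$, which is precisely $\onto^{++}([\kappa]^\kappa,\mathcal J^\kappa_\kappa,\theta)$.

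Clause~(2) follows the same template but is shorter on the assembly side. Here the base step is to extract from a witness to $\U(\kappa,2,\theta,3)$ an upper-regressive colouring witnessing $\ubd([\kappa]^\kappa,J^{\bd}[\kappa],\theta)$. Once this is in hand, Corollary~\ref{tripleupgrade} (which applies precisely because $\kappa$ is regular uncountable and $\theta<\kappa$) delivers $\ubd^{++}([\kappa]^\kappa,\mathcal J^\kappa_\kappa,\theta)$ in one stroke, absorbing simultaneously the passage from $J^{\bd}[\kappa]$ to all $\kappa$-complete ideals, the passage from $\ubd$ to $\ubd^+$, and the passage to the sequence-form $\ubd^{++}$.

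The main obstacle is exactly the two base steps, and this is where the advertised improvement over \cite[\S6]{paper47} lies: there the analogous conclusions were confined to subnormal ideals, whereas Theorem~\ref{uparrowlemma} and Corollary~\ref{tripleupgrade} let us absorb \emph{arbitrary} $\kappa$-complete ideals, for which Fodor's lemma is unavailable. The genuine subtlety in each base step is the single-row requirement: a rectangular hypothesis only guarantees that all $\theta$ colours appear somewhere in $A\circledast B$, not that they appear along one row $\{\eta\}\circledast B$. My plan for producing the base colourings mirrors Section~\ref{subnormalrevisited}: conduct walks on ordinals along a $C$-sequence and route the rectangular colouring (resp.\ the $\U$-colouring) through the last-step/trace data, as in the constructions of Theorems~\ref{strongamenfirstrepair} and \ref{strongamensecondrepair}, so that cofinally many correctly-walking $\eta$ conspire to realise all $\theta$ colours along a single row. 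Locating those $\eta$ via the pigeonholing afforded by Lemmas~\ref{claim411} and \ref{claim422}, and checking that the walk-derived colouring is genuinely onto (resp.\ has order-type-$\theta$ range) on one row over \emph{every} $B\in[\kappa]^\kappa$ rather than merely on the diagonal, is the step I expect to demand the most care.
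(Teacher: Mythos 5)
Your assembly of the upgrade machinery is exactly the paper's: for Clause~(1), pass from $\onto([\kappa]^\kappa,J^{\bd}[\kappa],\theta)$ to $\onto^{+}([\kappa]^\kappa,\mathcal J^\kappa_\kappa,\theta)$ via Theorem~\ref{uparrowlemma}(1) and then to $\onto^{++}$ via Lemma~\ref{lemma39}(1) with $I:=[\kappa]^{<\kappa}$ (this is Theorem~\ref{prop46}(2)), and for Clause~(2), Corollary~\ref{tripleupgrade} finishes once $\ubd([\kappa]^\kappa,J^{\bd}[\kappa],\theta)$ is in hand (this is Lemma~\ref{lemma34}(2)). However, the two places where you deviate from or defer to future work are precisely where the problems lie. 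First, your derivation of $\kappa\nrightarrow[\kappa;\kappa]^2_\theta$ from $\square(\kappa)$ does not work: the main result of \cite{paper13} transforms square-bracket relations into rectangular ones at \emph{successors of singular cardinals} only --- that is exactly why, in Corollary~\ref{regularpairs}, it is invoked in the case $\cf(\kappa)=\theta^+$ with $\theta$ singular --- and it says nothing about an arbitrary regular $\kappa$, which in the present corollary may be inaccessible or the successor of a regular. There is no ZFC theorem converting $\kappa\nrightarrow[\kappa]^2_\theta$ into $\kappa\nrightarrow[\kappa;\kappa]^2_\theta$ in this generality. The paper instead quotes the main result of \cite{paper18}, which produces the rectangular relation directly from $\square(\kappa)$; that citation cannot be reconstructed from Todor\v{c}evi\'c's square-bracket theorem plus \cite{paper13}.

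Second, the two ``base steps'' you postpone as the hardest part of the proof are in fact short pigeonhole arguments, and the walks-based plan you sketch for them is both unnecessary and unavailable. If $c$ witnesses $\kappa\nrightarrow[\kappa;\kappa]^2_\theta$ but fails to witness $\onto([\kappa]^\kappa,J^{\bd}[\kappa],\theta)$, fix unbounded $A,B$ such that each row $\{\eta\}\circledast B$ with $\eta\in A$ omits some colour $\tau_\eta$; since $\theta<\kappa=\cf(\kappa)$, a single colour $\tau$ is omitted for an unbounded $A'\s A$, whence $\tau\notin c[A'\circledast B]$ --- a contradiction. (This is \cite[Proposition~6.6]{paper47}, which the paper simply cites.) Likewise, Lemma~\ref{lemma34}(2) handles $\U(\kappa,2,\theta,3)\implies\ubd([\kappa]^\kappa,J^{\bd}[\kappa],\theta)$ by stabilizing $\sup(c[\{\eta\}\circledast B])=\sigma$ on an unbounded $A'$ and then contradicting $\U(\kappa,2,\theta,3)$ with a disjoint family of pairs meeting $A'$ and $B$. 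The ``single-row'' obstruction you worry about simply dissolves because the number of colours is below $\cf(\kappa)$; the walks arguments of Theorems \ref{strongamenfirstrepair} and \ref{strongamensecondrepair} are needed in Section~\ref{subnormalrevisited} only because there the number of colours is $\kappa$ itself, where no such stabilization is possible. Moreover, those arguments require $\kappa\in\sa_\kappa$, a hypothesis that neither $\kappa\nrightarrow[\kappa;\kappa]^2_\theta$ nor $\U(\kappa,2,\theta,3)$ is known to supply; since the corollary asserts its implications for every pair $\theta<\kappa$ of infinite regular cardinals, a proof routed through $\sa_\kappa$ would establish a strictly weaker statement.
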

\begin{proof} (1) The first part follows from the main result of \cite{paper18}.
The second part is Theorem~\ref{prop46}(2) below.

(2) By Lemma~\ref{lemma34} below.
\end{proof}

\begin{cor}\label{cor42} Suppose that $\kappa$ is strongly inaccessible.
Then $\square(\kappa,{<}\omega)$ implies
$\sup(\cspec(\kappa))=\kappa$ which in turn implies that
$\onto^{++}(\mathcal J^\kappa_\kappa,\theta)$ holds for all $\theta<\kappa$.
\end{cor}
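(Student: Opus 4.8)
The plan is to handle the two implications in the statement separately. For the first, $\square(\kappa,{<}\omega)\Rightarrow\sup(\cspec(\kappa))=\kappa$, I would treat this as an external input drawn from the theory of the $C$-sequence spectrum. The hypothesis that $\kappa$ is strongly inaccessible is what makes a $\square(\kappa,{<}\omega)$-sequence convertible, for each infinite regular $\lambda<\kappa$, into a $C$-sequence $\vec C$ over $\kappa$ with $\chi(\vec C)\ge\lambda$, so that $\cspec(\kappa)$ is cofinal in $\kappa$; this square-to-spectrum analysis is of exactly the kind carried out in \cite{paper29,paper35}, and I would simply cite it rather than reprove it here.

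For the second implication, $\sup(\cspec(\kappa))=\kappa\Rightarrow\onto^{++}(\mathcal J^\kappa_\kappa,\theta)$ for all $\theta<\kappa$, the strategy is to verify the hypothesis $(2)$ of Corollary~\ref{cor52} and then let that corollary do the heavy lifting. First, since $\cspec(\kappa)\s[\omega,\kappa]$ is nonempty, there is a $C$-sequence $\vec C$ with $\chi(\vec C)\ge\omega>1$; by Lemma~\ref{strongstrongamen} such a $\vec C$ is strongly amenable in $\kappa$, whence $\kappa\in\sa_\kappa$ and in particular $\chi(\kappa)>1$. Second, and this is the heart of the matter, I would extract $\kappa\nrightarrow[\kappa;\kappa]^2_\omega$ from the unboundedness of $\cspec(\kappa)$: feeding a $C$-sequence whose $\chi$-invariant exceeds $\omega$ into the walks/oscillation machinery yields a colouring witnessing $\kappa\nrightarrow[\kappa]^2_\omega$, which is then upgraded to $\kappa\nrightarrow[\kappa;\kappa]^2_\omega$ by the main result of \cite{paper13} (exactly as in the proof of Corollary~\ref{regularpairs}(iii)). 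With both conjuncts of Corollary~\ref{cor52}(2) secured, the chain $(2)\Rightarrow(3)\Rightarrow(4)$ delivers $\onto^{++}(\mathcal J^\kappa_\kappa,\theta)$ for every $\theta\in\reg(\kappa)$.

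It then remains to pass from regular $\theta$ to an arbitrary cardinal $\theta<\kappa$, which is a soft monotonicity observation in the number of colours. Since $\kappa$ is a limit cardinal, $\reg(\kappa)$ is cofinal in $\kappa$, so given any $\theta<\kappa$ I can fix $\lambda\in\reg(\kappa)$ with $\theta\le\lambda$ and a colouring $c:[\kappa]^2\to\lambda$ witnessing $\onto^{++}(\mathcal J^\kappa_\kappa,\lambda)$. Defining $d:[\kappa]^2\to\theta$ by $d(\eta,\beta):=c(\eta,\beta)$ when $c(\eta,\beta)<\theta$ and $d(\eta,\beta):=0$ otherwise, and padding any given sequence $\langle B_\tau\mid\tau<\theta\rangle$ of $J^+$-sets out to length $\lambda$ using copies of $\kappa\in J^+$, the witness $\eta$ supplied by $c$ works verbatim for $d$: for each colour $\tau<\theta$ the sets $\{\beta\in B_\tau\setminus(\eta+1)\mid d(\eta,\beta)=\tau\}$ and $\{\beta\in B_\tau\setminus(\eta+1)\mid c(\eta,\beta)=\tau\}$ coincide. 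Hence $d$ witnesses $\onto^{++}(\mathcal J^\kappa_\kappa,\theta)$.

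The main obstacle is precisely the step producing $\kappa\nrightarrow[\kappa;\kappa]^2_\omega$ from $\sup(\cspec(\kappa))=\kappa$, together with the parallel square-to-spectrum input underlying the first implication; everything else is either a direct appeal to Corollary~\ref{cor52} or the routine colour-reduction above. I expect this obstacle to rest on the established correspondence between rich $C$-sequence spectra and strong negative partition relations, so that the real content is choosing which $C$-sequence to walk along and checking that its oscillation realizes all $\omega$ colours on every rectangle $A\circledast B$ with $A,B\in[\kappa]^\kappa$.
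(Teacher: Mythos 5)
Your treatment of the first implication (citing the square-to-spectrum analysis of \cite{paper35}) matches the paper, which quotes \cite[Corollary~5.26]{paper35}, and your closing colour-reduction step (padding a $\theta$-sequence of positive sets to a $\lambda$-sequence and collapsing colours ${\ge}\theta$ to $0$) is sound, except that for the colour $\tau=0$ the two sets do not coincide but merely satisfy a containment, which still suffices. The genuine gap is the step you yourself flag as the heart of the matter: extracting $\kappa\nrightarrow[\kappa;\kappa]^2_\omega$ from $\sup(\cspec(\kappa))=\kappa$. A nontrivial $C$-sequence does yield the square version $\kappa\nrightarrow[\kappa]^2_\omega$ (Todor\v{c}evi\'c's Theorem~8.1.11 of \cite{TodWalks}, exactly as used in Theorem~\ref{strongamenfirstrepair}), but the proposed upgrade to the rectangular relation via the main result of \cite{paper13} does not apply: that theorem transforms rectangles into squares at \emph{successor} cardinals $\lambda^+$ --- in this paper it is only ever invoked for $\cf(\kappa)=\theta^+$, in Corollary~\ref{regularpairs} --- whereas here $\kappa$ is strongly inaccessible. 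No result in the paper or its references derives $\kappa\nrightarrow[\kappa;\kappa]^2_\omega$ from a nontrivial $C$-sequence (or from unboundedness of $\cspec(\kappa)$) at an inaccessible; this is precisely why Corollary~\ref{cor52}(2) and Theorem~\ref{strongamensecondrepair} carry $\kappa\nrightarrow[\kappa;\kappa]^2_\omega$ as a hypothesis \emph{separate} from $\chi(\kappa)>1$, and why the paper obtains that conjunction only from a nonreflecting stationary set, via \cite[Lemmas 4.17 and 4.7]{Sh:365} --- a hypothesis that is not available from $\square(\kappa,{<}\omega)$, let alone from $\sup(\cspec(\kappa))=\kappa$. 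So your route through Corollary~\ref{cor52} is blocked at its key step.

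The gap is also avoidable, because the target $\onto^{++}(\mathcal J^\kappa_\kappa,\theta)$ has $\mathcal A=\{\kappa\}$ rather than $[\kappa]^\kappa$, so by Theorem~\ref{prop46}(2) no rectangular partition relation should be needed. The paper's proof runs as follows: given $\theta<\kappa$, fix $\varkappa\in\cspec(\kappa)$ above $\theta$. By Proposition~\ref{csingular} --- whose hypothesis $\cf([\varkappa]^{<\varkappa},{\s})<\kappa$ holds since $\kappa$ is strongly inaccessible --- $\ubd^{++}([\kappa]^\kappa,\mathcal J^\kappa_\kappa,\varkappa)$ holds; note that the rectangular form of $\ubd^{++}$, unlike that of $\onto$, comes directly out of a $C$-sequence $\vec C$ with $\chi(\vec C)=\varkappa$. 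Since $\varkappa^\theta<\kappa$, Proposition~\ref{bruteforce} gives $\proj(\kappa,\varkappa,\theta,\theta)$, and Lemma~\ref{lemma15}(1) then converts the $\varkappa$-coloured $\ubd^{++}$ into the $\theta$-coloured $\onto^{++}(\mathcal J^\kappa_\kappa,\theta)$. This handles every cardinal $\theta<\kappa$ uniformly, with no case split into regular and singular $\theta$ and no separate monotonicity step. The moral is that many colours in the weak ($\ubd$) sense plus a projection is a substitute for the strong rectangular partition relation you were trying to import.
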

\begin{proof} The first part follows from \cite[Corollary~5.26]{paper35}.
Next, assuming that $\sup(\cspec(\kappa))=\kappa$,
given a cardinal $\theta<\kappa$, fix $\varkappa\in\cspec(\kappa)$ above $\theta$.
By Proposition~\ref{csingular} below, $\ubd^{++}([\kappa]^\kappa,\mathcal J^\kappa_{\kappa},\varkappa)$ holds.
As $\varkappa^\theta<\kappa$, by Proposition~\ref{bruteforce}, $\proj(\kappa,\varkappa,\theta,\theta)$ holds.
So, by Lemma~\ref{lemma15}(1), $\onto^{++}(\mathcal J^\kappa_{\kappa},\theta)$ holds, as well.
\end{proof}

In this section, $\kappa$ denotes a regular uncountable cardinal.

\begin{thm}\label{prop46}  Let $\theta<\kappa$.

\begin{enumerate}[(1)]
\item $\kappa\nrightarrow[\kappa]^2_\theta$ iff $\onto(\self, J^\bd[\kappa], \theta)$ iff $\onto^+(\self,\mathcal J^\kappa_\kappa, \theta)$;
\item $\kappa\nrightarrow[\kappa;\kappa]^2_\theta$ iff $\onto([\kappa]^\kappa, J^\bd[\kappa], \theta)$ iff $\onto^{++}([\kappa]^\kappa,\mathcal J^\kappa_\kappa, \theta)$.
\end{enumerate}
\end{thm}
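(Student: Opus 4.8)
The plan is to establish each part as a cycle of implications, observing throughout that, since $\kappa$ is regular, $J^{\bd}[\kappa]\in\mathcal J^\kappa_\kappa$ and $(J^{\bd}[\kappa])^+=[\kappa]^\kappa$. The genuinely analytic content will be imported from Sections~\ref{subnormalrevisited} and~\ref{pumings}, so the work here is a routing problem together with two short thinning arguments.

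First I would dispose of the ``downward'' implications, which need no new ideas. In part~(1), a witness $c$ to $\onto^+(\self,\mathcal J^\kappa_\kappa,\theta)$ witnesses $\onto^+(\self,J^{\bd}[\kappa],\theta)$, hence $\onto(\self,J^{\bd}[\kappa],\theta)$ (each positive colour class is nonempty), and a witness to the latter witnesses $\kappa\nrightarrow[\kappa]^2_\theta$, since the supplied $\eta\in B$ obeys $\{\eta\}\circledast B\s[B]^2$, whence $c``[B]^2\supseteq c[\{\eta\}\circledast B]=\theta$. In part~(2) the same pattern runs with $[\kappa]^\kappa$ in place of $\self$: a witness to $\onto^{++}([\kappa]^\kappa,\mathcal J^\kappa_\kappa,\theta)$ witnesses $\onto^{++}([\kappa]^\kappa,J^{\bd}[\kappa],\theta)$, hence $\onto^+([\kappa]^\kappa,J^{\bd}[\kappa],\theta)$ by feeding in a constant sequence, hence $\onto([\kappa]^\kappa,J^{\bd}[\kappa],\theta)$, and finally $\kappa\nrightarrow[\kappa;\kappa]^2_\theta$ via the containment $\{\eta\}\circledast B\s A\circledast B$ (valid as $\eta\in A$).

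Next I would prove the ``negative-square-bracket $\Rightarrow\onto$'' step, which holds for the very same colouring. For part~(1), suppose $c$ witnesses $\kappa\nrightarrow[\kappa]^2_\theta$ but not $\onto(\self,J^{\bd}[\kappa],\theta)$, as exhibited by some $B\in[\kappa]^\kappa$: for each $\eta\in B$ choose a colour $\tau_\eta<\theta$ omitted by $c$ on $\{\eta\}\circledast B$. As $\theta<\kappa=\cf(\kappa)$, thin $B$ to $B'\in[\kappa]^\kappa$ on which $\eta\mapsto\tau_\eta$ is constant with value $\tau^\ast$; then every pair in $[B']^2$ avoids the colour $\tau^\ast$, contradicting $c``[B']^2=\theta$. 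For part~(2) the identical argument applies to a pair $A,B\in[\kappa]^\kappa$ witnessing the failure of $\onto([\kappa]^\kappa,J^{\bd}[\kappa],\theta)$: thin $A$ to $A'$ with a single omitted colour $\tau^\ast$, and contradict $c[A'\circledast B]=\theta$.

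It then remains only to upgrade the $J^{\bd}[\kappa]$-level $\onto$ statements to the $\mathcal J^\kappa_\kappa$-level statements, and this is where all the difficulty is concentrated. For part~(1), the implication $\onto(\self,J^{\bd}[\kappa],\theta)\Rightarrow\onto^+(\self,\mathcal J^\kappa_\kappa,\theta)$ is exactly Theorem~\ref{thm42}(1). For part~(2), $\onto([\kappa]^\kappa,J^{\bd}[\kappa],\theta)\Rightarrow\onto^+([\kappa]^\kappa,\mathcal J^\kappa_\kappa,\theta)$ is Theorem~\ref{uparrowlemma}(1) used with $\nu:=\kappa$ and $\mathcal A:=[\kappa]^\kappa$ (so that $\sigma=\kappa$), after which Lemma~\ref{lemma39}(1), applied with $I:=J^{\bd}[\kappa]$ so that $I^+=[\kappa]^\kappa$ and ranging over each $J\in\mathcal J^\kappa_\kappa$, promotes that same witness to one for $\onto^{++}([\kappa]^\kappa,\mathcal J^\kappa_\kappa,\theta)$. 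The main obstacle is therefore wholly absorbed into those cited theorems, whose engines are walks on ordinals, subnormality, and projections; note that both upgrade theorems employ a $2$-to-$1$ map on $\theta$ and thus presume $\theta$ infinite, so if one wants the finite-$\theta$ instances these must be extracted separately, e.g.\ through the subnormal analysis of Section~\ref{subnormalrevisited} and Corollary~\ref{cor8a}.
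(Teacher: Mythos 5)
Your proposal is correct and follows essentially the same route as the paper's proof: the same cycle of implications, with the upgrade steps delegated to Theorem~\ref{thm42}(1) for Clause~(1) and to Theorem~\ref{uparrowlemma}(1) followed by Lemma~\ref{lemma39}(1) (with $I:=J^{\bd}[\kappa]$, so $I^+=[\kappa]^\kappa$) for Clause~(2); the thinning argument you write out for $\kappa\nrightarrow[\kappa]^2_\theta\Rightarrow\onto(\self,J^{\bd}[\kappa],\theta)$ and its rectangular variant is exactly what the paper imports by citing Propositions~6.4 and~6.6 of the prequel. Your caveat about finite $\theta$ is well taken, but it is not a point of divergence: the paper's own proof likewise invokes only the $\theta$-infinite upgrade theorems, so the finite-$\theta$ instances are no better covered there than in your write-up.
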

\begin{proof} (1)
By the same proof of \cite[Proposition~6.4]{paper47},
$\kappa\nrightarrow[\kappa]^2_\theta$ implies $\onto(\self,\allowbreak J^{\bd}[\kappa],\allowbreak\theta)$.
By Theorem~\ref{thm42}(1), $\onto(\self, J^\bd[\kappa], \theta)$ implies $\onto^+(\self,\mathcal J^\kappa_\kappa, \theta)$.
Trivially, $\onto^+(\self,\mathcal J^\kappa_\kappa, \theta)$ implies $\kappa\nrightarrow[\kappa]^2_\theta$.

(2) By \cite[Proposition~6.6]{paper47},
$\kappa\nrightarrow[\kappa;\kappa]^2_\theta$ implies $\onto([\kappa]^\kappa,J^{\bd}[\kappa], \theta)$.
By Theorem~\ref{uparrowlemma}(1), $\onto([\kappa]^\kappa,J^{\bd}[\kappa], \theta)$ implies $\onto^{+}([\kappa]^\kappa,\mathcal J^\kappa_\kappa, \theta)$.
By Lemma~\ref{lemma39}(1),
$\onto^+([\kappa]^\kappa, J^\bd[\kappa], \theta)$ implies $\onto^{++}([\kappa]^\kappa,\mathcal J^\kappa_\kappa, \theta)$.
Trivially, $\onto^{++}([\kappa]^\kappa,\mathcal J^\kappa_\kappa, \theta)$ implies $\kappa\nrightarrow[\kappa;\kappa]^2_\theta$.
\end{proof}

We now arrive at a proof of Theorem~\ref{thmc}.
\begin{cor}
Suppose that $\kappa$ is a regular uncountable cardinal admitting a stationary set that does not reflect at regulars.
Then, for every cardinal $\theta<\kappa$, $\onto^{++}(\mathcal J^\kappa_\kappa,\theta)$ holds.
\end{cor}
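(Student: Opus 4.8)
The plan is to reduce the statement to Corollary~\ref{cor52}, which already extracts from the hypothesis the conclusion $\onto^{++}(\mathcal J^\kappa_\kappa,\theta')$ for every \emph{regular} $\theta'\in\reg(\kappa)$. The bridge from the regular case to an arbitrary $\theta$ is the downward monotonicity of $\onto^{++}$ in the number of colours: if $\theta\le\theta'$ and $\onto^{++}(\mathcal J^\kappa_\kappa,\theta')$ holds, then so does $\onto^{++}(\mathcal J^\kappa_\kappa,\theta)$. Indeed, given a witness $c:[\kappa]^2\to\theta'$, I would use the truncation $c'$ that agrees with $c$ wherever $c<\theta$ and is $0$ elsewhere: any prescribed sequence $\langle B_\tau\mid\tau<\theta\rangle$ of $J^+$-sets is padded to length $\theta'$ by repeating $B_0$, and the $\eta$ witnessing $\onto^{++}(\mathcal J^\kappa_\kappa,\theta')$ for the padded sequence witnesses the original demand for $c'$, since on each $B_\tau$ with $\tau<\theta$ the colour $\tau$ is realized by $c$ and hence by $c'$. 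This is an easy observation, and I would record it as such rather than belabour it.

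With this in hand, I would fix a cardinal $\theta<\kappa$ and ask whether there is a regular cardinal $\theta'$ with $\theta\le\theta'<\kappa$. If $\theta$ is infinite and regular, take $\theta'=\theta$; if $\theta$ is finite, take $\theta'=\omega$, which is legitimate as $\kappa$ is uncountable; and if $\theta$ is infinite singular with $\theta^+<\kappa$, take $\theta'=\theta^+$. In all of these cases $\theta'\in\reg(\kappa)$, so Corollary~\ref{cor52} yields $\onto^{++}(\mathcal J^\kappa_\kappa,\theta')$ and the monotonicity step delivers $\onto^{++}(\mathcal J^\kappa_\kappa,\theta)$. A regular $\theta'\in[\theta,\kappa)$ fails to exist in exactly one configuration: when $\theta$ is an infinite singular cardinal and $\kappa=\theta^+$.

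The single residual case $\kappa=\theta^+$ with $\theta$ singular is the heart of the matter, and it is here that I expect the real work to lie. Corollary~\ref{cor52} no longer applies, its conclusion being confined to regular numbers of colours, so the idea is to replace it by this section's analysis of \emph{singular} numbers of colours, which supplies $\ubd^{++}([\kappa]^\kappa,\mathcal J^\kappa_\kappa,\theta)$ at $\kappa=\theta^+$ for singular $\theta$ (Proposition~\ref{csingular}); in particular $\ubd^{++}(\{\kappa\},\mathcal J^\kappa_\kappa,\theta)$ holds. To upgrade this into the target $\onto$-type principle \emph{without} changing the number of colours, I would invoke Lemma~\ref{lemma15}(1) with $(\nu,\varkappa):=(\kappa,\theta)$, whose one remaining hypothesis is a projection $\proj(\kappa,\theta,\theta,\theta)$; alternatively one may run the reduction through $\varkappa:=\kappa$ and $\proj(\kappa,\kappa,\theta,\theta)$ in the style of Corollary~\ref{cor42}. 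Securing such a projection at the singular cardinal $\theta$ is the main obstacle: unlike the regular case handled by Corollary~\ref{regularpairs}(1)--(2), a singular $\theta$ forces one to organize its $\theta$-many colours through the cofinal or dense structure of $([\theta]^{\cf(\theta)},\subseteq)$ as in Corollary~\ref{regularpairs}(3), or through a scale. It is exactly this combinatorial input — supplied by the projection and scale lemmas of the section, and not any further appeal to the reflection hypothesis (which is automatic at the successor cardinal $\kappa=\theta^+$) — that I expect to carry the singular-successor case.
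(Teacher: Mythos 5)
Your overall skeleton is the paper's own: Corollary~\ref{cor52}(1)$\implies$(4) covers all $\theta\in\reg(\kappa)$, the downward monotonicity of $\onto^{++}$ in the number of colours (your padding argument is correct, and the paper uses it implicitly) covers finite $\theta$ and singular $\theta$ with $\theta^+<\kappa$, and everything reduces to the case $\kappa=\theta^+$ with $\theta$ singular. It is in that last case --- which you yourself identify as the heart of the matter --- that your proposal has a genuine gap, in two respects. First, Proposition~\ref{csingular} does not apply there: it requires $\cf([\theta]^{<\theta},{\s})<\kappa$, whereas for singular $\theta$ an elementary diagonalization shows that no $\theta$-sized subfamily of $[\theta]^{<\theta}$ can even cover $[\theta]^{\cf(\theta)}$, so $\cf([\theta]^{<\theta},{\s})\ge\theta^+=\kappa$; the hypothesis $\theta\in\cspec(\kappa)$ is likewise unverified. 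The fact you want, $\ubd^{++}([\kappa]^\kappa,\mathcal J^\kappa_\kappa,\theta)$ at $\kappa=\theta^+$ for singular $\theta$, is instead Corollary~\ref{thm54}(2); this mis-citation by itself would be repairable.

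Second, and decisively, the upgrade to $\onto^{++}$ via Lemma~\ref{lemma15}(1) with $(\nu,\varkappa)=(\kappa,\theta)$ needs $\proj(\kappa,\theta,\theta,\theta)$, and no lemma of the paper provides such a projection at the successor of an arbitrary singular $\theta$: Corollary~\ref{regularpairs}(3) is conditional on a pcf-type hypothesis (cofinality or density $\theta^+$ of $([\theta]^{\cf(\theta)},{\s})$) that can consistently fail; Proposition~\ref{bruteforce} only gives $\proj(2^\theta,\theta,\theta,\theta)$, which is useless unless $2^\theta=\theta^+$; and the scale results of Section~\ref{sectionscales} produce colourings of $\theta^+$, not projections onto $\theta$-many colours for singular $\theta$. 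Your alternative route through $\varkappa:=\kappa$ is provably dead, since Lemma~\ref{lemma15}(1) would then require $\ubd^{++}(\{\nu\},\mathcal J^\kappa_\kappa,\kappa)$ for some $\nu\le\kappa$, which fails by Proposition~\ref{prop912}. So the ``combinatorial input'' you expect the section to supply is precisely what is missing, and your argument proves the corollary only under extra cardinal-arithmetic assumptions. The paper closes the case by a different mechanism: by Theorem~\ref{prop46}(2), $\onto^{++}([\kappa]^\kappa,\mathcal J^\kappa_\kappa,\theta)$ is \emph{equivalent} to the rectangular relation $\theta^+\nrightarrow[\theta^+;\theta^+]^2_\theta$, and this relation is imported from the main result of \cite{paper15}, a deep strong-colouring theorem whose hypothesis is a nonreflecting stationary set. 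You are right that the reflection hypothesis of the corollary is automatic at $\kappa=\theta^+$, but wrong to conclude that the case then follows from the projection and scale lemmas alone; it is carried wholesale by the external theorem of \cite{paper15}.
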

\begin{proof} By the implication $(1)\implies(4)$ of Corollary~\ref{cor52},
$\onto^{++}(\mathcal J^\kappa_\kappa,\theta)$ holds for all $\theta\in\reg(\kappa)$.
Thus, the only thing left is establishing that $\onto^{++}(\mathcal J^\kappa_\kappa,\theta)$ holds
in the case that $\kappa=\theta^+$ where $\theta$ is a singular cardinal.
By Theorem~\ref{prop46}(3), it suffices to show that $\theta^+\nrightarrow[\theta^+;\theta^+]^2_\theta$ holds assuming that $\theta^+$ admits a nonreflecting stationary set,
and this follows, e.g., from the main result of \cite{paper15}.
\end{proof}

\begin{cor}\label{prop41}\label{thm54}  Suppose that $\kappa=\theta^+$ for an infinite cardinal $\theta$.
\begin{enumerate}[(1)]
\item If $\theta$ is regular, then $\onto^{++}([\kappa]^\kappa,\mathcal J^\kappa_\kappa,\theta)$ holds;
\item If $\theta$ is singular, then $\ubd^{++}([\kappa]^\kappa,\mathcal J^\kappa_{\kappa},\theta)$ holds.
\end{enumerate}
\end{cor}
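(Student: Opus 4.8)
The plan is to obtain each clause by feeding a strong-colouring existence theorem at $\kappa=\theta^+$ into one of the transfer results proved earlier in the paper; since $\kappa=\theta^+$ is regular and uncountable and $\theta<\kappa$, every cited transfer applies. In both clauses the substantive work is the construction of a single colouring $c:[\kappa]^2\rightarrow\theta$ with a rectangular partition property, while the passage to the full class $\mathcal J^\kappa_\kappa$ of $\kappa$-complete ideals is then automatic.

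For Clause~(1), with $\theta$ regular, I would invoke Theorem~\ref{prop46}(2), according to which $\onto^{++}([\kappa]^\kappa,\mathcal J^\kappa_\kappa,\theta)$ is \emph{equivalent} to the rectangular relation $\kappa\nrightarrow[\kappa;\kappa]^2_\theta$. The whole task thus collapses to verifying $\theta^+\nrightarrow[\theta^+;\theta^+]^2_\theta$. For regular $\theta$ this is a classical theorem of \zfc: one may start from Todor\v{c}evi\'c's $\theta^+\nrightarrow[\theta^+]^2_{\theta^+}$ (composing his colouring with a surjection of $\theta^+$ onto $\theta$ trims the colours down to $\theta$, surjectivity on squares being preserved), and then pass from the square relation to the rectangular one by the main result of \cite{paper13}, exactly as in case~(iii) of the proof of Corollary~\ref{regularpairs}. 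Plugging the resulting colouring into Theorem~\ref{prop46}(2) delivers Clause~(1).

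For Clause~(2), with $\theta$ singular, the $\onto$-shaped conclusion is the wrong target: by Theorem~\ref{prop46}(2) it would again require the rectangular relation $\theta^+\nrightarrow[\theta^+;\theta^+]^2_\theta$, and for singular $\theta$ this is not available in \zfc\ without additional pcf hypotheses (compare the standing assumption $\cf([\theta]^{\cf(\theta)},{\s})=\theta^+$ in case~(iii) of Corollary~\ref{regularpairs}). One therefore weakens the demand to the order-type form $\ubd^{++}$, which \emph{is} provable outright. Concretely, I would use Corollary~\ref{tripleupgrade} to reduce the goal to the base statement $\ubd([\kappa]^\kappa,J^\bd[\kappa],\theta)$, that is, the existence of an upper-regressive $c:[\kappa]^2\rightarrow\theta$ such that for all $A,B\in[\kappa]^\kappa$ there is an $\eta\in A$ with $\otp(c[\{\eta\}\circledast B])=\theta$. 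This is precisely the output of the unbounded-colouring principle: the instance $\U(\kappa,2,\theta,3)$ yields it (indeed it yields $\ubd^{++}([\kappa]^\kappa,\mathcal J^\kappa_\kappa,\theta)$ directly, by the first corollary of this section together with Lemma~\ref{lemma34}), and $\U(\theta^+,2,\theta,3)$ is a \zfc\ theorem for successors of singular cardinals: being an instance with the small width parameter $\chi=3$, it follows from the standard walks/pcf constructions of strong colourings carrying $\theta$ colours at $\theta^+$.

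The in-house reductions (Theorem~\ref{prop46}(2), Corollary~\ref{tripleupgrade}, Lemma~\ref{lemma34}) are the routine half. The genuine weight of the corollary sits in the two external colouring theorems it quotes. The harder of the two, and the real obstacle, is the singular case: securing a \emph{full} set of $\theta$ colours (rather than merely $\cf(\theta)$ of them) at a successor of a singular cardinal, simultaneously with both the rectangular feature and the order-type feature, is exactly the delicate point where Shelah's club-guessing and pcf machinery — packaged here as the $\U$-principle — is indispensable.
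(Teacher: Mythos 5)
Your two in-house reductions coincide exactly with the paper's: Clause~(1) is reduced via Theorem~\ref{prop46}(2) to the rectangular relation $\kappa\nrightarrow[\kappa;\kappa]^2_\theta$, and Clause~(2) via Corollary~\ref{tripleupgrade} to $\ubd([\kappa]^\kappa,J^{\bd}[\kappa],\theta)$. The problems lie in how you discharge these two base statements. For Clause~(1), you derive $\theta^+\nrightarrow[\theta^+;\theta^+]^2_\theta$ ($\theta$ regular) from Todor\v{c}evi\'c's square relation together with ``the main result of \cite{paper13}''. But the transformation of \cite{paper13} is a theorem about successors of \emph{singular} cardinals --- it is manufactured from a pcf scale on the singular cardinal --- and case~(iii) of Corollary~\ref{regularpairs}, which you cite as your template, is precisely the case ``$\theta$ singular''. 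At successors of regulars no such square-to-rectangle transfer is known; the rectangular relation there is the main theorem of Rinot--Todor\v{c}evi\'c \cite{paper14}, and that is exactly what the paper quotes. So the statement you need is true, but your derivation of it is unsound; the repair is simply to cite \cite{paper14}.

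For Clause~(2) the gap is not a mis-citation but a mechanism that cannot work. You propose to obtain the base colouring from $\U(\kappa,2,\theta,3)$ via Lemma~\ref{lemma34}, but Lemma~\ref{lemma34} (like the opening corollary of Section~\ref{partitions}) carries the hypothesis $\theta\in\reg(\kappa)$, and its proof uses regularity essentially: the failure of $\ubd$ only gives $\otp(c[\{\eta\}\circledast B])<\theta$ for all $\eta\in A$, and it is regularity of $\theta$ that converts this into a bound $\sup(c[\{\eta\}\circledast B])<\theta$ which can then be stabilized on a $\kappa$-sized set. For singular $\theta$ --- the only case of Clause~(2) --- no argument of this shape can exist, because $\U(\kappa,2,\theta,3)$ is then a cheap principle: if $\iota:\cf(\theta)\rightarrow\theta$ is increasing and cofinal and $d$ witnesses $\U(\kappa,2,\cf(\theta),3)$, then $\iota\circ d$ witnesses $\U(\kappa,2,\theta,3)$, yet its range has order type $\cf(\theta)$, so at most $\cf(\theta)<\theta$ of its colour classes are nonempty; such a colouring cannot witness $\ubd([\kappa]^\kappa,J^{\bd}[\kappa],\theta)$, let alone $\ubd^{++}([\kappa]^\kappa,\mathcal J^\kappa_\kappa,\theta)$, which requires $\theta$-many positive classes. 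In other words, for singular $\theta$ the premise you invoke carries no more information than its $\cf(\theta)$-colour version, while the conclusion demands a genuine order-type-$\theta$ spread of colours --- exactly the ``full set of $\theta$ colours'' difficulty you yourself flag at the end, and it defeats your own mechanism. The paper instead quotes \cite[Theorem~7.4]{paper47}, whose club-guessing/pcf construction produces a witness to $\ubd([\kappa]^\kappa,J^{\bd}[\kappa],\theta)$ outright, and only then applies Corollary~\ref{tripleupgrade}, which is the one step you got right.
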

\begin{proof} (1) By Theorem~\ref{prop46}(2), using \cite{paper14}.

(2) By \cite[Theorem~7.4]{paper47}, in particular, $\ubd([\kappa]^\kappa,J^{\bd}[\kappa],\theta)$ holds.
Now, appeal to Corollary~\ref{tripleupgrade}.
\end{proof}

\begin{cor}\label{cor8} Suppose that $\kappa$ is a regular uncountable cardinal, $\varkappa\le\kappa$,
and $\ubd(J^{\bd}[\kappa],\varkappa)$ holds. Then:
\begin{enumerate}[(1)]
\item For every $\theta\le\varkappa$, $\ubd^+(\mathcal J^\kappa_\kappa,\theta)$ holds;
\item For every regular $\theta<\varkappa$, $\onto^+(\mathcal J^\kappa_\kappa,\theta)$ holds;
\item For every $\theta\le\varkappa$ such that $2^\theta\le\kappa$, $\onto^+(\mathcal J^\kappa_\kappa,\theta)$ holds;
\item For every regular $\theta<\varkappa$ such that $\cf([\kappa]^\theta,{\s})=\kappa$, $\onto^{++}(\mathcal J^\kappa_\kappa,\theta)$ holds.
\end{enumerate}
\end{cor}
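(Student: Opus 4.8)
The plan is to reduce the whole statement to the monotonicity result, Corollary~\ref{cmonotone}, by first manufacturing its hypothesis $\ubd^+(\mathcal J^\kappa_\kappa,\varkappa)$ out of the given $\ubd(J^{\bd}[\kappa],\varkappa)$. First I would invoke Theorem~\ref{uparrowlemma}(3) with $\nu:=\kappa$ and $\theta:=\varkappa$. Since $\kappa$ is regular and $\{\kappa\}\s[\kappa]^{\le\kappa}$, the hypothesis $\ubd(\{\kappa\},J^{\bd}[\kappa],\varkappa)$ is exactly our assumption; and because $\varkappa\le\kappa$, the completeness parameter collapses: $\sigma=\min\{\kappa,\max\{\kappa,\varkappa\}^+\}=\kappa$. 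Thus, provided $\varkappa>2$ (the only range in which any clause below is non-vacuous), the theorem delivers $\ubd^+(\mathcal J^\kappa_\kappa,\varkappa)$. This single move accomplishes at once both upgrades we need: from $\ubd$ to $\ubd^+$, and from the single ideal $J^{\bd}[\kappa]$ to the full class $\mathcal J^\kappa_\kappa$.

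With $\ubd^+(\mathcal J^\kappa_\kappa,\varkappa)$ in hand, and noting $\mathcal J^\kappa_\kappa\s\mathcal J^\kappa_\omega$, I would feed it into Corollary~\ref{cmonotone} taking $\mathcal J:=\mathcal J^\kappa_\kappa$. Its clauses (2) and (3) are verbatim the clauses (2) and (3) sought here. For clause (1), Corollary~\ref{cmonotone}(1) yields $\ubd^+(\mathcal J^\kappa_\kappa,\theta)$ for all $\theta<\varkappa$, while the missing endpoint $\theta=\varkappa$ is precisely the output of the bridging step; together these give every $\theta\le\varkappa$.

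The only genuine care-point is clause (4), where Corollary~\ref{cmonotone}(4) produces $\onto^{++}(\mathcal J^\kappa_{\theta^{++}}\cap\mathcal J^\kappa_\kappa,\theta)$ rather than the desired $\onto^{++}(\mathcal J^\kappa_\kappa,\theta)$. I would split on whether $\theta^{++}\le\kappa$: in that case every $\kappa$-complete ideal is automatically $\theta^{++}$-complete, so $\mathcal J^\kappa_{\theta^{++}}\cap\mathcal J^\kappa_\kappa=\mathcal J^\kappa_\kappa$ and the two coincide. Otherwise $\theta^{++}>\kappa$; since $\theta$ is regular with $\theta<\varkappa\le\kappa$ we have $\theta^+\le\kappa$, and this pins down $\kappa=\theta^+$. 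In this edge case Corollary~\ref{prop41}(1) (which needs $\theta$ regular, as guaranteed) supplies $\onto^{++}([\kappa]^\kappa,\mathcal J^\kappa_\kappa,\theta)$ outright, hence a fortiori $\onto^{++}(\mathcal J^\kappa_\kappa,\theta)$.

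The hard part is not in this corollary at all: the substantive work — the walks-on-ordinals upgrade behind Theorem~\ref{uparrowlemma}(3) and the projection machinery behind Corollary~\ref{cmonotone} — is already carried out upstream, so what remains is bookkeeping. The two things to watch are the collapse of $\sigma$ to $\kappa$ (ensuring the conclusion lands on the $\kappa$-complete class and not a narrower completeness degree) and the clause-(4) mismatch just discussed, together with the harmless caveat that $\varkappa>2$ is needed to apply Theorem~\ref{uparrowlemma}(3).
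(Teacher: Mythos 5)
Your proposal is correct and takes essentially the same route as the paper's own proof: apply Theorem~\ref{uparrowlemma}(3) (with $\nu:=\kappa$, so $\sigma=\kappa$) to obtain $\ubd^+(\mathcal J^\kappa_\kappa,\varkappa)$, derive clauses (1)--(3) from Corollary~\ref{cmonotone}, and for clause (4) split into the case $\kappa\ge\theta^{++}$ (where $\mathcal J^\kappa_{\theta^{++}}\cap\mathcal J^\kappa_\kappa=\mathcal J^\kappa_\kappa$) and the remaining case $\kappa=\theta^+$ with $\theta$ regular, which is covered by Corollary~\ref{prop41}(1). Your write-up is in fact slightly more explicit than the paper's about the computation of $\sigma$, the endpoint $\theta=\varkappa$ in clause (1), and the clause-(4) case analysis.
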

\begin{proof} By Theorem~\ref{uparrowlemma}(3), $\ubd^+(\mathcal J^\kappa_\kappa,\varkappa)$ holds.
Thus, Clauses (1)--(3) follow from Corollary~\ref{cmonotone},
Now what about Clause~(4)? The conclusion follows in the same way, but only under the additional assumption that $\kappa\ge\theta^{++}$.
The remaining case, when $\kappa=\theta^+$ for a regular cardinal $\theta$, is covered by Corollary~\ref{prop41}.
\end{proof}
\begin{remark} When put together with \cite[Corollary~3.10]{paper47},
this shows that if $\onto(J^{\bd}[\kappa],\theta)$ fails for a pair $\theta<\kappa$ of infinite regular cardinals, then $\kappa$ is greatly Mahlo.
This improves Clause~(1) of \cite[Theorem~D]{paper47},
which derived the same conclusion from the failure of $\onto(\ns_\kappa,\theta)$.
\end{remark}

Question~8.1.14 of \cite{TodWalks} asks whether a strong limit regular uncountable cardinal $\kappa$ is not weakly compact iff $\kappa\nrightarrow[\kappa]^2_\omega$.
By Theorem~\ref{prop46}(1), the latter is equivalent to $\onto^+(\self,\mathcal J^\kappa_\kappa,\omega)$. Hence, the next theorem
(which improves \cite[Corollary~10.4]{paper47}) is a step in the right direction.

\begin{thm}\label{treelemma}Suppose that $\kappa\ge\mathfrak d$ is a regular cardinal that is not weakly compact.

Then  $\onto^+(\mathcal J^\kappa_\kappa,\omega)$ holds.
\end{thm}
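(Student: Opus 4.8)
The plan is to reduce the statement, for \emph{arbitrary} $\kappa$-complete ideals, to the single instance $\onto(J^{\bd}[\kappa],\omega)$ for the bounded ideal, and then feed it through the walks-based upgrade machinery of this section. Concretely, I would apply Theorem~\ref{uparrowlemma}(1) with $\nu:=\kappa$, $\theta:=\omega$ and $\mathcal A:=\{\kappa\}$: here $\sigma=\min\{\kappa,\kappa^+\}=\kappa$, so that any colouring witnessing $\onto(J^{\bd}[\kappa],\omega)$ automatically witnesses $\onto^+(\mathcal J^\kappa_\kappa,\omega)$. Thus it suffices to produce a single colouring $c:[\kappa]^2\rightarrow\omega$ such that every unbounded $B\s\kappa$ admits an $\eta<\kappa$ with $c[\{\eta\}\circledast B]=\omega$. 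This is precisely where the passage from subnormal ideals (the setting of \cite[Corollary~10.4]{paper47}, which this theorem improves) to all of $\mathcal J^\kappa_\kappa$ takes place, since $J^{\bd}[\kappa]$ is subnormal.

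It remains to secure $\onto(J^{\bd}[\kappa],\omega)$, and I would split on $\chi(\kappa)$. If $\chi(\kappa)>1$, then by Corollary~\ref{cor28} this is equivalent to $\ubd(J^{\bd}[\kappa],\kappa)$, and Corollary~\ref{cor8}(2) (taking $\varkappa:=\kappa$ and the regular cardinal $\theta:=\omega<\kappa$) already delivers $\onto^+(\mathcal J^\kappa_\kappa,\omega)$ outright, so in this case nothing further is needed. Since $\kappa$ is not weakly compact we have $\chi(\kappa)\neq 0$, so the only remaining case is $\chi(\kappa)=1$. By the remark following Definition~\ref{defnchi}, $\chi(\kappa)\le 1$ forces $\kappa$ to be greatly Mahlo, hence inaccessible and in particular a strong limit; consequently $\mathfrak d\le 2^{\aleph_0}<\kappa$ is automatic here, and it is exactly this case in which the hypothesis $\kappa\ge\mathfrak d$ is put to work.

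For the inaccessible case I would run walks on ordinals along a nontrivial $C$-sequence $\vec C$ on $\kappa$ (one exists since $\kappa$ is not weakly compact) and let $\rho_2:[\kappa]^2\rightarrow\omega$ record the number of steps of the walk. Nontriviality of $\vec C$ is used to argue that $\rho_2$ is sufficiently unbounded along rows, namely that for every unbounded $B\s\kappa$ there are cofinally many $\eta<\kappa$ for which $\{\rho_2(\eta,\beta)\mid\beta\in B\setminus(\eta+1)\}$ is infinite. To promote ``infinitely many step-values on a row'' to ``all of $\omega$ on a row'', I would post-compose $\rho_2$ with a dominating family $\langle g_\eta\mid\eta<\kappa\rangle$ in ${}^\omega\omega$, which exists because $\mathfrak d\le\kappa$, encoding into the index $\eta$ both a base point for the walk and a member of the dominating family; the abundance of admissible rows together with the domination property then permits the selection, for each given $B$, of an $\eta$ realizing every colour of $\omega$ on $B$. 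This is the colouring underlying \cite[Corollary~10.4]{paper47}, and one may equally quote it directly since $J^{\bd}[\kappa]\in\mathcal S^\kappa_\kappa$.

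The main obstacle is precisely this last bridge from infinitude to surjectivity. A single post-composition $g:\omega\rightarrow\omega$ cannot be onto on every infinite subset of $\omega$, so ``infinitely many values on one row'' does not by itself yield $c[\{\eta\}\circledast B]=\omega$; the role of $\mathfrak d\le\kappa$ is to supply a whole dominating family indexed along $\kappa$, so that among the cofinally-many rows on which $\rho_2$ takes infinitely many values over $B$, at least one can be arranged to hit \emph{all} colours. Once $\onto(J^{\bd}[\kappa],\omega)$ is in hand by either route, Theorem~\ref{uparrowlemma}(1) completes the proof.
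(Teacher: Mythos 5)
Your opening reduction is sound and matches the paper's own final step: by Theorem~\ref{uparrowlemma}(1) with $\nu=\kappa$ and $\mathcal A=\{\kappa\}$, any witness to $\onto(J^{\bd}[\kappa],\omega)$ witnesses $\onto^+(\mathcal J^\kappa_\kappa,\omega)$. Your case $\chi(\kappa)>1$ is also correct: Corollary~\ref{cor28} gives $\ubd(J^{\bd}[\kappa],\kappa)$, and Corollary~\ref{cor8}(2) then yields $\onto^+(\mathcal J^\kappa_\kappa,\omega)$ (note this does not even use $\kappa\ge\mathfrak d$). The proposal collapses, however, in the remaining case $\chi(\kappa)=1$, which is where the theorem's whole content lies, and it collapses on the very first move: you assert that a nontrivial $C$-sequence on $\kappa$ ``exists since $\kappa$ is not weakly compact''. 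This is false, and it contradicts the paper's own Theorem~\ref{thmb} together with Corollary~\ref{cor28}: the existence of a nontrivial $C$-sequence (equivalently $\kappa\in\sa_\kappa$, equivalently $\ubd(J^{\bd}[\kappa],\kappa)$) is \emph{equivalent} to $\chi(\kappa)>1$, i.e.\ to the case you have already disposed of; non-weak-compactness yields only $\chi(\kappa)\ge1$. So in the case you are trying to handle there is, by assumption, nothing nontrivial to walk along, and the walks-plus-dominating-family colouring never gets off the ground. This is not a repairable technicality: the gap between ``not weakly compact'' and ``admits a nontrivial $C$-sequence'' (witnessed, e.g., by Kunen's model discussed in the remark following Theorem~\ref{uparrowlemma}) is precisely the obstruction this theorem must circumvent.

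The auxiliary claim propping up your third case is also wrong: ``greatly Mahlo, hence inaccessible and in particular a strong limit'' is not a valid inference, and $\chi(\kappa)\le1$ provably does not imply that $\kappa$ is a strong limit. Indeed, by the remark after Theorem~\ref{uparrowlemma}, $\chi(\kappa)\le 1$ holds whenever $\kappa$ carries a $\kappa$-saturated ideal in $\mathcal J^\kappa_\kappa$ --- for instance when $\kappa$ is real-valued measurable --- and such a $\kappa$ satisfies $\kappa\le 2^{\aleph_0}$ while being regular, non-weakly-compact, and (consistently) $\ge\mathfrak d$; so it falls squarely into your third case with neither ``$2^{\aleph_0}<\kappa$'' nor a usable $C$-sequence available. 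The paper avoids $C$-sequences altogether here and splits instead on cardinal arithmetic: if $\kappa^{\aleph_0}=\kappa$, it quotes \cite[Theorem~10.2]{paper47} --- where non-weak-compactness is exploited through trees rather than walks --- to get $\onto^+(J^{\bd}[\kappa],\omega)$, and then upgrades via Theorem~\ref{uparrowlemma}(1); if $\kappa^{\aleph_0}>\kappa$, it fixes $\lambda<\kappa$ with $\lambda^{\aleph_0}\ge\kappa$ and invokes Corollary~\ref{cor68}(2), whose almost-disjoint-family colouring is where the hypothesis $\mathfrak d\le\kappa$ is actually spent. Your parenthetical fallback of quoting \cite[Corollary~10.4]{paper47} outright cannot carry the argument either: beyond amounting to a citation of the very result being improved, it would require checking that that corollary applies to the bounded ideal and to arbitrary unbounded positive sets, which your write-up does not do and which the paper's own proof pointedly does not rely on. To repair the outline, replace the $\chi(\kappa)=1$ case with the paper's dichotomy on $\kappa^{\aleph_0}$.
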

\begin{proof} $\br$ If $\kappa^{\aleph_0}>\kappa$, then since $\cf(\kappa)>\aleph_0$,
we may fix some $\lambda<\kappa$ such that $\lambda^{\aleph_0}\ge\kappa$,
and then the conclusion follows from Corollary~\ref{cor68}(2) below.

$\br$  If $\kappa^{\aleph_0}=\kappa$, then by \cite[Theorem~10.2]{paper47},
$\onto^+(J^{\bd}[\kappa],\omega)$ holds. Now, appeal to Theorem~\ref{uparrowlemma}(1).
\end{proof}

\begin{thm}\label{rectangular} Suppose that $\kappa=\kappa^{\aleph_0}$ and $\kappa\nrightarrow[\kappa;\kappa]^2_2$.
Then $\onto^{++}(\mathcal J^\kappa_\kappa,\omega)$ holds.
\end{thm}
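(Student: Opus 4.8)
The plan is to mimic the proof of Theorem~\ref{thm814}, which already produces, from a colouring $c:[\kappa]^2\to2$, a derived colouring $d:[\kappa]^2\to\omega$ witnessing $\onto^{++}(J,\omega)$ for every \emph{subnormal} $\omega_1$-complete ideal $J$ enjoying the rectangle property $c[A\circledast B]=2$ for all $A,B\in J^+$. So first I would fix a colouring $c:[\kappa]^2\to2$ witnessing $\kappa\nrightarrow[\kappa;\kappa]^2_2$, use $\kappa=\kappa^{\aleph_0}$ to enumerate ${}^\omega\kappa$ as $\langle x_\eta\mid\eta<\kappa\rangle$, and define $d$ exactly as there: $d(\eta,\beta)$ is the least $n$ with $c(x_\eta(n),\beta)=1$, or $0$ if no such $n$ exists. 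Since every $J\in\mathcal J^\kappa_\kappa$ extends $J^\bd[\kappa]$ and $\kappa$ is regular, every $J$-positive set lies in $[\kappa]^\kappa$, so the rectangle property on $J^+$ follows automatically from $\kappa\nrightarrow[\kappa;\kappa]^2_2$. In fact the full strength of $\kappa\nrightarrow[\kappa;\kappa]^2_2$ gives $c[A\circledast B]=2$ for \emph{all} $A,B\in[\kappa]^\kappa$, not merely for $J$-positive $A,B$, and it is exactly this surplus that will let me dispense with subnormality.

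The entire tree argument of Theorem~\ref{thm814} (the definition of $\tree(\vec B)$, Claim~\ref{wcontoclaim}, and the recursive construction of $x\in{}^\omega\kappa$) refers to the ambient ideal only through Claim~\ref{wcontoplusclaim} and through $\omega_1$-completeness, both of which I retain; note every $J\in\mathcal J^\kappa_\kappa$ is $\kappa$-complete, hence $\omega_1$-complete. Thus the only thing to re-establish for a general, possibly non-subnormal, $J\in\mathcal J^\kappa_\kappa$ is the statement of Claim~\ref{wcontoplusclaim}: for every sequence $\langle B_m\mid m<\omega\rangle$ of $J^+$-sets there is $\eta<\kappa$ with $\{\beta\in B_m\mid c(\eta,\beta)=i\}\in J^+$ for all $m<\omega$ and $i<2$. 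Everything downstream is then word-for-word as in Theorem~\ref{thm814}, and since the derived $d$ depends only on $c$ and on the chosen enumeration of ${}^\omega\kappa$, it is a single colouring witnessing $\onto^{++}(J,\omega)$ simultaneously for all $J\in\mathcal J^\kappa_\kappa$, which is $\onto^{++}(\mathcal J^\kappa_\kappa,\omega)$.

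For the reproof of that claim I would argue by contradiction: for each $\eta<\kappa$ pick $m_\eta<\omega$, $i_\eta<2$ and $E_\eta\in J^*$ with $i_\eta\notin c[\{\eta\}\circledast(B_{m_\eta}\cap E_\eta)]$, and, using $\omega_1$-completeness, fix $m^*<\omega$ and $i^*<2$ for which $A:=\{\eta<\kappa\mid m_\eta=m^*\ \&\ i_\eta=i^*\}$ is in $J^+$. Where Theorem~\ref{thm814} invoked subnormality to thin $A$ and $B_{m^*}$, I would instead build, by recursion on $\zeta<\kappa$, interleaved strictly increasing sequences $\langle\eta_\xi\mid\xi<\kappa\rangle$ in $A$ and $\langle\beta_\xi\mid\xi<\kappa\rangle$ in $B_{m^*}$ with $\eta_\xi<\beta_\xi<\eta_{\xi+1}$ and $\beta_\zeta\in\bigcap_{\xi\le\zeta}E_{\eta_\xi}$: at stage $\zeta$ first choose $\eta_\zeta\in A$ above $\sup\{\beta_\xi\mid\xi<\zeta\}$ (possible as $A$ is unbounded), and then, since $\{\eta_\xi\mid\xi\le\zeta\}$ has size $<\kappa$ and $J$ is $\kappa$-complete, the set $B_{m^*}\cap\bigcap_{\xi\le\zeta}E_{\eta_\xi}$ is $J$-positive, hence unbounded, so choose $\beta_\zeta$ in it above $\eta_\zeta$. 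Setting $A':=\{\eta_\xi\mid\xi<\kappa\}$ and $B':=\{\beta_\xi\mid\xi<\kappa\}$, both members of $[\kappa]^\kappa$, the hypothesis $\kappa\nrightarrow[\kappa;\kappa]^2_2$ furnishes a pair with $c(\eta_\xi,\beta_\zeta)=i^*$; from $\eta_\xi<\beta_\zeta$ and the interleaving we get $\xi\le\zeta$, whence $\beta_\zeta\in B_{m^*}\cap E_{\eta_\xi}$ while $\eta_\xi\in A$, contradicting $i^*\notin c[\{\eta_\xi\}\circledast(B_{m^*}\cap E_{\eta_\xi})]$.

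The main obstacle is precisely this substitution for subnormality, and the point to watch is that the length-$\kappa$ recursion stays alive only because at each stage we intersect fewer than $\kappa$ of the co-$J$-small sets $E_\eta$; $\kappa$-completeness of $J$ then keeps $\bigcap_{\xi\le\zeta}E_{\eta_\xi}$ in $J^*$, so its trace on $B_{m^*}$ remains $J$-positive and unbounded. Subnormality had been performing exactly this diagonalization under the weaker rectangle-on-positive-sets hypothesis; trading it for the full-strength $\kappa\nrightarrow[\kappa;\kappa]^2_2$, which applies to the recursively-built $A',B'$ even though these need not themselves be $J$-positive, is what removes the subnormality restriction and upgrades the conclusion from $\onto^{++}(\mathcal S^\kappa_\kappa,\omega)$ to $\onto^{++}(\mathcal J^\kappa_\kappa,\omega)$.
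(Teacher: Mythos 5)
Your proposal is correct and takes essentially the same approach as the paper: you define $d$ from $c$ exactly as in Theorem~\ref{thm814}, isolate Claim~\ref{wcontoplusclaim} as the only place subnormality was used, and re-prove it by an interleaved recursion of length $\kappa$ in which $\kappa$-completeness keeps $B_{m^*}\cap\bigcap_{\xi\le\zeta}E_{\eta_\xi}$ positive (hence unbounded), finishing by applying $\kappa\nrightarrow[\kappa;\kappa]^2_2$ to the resulting cofinal sets $A',B'$. The paper's proof does precisely this, compressing your recursion into the single line ``as $J\in\mathcal J^\kappa_\kappa$, we may recursively construct two cofinal subsets $A'\s A$ and $B'\s B_{m^*}$ such that $\beta\in E_\eta$ for every $(\eta,\beta)\in A'\circledast B'$'', so your write-up merely makes that step explicit.
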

\begin{proof} We follow the proof of Theorem~\ref{thm814}.
Fix an enumeration $\langle x_\eta \mid \eta< \kappa\rangle$ of all the elements in ${}^\omega \kappa$.
Fix a colouring $c:[\kappa]^2\rightarrow2$ witnessing $\kappa\nrightarrow[\kappa;\kappa]^2_2$.
Derive a colouring $d: [\kappa]^2 \rightarrow \omega$ by letting $d(\eta, \beta)$ be the least $n$ such that $c(x_\eta(n), \beta) =1$ if such an $n$ exists, and if not, $d(\eta, \beta) := 0$.
\begin{claim} Suppose that $\langle B_m \mid m< \omega\rangle$ is a sequence of sets in $J^+$,
for a given $J\in\mathcal J^\kappa_\kappa$.
Then there exists an $\eta < \kappa$ such that, for all $m< \omega$ and  $i < 2$,
the set	$\{\beta \in B_m \mid c(\eta, \beta) = i\}$  is in $J^+$.
\end{claim}
\begin{why}
Suppose not. Then for all $\eta < \kappa$ there are $m_\eta < \omega$, $i_\eta< 2$ and $E_\eta \in J^*$ such that $i_\eta \notin c[\{\eta\} \circledast (B_{m_\eta} \cap E_\eta)]$.
Fix $m^* < \omega$ and $i^* < 2$ for which  $A:=\{\eta<\kappa\mid m_\eta = m^*\text{ and }i_\eta = i^*\}$ is cofinal in $\kappa$.
As $J\in\mathcal J^\kappa_\kappa$, we may recursively construct two cofinal subsets $A' \s A$ and $B' \s B_{m^*}$ such that for every $(\eta, \beta) \in A' \circledast B'$, $\beta \in E_{\eta}$. In particular, $i^* \notin c[A' \circledast B']$. This contradicts the hypothesis on $c$.
\end{why}
The rest of the proof is now identical to that of Theorem~\ref{thm814}.
\end{proof}

\begin{defn}[{\cite[Definition~1.2]{paper34}}]\label{defUp}
$\U(\kappa, \mu, \theta, \chi)$ asserts the existence of a colouring $c:[\kappa]^2\rightarrow \theta$ such that for every $\sigma < \chi$, every pairwise disjoint subfamily
$\mathcal{A} \s [\kappa]^{\sigma}$ of size $\kappa$,
and every $i < \theta$, there exists $\mathcal{B} \in [\mathcal{A}]^\mu$
such that $\min(c[a \times b]) > i$ for all $a, b \in \mathcal{B}$ with $\sup(a)<\min(b)$.
\end{defn}

\begin{lemma}\label{lemma34} Let $\theta\in\reg(\kappa)$. Then:
\begin{enumerate}[(1)]
\item $\U(\kappa,2,\theta,2)$ iff $\ubd^+(\self,\allowbreak J^{\bd}[\kappa], \theta)$;
\item  $\U(\kappa,2,\theta,3)$ implies $\ubd^{++}([\kappa]^\kappa,\mathcal J^\kappa_\kappa, \theta)$.
\end{enumerate}
\end{lemma}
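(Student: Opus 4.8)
The plan is to prove both clauses by one elementary device---a pigeonhole argument exploiting $\theta\in\reg(\kappa)$ (so that $\theta<\kappa=\cf(\kappa)$)---which reveals that the two-dimensional unboundedness packaged in a $\U$-colouring already concentrates into a single cofinal row. No walks are needed here: once a bare $\ubd$-witness is produced, the passage to all of $\mathcal J^\kappa_\kappa$ and the upgrade in the exponent are outsourced to the tools of the previous sections.

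For Clause (1), the backward implication is immediate: a witness $c$ to $\ubd^+(\self,J^{\bd}[\kappa],\theta)$ is itself a witness to $\U(\kappa,2,\theta,2)$, since given a cofinal $B$ (equivalently, a pairwise disjoint family of $\kappa$-many singletons) and $i<\theta$, the $\eta\in B$ furnished by $\ubd^+$ supplies a colour $\tau>i$ realized by some $\beta\in B\setminus(\eta+1)$, so $\{\eta,\beta\}$ is as required. For the forward implication, fix a witness $c$ to $\U(\kappa,2,\theta,2)$; taking $\sigma=1$, this asserts that for every cofinal $B$ and every $i<\theta$ there are $\eta<\beta$ in $B$ with $c(\eta,\beta)>i$. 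I claim $c$ already has a cofinal row over each cofinal $B$: otherwise $\eta\mapsto\sup(c[\{\eta\}\circledast B])$ maps $B$ into $\theta$, and since $|B|=\kappa>\theta=\cf(\kappa)$ some fibre $B^*\in[B]^\kappa$ is bounded below a single $s^*<\theta$, so $c[[B^*]^2]\s s^*+1$, contradicting $\U(\kappa,2,\theta,2)$ for the cofinal $B^*$. Redefining the value to be $0$ wherever $c(\eta,\beta)\ge\beta$ (which affects only the bounded set of pairs with $\beta\le\theta$) makes $c$ upper-regressive without disturbing cofinal rows, giving $\ubd(\self,J^{\bd}[\kappa],\theta)$. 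As $J^{\bd}[\kappa]\in\mathcal S^\kappa_{\theta^+}$, Lemma~\ref{lemma43b} (with $I=J^{\bd}[\kappa]$) promotes this to $\ubd^+(\self,J^{\bd}[\kappa],\theta)$, closing the cycle.

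For Clause (2), I would first extract from $\U(\kappa,2,\theta,3)$---now using $\sigma=2$---the rectangular statement that for all cofinal $A,B\s\kappa$ there is $\eta\in A$ with $\otp(c[\{\eta\}\circledast B])=\theta$. The pigeonhole is the same in spirit: were every row $\{\eta\}\circledast B$ ($\eta\in A$) bounded in $\theta$, a fibre $A^*\in[A]^\kappa$ of $\eta\mapsto\sup(c[\{\eta\}\circledast B])$ would bound all $c(\eta,\beta)$ with $(\eta,\beta)\in A^*\circledast B$ by some $s^*<\theta$; but recursively interleaving $A^*$ and $B$ into a pairwise disjoint, spread-out family of pairs $a_j=\{\alpha_j,\beta_j\}$ (with $\alpha_j\in A^*$, $\beta_j\in B$, $\alpha_j<\beta_j<\alpha_{j+1}$) and feeding it to $\U(\kappa,2,\theta,3)$ with $i=s^*$ yields $j<j'$ with $\min(c[a_j\times a_{j'}])>s^*$, whose cross-pair $(\alpha_j,\beta_{j'})\in A^*\circledast B$ violates the bound. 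After the same upper-regressivity correction this is precisely $\ubd([\kappa]^\kappa,J^{\bd}[\kappa],\theta)$, and Corollary~\ref{tripleupgrade} lifts it to $\ubd^{++}([\kappa]^\kappa,\mathcal J^\kappa_\kappa,\theta)$, as wanted.

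The main obstacle is conceptual rather than computational: seeing that the apparently weak two-dimensional $\U$-principles already force a single cofinal (indeed order-type $\theta$) row, so that all the genuinely hard work---the move from $J^{\bd}[\kappa]$ to arbitrary $\kappa$-complete ideals, and from $\ubd$/$\ubd^+$ to $\ubd^{++}$---is already packaged in Lemma~\ref{lemma43b} and Corollary~\ref{tripleupgrade}. The only delicate bookkeeping is verifying that ``bounded fibre'' uses $\theta<\cf(\kappa)$, that ``cofinal row'' upgrades to ``order-type $\theta$'' using the regularity of $\theta$, and that the upper-regressive fix touches only the bounded initial segment $\beta\le\theta$ and hence preserves the rows that matter.
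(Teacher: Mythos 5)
Your proposal is correct and takes essentially the same route as the paper's proof: both clauses reduce, via the pigeonhole argument on $\theta<\kappa=\cf(\kappa)$ (together with, for Clause~(2), the interleaved pairwise disjoint family of pairs fed to $\U(\kappa,2,\theta,3)$), to a plain $\ubd$-witness over $J^{\bd}[\kappa]$, which is then upgraded by the paper's own machinery --- you invoke Lemma~\ref{lemma43b} where the paper invokes Theorem~\ref{thm42}(2) for Clause~(1), and both proofs invoke Corollary~\ref{tripleupgrade} for Clause~(2). Your explicit upper-regressivity repair (redefining the colour to $0$ on the affected pairs, which all have $\beta\le\theta$, and then running the claim on $B\setminus(\theta+1)$) is a detail the paper leaves implicit, and you handle it correctly.
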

\begin{proof} (1) By the same proof of \cite[Proposition~6.4]{paper47},
$\U(\kappa,2,\theta,2)$ implies $\ubd(\self,\allowbreak J^{\bd}[\kappa], \theta)$.
By Theorem~\ref{thm42}(2), $\ubd(\self,\allowbreak J^{\bd}[\kappa], \theta)$ implies $\ubd^+(\self,\allowbreak J^{\bd}[\kappa], \theta)$.
It is trivial to see that $\ubd^+(\self,\allowbreak J^{\bd}[\kappa], \theta)$ implies $\U(\kappa,2,\theta,2)$.

(2) By Corollary~\ref{tripleupgrade}, it suffices to prove that $\U(\kappa,2,\theta,3)$ implies $\ubd([\kappa]^\kappa,\allowbreak J^{\bd}[\kappa],\theta)$.
To this end, suppose that $c:[\kappa]^2\rightarrow\theta$ is a colouring witnessing $\U(\kappa,2,\theta,3)$.
If $c$ fails to witness $\ubd([\kappa]^\kappa,\allowbreak J^{\bd}[\kappa], \theta)$, then we may fix $A,B\in[\kappa]^\kappa$
such that $\sup(c[\{\eta\}\circledast B])<\theta$ for all $\eta\in A$.
In particular, we may fix $A'\in[A]^\kappa$ and $\sigma<\theta$ such that
$\sup(c[\{\eta\}\circledast B])=\sigma$ for all $\eta\in A$. Let $\mathcal A$ be a pairwise disjoint subfamily of $[\kappa]^2$ of size $\kappa$ such that,
for every $x\in\mathcal A$, $\max(x)\in A'$ and $\min(x)\in B$.
As $c$ witnesses $\U(\kappa,2,\theta,3)$, we may now pick $a,b\in\mathcal A$ with $\max(a)<\min(b)$ such that $\min(c[a\times b])>\sigma$.
Set $\eta:=\max(a)$ and $\beta:=\min(b)$. Then $(\eta,\beta)\in A'\circledast B$ and $c(\eta,\beta)>\sigma$. This is a contradiction.
\end{proof}

By \cite[Corollary~5.21]{paper35}, for every $\theta\in\reg(\kappa)$, $\theta\in\cspec(\kappa)$ iff there is a closed witness to $\U(\kappa,2,\theta,\theta)$.
So, by the previous results of this section, $\ubd^{++}([\kappa]^\kappa,\mathcal J^\kappa_{\kappa},\theta)$ holds for every $\theta\in \cspec(\kappa)\cap \reg(\kappa)$.
Recalling Corollary~\ref{thm54}, the case remaining open is $\theta\in\cspec(\kappa)\cap\sing(\kappa)$ with $\theta^+<\kappa$.
Assuming an anti-large-cardinal hypothesis such as $\ssh$ (\emph{Shelah's Strong Hypothesis}; see \cite[\S8.1]{MR1112424}),
for every such singular cardinal $\theta$, $\cf([\theta]^{<\theta},{\s})<\kappa$.
Then the following proposition will help complete the picture.

\begin{prop}\label{csingular} Suppose that $\theta\in\cspec(\kappa)$ and $\cf([\theta]^{<\theta},{\s})<\kappa$.

Then $\ubd^{++}([\kappa]^\kappa,\mathcal J^\kappa_{\kappa},\theta)$ holds.
\end{prop}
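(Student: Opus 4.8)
The plan is to mirror the regular-cardinal treatment (reduce to a single upper-regressive colouring with a rectangular lower bound) and only afterwards to feed in $\theta\in\cspec(\kappa)$ through walks on ordinals. Note first that $\cf([\theta]^{<\theta},{\s})<\kappa$ rules out $\theta=\kappa$, so $\theta<\kappa$, and hence by Corollary~\ref{tripleupgrade} it suffices to establish $\ubd([\kappa]^\kappa,J^{\bd}[\kappa],\theta)$. I would in fact prove the sharper statement that there is an upper-regressive colouring $c:[\kappa]^2\rightarrow\theta$ with $|c[A\circledast B]|=\theta$ for all $A,B\in[\kappa]^\kappa$. To see this suffices, fix a $\s$-cofinal family $\langle a_\xi\mid\xi<\lambda\rangle$ in $([\theta]^{<\theta},{\s})$, where $\lambda:=\cf([\theta]^{<\theta},{\s})<\kappa$, and suppose $c$ failed to witness $\ubd([\kappa]^\kappa,J^{\bd}[\kappa],\theta)$, as exhibited by some $A,B\in[\kappa]^\kappa$. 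Then for every $\eta\in A$ the set $c[\{\eta\}\circledast B]$ has order-type $<\theta$, hence lies in $[\theta]^{<\theta}$ and is covered by some $a_{\xi_\eta}$. As $\kappa$ is regular and $\lambda<\kappa$, there are $A'\in[A]^\kappa$ and a fixed $\xi^*<\lambda$ with $c[\{\eta\}\circledast B]\s a_{\xi^*}$ for all $\eta\in A'$; but then $c[A'\circledast B]\s a_{\xi^*}$ has size $<\theta$, a contradiction. This is the one place the singular-specific hypothesis $\cf([\theta]^{<\theta},{\s})<\kappa$ enters, and it is exactly what replaces the step in Lemma~\ref{lemma34}(2) that fails when order-type $<\theta$ no longer forces bounded supremum.

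For the construction I would fix a $C$-sequence $\vec C=\langle C_\gamma\mid\gamma<\kappa\rangle$ with $\chi(\vec C)=\theta$ (in the sense of Definition~\ref{defnchi}) and conduct walks along $\vec C$ as in the proof of Theorem~\ref{strongamenfirstrepair}. Given $\eta<\beta<\kappa$, let $\gamma(\eta,\beta)$ be the last ordinal visited by the walk from $\beta$ before reaching $\eta$, so that $\eta\in C_{\gamma(\eta,\beta)}$, and set $c(\eta,\beta):=\otp(C_{\gamma(\eta,\beta)}\cap\eta)$; since $C_{\gamma(\eta,\beta)}\cap\eta\s\eta<\beta$, this $c$ is upper-regressive. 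Since the values $\otp(C_{\gamma(\eta,\beta)}\cap\eta)$ need not lie below $\theta$, one must pass to a witness $\vec C$ of $\chi(\vec C)=\theta$ for which the order-types recorded along the pertinent branches stay below $\theta$ (equivalently, truncate so that only positions $<\theta$ are read off), keeping the range inside $\theta$ without spoiling the lower bound.

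It then remains to verify $|c[A\circledast B]|=\theta$ for all $A,B\in[\kappa]^\kappa$, and this is the crux. Assuming otherwise, so that $c[A\circledast B]\s a$ for some $a\in[\theta]^{<\theta}$, I would derive a contradiction with $\chi(\vec C)\ge\theta$ by manufacturing from this data a set $\Delta\in[\kappa]^\kappa$ and a map $b:\kappa\rightarrow[\kappa]^{<\theta}$ with $\Delta\cap\alpha\s\bigcup_{\gamma\in b(\alpha)}C_\gamma$ for every $\alpha<\kappa$, which a cover of uniform size $|a|<\theta$ would provide, contradicting the minimality in $\chi(\vec C)=\theta$. The usable input is the uniformity exploited in Theorem~\ref{strongamenfirstrepair}: for a fixed $\beta$ and limit $\delta<\beta$, writing $\gamma:=\last{\delta}{\beta}$ and $\Lambda:=\lambda(\gamma,\beta)<\delta$, every $\eta\in(\Lambda,\delta)$ has its walk to $\beta$ pass through $\gamma$, and those $\eta$ lying in $C_\gamma$ satisfy $\gamma(\eta,\beta)=\gamma$ with $c(\eta,\beta)=\otp(C_\gamma\cap\eta)$; so the constraint that only colours in $a$ appear pins the relevant $A$-points to $\le|a|$ positions along each such $C_\gamma$.

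The main obstacle I foresee is precisely this last extraction: converting the pointwise statement ``fewer than $\theta$ colours occur on the rectangle'' into a genuinely $<\theta$-sized covering family $b(\alpha)$ of $\Delta\cap\alpha$, i.e.\ marrying the recursive gap structure of the walk with the combinatorial content of $\chi(\vec C)$. A cover indexed by all visited last-steps $\gamma(\eta,\beta)$ is hopelessly large, so the argument must instead descend recursively through the gaps of the principal $C_\gamma$'s along the walk and bound the total number of sets used by a function of $|a|$ alone. This is the genuinely singular ingredient, and I expect it to be the longest and most delicate part of the proof; the two reductions above, by contrast, are routine.
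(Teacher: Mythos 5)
Your preliminary reductions are sound: $\cf([\theta]^{<\theta},{\s})<\kappa$ does force $\theta<\kappa$, Corollary~\ref{tripleupgrade} does reduce the task to $\ubd([\kappa]^\kappa,J^{\bd}[\kappa],\theta)$, and your uniformization argument (covering each $c[\{\eta\}\circledast B]\in[\theta]^{<\theta}$ by a member of a fixed cofinal family of size $<\kappa$) correctly shows that an upper-regressive $c$ with $|c[A\circledast B]|=\theta$ for all $A,B\in[\kappa]^\kappa$ would suffice. But everything after that is a genuine gap, and you say so yourself. Concretely: (a) your walks colouring $c(\eta,\beta)=\otp(C_{\gamma(\eta,\beta)}\cap\eta)$ does not map into $\theta$, and neither proposed repair is available --- at ordinals $\gamma$ with $\cf(\gamma)>\theta$ \emph{every} club in $\gamma$ has order-type $>\theta$, so no thinning of $\vec C$ can keep the read-off order-types below $\theta$ (and thinning in any case destroys the covering witness to $\chi(\vec C)\le\theta$), while truncating the values to $0$ above $\theta$ keeps the range in $\theta$ but leaves no stated mechanism for producing $\theta$ many colours; (b) the ``extraction'' you defer --- converting colour-concentration into a covering of a $\kappa$-sized set by fewer than $\theta$ many $C_\gamma$'s --- is not a technical appendix but the entire mathematical content of the proposition: with a walks-based colouring there is no link between the colour of a pair and a specific member of a candidate covering family, so the contradiction with the minimality of $\chi(\vec C)$ never gets started. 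Note also that your plan ``mirror the regular-cardinal treatment via walks'' is exactly the route the paper could not take here: the walks machinery (via $\U(\kappa,2,\theta,\theta)$ and \cite[Corollary~5.21]{paper35}) handles $\theta\in\cspec(\kappa)\cap\reg(\kappa)$, and Proposition~\ref{csingular} exists precisely to treat the singular case where that route is unavailable.

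The idea you are missing is that walks are unnecessary: the witness to $\chi(\vec C)=\theta$ already \emph{is} the colouring. The paper fixes $\Delta\in[\kappa]^\kappa$ and $b:\kappa\rightarrow{}^\theta\kappa$ with $\Delta\cap\beta\s\bigcup_{i<\theta}C_{b(\beta)(i)}$ for all $\beta<\kappa$, sets $\delta_\eta:=\min(\Delta\setminus\eta)$, and colours $c(\eta,\beta):=\min\{i<\theta\mid\delta_\eta\in C_{b(\beta)(i)}\}$ whenever $\delta_\eta<\beta$. A colour is thus by definition an index into the covering family, so the extraction is immediate: if $c$ fails to witness $\ubd^{++}$ for some $J\in\mathcal J^\kappa_\kappa$, $A\in[\kappa]^\kappa$ and $\langle B_\tau\mid\tau<\theta\rangle$, then each $\eta\in A$ has some $\tau_\eta$ with $T_\eta:=\{i<\theta\mid\{\beta\in B_{\tau_\eta}\setminus(\eta+1)\mid c(\eta,\beta)=i\}\in J^+\}$ of order-type $<\theta$; the hypothesis $\cf([\theta]^{<\theta},{\s})<\kappa$ uniformizes this to a cofinal $A'\s A$, a single $\tau$, and a single $T\in[\theta]^{<\theta}$ with $T_\eta\s T$; and then, for each $\epsilon<\kappa$, the $\kappa$-completeness of $J$ yields one $\beta>\epsilon$ lying in $B_\tau$ and in all the sets $E_{\eta,i}\in J^*$ ($\eta\in A'\cap\epsilon$, $i\in\theta\setminus T$), forcing $c(\eta,\beta)\in T$ for all such $\eta$ and hence $\{\delta_\eta\mid\eta\in A'\}\cap\epsilon\s\bigcup_{i\in T}C_{b(\beta)(i)}$. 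This covers a $\kappa$-sized set by $|T|<\theta$ many members of $\vec C$, contradicting $\chi(\vec C)=\theta$. Observe that this proves $\ubd^{++}$ for every $J\in\mathcal J^\kappa_\kappa$ directly, with no detour through $J^{\bd}[\kappa]$ or Corollary~\ref{tripleupgrade}; the tie between colours and covering indices is exactly what your sketch never supplies.
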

\begin{proof} Fix a $C$-sequence $\vec C=\langle C_\gamma\mid\gamma<\kappa\rangle$ such that $\chi(\vec C)=\theta$.
Fix $\Delta\in[\kappa]^\kappa$ and $b:\kappa\rightarrow{}^\theta\kappa$ such that, for every $\beta<\kappa$, $\Delta\cap\beta\s\bigcup_{i<\theta}C_{b(\beta)(i)}$.
For every $\eta<\kappa$, let $\delta_\eta:=\min(\Delta\setminus\eta)$.
Fix an upper regressive colouring $c:[\kappa]^2\rightarrow\theta$ such that, for all $\eta<\beta<\kappa$, if $\delta_\eta<\beta$, then $c(\eta,\beta):=\min\{i<\theta\mid \delta_\eta\in C_{b(\beta)(i)}\}$.

We claim that $c$ witnesses  $\ubd^{++}([\kappa]^\kappa,\mathcal J^\kappa_\kappa,\theta)$.
Suppose not.
Fix $A\in[\kappa]^\kappa$,
$J\in\mathcal J^\kappa_\kappa$, and a sequence $\langle B_\tau\mid\tau<\theta\rangle$ of sets in $J^+$ such that, for every $\eta\in A$,
for some $\tau_\eta<\theta$,
$$T_\eta:=\{ i<\theta\mid \{ \beta\in B_{\tau_\eta}\setminus(\eta+1)\mid c(\eta,\beta)=i\}\in J^+\}$$ has order-type less than $\theta$.
As $\cf([\theta]^{<\theta},{\s})<\kappa$, there must exist $\tau<\theta$ and $T\in[\theta]^{<\theta}$ for which the following set is cofinal in $\kappa$:
$$A':=\{ \eta\in A\mid \tau_\eta=\tau\ \&\ T_\eta\s T\}.$$
Evidently, $\Delta':=\{ \delta_\eta\mid \eta\in A'\}$ is an element of $[\kappa]^\kappa$.
\begin{claim} Let $\epsilon<\kappa$. There exists $\Gamma\s\kappa$ with $|\Gamma|\le|T|$ such that $\Delta'\cap\epsilon\s\bigcup_{\gamma\in\Gamma}C_\gamma$.
\end{claim}
\begin{why} For all $\eta\in A'\cap\epsilon$ and $i\in\theta\setminus T$, since $i\notin T_\eta$,
there exists some $E_{\eta,i}\in J^*$ disjoint from $\{ \beta\in B_{\tau}\setminus(\eta+1)\mid c(\eta,\beta)=i\}$.
As $J$ is $\kappa$-complete, $B_\tau\cap\bigcap\{ E_{\eta,i}\mid \eta\in A'\cap\epsilon, i\in\theta\setminus T\}$  is in $J^+$
so we may pick an element $\beta$ in that intersection that is above $\epsilon$.
Set $\Gamma:=\{ b(\beta)(i)\mid i\in T\}$.

Now, given $\delta\in\Delta'\cap\epsilon$, find $\eta\in A'\cap\epsilon$ such that $\delta=\delta_\eta$,
and then notice that since $\beta\in B_\tau\cap\bigcap_{i\in\theta\setminus T}E_{\eta,i}$, it is the case that $c(\eta,\beta)\in T$.
So, since  $\beta>\epsilon>\delta$, this means that $\delta_\eta\in C_{b(\beta)(i)}$ for some $i\in T$. Altogether, $\Delta'\cap\epsilon\s\bigcup_{\gamma\in\Gamma}C_\gamma$.
\end{why}
It follows that $\chi(\vec C)\le|T|<\theta$.  This is a contradiction.
\end{proof}

\begin{cor} Assuming $\ssh$, $\ubd^{++}([\kappa]^\kappa,\mathcal J^\kappa_{\kappa},\theta)$ holds
for every $\theta\in\cspec(\kappa)$. \qed
\end{cor}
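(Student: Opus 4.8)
The plan is to establish the corollary through a case analysis on $\theta$, routing each case to a result already accumulated in this section; $\ssh$ will be invoked only once, and in a single case. Since $\kappa$ is regular, every $\theta\in\cspec(\kappa)$ is an infinite cardinal $\le\kappa$, so I would split according to whether $\theta$ is regular or singular, exactly as the discussion preceding Proposition~\ref{csingular} foreshadows.

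For the regular case no new work is required: as recorded before Proposition~\ref{csingular}, whenever $\theta\in\cspec(\kappa)\cap\reg(\kappa)$ the characterization \cite[Corollary~5.21]{paper35} furnishes a closed witness to $\U(\kappa,2,\theta,\theta)$, which in particular witnesses $\U(\kappa,2,\theta,3)$ (as $\theta\ge3$), and then Lemma~\ref{lemma34}(2) yields $\ubd^{++}([\kappa]^\kappa,\mathcal J^\kappa_\kappa,\theta)$. No appeal to $\ssh$ is needed here.

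The substance lies in the singular case. Suppose $\theta\in\cspec(\kappa)\cap\sing(\kappa)$; since $\kappa$ is regular this forces $\theta<\kappa$, so $\theta^+\le\kappa$. If $\theta^+=\kappa$, then $\kappa=\theta^+$ with $\theta$ singular, and Corollary~\ref{thm54}(2) gives the conclusion outright. The one remaining configuration is $\theta^+<\kappa$, where I would apply Proposition~\ref{csingular}; its only hypothesis beyond $\theta\in\cspec(\kappa)$ is the covering bound $\cf([\theta]^{<\theta},{\s})<\kappa$, and this is precisely what $\ssh$ supplies. As indicated before Proposition~\ref{csingular} (see \cite[\S8.1]{MR1112424}), $\ssh$ forces $\cf([\theta]^{<\theta},{\s})=\theta^+$ for every singular cardinal $\theta$, so in the case at hand $\cf([\theta]^{<\theta},{\s})=\theta^+<\kappa$. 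Proposition~\ref{csingular} then delivers $\ubd^{++}([\kappa]^\kappa,\mathcal J^\kappa_\kappa,\theta)$, completing the analysis.

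The only genuinely nontrivial ingredient is the pcf-theoretic fact that $\ssh$ collapses $\cf([\theta]^{<\theta},{\s})$ to $\theta^+$ for singular $\theta$; this is the step I expect to be the main obstacle. The point requiring care is that $\ssh$ is most directly a statement about the $\cf(\theta)$-covering number $\cf([\theta]^{\cf(\theta)},{\s})$, so one must confirm that its conclusion propagates to the full $({<}\theta)$-covering number before feeding it into Proposition~\ref{csingular}. Everything else is bookkeeping: dispatching the three mutually exclusive cases — $\theta$ regular, $\kappa=\theta^+$ with $\theta$ singular, and $\theta^+<\kappa$ with $\theta$ singular — to the appropriate previously established result.
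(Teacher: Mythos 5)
Your proposal is correct and is essentially the paper's own argument: the corollary is stated with an immediate \qed{} precisely because the preceding discussion already dispatches the three cases you identify — regular $\theta\in\cspec(\kappa)$ via the closed witness to $\U(\kappa,2,\theta,\theta)$ from \cite[Corollary~5.21]{paper35} and Lemma~\ref{lemma34}(2), singular $\theta$ with $\theta^+=\kappa$ via Corollary~\ref{thm54}(2), and singular $\theta$ with $\theta^+<\kappa$ via the $\ssh$-consequence $\cf([\theta]^{<\theta},{\s})<\kappa$ fed into Proposition~\ref{csingular}. Your added care about $\U(\kappa,2,\theta,\theta)\Rightarrow\U(\kappa,2,\theta,3)$ and about $\ssh$ controlling the full $({<}\theta)$-cofinality rather than just $\cf([\theta]^{\cf(\theta)},{\s})$ matches what the paper asserts without further comment.
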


We conclude this section by providing a sufficient condition for $\onto^+(\ldots)$ to hold with the maximal possible number of colours.
For the definition of $\p^\bullet(\ldots)$, see \cite[Definition~5.9]{paper23}.

\begin{thm}\label{proxybullet} Suppose that $\p^\bullet(\kappa,\kappa^+,{\sq},1)$ holds.
Then:
\begin{enumerate}[(1)]
\item $\onto^+([\kappa]^\kappa,\mathcal J^\kappa_\kappa,\kappa)$ holds;
\item For every cardinal $\theta<\kappa$, $\onto^{++}([\kappa]^\kappa,\mathcal J^\kappa_\kappa,\theta)$ holds.
\end{enumerate}
\end{thm}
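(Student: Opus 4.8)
The plan is to reduce both clauses to the single assertion that $\onto([\kappa]^\kappa,J^{\bd}[\kappa],\kappa)$ holds, and then to construct a witness to the latter by walking along the coherent amenable $C$-sequence furnished by $\p^\bullet(\kappa,\kappa^+,{\sq},1)$.

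Granting that $\onto([\kappa]^\kappa,J^{\bd}[\kappa],\kappa)$ holds, Clause~(1) follows at once from Theorem~\ref{uparrowlemma}(1), applied with $\nu:=\kappa$, $\mathcal A:=[\kappa]^\kappa$ and $\theta:=\kappa$, so that $\sigma=\min\{\kappa,\kappa^+\}=\kappa$ and the conclusion reads $\onto^+([\kappa]^\kappa,\mathcal J^\kappa_\kappa,\kappa)$. For Clause~(2), fix a witness $c:[\kappa]^2\rightarrow\kappa$ to $\onto([\kappa]^\kappa,J^{\bd}[\kappa],\kappa)$ and a cardinal $\theta<\kappa$, and let $s:\kappa\rightarrow\theta$ be any surjection. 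Since $\kappa$ is regular, $(J^{\bd}[\kappa])^+=[\kappa]^\kappa$, and for all $A,B\in[\kappa]^\kappa$ the ordinal $\eta\in A$ with $c[\{\eta\}\circledast B]=\kappa$ satisfies $(s\circ c)[\{\eta\}\circledast B]=s[\kappa]=\theta$; hence $s\circ c$ witnesses $\onto([\kappa]^\kappa,J^{\bd}[\kappa],\theta)$. As $\theta<\kappa$, Theorem~\ref{prop46}(2) then upgrades this to $\onto^{++}([\kappa]^\kappa,\mathcal J^\kappa_\kappa,\theta)$, completing the reduction.

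It thus remains to establish $\onto([\kappa]^\kappa,J^{\bd}[\kappa],\kappa)$. Let $\vec C=\langle C_\delta\mid\delta<\kappa\rangle$ be a $\sq$-coherent amenable $C$-sequence as provided by $\p^\bullet(\kappa,\kappa^+,{\sq},1)$; being amenable, it is in particular strongly amenable in the sense of Section~\ref{subnormalrevisited}, so $\chi(\vec C)>1$ by Lemma~\ref{strongstrongamen}, and we may fix an auxiliary colouring witnessing $\kappa\nrightarrow[\kappa]^2_\omega$ exactly as in the proof of Theorem~\ref{strongamenfirstrepair}. I would then conduct walks on ordinals along $\vec C$ and define $c:[\kappa]^2\rightarrow\kappa$ by extracting from the walk from $\beta$ down to $\eta$ two pieces of data: a quantity $w(\eta,\beta)<\kappa$ governed by the order-type of the last $C$-set encountered (which will be able to realize cofinally many values below $\kappa$), and a ``position'' $u(\eta,\beta)<\kappa$ locating $\beta$ relative to that set. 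Using a fixed family of surjections $\langle e_\alpha:\kappa\rightarrow\alpha+1\mid\alpha<\kappa\rangle$, I would recode these into a single colour $c(\eta,\beta):=e_{w(\eta,\beta)}(u(\eta,\beta))$, precisely in the spirit of Lemma~\ref{increasecolours}; this recoding is what turns a cofinal set of realized values into the full set of colours.

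To verify the onto property, given $A,B\in[\kappa]^\kappa$ I would appeal to the amenability of $\vec C$ to produce a limit point $\delta$ of both $A$ and $B$ whose ladder $C_\delta$ guesses $A$ at $\delta$, together with a single $\eta\in A\cap\delta$ through which the walk from cofinally many $\beta\in B$ (lying above $\delta$) factors; by $\sq$-coherence these walks share their final descent along $C_\delta$. As $\beta$ ranges over these cofinally many members of $B$, the quantity $w(\eta,\beta)$ sweeps out a cofinal subset of $\kappa$; so for any prescribed colour $\tau<\kappa$ one may select such a $\beta$ with $w(\eta,\beta)\ge\tau$ and with $u(\eta,\beta)$ an $e_{w(\eta,\beta)}$-preimage of $\tau$, giving $c(\eta,\beta)=\tau$ and hence $c[\{\eta\}\circledast B]=\kappa$. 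The main obstacle lies exactly here: unlike the $\ubd^+$ arguments of Section~\ref{subnormalrevisited}, where attaining order-type $\kappa$ suffices and repetitions can be harvested, one must now realize \emph{every} colour below $\kappa$ on a single vertical slice while working only with the bounded ideal $J^{\bd}[\kappa]$. Securing simultaneously the cofinal spread of $w$ and the independent controllability of $u$ along one walk is the crux, and it is what forces the passage to the full strength of $\p^\bullet(\kappa,\kappa^+,{\sq},1)$, rather than the mere strong amenability (equivalently, $\chi(\kappa)>1$) that underlay the $\ubd^+$ results.
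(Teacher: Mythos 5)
Your two reductions are correct and even clean: Theorem~\ref{uparrowlemma}(1) with $\nu:=\kappa$, $\mathcal A:=[\kappa]^\kappa$, $\sigma=\kappa$ does turn $\onto([\kappa]^\kappa,J^{\bd}[\kappa],\kappa)$ into Clause~(1), and composing with a surjection $s:\kappa\rightarrow\theta$ followed by Theorem~\ref{prop46}(2) does yield Clause~(2). The gap is that the statement you reduced everything to, $\onto([\kappa]^\kappa,J^{\bd}[\kappa],\kappa)$, is never actually proved. Your sketch treats $\p^\bullet(\kappa,\kappa^+,{\sq},1)$ as if it merely supplied a coherent, amenable $C$-sequence, and then proposes a colouring built from two walk-derived quantities $w$ and $u$ recoded through surjections $e_\alpha$. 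But a $C$-sequence alone, however coherent and amenable, carries no mechanism for making a single slice $\{\eta\}\circledast B$ realize \emph{every} colour below $\kappa$: this is exactly the obstruction you yourself flag as ``the crux'', and asserting that the full strength of $\p^\bullet$ ``forces the passage'' past it is not an argument. (Your side claim that the sequence is ``in particular strongly amenable'', hence $\chi(\vec C)>1$, is also unjustified and plays no role in the actual proof.)

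What $\p^\bullet(\kappa,\kappa^+,{\sq},1)$ really provides — and what the paper's proof leans on — is not the ladders but the $H_\kappa$-\emph{valued prediction} feature: each $C\in\mathcal C_\delta$ is a function $C:\mathring C\rightarrow H_\kappa$, coherent under restriction, such that for every $g:\kappa\rightarrow\kappa$ there is $\delta\in\acc(\kappa)$ with $\sup\{\tau\in\nacc(\mathring C)\mid g[\tau]\s\tau\ \&\ C(\tau)=g\restriction\tau\}=\delta$ for all $C\in\mathcal C_\delta$ (Clause~(iii')). The paper's colouring reads off this guessed value along the walk: $c(\eta,\beta):=C_\gamma(\min(\mathring C_\gamma\setminus(\eta+1)))(\eta)$ where $\gamma=\min(\im(\tr(\eta,\beta)))$, and the verification is by contradiction rather than by hitting colours directly: if $B\in J^+$ and $g(\eta)$ selects, for each $\eta$, a colour whose fiber over $B$ is in $J$, then guessing $g$ at some $\delta$, together with coherence ($C_\gamma\restriction\delta\in\mathcal C_\delta$ for $\gamma=\last{\delta}{\beta}$) and $\lambda(\gamma,\beta)<\delta$, shows that every $\beta\in B$ above $\delta$ satisfies $c(\eta_\beta,\beta)=g(\eta_\beta)$ for some $\eta_\beta<\delta$; then $\kappa$-completeness of $J$ pigeonholes a single $\eta$ whose fiber is $J$-positive, a contradiction. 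This diamond-style guessing is the ingredient your $w/u$ recoding cannot replace. Note also that this direct argument only produces \emph{some} $\eta<\kappa$, not an $\eta$ in a prescribed $A\in[\kappa]^\kappa$; the paper obtains the wide version by a case split (for $\kappa=\nu^+$ via \cite[Lemma~8.3(1)]{paper47} and Proposition~\ref{prop614}(2), for $\kappa$ inaccessible via \cite[Lemma~8.5(2)]{paper47}), not by having the ladders guess $A$, which is another step your outline elides.
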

\begin{proof} (1)
According to \cite[\S5]{paper23}, $\p^\bullet(\kappa,\kappa^+,{\sq},1)$ provides us with a sequence $\langle\mathcal C_\beta\mid\beta<\kappa\rangle$ satisfying the following:
\begin{enumerate}
\item for every $\beta<\kappa$, $\mathcal C_\beta$ is a nonempty collection of functions $C:\mathring{C}\rightarrow  H_\kappa$ such that $\mathring{C}$ is a closed subset of $\beta$ with $\sup(\mathring C)=\sup(\beta)$;\footnote{$H_\kappa$ denotes the collection of all sets of hereditary cardinality less than $\kappa$.}
\item for all $\beta<\kappa$, $C\in\mathcal C_\beta$ and $\alpha\in\acc(\mathring C)$, $C\restriction \alpha\in\mathcal C_{\alpha}$;
\item for all $\Omega\s H_\kappa$ and $p\in H_{\kappa^+}$, there exists $\delta\in\acc(\kappa)$
such that, for all $C\in\mathcal C_\delta$ and $\epsilon<\delta$,
there exists an elementary submodel $\mathcal M\prec H_{\kappa^+}$ with $\{\epsilon,p\}\in\mathcal M$
such that $\tau:=\mathcal M\cap\kappa$ is in $\nacc(\mathring C)$ and $C(\tau)=\mathcal M\cap\Omega$.
\end{enumerate}
For our purposes, it suffices to consider the following special case of (iii):
\begin{enumerate}
\item[(iii')] for every function $g: \kappa\rightarrow \kappa$, there exists $\delta\in\acc(\kappa)$
such that, for all $C\in\mathcal C_\delta$,
$$\sup\{\tau\in \nacc(\mathring C)\mid g[\tau]\s\tau\ \&\ C(\tau)=g\restriction \tau\}=\delta.$$
\end{enumerate}

In particular,\footnote{Note that the proof of \cite[Claim~5.11.1]{paper23}
moreover shows that $H_\kappa=\bigcup_{\alpha\in\acc(\kappa)}\im(C_\alpha)$ for every transversal $\langle C_\alpha\mid\alpha<\kappa\rangle\in\prod_{\alpha<\kappa}\mathcal C_\alpha$.}
$\kappa^{<\kappa}=\kappa$.
Now, if $\kappa=\nu^+$ is a successor cardinal, then by \cite[Lemma~8.3(1)]{paper47},
$\onto^+([\kappa]^\nu,\allowbreak J^{\bd}[\kappa],\kappa)$ holds,
and then, by Proposition~\ref{prop614}(2), $\onto^+([\kappa]^\nu,\allowbreak \mathcal J^\kappa_\kappa,\kappa)$ holds. Thus, hereafter, assume that $\kappa$ is inaccessible.
By \cite[Lemma~8.5(2)]{paper47}, in this case, it suffices to prove that $\onto^+(\mathcal J^\kappa_\kappa, \kappa)$ holds.

To this end, fix a sequence $\vec C=\langle C_\beta\mid \beta<\kappa\rangle$ such that, $C_\beta\in\mathcal C_\beta$ for all $\beta<\kappa$.
We shall conduct walks on ordinals along
$\vec{\mathring C}:=\langle \mathring C_\beta\mid \beta<\kappa\rangle$ and all terms regarding walks on ordinals should be understood as pertaining to this fixed $C$-sequence.
Define an upper-regressive colouring $c:[\kappa]^2\rightarrow\kappa$ as follows. Given $\eta<\beta<\kappa$,
let $\gamma:=\min(\im(\tr(\eta,\beta)))$
and then let  $c(\eta,\beta):=C_\gamma(\min(\mathring C_\gamma\setminus(\eta+1)))(\eta)$
provided that the latter is a well-defined ordinal less than $\beta$; otherwise, let $c(\eta,\beta):=0$.

Towards a contradiction, suppose that $J\in\mathcal J^\kappa_\kappa$ and $B\in J^+$ are such that there exists a function $g:\kappa\rightarrow\kappa$ such that,
for every $\eta<\kappa$, $\{\beta\in B\setminus(\eta+1)\mid c(\eta,\beta)=g(\eta)\}$ is in $J$.
Fix $\delta\in\acc(\kappa)$ as in Clause~(iii').
\begin{claim} For every $\beta\in B$ above $\delta$,
there exists $\eta<\delta$ such that $c(\eta,\beta)=g(\eta)$.
\end{claim}
\begin{why} Let $\beta \in B$ be above $\delta$ and let $\gamma:= \last{\delta}{\beta}$ in the sense of \cite[Definition~2.10]{paper44} so that $\delta \le \gamma\le \beta$. Since $\delta \in \acc(\kappa)$, it is also the case that $\sup(\mathring C_\gamma\cap \delta)= \delta$. By Clause (ii) this implies that $C_\gamma \restriction \delta \in \mathcal C_\delta$. As $\delta$ was chosen as in Clause (iii') we have that
$$\sup\{\tau\in \nacc(\mathring C_\gamma)\mid g[\tau]\s\tau\ \&\ C_\gamma(\tau)=g\restriction \tau\}=\delta.$$
By \cite[Lemma~2.11]{paper44}, $\Lambda:= \lambda(\gamma, \beta)$ is less than $\delta$.
Therefore, for every $\eta\in \mathring C_\gamma$ larger than $\Lambda$, $\min(\im(\tr(\eta,\beta)))=\gamma$.
Now pick $\tau \in \nacc(\mathring C_\gamma)$ such that $g[\tau] \s \tau$ and $C_\gamma(\tau) = g\restriction \tau$ and such that $\eta:= \sup(\mathring C_\gamma \cap \tau)$ is larger than $\Lambda$. It follows that
$$c(\eta, \beta) = (C_\gamma(\tau))(\eta) = (g\restriction \tau)(\eta) = g(\eta),$$ as required.
\end{why}
For every $\beta\in B$ above $\delta$, let $\eta_\beta<\delta$ be given by the preceding claim.
As $J$ is $\kappa$-complete, there exists some $\eta<\delta$ such that $B':=\{ \beta\in B\setminus(\delta+1)\mid \eta_\beta=\eta\}$ is in $J^+$.
Then, $\{ \beta\in B\setminus(\eta+1)\mid c(\eta,\beta)=g(\eta)\}$ covers $B'$, contradicting the choice of $g(\eta)$.

(2) Given a cardinal $\theta<\kappa$, by Clause~(1), in particular, $\onto^+([\kappa]^\kappa,\mathcal J^\kappa_\kappa,\theta)$ holds.
Now appeal to Lemma~\ref{lemma39}(1).
\end{proof}

\section{Scales}\label{sectionscales}

\begin{lemma}\label{lemma712a}  Suppose that $\theta<\kappa$ are infinite cardinals.
Consider the following statements:
\begin{enumerate}[(1)]
\item $\ubd(\{\theta\},J^\bd[\kappa],\theta)$ holds;
\item There exists a sequence $\vec g=\langle g_\beta\mid \beta<\kappa\rangle$ of functions from $\theta$ to $\theta$,
such that, for every cofinal $B\s\kappa$, $\{ g_\beta\mid\beta \in B\}$ is unbounded in $({}^\theta\theta,<^*)$.
\end{enumerate}
If $\cf(\theta)\neq\cf(\kappa)$, then $(1)\implies(2)$. If $\cf(\theta)=\theta$, then $(2)\implies(1)$.
\end{lemma}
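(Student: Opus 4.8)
The plan is to exploit the obvious dictionary between an upper-regressive colouring $c\colon[\kappa]^2\to\theta$ and the $\kappa$-indexed family of its \emph{columns} $\beta\mapsto\langle c(\eta,\beta)\mid\eta<\theta\rangle\in{}^\theta\theta$, translating each principle into a statement about this family. For $(2)\Rightarrow(1)$ (where $\cf(\theta)=\theta$), I would let $\vec g$ be as in (2) and set $c(\eta,\beta):=g_\beta(\eta)$ whenever $\eta<\theta\le\beta$ (fixing any upper-regressive values for $\beta<\theta$); since $g_\beta(\eta)<\theta\le\beta$, this $c$ is upper-regressive. Given a cofinal $B$, trim to $B\setminus\theta$: if \emph{every} column $\{g_\beta(\eta)\mid\beta\in B\}$ were bounded in $\theta$, say by $h(\eta)<\theta$, then $g_\beta<h$ pointwise for all $\beta\in B$, so $h$ would $<^*$-dominate the family, contradicting (2). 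Hence some column is cofinal in $\theta$; here regularity of $\theta$ upgrades ``cofinal in $\theta$'' to ``order type $\theta$'', yielding the required $\eta$ with $\otp(c[\{\eta\}\circledast B])=\theta$. Note that regularity is used \emph{only} for this last upgrade.

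For $(1)\Rightarrow(2)$, fix $c$ witnessing (1) and define $g_\beta$ as a \emph{monotone} running supremum of the columns, $g_\beta(\eta):=\ssup\{c(\xi,\beta)\mid\xi\le\eta\}$. The virtue of monotonicity is that if $g_\beta<^* h$ (with $h$ taken nondecreasing) with threshold $j_\beta<\theta$, then $c(\xi,\beta)\le g_\beta(\max\{\xi,j_\beta\})<h(\max\{\xi,j_\beta\})$ for \emph{all} $\xi<\theta$, so controlling the single threshold controls every column at once. Suppose then, toward a contradiction, that for some cofinal $B$ the family $\{g_\beta\mid\beta\in B\}$ is bounded by $h$. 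The engine, and the sole place $\cf(\theta)\ne\cf(\kappa)$ is used, is a pressing-down fact: were there no $j^*<\theta$ with $\{\beta\in B\mid j_\beta\le j^*\}$ cofinal in $\kappa$, the map $j^*\mapsto\sup\{\beta\in B\mid j_\beta\le j^*\}$ would be a weakly increasing cofinal function $\theta\to\kappa$, forcing $\cf(\theta)=\cf(\kappa)$. So I may fix such a $j^*$ and a cofinal $C\subseteq B$ with $j_\beta\le j^*$ throughout; by the displayed monotonicity this gives a single $h'\in{}^\theta\theta$ with $c(\eta,\beta)<h'(\eta)$ for all $\eta<\theta$ and $\beta\in C$. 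Applying (1) to the cofinal set $C$ now produces an $\eta_0$ with $\otp(c[\{\eta_0\}\circledast C])=\theta$, a column that is nevertheless bounded by $h'(\eta_0)<\theta$ --- contradiction.

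The main obstacle is keeping $g_\beta$ valued in $\theta$, which is exactly the place where the uncontrollable location of the witness $\eta_0$ bites. When $\theta$ is regular the running supremum is a sup of fewer than $\cf(\theta)=\theta$ ordinals below $\theta$, hence stays below $\theta$, and the argument above closes cleanly. When $\theta$ is singular the running supremum can reach $\theta$, so one cannot dominate \emph{all} columns for free; the real content becomes simultaneously bounding the ``sub-threshold'' columns $\{c(\eta,\cdot)\mid\eta<j^*\}$ on a further cofinal subset, so that the witness $\eta_0$ handed to us by (1) is guaranteed to land in the dominated range. If $\cf(\kappa)<\cf(\theta)$ this still works: a diagonal box $\bar s(\eta):=\sup_{i<\cf(\kappa)}s_i(\eta)$ built over a $\cf(\kappa)$-indexed cofinal sequence of ``bounding boxes'' remains below $\theta$ and produces the simultaneous bound. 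The delicate case I expect to be hardest is $\cf(\kappa)>\cf(\theta)$, where simultaneous bounding of $\ge\cf(\theta)$ many columns can fail outright --- a scale in a product exhibits a family no cofinal subset of which is boxed. My proposed remedy there is to pass to a cofinal $D\subseteq\theta$ of order type $\cf(\theta)$, take the running supremum only along $D$ (keeping values below $\theta$), and transfer unboundedness from $({}^D\theta,<^*)$ back to $({}^\theta\theta,<^*)$; reconciling this restriction with the fact that $\eta_0$ may fall outside $D$ is the crux I would need to resolve, and I anticipate it is the genuinely technical heart of the lemma.
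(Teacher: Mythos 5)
Your $(2)\Rightarrow(1)$ argument is correct and is essentially the paper's own: define $c(\eta,\beta):=g_\beta(\eta)$ for $\eta<\theta\le\beta$, and use regularity of $\theta$ to convert ``every column has order type $<\theta$'' into a single pointwise bound contradicting (2). The gap is in $(1)\Rightarrow(2)$: what you actually prove is that implication under the \emph{additional} hypothesis that $\theta$ is regular, whereas the lemma asserts it for every pair with $\cf(\theta)\neq\cf(\kappa)$, singular $\theta$ included. Your central device, the monotone running supremum $g_\beta(\eta):=\ssup\{c(\xi,\beta)\mid\xi\le\eta\}$ (and likewise the replacement of $h$ by a nondecreasing majorant), needs $\cf(\theta)=\theta$ to keep these functions inside ${}^\theta\theta$; you acknowledge this, and the remedies you sketch for singular $\theta$ (diagonal boxes when $\cf(\kappa)<\cf(\theta)$, restriction to a cofinal $D\s\theta$ of order type $\cf(\theta)$ when $\cf(\kappa)>\cf(\theta)$) are not carried out --- indeed you state that reconciling the witness $\eta_0$ falling outside $D$ is an unresolved ``crux''. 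As written, the proposal therefore does not establish the stated implication.

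The missing idea, and the paper's way around the obstacle, is to never form suprema at all. Fix a surjection $\sigma:\theta\rightarrow\theta$ such that $\sigma^{-1}\{\eta\}$ is cofinal in $\theta$ for every $\eta<\theta$, and define $g_\beta(\tau):=c(\{\sigma(\tau),\beta\})$; these are literally reindexed columns, so they lie in ${}^\theta\theta$ for any $\theta$. If $\{g_\beta\mid\beta\in B\}$ were $<^*$-bounded by some $g$, then by the same pressing-down you use (the sole place $\cf(\theta)\neq\cf(\kappa)$ enters) there are a cofinal $B'\s B$ and a single $\epsilon<\theta$ with $g_\beta(\tau)<g(\tau)$ for all $\beta\in B'$ and $\tau\ge\epsilon$. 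Now, given any $\eta<\theta$, pick $\tau\in\sigma^{-1}\{\eta\}$ above $\epsilon$: then $c[\{\eta\}\circledast B']\s\{g_\beta(\tau)\mid\beta\in B'\}\s g(\tau)$, so \emph{every} column over $B'$ has order type at most $g(\tau)<\theta$, contradicting (1). Cofinality of the fibers of $\sigma$ lets each fixed colour-index $\eta$ be read off at arbitrarily late coordinates $\tau$, so eventual domination bounds the whole column by the single ordinal $g(\tau)$; no monotone envelope, and hence no regularity of $\theta$, is ever needed. Your regular-$\theta$ argument is subsumed by this, but the surjection trick is what makes the lemma true at its stated level of generality.
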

\begin{proof} $(1)\implies(2)$:
Suppose that $c:[\kappa]^2\rightarrow\theta$ is a colouring witnessing $\ubd(\{\theta\},\allowbreak J^\bd[\kappa],\theta)$.
Fix a surjection $\sigma:\theta\rightarrow\theta$ such that the preimage of any singleton is cofinal in $\theta$.
For every $\beta<\kappa$, derive $g_\beta:\theta\rightarrow\theta$ via $g_\beta(\tau):=c(\{\sigma(\tau),\beta\})$.
Towards a contradiction, suppose that there exists a cofinal $B\s\kappa$ such that $\{ g_\beta\mid \beta\in B\}$ is bounded in $({}^\theta\theta,<^*)$.
Pick a function $g:\theta\rightarrow\theta$ such that,
for every $\beta\in B$, for some $\epsilon_\beta<\theta$, $g_\beta(\tau)<g(\tau)$ whenever  $\epsilon_\beta\le \tau<\theta$.
Assuming that $\cf(\theta)\neq\cf(\kappa)$, we may pick $\epsilon<\theta$ for which $$B':=\{\beta\in B\mid \epsilon_\beta\le \epsilon\}$$
is cofinal in $\kappa$. Now, for every $\eta<\theta$, we may find $\tau\in\sigma^{-1}\{\eta\}$ above $\epsilon$,
and then $$c[\{\eta\}\circledast B']\s \{g_\beta(\tau)\mid \beta\in B'\}\s g(\tau),$$
so that $\otp(c[\{\eta\}\circledast B'])<\theta$.
This is a contradiction.

$(2)\implies(1)$: Given a sequence $\langle g_\beta\mid \beta<\kappa\rangle$ as above,
pick any upper-regressive colouring $c:[\kappa]^2\rightarrow\theta$ such that, for all $\eta\le\theta\le\beta<\kappa$,
$c(\eta,\beta)=g_\beta(\eta)$.
Towards a contradiction, suppose that $c$ fails to be a witness to $\ubd(\{\theta\},J^\bd[\kappa],\theta)$.
Then there exists a cofinal $B\s\kappa$ such that, for every $\eta<\theta$,
$$\otp(c[\{\eta\}\circledast B])<\theta.$$

Assuming that $\theta$ is regular,
we may define a function $g:\theta\rightarrow\theta$ via:
$$g(\eta):=\sup(c[\{\eta\}\circledast B]).$$
Then $g$ witnesses that $\{ g_\beta\mid \beta\in B\}$ is bounded.
\end{proof}

\begin{remark} For the implication $(1)\implies(2)$, the requirement ``$\cf(\theta)\neq\cf(\kappa)$'' cannot be waived.
For instance, if $\mathfrak b>\aleph_\omega$,
then for $\theta:=\aleph_0$ and $\kappa:=\aleph_\theta$,
we get from \cite[Propositions 6.1 and 7.8]{paper47} that $\ubd(\{\theta\},J^\bd[\kappa],\theta)$ holds,
but because $\mathfrak b>\kappa$, any $\kappa$-sized family of reals is bounded.
\end{remark}

\begin{cor}\label{lemma712} Suppose that $\theta$ is an infinite regular cardinal.
\begin{enumerate}[(1)]
\item $\ubd(\{\theta\},J^\bd[{\mathfrak b_\theta}],\theta)$ holds;
\item $\ubd(\{\theta\},J^\bd[{\mathfrak d_\theta}],\theta)$ holds;
\item If $\ubd(\{\theta\},J^{\bd}[\kappa],\theta)$ holds and $\cf(\kappa)\neq\theta$, then $\mathfrak b_\theta\le\cf(\kappa)\le \mathfrak d_\theta$.
In particular, $\ubd(\{\theta\},J^{\bd}[\theta^+],\theta)$ holds iff $\mathfrak b_\theta=\theta^+$.
\end{enumerate}
\end{cor}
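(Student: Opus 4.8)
The plan is to derive all three clauses from Lemma~\ref{lemma712a}, exploiting that $\theta$ regular gives $\cf(\theta)=\theta$. For Clauses~(1) and~(2) I will invoke the implication $(2)\implies(1)$ of that lemma (legitimate since $\cf(\theta)=\theta$), so it suffices to produce, for $\kappa\in\{\mathfrak b_\theta,\mathfrak d_\theta\}$, a sequence $\vec g=\langle g_\beta\mid\beta<\kappa\rangle$ of functions in ${}^\theta\theta$ all of whose cofinally-indexed subfamilies $\{g_\beta\mid\beta\in B\}$ (with $B\s\kappa$ cofinal) are $<^*$-unbounded. The common engine is the standard fact that, $\theta$ being regular, $\mathfrak b_\theta$ is regular and carries a $<^*$-increasing unbounded sequence $\langle h_i\mid i<\mathfrak b_\theta\rangle$; I will record the elementary observation that whenever $I\s\mathfrak b_\theta$ is unbounded, $\{h_i\mid i\in I\}$ is itself $<^*$-unbounded (any $<^*$-bound on it would, by $<^*$-increasingness and unboundedness of $I$, bound the entire sequence).

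For Clause~(1), take $\kappa:=\mathfrak b_\theta$ and set $g_\beta:=h_\beta$; a cofinal $B\s\mathfrak b_\theta$ is unbounded in $\mathfrak b_\theta$, so $\{g_\beta\mid\beta\in B\}$ is unbounded by the observation, and $(2)\implies(1)$ of Lemma~\ref{lemma712a} yields $\ubd(\{\theta\},J^\bd[\mathfrak b_\theta],\theta)$. For Clause~(2), take $\kappa:=\mathfrak d_\theta$; here the length $\mathfrak d_\theta$ may exceed $\mathfrak b_\theta$ and need not be regular, so I interpose a transfer map. Using the standard inequality $\mathfrak b_\theta\le\cf(\mathfrak d_\theta)$ together with the regularity of $\mathfrak b_\theta$, I will fix a map $\pi:\mathfrak d_\theta\to\mathfrak b_\theta$ whose preimages of initial segments of $\mathfrak b_\theta$ are bounded in $\mathfrak d_\theta$ (concretely, compose the block map of a continuous increasing cofinal sequence $\langle\lambda_i\mid i<\cf(\mathfrak d_\theta)\rangle$ in $\mathfrak d_\theta$ with a nondecreasing cofinal map $\cf(\mathfrak d_\theta)\to\mathfrak b_\theta$). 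Setting $g_\beta:=h_{\pi(\beta)}$, any cofinal $B\s\mathfrak d_\theta$ has $\pi[B]$ unbounded in $\mathfrak b_\theta$, whence $\{h_i\mid i\in\pi[B]\}\s\{g_\beta\mid\beta\in B\}$ is unbounded; again $(2)\implies(1)$ delivers the conclusion.

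For Clause~(3), assume $\ubd(\{\theta\},J^\bd[\kappa],\theta)$ with $\cf(\kappa)\neq\theta$. Then $\theta<\kappa$ (otherwise $\cf(\kappa)=\cf(\theta)=\theta$), and since $\cf(\theta)=\theta\neq\cf(\kappa)$, the implication $(1)\implies(2)$ of Lemma~\ref{lemma712a} furnishes $\vec g=\langle g_\beta\mid\beta<\kappa\rangle$ with every cofinally-indexed subfamily $<^*$-unbounded. The lower bound $\mathfrak b_\theta\le\cf(\kappa)$ is immediate: a cofinal $B\s\kappa$ of size $\cf(\kappa)$ yields the unbounded family $\{g_\beta\mid\beta\in B\}$, which forces $|B|\ge\mathfrak b_\theta$ since any family of fewer than $\mathfrak b_\theta$ functions is $<^*$-bounded. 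For the upper bound, fix a dominating family $D$ with $|D|=\mathfrak d_\theta$; for each $d\in D$ the set $\{\beta<\kappa\mid g_\beta\le^* d\}$ must be bounded in $\kappa$ (otherwise its cofinally-indexed subfamily would be unbounded yet dominated by $d$), while $\kappa=\bigcup_{d\in D}\{\beta<\kappa\mid g_\beta\le^* d\}$ by domination; a cover of $\kappa$ by $\mathfrak d_\theta$ many bounded sets forces $\cf(\kappa)\le\mathfrak d_\theta$.

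Finally, for the ``in particular'', $\cf(\theta^+)=\theta^+\neq\theta$, so Clause~(3) gives $\mathfrak b_\theta\le\theta^+$ whenever $\ubd(\{\theta\},J^\bd[\theta^+],\theta)$ holds; as always $\mathfrak b_\theta\ge\theta^+$, equality follows, and the converse direction is exactly Clause~(1) specialized to $\mathfrak b_\theta=\theta^+$. The main obstacle is the upper bound in Clause~(3) and the transfer map of Clause~(2); both rest on the standard inequality $\mathfrak b_\theta\le\cf(\mathfrak d_\theta)$ and on keeping the ``preimages/level sets are bounded'' bookkeeping precise, whereas the remaining steps are routine.
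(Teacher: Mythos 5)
Your treatment of Clauses (1) and (3) and of the ``in particular'' part is correct: Clause (1) matches the paper's argument in spirit (the paper diagonalizes over an arbitrary unbounded family rather than fixing a $<^*$-increasing scale, but both are legitimate), and in Clause (3) your covering argument for $\cf(\kappa)\le\mathfrak d_\theta$ is an acceptable variant of the paper's pigeonhole on the map $\beta\mapsto h_\beta$. The problem is Clause (2). The transfer map you want --- a map $\pi:\mathfrak d_\theta\rightarrow\mathfrak b_\theta$ whose preimages of proper initial segments of $\mathfrak b_\theta$ are bounded in $\mathfrak d_\theta$ --- simply does not exist when $\mathfrak b_\theta<\cf(\mathfrak d_\theta)$: if every $\pi^{-1}[\xi]$ ($\xi<\mathfrak b_\theta$) were bounded, then $\mathfrak d_\theta=\bigcup_{\xi<\mathfrak b_\theta}\pi^{-1}[\xi]$ would exhibit $\mathfrak d_\theta$ as a union of $\mathfrak b_\theta$ many bounded sets, forcing $\cf(\mathfrak d_\theta)\le\mathfrak b_\theta$. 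For the same reason your concrete recipe is vacuous in that case: a nondecreasing map from the regular cardinal $\cf(\mathfrak d_\theta)$ into the smaller regular cardinal $\mathfrak b_\theta$ necessarily has bounded image, so no ``nondecreasing cofinal map $\cf(\mathfrak d_\theta)\to\mathfrak b_\theta$'' exists. And the case $\mathfrak b_\theta<\cf(\mathfrak d_\theta)$ is the typical one, not a fringe case: for $\theta=\omega$, after adding $\aleph_2$ Cohen reals over a model of $\ch$ one has $\mathfrak b=\aleph_1<\aleph_2=\cf(\mathfrak d)$. Moreover, the failure is not merely of your particular map but of the whole strategy: for \emph{any} $\pi:\mathfrak d_\theta\rightarrow\mathfrak b_\theta$ with $\mathfrak b_\theta<\cf(\mathfrak d_\theta)$, some $\pi^{-1}[\xi]$ is cofinal in $\mathfrak d_\theta$, and for that cofinal $B:=\pi^{-1}[\xi]$ the family $\{h_{\pi(\beta)}\mid\beta\in B\}$ sits inside $\{h_i\mid i<\xi\}$, a family of size less than $\mathfrak b_\theta$, hence $<^*$-bounded. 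So the sequence $\vec g$ you build genuinely fails condition (2) of Lemma~\ref{lemma712a}; no re-indexing of a $\mathfrak b_\theta$-scale can witness it at $\kappa=\mathfrak d_\theta$.

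The repair is to diagonalize against a \emph{dominating} family rather than transfer an unbounded one, which is what the paper does. Fix a dominating family $\langle f_\alpha\mid\alpha<\mathfrak d_\theta\rangle$ in $({}^\theta\theta,<^*)$. For each $\beta<\mathfrak d_\theta$, the family $\{f_\alpha\mid\alpha\le\beta\}$ has size less than $\mathfrak d_\theta$ and hence is not dominating, so one may pick $g_\beta$ with $\neg(g_\beta<^*f_\alpha)$ for all $\alpha\le\beta$. Now if $B\s\mathfrak d_\theta$ is cofinal and $g$ were a $<^*$-bound for $\{g_\beta\mid\beta\in B\}$, pick $\alpha$ with $g<^*f_\alpha$ and then $\beta\in B$ above $\alpha$; then $g_\beta<^*g<^*f_\alpha$, contradicting the choice of $g_\beta$. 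This yields the hypothesis of Lemma~\ref{lemma712a}(2) at $\kappa=\mathfrak d_\theta$ unconditionally, and the rest of your outline then goes through.
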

\begin{proof} (1) Denote $\kappa:=\mathfrak b_\theta$ and note that $\kappa=\cf(\kappa) > \theta$.
Let $\vec f=\langle f_\beta\mid \beta<\kappa\rangle$ denote an enumeration of some unbounded family in $({}^\theta\theta,<^*)$.
For every $\beta<\kappa$, as $\beta<\mathfrak b_\theta$, let us fix $g_\beta:\theta\rightarrow\theta$ such that,
for every $\alpha\le\beta$, $f_\alpha<^* g_\beta$.
We claim that $\vec g=\langle g_\beta\mid\beta<\kappa\rangle$ is as in Lemma~\ref{lemma712a}.
Towards a contradiction, suppose that we may fix a cofinal $B\s\kappa$
such that $\{ g_\beta\mid\beta \in B\}$ is bounded in $({}^\theta\theta,<^*)$.
Pick a function $g:\theta\rightarrow\theta$ such that $g_\beta<^*g$ for every $\beta\in B$.
By the choice of $\vec f$, find $\alpha<\kappa$ such that $\neg(f_\alpha <^* g)$, and then fix $\beta\in B$ above $\alpha$.
As $\neg(f_\alpha <^* g)$ and  $f_\alpha<^* g_\beta$, we infer that $\neg(g_\beta <^* g)$.
This is a contradiction.

(2) 	Denote $\kappa:=\mathfrak d_\theta$ and note that $\cf(\kappa) > \theta$.
Let $\vec f=\langle f_\beta\mid \beta<\kappa\rangle$ denote an enumeration of some dominating family in $({}^\theta\theta,<^*)$.
For every $\beta<\kappa$, as $\beta<\mathfrak d_\theta$, let us fix $g_\beta:\theta\rightarrow\theta$ such that,
for every $\alpha\le\beta$, $\neg(g_\beta<^* f_\alpha)$.
We claim that $\vec g=\langle g_\beta\mid\beta<\kappa\rangle$ is as in Lemma~\ref{lemma712a}.
Towards a contradiction, suppose that we may fix a cofinal $B\s\kappa$
such that $\{ g_\beta\mid\beta \in B\}$ is bounded in $({}^\theta\theta,<^*)$.
Pick a function $g:\theta\rightarrow\theta$ such that 	$g_\beta<^*g$ for every $\beta\in B$.
By the choice of $\vec f$, find $\alpha<\kappa$ such that $g<^* f_\alpha $, and then fix $\beta\in B$ above $\alpha$.
As $g <^* f_\alpha$ and $\neg(g_\beta<^* f_\alpha)$, we infer that $\neg(g_\beta<^* g)$.
This is a contradiction.

(3) Suppose that $\ubd(\{\theta\},J^{\bd}[\kappa],\theta)$ holds, and $\cf(\kappa)\neq\theta$.
By Lemma~\ref{lemma712a}, fix a sequence $\vec g=\langle g_\beta\mid \beta<\kappa\rangle$ of functions from $\theta$ to $\theta$,
such that, for every cofinal $B\s\kappa$, $\im(\vec g\restriction B)$ is unbounded in $({}^\theta\theta,<^*)$.
Fix an injective map $\varphi:\cf(\kappa)\rightarrow\kappa$ whose image is cofinal in $\kappa$. For each $\beta<\cf(\kappa)$, set $f_\beta:=g_{\varphi(\beta)}$.
Then $\vec f:=\langle f_\beta\mid \beta<\cf(\kappa)\rangle$ is a sequence of functions from $\theta$ to $\theta$,
such that, for every cofinal $B\s\cf(\kappa)$, $\im(f\restriction B)$ is unbounded in $({}^\theta\theta,<^*)$.
In particular, $\mathfrak b_\theta\le\cf(\kappa)$.
Next, let $\mathcal D$ be a cofinal family in $({}^\theta\theta,<^*)$ of size $\mathfrak d_\theta$.
For each $\beta<\cf(\kappa)$, fix $h_\beta\in\mathcal D$ that dominates $f_\beta$.
Now, if $\mathfrak d_\theta<\cf(\kappa)$, then there exists $B\in[\cf(\kappa)]^{\cf(\kappa)}$ on which the map $\beta\mapsto h_\beta$ is constant over $B$,
and then $\im(\vec f\restriction B)$ is bounded, contradicting the choice of $\vec f$.
\end{proof}
In contrast with Clause~(3) of the preceding, we shall prove in Corollary~\ref{cor215} below that $\ubd(\{\theta\},J^{\bd}[\theta^+],\theta)$ holds for every singular cardinal $\theta$.

\begin{cor}\label{cor55} Suppose that $\theta$ is an infinite regular cardinal.
\begin{enumerate}[(1)]
\item If $\kappa=\mathfrak b_\theta$ and $\onto(J^{\bd}[\theta],\theta)$ holds, then so does
$\onto^+(\{\theta\},\mathcal J^\kappa_{\theta^+},\theta)$;
\item If $\kappa=\mathfrak b_\theta=\theta^+$ and $\onto(J^{\bd}[\theta],\theta)$ holds,
then so does $\onto^+(\{\theta\},\mathcal J^{\kappa}_{\kappa},\kappa)$;
\item If $\kappa=\mathfrak d_\theta$, then $\onto^+(\mathcal J^{\kappa}_{\theta^+},\theta)$ holds;
\item If $\kappa=\mathfrak d_\theta$ and $\mathfrak b_\theta>\theta^+$,
then $\ubd^+(\mathcal J^{\kappa}_{\theta^+},\theta^+)$ implies $\onto^+(\mathcal J^{\kappa}_{\theta^+},\theta^+)$.
\end{enumerate}
\end{cor}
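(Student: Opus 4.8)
The plan is to route the unbounded and dominating families underlying $\mathfrak b_\theta$ and $\mathfrak d_\theta$ through the projection machinery of Section~\ref{pumings}. All four clauses share a starting point. Since $\theta$ is regular we have $J^\bd[\theta]=[\theta]^{<\theta}$, and Corollary~\ref{lemma712}(1),(2) give $\ubd(\{\theta\},J^\bd[\kappa],\theta)$ whenever $\kappa\in\{\mathfrak b_\theta,\mathfrak d_\theta\}$. As $\kappa$ is regular, $J^\bd[\kappa]$ is $\kappa$-complete, hence belongs to $\mathcal J^\kappa_{\theta^+}$, so Proposition~\ref{prop614}(1) (taking $\nu:=\theta$ and noting $\theta\le\nu$) upgrades this to the narrow principle $\ubd^+(\{\theta\},\mathcal J^\kappa_{\theta^+},\theta)$. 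For Clause~(1), where $\kappa=\mathfrak b_\theta$, the hypothesis $\onto(J^\bd[\theta],\theta)$ is literally $\onto(\{\theta\},[\theta]^{<\theta},\theta)$, so Remark~\ref{rmk313} hands us $\proj(\theta,\theta,\theta,1)$; feeding this together with the narrow $\ubd^+$ just obtained into Lemma~\ref{ubdtoonto}(2) (with $\nu=\varkappa=\theta$) yields $\onto^+(\{\theta\},\mathcal J^\kappa_{\theta^+},\theta)$. For Clause~(3), where $\kappa=\mathfrak d_\theta$, Lemma~\ref{d1} supplies $\proj(\mathfrak d_\theta,\theta,\theta,{<}\mathfrak b_\theta)$, in particular $\proj(\kappa,\theta,\theta,1)$; feeding this and the (non-narrow weakening of the) principle $\ubd^+(\mathcal J^\kappa_{\theta^+},\theta)$ into Lemma~\ref{ubdtoonto}(2) with $\nu:=\kappa$ gives $\onto^+(\mathcal J^\kappa_{\theta^+},\theta)$.

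For Clause~(2) we have $\kappa=\theta^+$, so $\mathcal J^\kappa_{\theta^+}=\mathcal J^\kappa_\kappa$ and Clause~(1) already provides the narrow $\onto^+(\{\theta\},\mathcal J^\kappa_\kappa,\theta)$. Ulam's theorem gives the narrow $\ubd^+(\{\theta\},J^\bd[\theta^+],\theta^+)$, and Theorem~\ref{uparrowlemma}(2)---whose completeness parameter computes to $\min\{\kappa,(\theta^+)^+\}=\kappa$---spreads this to $\ubd^+(\{\theta\},\mathcal J^\kappa_\kappa,\theta^+)$. Now I would run the proof of Lemma~\ref{increasecolours}(1) verbatim, except that both ingredients are narrow and the auxiliary bijection $\pi$ is taken on $\theta\times\theta$ instead of $\kappa\times\kappa$: since the witness $\eta_1$ coming from the $\theta^+$-coloured map and the witness $\eta_0$ coming from the narrow $\onto^+$-colouring both lie below $\theta$, so does $\eta=\pi^{-1}(\eta_0,\eta_1)$, delivering the narrow $\onto^+(\{\theta\},\mathcal J^\kappa_\kappa,\kappa)$ with the maximal $\kappa=\theta^+$ colours.

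Clause~(4) is the heart of the matter and the main obstacle. Here $\kappa=\mathfrak d_\theta$ and, crucially, $\mathfrak b_\theta>\theta^+$, so $\theta^{++}\le\mathfrak b_\theta$ and Lemma~\ref{d1} now delivers the stronger projection $\proj(\kappa,\theta,\theta,\theta^+)$; fix a witness $p$. Let $d$ witness the assumed $\ubd^+(\mathcal J^\kappa_{\theta^+},\theta^+)$, let $c^*$ witness the narrow $\ubd^+(\{\theta\},\mathcal J^\kappa_{\theta^+},\theta)$ from the common setup, and fix surjections $e_\alpha:\theta\to\alpha+1$ for $\alpha<\theta^+$. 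I would again follow the proof of Lemma~\ref{increasecolours}(1): given $B\in J^+$, fix $\eta_1$ with $A_{\eta_1}(B)=\{\alpha<\theta^+\mid B^{\eta_1,\alpha}\in J^+\}$ of size $\theta^+$. The task is to split all $\theta^+$ of the positive sets $B^{\eta_1,\alpha}$ into $\theta$ positive pieces using a single secondary parameter. In clause~(1) of that lemma this parameter is extracted by pigeonholing the narrow witnesses of an $\onto^+$-colouring, which is unavailable here; instead, narrowness of $c^*$ lets me pigeonhole the $\theta^+$ many $c^*$-witnesses to a single $\eta'<\theta$ (thinning $A_{\eta_1}(B)$ to a set $A'$ of size $\theta^+$), and then the $\theta^+$-sized projection furnishes a single $i<\kappa$ with $p[\{i\}\times X_\alpha]=\theta$ for all $\alpha\in A'$, where $X_\alpha\in[\theta]^\theta$ is the set of $c^*$-colours realized $J$-positively inside $B^{\eta_1,\alpha}$. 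Coding $\eta\mapsto(\eta_1,\eta',i)$ and setting $f(\eta,\beta):=e_{d(\eta_1,\beta)}(p(i,c^*(\eta',\beta)))$ produces the desired witness to $\onto^+(\mathcal J^\kappa_{\theta^+},\theta^+)$. The delicate points are that the pigeonhole over $\eta'$ must precede the projection (so that $p$ can be applied simultaneously to the $\theta^+$ sets $X_\alpha$), and that $J$-positivity survives the threefold composition; it is exactly at the simultaneous step that $\mathfrak b_\theta>\theta^+$ is indispensable, since $\proj(\kappa,\theta,\theta,1)$ would only produce a separate $i_\alpha<\kappa$ for each $\alpha$, and these cannot be amalgamated.
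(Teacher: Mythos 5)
Your proposal is correct, and on Clauses (1), (3) and (4) it is essentially the paper's own proof: the same common setup (Corollary~\ref{lemma712} plus Proposition~\ref{prop614}(1) yielding the narrow $\ubd^+(\{\theta\},\mathcal J^\kappa_{\theta^+},\theta)$), the same use of $\proj(\theta,\theta,\theta,1)$ resp.\ $\proj(\kappa,\theta,\theta,1)$ fed into Lemma~\ref{ubdtoonto}(2) for (1) and (3), and for (4) the identical threefold composition $f(\eta,\beta)=e_{d(\eta_0,\beta)}(p(\eta_1,c(\eta_2,\beta)))$, with the pigeonhole over the narrow witnesses performed \emph{before} invoking $\proj(\kappa,\theta,\theta,\theta^+)$ --- exactly the paper's argument, down to the order in which $\eta_2$ and $\eta_1$ are chosen. (One small inaccuracy: in the setup you justify $J^{\bd}[\kappa]\in\mathcal J^\kappa_{\theta^+}$ by regularity of $\kappa$, but $\mathfrak d_\theta$ need not be regular; what is used, and what is true, is that $\cf(\kappa)\geq\mathfrak b_\theta>\theta$, so the bounded ideal is $\theta^+$-complete.) The genuine divergence is Clause (2). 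The paper stays one-dimensional: from Clause (1) it extracts $\onto(\{\theta\},J^{\bd}[\kappa],\theta)$, stretches the number of colours from $\theta$ to $\kappa$ by citing Lemma~6.15(3) of \cite{paper47} (this is where $\mathfrak b_\theta=\theta^+$ enters), and then upgrades to $\onto^+(\{\theta\},\mathcal J^{\kappa}_{\kappa},\kappa)$ via Theorem~\ref{uparrowlemma}(1). You instead start from Ulam's theorem $\ubd^+(\{\theta\},J^{\bd}[\theta^+],\theta^+)$, spread it with Theorem~\ref{uparrowlemma}(2) (the completeness parameter does compute to $\kappa$), and merge it with the narrow $\onto^+$ from Clause (1) by rerunning Lemma~\ref{increasecolours}(1) in a narrow form. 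That last step is not a citation --- Lemma~\ref{increasecolours}(1) as stated does not preserve narrowness --- but your adaptation is sound: both witnesses $\eta_0,\eta_1$ now lie below $\theta$, the pigeonhole only needs $\theta<\cf(\theta^+)$, and coding through $\pi:\theta\leftrightarrow\theta\times\theta$ keeps the final $\eta$ below $\theta$. Your route buys self-containment (everything internal to this paper plus Ulam's classical theorem) at the cost of re-proving a variant of an existing lemma; the paper's route is shorter but leans on the prequel.
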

\begin{proof}
Assuming $\kappa\in\{\mathfrak b_\theta,\mathfrak d_\theta\}$,
by Corollary~\ref{lemma712}, $\ubd(\{\theta\},J^{\bd}[\kappa],\theta)$ holds.
Then, by Proposition~\ref{prop614}(1), moreover $\ubd^+(\{\theta\},\mathcal J^{\kappa}_{\theta^+},\theta)$ holds.

(1) If $\onto(J^{\bd}[\theta],\theta)$ holds,
then so does $\proj(\theta,\theta,\theta,1)$.
So by Theorem~\ref{ubdtoonto}(2), using $\nu=\varkappa=\theta$ and the consequence of $\mathfrak b_\theta=\kappa$ that we noticed,
$\onto^+(\{\theta\},\mathcal J^\kappa_{\theta^+},\theta)$ holds.

(2) If $\kappa=\mathfrak b_\theta=\theta^+$ and $\onto(J^{\bd}[\theta],\allowbreak\theta)$ holds,
then, by Clause~(1), in particular $\onto(\{\theta\},J^{\bd}[\kappa],\allowbreak\theta)$ holds.
By \cite[Lemma~6.15(3)]{paper47}, then, $\onto(\{\theta\},J^{\bd}[\kappa],\kappa)$ holds.
Finally, by Theorem~\ref{uparrowlemma}(1),
$\onto^+(\{\theta\},\mathcal J^{\kappa}_{\kappa},\kappa)$ holds.

(3) If $\kappa=\mathfrak d_\theta$,
then since $\ubd^+(\mathcal J^{\kappa}_{\theta^+},\theta)$ holds,
Lemmas \ref{d1} and \ref{ubdtoonto} imply that so does $\onto^+(\mathcal J^{\kappa}_{\theta^+},\theta)$.

(4) Assuming $\kappa=\mathfrak d_\theta$,
fix a colouring $c:[\kappa]^2\rightarrow\theta$ witnessing $\ubd^+(\{\theta\},\mathcal J^\kappa_{\theta^+},\allowbreak\theta)$.
Suppose that $d:[\kappa]^2\rightarrow\theta^+$ is a colouring witnessing $\ubd^+(\mathcal J,\theta^+)$.
For every $\alpha<\theta^+$, fix a surjection $e_\alpha:\theta\rightarrow\alpha+1$.
Assuming $\mathfrak b_\theta>\theta^+$, appeal to Lemma~\ref{d1} to fix a map $p:\kappa\times\theta\rightarrow\theta$ witnessing $\proj(\kappa,\theta,\theta,\theta^+)$.
Also, fix a bijection $\pi:\kappa\leftrightarrow\kappa\times\kappa\times\theta$.
Finally, define a colouring $f:[\kappa]^2\rightarrow\theta^+$ as follows.
Given $\eta<\beta<\kappa$, let $(\eta_0,\eta_1,\eta_2):=\pi(\eta)$ and then set $f(\eta,\beta):=e_{d(\{\eta_0,\beta\})}(p(\eta_1,c(\{\eta_2,\beta\})))$.

To see this works, let $J\in\mathcal J^\kappa_{\theta^+}$ and $B\in J^+$.
For all $\eta,\alpha<\kappa$, denote $B^{\eta,\alpha}:=\{\beta\in B\setminus(\eta+1)\mid d(\eta,\beta)=\alpha\}$.
By the choice of $d$, we may fix an $\eta_0<\kappa$ such that $A_{\eta_0}(B):=\{\alpha<\theta^+\mid B^{\eta_0,\alpha}\in J^+\}$ has size $\theta^+$.
For each $\alpha\in A_{\eta_0}(B)$, as $B^{\eta_0,\alpha}$ is in $J^+$,
we may find an $\eta^\alpha<\theta$ such that the following set is in $[\theta]^\theta$:
$$X_\alpha:=\{\xi<\theta\mid \{\beta\in B^{\eta_0,\alpha}\setminus (\eta^\alpha+1)\mid c(\eta^\alpha,\beta)=\xi\}\in J^+\}.$$
Pick $\eta_2<\theta$ for which $A:=\{\alpha\in A_{\eta_0}(B)\mid \eta^\alpha=\eta_2\}$ has size $\theta^+$.
Finally, pick $\eta_1<\kappa$ such that $p[\{\eta_1\}\times X_\alpha]=\theta$ for all $\alpha\in A$.
Fix $\eta<\kappa$ such that $\pi(\eta)=(\eta_0,\eta_1,\eta_2)$.
Given any colour $\tau<\theta^+$, pick $\alpha\in A\setminus\tau$.
Then pick $i<\theta$ such that $e_\alpha(i)=\tau$.
Then pick $\xi\in X_\alpha$ such that $p(\eta_1,\xi)=i$.
Then pick $\beta\in B^{\eta_0,\alpha}$ above $\max\{\eta_0,\eta_2,\eta\}$ such that $c(\eta_2,\beta)=\xi$.
Then $f(\eta,\beta)=e_{d(\eta_0,\beta)}(p(\eta_1,c(\eta_2,\beta)))=e_\alpha(p(\eta_1,\xi))=e_\alpha(i)=\tau$, as sought.
\end{proof}

The proof of the implication $(2)\implies(1)$ of Lemma~\ref{lemma712a} makes it clear that the following holds, as well.

\begin{lemma}\label{lemma66} Suppose that $\kappa=\mathfrak b_\theta=\mathfrak d_\theta$
for an infinite regular cardinal $\theta$.

Then $\ubd([\theta]^\theta,J^{\bd}[\kappa],\theta)$ holds.\qed
\end{lemma}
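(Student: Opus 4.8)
The plan is to mimic the construction from the proof of the implication $(2)\implies(1)$ of Lemma~\ref{lemma712a}, but feeding it a \emph{scale} rather than merely an unbounded family. Since $\kappa=\mathfrak b_\theta=\mathfrak d_\theta$ and $\theta$ is regular, I would fix a sequence $\vec f=\langle f_\alpha\mid\alpha<\kappa\rangle$ of functions from $\theta$ to $\theta$ that is both $<^*$-increasing and cofinal (i.e.\ dominating) in $({}^\theta\theta,<^*)$; such a scale of length $\kappa$ exists precisely because the bounding and dominating numbers of $\theta$ both equal $\kappa$. I would then define an upper-regressive colouring $c:[\kappa]^2\rightarrow\theta$ by setting $c(\eta,\beta):=f_\beta(\eta)$ whenever $\eta<\theta\le\beta<\kappa$ (and, say, $c(\eta,\beta):=0$ otherwise); this is upper-regressive since $f_\beta(\eta)<\theta\le\beta$.

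The verification proceeds by contradiction, just as in Lemma~\ref{lemma712a}. Fix $A\in[\theta]^\theta$ and a cofinal $B\s\kappa$, and suppose that $\otp(c[\{\eta\}\circledast B])<\theta$ for every $\eta\in A$. Since $\theta$ is regular, for each such $\eta$ the set $c[\{\eta\}\circledast B]$ is then bounded below $\theta$, so I may set $g(\eta):=\sup(c[\{\eta\}\circledast B])<\theta$; note that $f_\beta(\eta)\le g(\eta)$ for all $\beta\in B$ with $\beta\ge\theta$. Extending $g$ to a function $\tilde g\in{}^\theta\theta$ (say, by zero off $A$), cofinality of the scale gives $\alpha<\kappa$ with $\tilde g<^* f_\alpha$, and picking $\beta\in B$ above $\alpha$ and using that $\vec f$ is $<^*$-increasing yields $\tilde g<^* f_\beta$. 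Fixing $\epsilon<\theta$ past which $\tilde g<f_\beta$ and choosing $\eta\in A$ with $\eta\ge\epsilon$ (possible, as $A$ is cofinal in $\theta$), I obtain $g(\eta)=\tilde g(\eta)<f_\beta(\eta)\le g(\eta)$, a contradiction.

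The one point deserving care — and the main obstacle relative to Lemma~\ref{lemma712a} — is that here $A$ is an \emph{arbitrary} cofinal subset of $\theta$ rather than all of $\theta$, so the pointwise bound $g$ a priori lives only on $A$. To run the contradiction I must dominate the \emph{extended} function $\tilde g$ by some $f_\beta$ with $\beta\in B$ and then locate the witnessing coordinate $\eta$ inside $A$. This is exactly why unboundedness of $\{f_\beta\mid\beta\in B\}$ (which sufficed for the $\{\theta\}$-version) is no longer enough: I need full domination of cofinal subfamilies, which is supplied by the scale being $<^*$-increasing together with $B$ being cofinal in $\kappa$. It is precisely this step that forces the use of the hypothesis $\kappa=\mathfrak b_\theta=\mathfrak d_\theta$ rather than merely $\kappa=\mathfrak d_\theta$.
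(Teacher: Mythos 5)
Your proof is correct and is essentially the paper's own argument: the paper dispatches this lemma by pointing back to the proof of the implication $(2)\implies(1)$ of Lemma~\ref{lemma712a}, and your instantiation via a $<^*$-increasing dominating sequence $\langle f_\beta\mid\beta<\kappa\rangle$ (available exactly because $\mathfrak b_\theta=\mathfrak d_\theta=\kappa$), the colouring $c(\eta,\beta):=f_\beta(\eta)$, and the contradiction through $g(\eta):=\sup(c[\{\eta\}\circledast B])$ with the witnessing coordinate located inside $A$ by tail-domination is precisely the intended reading. The only cosmetic point is that $\beta\in B$ should be taken above $\max\{\alpha,\theta\}$ (not merely above $\alpha$) so that $c(\eta,\beta)=f_\beta(\eta)$ applies, which is immediate since $B$ is cofinal in $\kappa>\theta$.
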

\begin{remark} Analogous results may be obtained from the existence of a $\kappa$-Luzin subset of ${}^\theta\theta$.
In particular, for an infinite cardinal $\theta=\theta^{<\theta}$, if $\kappa=\cov(\mathcal M_\theta)=\cof(\mathcal M_\theta)$,
then $\onto([\theta]^\theta,J^{\bd}[\kappa],\theta)$ holds.
\end{remark}

\begin{cor}\label{ccor68} Suppose that $\kappa=\mathfrak b_{\theta^+}=\mathfrak d_{\theta^+}$
for an infinite cardinal $\theta$. Then:
\begin{enumerate}[(1)]
\item $\ubd^{++}(\{\theta^+\},\mathcal J^\kappa_{\theta^{++}},\theta)$ holds;
\item If $\theta$ is regular, then moreover $\onto^{++}(\{\theta^+\},\mathcal J^\kappa_{\theta^{++}},\theta)$ holds.
\end{enumerate}
\end{cor}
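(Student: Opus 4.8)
The plan is to apply Lemma~\ref{lemma66} to the regular cardinal $\theta^+$ and then to cut the number of colours down from $\theta^+$ to $\theta$ by means of a projection, in the spirit of Lemma~\ref{lemma15}. First, since $\theta^+$ is (infinite) regular and $\kappa=\mathfrak b_{\theta^+}=\mathfrak d_{\theta^+}$, reading Lemma~\ref{lemma66} with $\theta^+$ in place of its ``$\theta$'' produces an upper-regressive $c_0:[\kappa]^2\to\theta^+$ witnessing $\ubd([\theta^+]^{\theta^+},J^{\bd}[\kappa],\theta^+)$. Setting $I:=J^{\bd}[\theta^+]\in\mathcal J^{\theta^+}_{\theta^+}$ (so that $I^+=[\theta^+]^{\theta^+}$), I would run this through Proposition~\ref{prop614}(1) with $\nu:=\theta^+$, which is legitimate because $[\theta^+]^{\theta^+}\s[\kappa]^{\le\theta^+}$, $J^{\bd}[\kappa]\in\mathcal J^\kappa_{\theta^{++}}$, and the number of colours $\theta^+$ is $\le\nu$; this yields $\ubd^+(I^+,\mathcal J^\kappa_{\theta^{++}},\theta^+)$. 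Thus, for every $J\in\mathcal J^\kappa_{\theta^{++}}$ and $B\in J^+$, the good set $G_B$ of those $\eta'<\theta^+$ for which $\{\alpha<\theta^+\mid \{\beta\in B\setminus(\eta'+1)\mid c_0(\eta',\beta)=\alpha\}\in J^+\}$ has order-type $\theta^+$ meets every member of $I^+$, hence $G_B\in I^*$; that is, $G_B$ is co-bounded in $\theta^+$.

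Next I would fix a projection $p:\theta^+\times\theta^+\to\theta$ witnessing $\proj(\theta^+,\theta^+,\theta,\theta)$ and a bijection $\pi:\theta^+\leftrightarrow\theta^+\times\theta^+$, and define an upper-regressive $d:[\kappa]^2\to\theta$ by $d(\eta,\beta):=p(i,c_0(\eta',\beta))$ whenever $\eta<\theta^+$, $\pi(\eta)=(\eta',i)$ and this is a legal value, and $d(\eta,\beta):=0$ otherwise. Given $J\in\mathcal J^\kappa_{\theta^{++}}$ and a sequence $\langle B_\tau\mid\tau<\theta\rangle$ of $J^+$-sets, each $G_{B_\tau}$ is co-bounded in $\theta^+$, and, as $I$ is $\theta^+$-complete and $\theta<\theta^+$, so is $\bigcap_{\tau<\theta}G_{B_\tau}$; fix $\eta'$ in this intersection. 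Then each $X_\tau:=\{\alpha<\theta^+\mid \{\beta\in B_\tau\setminus(\eta'+1)\mid c_0(\eta',\beta)=\alpha\}\in J^+\}$ belongs to $[\theta^+]^{\theta^+}$, so $p$ supplies a single $i<\theta^+$ with $p[\{i\}\times X_\tau]=\theta$ for all $\tau<\theta$. Put $\eta:=\pi^{-1}(\eta',i)<\theta^+$; unwinding the definition shows that $\{\beta\in B_\tau\setminus(\eta+1)\mid d(\eta,\beta)=\sigma\}\in J^+$ for every $\tau<\theta$ and every $\sigma<\theta$. Taking the identity for the relabelling $h$ then gives Clause~(1), and reading the same statement with $\sigma=\tau$ gives Clause~(2); the latter may alternatively be packaged directly via Lemma~\ref{lemma15}(2) applied with $\nu=\varkappa=\theta^+$ and the ideal $I$.

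The whole argument is unconditional apart from the existence of the projection $p$. When $\theta$ is regular, $\theta\in\reg(\theta^+)=\reg(\cf(\theta^+))$, so Corollary~\ref{regularpairs}(2) supplies $\proj(\theta^+,\theta^+,\theta,\theta)$, and both clauses follow at once---which is exactly why Clause~(2) is confined to regular $\theta$. I expect the main obstacle to be Clause~(1) for singular $\theta$: there Corollary~\ref{regularpairs}(2) is silent, and securing $\proj(\theta^+,\theta^+,\theta,\theta)$---equivalently $\mathcal C(\theta^+,\theta)\le\theta^+$ through Lemma~\ref{bsicproj}, i.e.\ guessing clubs of $\theta^+$ by sets of order-type $\theta$---is the PCF-sensitive point already visible in the singular cases of Corollary~\ref{regularpairs}. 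The way I would try to get around it is to exploit the slack in $\ubd^{++}$ that is absent from $\onto^{++}$: the injection $h$ only requires the images $p[\{i\}\times X_\tau]$ to admit an injective system of distinct representatives rather than to exhaust $\theta$, so I would stratify $\theta$ into its regular constituents $\langle\theta_\xi\mid\xi<\cf(\theta)\rangle$, invoke Corollary~\ref{regularpairs}(2) for each $\theta_\xi$, and attempt to assemble the resulting projections into a single map witnessing a suitably weakened projection that still supports the representative system.
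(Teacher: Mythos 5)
Your reduction (Lemma~\ref{lemma66} applied to the regular cardinal $\theta^+$, then Proposition~\ref{prop614}(1) to pass to all $\theta^{++}$-complete extensions of $J^{\bd}[\kappa]$) is exactly the paper's opening move, and your proof of Clause~(2) coincides with the paper's: Corollary~\ref{regularpairs}(2) gives $\proj(\theta^+,\theta^+,\theta,\theta)$ and Lemma~\ref{lemma15}(2) finishes. The gap is in Clause~(1), which is asserted for \emph{every} infinite $\theta$; the singular case is the one that matters (it is the case recorded in Table~2 of the appendix). Your main argument needs $\proj(\theta^+,\theta^+,\theta,\theta)$, which you only have for regular $\theta$, and your fallback for singular $\theta$ is not a proof. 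Concretely, the stratification plan fails in two places. First, to serve all $\theta$-many sets $B_\tau$ with one $\eta$ you would have to code the whole sequence $\langle i_\xi\mid\xi<\cf(\theta)\rangle$ of chosen indices into a single ordinal below $\theta^+$, which requires $(\theta^+)^{\cf(\theta)}=\theta^+$ and fails outright when $2^{\cf(\theta)}>\theta^+$. Second, and more fundamentally, the composed colouring must apply one fixed transformation to $c_0(\eta',\beta)$ for every $\beta$; a point $\beta$ does not know which block of the stratification its $B_\tau$ lives in, so there is no way to dispatch the per-block projections $p_\xi$, and an $\eta$ coding a single block serves only the $B_\tau$ in that block.

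What the paper does instead for Clause~(1) requires no projection at all, and it is precisely the idea you were circling with your remark about systems of distinct representatives. For each $\gamma<\theta^+$ fix an \emph{injection} $e_\gamma:\gamma\rightarrow\theta$ (these exist unconditionally, since $|\gamma|\le\theta$), and let $d(\eta,\beta):=e_\gamma(c_0(\eta',\beta))$ whenever $\pi(\eta)=(\eta',\gamma)$ and $c_0(\eta',\beta)<\gamma$, and $d(\eta,\beta):=0$ otherwise. Given $\langle B_\tau\mid\tau<\theta\rangle$ and the good $\eta'$, each $X_\tau$ has size $\theta^+$; since there are only $\theta$-many $\tau$'s and $\theta^+$ is regular, there is a single $\gamma<\theta^+$ with $|X_\tau\cap\gamma|=\theta$ for every $\tau<\theta$. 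Then, for each $\tau$, the set of colours $\zeta<\theta$ with $\{\beta\in B_\tau\setminus(\eta+1)\mid d(\eta,\beta)=\zeta\}\in J^+$ contains $e_\gamma[X_\tau\cap\gamma]$, a set of size $\theta$, and a recursion of length $\theta$ picks pairwise distinct such colours, yielding the injection $h$ demanded by $\ubd^{++}$. In other words, the ``suitably weakened projection'' you were after is simply the family of collapse injections $\langle e_\gamma\mid\gamma<\theta^+\rangle$: it works for singular and regular $\theta$ alike, and it makes the stratification unnecessary.
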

\begin{proof} By Lemma~\ref{lemma66}, pick a colouring $c:[\kappa]^2\rightarrow\theta^+$ witnessing $\ubd([\theta^+]^{\theta^+},\allowbreak J^{\bd}[\kappa],\theta^+)$.
By Proposition~\ref{prop614}(1), $c$ moreover witnesses $\ubd^+([\theta^+]^{\theta^+},\mathcal J^{\kappa}_{\theta^{++}},\allowbreak\theta^+)$.

(1) For every $\gamma<\theta^+$, fix an injection $e_\gamma:\gamma\rightarrow\theta$.
Fix a bijection $\pi:\theta^+\leftrightarrow\theta^+\times\theta^+$
and pick an upper-regressive colouring $d:[\kappa]^2\rightarrow\theta$ such that for all $\eta<\theta^+\le\beta<\kappa$,
if $\pi(\eta)=(\eta',\gamma)$, then $d(\eta,\beta)=e_\gamma(c(\{\eta',\beta\}))$.
To see this works, let $\langle B_\tau\mid \tau<\theta\rangle$ be a sequence of $J^+$-sets, for some $J\in\mathcal J^\kappa_{\theta^{++}}$.
By the choice of $c$, there is a large enough $\eta'<\theta^+$ such that, for every $\tau<\theta$,
the following set has size $\theta^+$:
$$X_\tau:=\{ \xi<\theta^+\mid \{\beta\in B_\tau\setminus(\eta'+1)\mid c(\eta,\beta)=\xi\}\in J^+\}.$$
Pick a large enough $\gamma<\theta^+$ such that $|X_\tau\cap\gamma|=\theta$ for all $\tau<\theta$.
Find $\eta<\theta^+$ such that $\pi(\eta)=(\eta',\gamma)$.
Then, for every $\tau<\theta$,
$$\{ e_\gamma(\xi)\mid \xi<\gamma\ \&\ \{\beta\in B_\tau\setminus(\eta'+1)\mid c(\eta,\beta)=\xi\}\in J^+\}$$
has order-type $\theta$.
Consequently, for every $\tau<\theta$,
$$\{ \zeta<\theta\mid \{\beta\in B_\tau\setminus(\eta'+1)\mid d(\eta,\beta)=\zeta\}\in J^+\}$$
has order-type $\theta$.

(2) Set $\nu:=\theta^+$ and $\varkappa:=\theta^+$. By Corollary~\ref{regularpairs}(2), $\proj(\nu,\varkappa,\theta,\theta)$ holds.
So, by Lemma~\ref{lemma15}(2) using $I:=[\nu]^\nu$, $\onto^{++}(\{\nu\},\mathcal J^\kappa_{\theta^{++}},\theta)$ holds.
\end{proof}

\begin{defn}[Shelah, \cite{Sh:g}] For a singular cardinal $\lambda$, $\PP(\lambda)$ stands for the set of all cardinals $\kappa$ such that
there exists an ultrafilter $\mathcal U$ over $\cf(\lambda)$ disjoint from $J^{\bd}[\cf(\lambda)]$ and a strictly increasing sequence of regular cardinals $\langle \lambda_i\mid i<\cf(\lambda)\rangle$ converging to $\lambda$
such that the linear order $(\prod_{i<\cf(\lambda)}\lambda_i,<_{\mathcal U})$ has cofinality $\kappa$.
\end{defn}
By a theorem of Shelah (see \cite[Theorem~5.3]{MR1112424}), $\PP(\lambda)$ is an interval of regular cardinals with $\min(\PP(\lambda))=\lambda^+$.

\begin{thm}\label{thm811} Suppose that $\kappa\in\PP(\lambda)$ for a singular cardinal $\lambda$.

Then $\onto^+(\{\lambda\},\mathcal J^\kappa_{\lambda^+},\theta)$ holds for every $\theta<\lambda$.
\end{thm}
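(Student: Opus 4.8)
The plan is to first peel off the ideal-theoretic content via the narrowness machinery of Section~\ref{basics}, reducing to a colouring statement over the bounded ideal. Since $\theta<\lambda<\kappa$ (recall $\min(\PP(\lambda))=\lambda^+$, so $\kappa>\lambda$), and since $J^{\bd}[\kappa]\in\mathcal J^\kappa_{\lambda^+}$ while $\lambda\s\kappa$ has size $\lambda$, I would apply Proposition~\ref{prop614}(2) with $\nu:=\lambda$, $\mathcal A:=\{\lambda\}$ and $J:=J^{\bd}[\kappa]$. As every member of $\mathcal J^\kappa_{\lambda^+}$ extends $J^{\bd}[\kappa]$, the resulting collection $\{I\in\mathcal J^\kappa_{\lambda^+}\mid I\supseteq J^{\bd}[\kappa]\}$ is exactly $\mathcal J^\kappa_{\lambda^+}$, so it suffices to establish the basic narrow instance $\onto(\{\lambda\},J^{\bd}[\kappa],\theta)$: a colouring $c\colon[\kappa]^2\to\theta$ such that every unbounded $B\s\kappa$ admits some $\eta<\lambda$ with $c[\{\eta\}\circledast B]=\theta$.

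To build $c$, I would unpack the pcf data. Let $\delta:=\cf(\lambda)$, fix the ultrafilter $\mathcal U$ over $\delta$ extending the cobounded filter and the increasing sequence $\langle\lambda_i\mid i<\delta\rangle$ of regular cardinals converging to $\lambda$ witnessing $\kappa\in\PP(\lambda)$, and fix a $<_{\mathcal U}$-increasing sequence $\langle f_\alpha\mid\alpha<\kappa\rangle$ cofinal in $(\prod_{i<\delta}\lambda_i,<_{\mathcal U})$. Discarding a bounded (hence $\mathcal U$-null) set of coordinates, I may assume $\theta<\lambda_i$ for all $i$; appealing to Shelah's theory of possible cofinalities, I may moreover assume each $\lambda_i$ is a successor cardinal. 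Then $\lambda_i\nrightarrow[\lambda_i]^2_\theta$ holds in ZFC (Todor\v{c}evi\'c; see \cite{TodWalks}), so by Theorem~\ref{prop46}(1) and the monotonicity recorded after Convention~\ref{colouringconventions} I may fix, for each $i<\delta$, a colouring $d_i\colon[\lambda_i]^2\to\theta$ witnessing $\onto(J^{\bd}[\lambda_i],\theta)$. Finally I would fix a bijection $\eta\mapsto(i_\eta,\zeta_\eta)$ between $\lambda$ and $\{(i,\zeta)\mid i<\delta,\ \zeta<\lambda_i\}$ (legitimate as $\sum_{i<\delta}\lambda_i=\lambda$), and define, for $\eta<\beta<\kappa$,
$$c(\eta,\beta):=d_{i_\eta}(\zeta_\eta,f_\beta(i_\eta))\quad\text{if }\eta<\lambda\text{ and }\zeta_\eta<f_\beta(i_\eta),$$
and $c(\eta,\beta):=0$ otherwise.

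For the verification, given an unbounded $B\s\kappa$ I would pass to $B':=B\setminus\lambda$, still unbounded in $\kappa$, so that $\{f_\beta\mid\beta\in B'\}$ is $<_{\mathcal U}$-cofinal and every $\beta\in B'$ exceeds every $\eta<\lambda$. The key claim is that $Z:=\{i<\delta\mid\sup\{f_\beta(i)\mid\beta\in B'\}=\lambda_i\}$ lies in $\mathcal U$: otherwise the coordinatewise suprema along $\delta\setminus Z\in\mathcal U$ yield $g\in\prod_i\lambda_i$, and cofinality provides $\beta\in B'$ with $g<_{\mathcal U}f_\beta$, forcing $f_\beta(i)>g(i)\ge f_\beta(i)$ on a set in $\mathcal U$, a contradiction. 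Fixing $i^*\in Z$, the set $X:=\{f_\beta(i^*)\mid\beta\in B'\}$ is unbounded in $\lambda_{i^*}$, so the choice of $d_{i^*}$ gives $\zeta<\lambda_{i^*}$ with $d_{i^*}[\{\zeta\}\circledast X]=\theta$. Taking $\eta<\lambda$ with $(i_\eta,\zeta_\eta)=(i^*,\zeta)$, each $\tau<\theta$ equals $d_{i^*}(\zeta,x)$ for some $x\in X$ above $\zeta$; writing $x=f_\beta(i^*)$ with $\beta\in B'$ gives $\eta<\lambda\le\beta$ and $\zeta_\eta<f_\beta(i_\eta)$, whence $c(\eta,\beta)=\tau$. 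Thus $c[\{\eta\}\circledast B]\supseteq c[\{\eta\}\circledast B']=\theta$.

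I expect the main obstacle to be the coordinate-selection claim that $Z\in\mathcal U$: this is precisely what allows passing from a single $\mathcal U$-large coordinate to the whole function, and it is where the hypothesis that $\mathcal U$ is a genuine ultrafilter (rather than merely the cobounded filter) is indispensable. A secondary subtlety worth flagging is that no fixed one-dimensional map $\lambda_{i^*}\to\theta$ can send every unbounded subset of $\lambda_{i^*}$ onto $\theta$; this is exactly why the unbounded set $X$ must be fed into a two-dimensional $\onto$-colouring $d_{i^*}$, and hence why the coordinates must avoid weak compactness—handled here by taking the $\lambda_i$ to be successors.
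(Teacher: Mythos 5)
Your overall skeleton matches the paper's proof: the reduction to $\onto(\{\lambda\},J^{\bd}[\kappa],\theta)$ via Proposition~\ref{prop614}(2), the unpacking of the $\PP(\lambda)$ witness into $\mathcal U$, $\langle\lambda_i\mid i<\cf(\lambda)\rangle$ and a $<_{\mathcal U}$-increasing cofinal sequence $\langle f_\beta\mid\beta<\kappa\rangle$, and your claim that $Z=\{i\mid\sup\{f_\beta(i)\mid\beta\in B'\}=\lambda_i\}\in\mathcal U$ (this is exactly the paper's Claim~\ref{claim111}) are all sound and are what the paper does. The gap is in the step that supplies, for $\mathcal U$-many coordinates $i$, a colouring $d_i$ witnessing $\onto(J^{\bd}[\lambda_i],\theta)$. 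You obtain these from ``WLOG each $\lambda_i$ is a successor cardinal, hence $\lambda_i\nrightarrow[\lambda_i]^2_\theta$ by Todor\v{c}evi\'c.'' Both halves of this are problematic. First, the sequence $\langle\lambda_i\rangle$ comes with the $\PP(\lambda)$ witness; replacing it by a sequence of successor cardinals while preserving $\tcf(\prod_i\lambda_i,<_{\mathcal U})=\kappa$ is a nontrivial pcf assertion that does not follow from anything you cite, and you would in fact need successors of \emph{regulars}. Second, even granting successors, the attribution is wrong: Todor\v{c}evi\'c's theorem gives $\mu^+\nrightarrow[\mu^+]^2_{\mu^+}$ for \emph{regular} $\mu$; for $\mu$ singular, $\mu^+\nrightarrow[\mu^+]^2_\theta$ with $\theta>\cf(\mu)$ is not a theorem of $\zfc$ --- this very paper treads carefully around that point (Corollary~\ref{prop41}(2) only gets $\ubd^{++}$ at successors of singulars, and the proof of Theorem~C invokes a nonreflecting stationary set to obtain the square-bracket relation there). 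The need for \emph{some} restriction on the coordinates is real, not cosmetic: if some $\lambda_i$ is weakly compact, then $\onto(J^{\bd}[\lambda_i],3)$ fails outright (as noted in the introduction), so your $d_i$ simply does not exist for that coordinate.

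The paper's proof avoids this entirely, and this is the idea your proposal is missing: instead of two-dimensional square-bracket colourings on the coordinates, it first increases $\theta$ to a regular cardinal greater than $\cf(\lambda)$ (harmless for $\onto$) and arranges $\lambda_0>\theta^+$, then fixes on each $\lambda_i$ a $\theta$-bounded \emph{club-guessing} $C$-sequence $\langle C^i_\delta\mid\delta\in E^{\lambda_i}_\theta\rangle$ --- a $\zfc$ theorem of Shelah valid for every regular $\lambda_i>\theta^+$, with no anti-large-cardinal hypothesis on $\lambda_i$. The colouring is $c(\eta,\beta)=\sup(\otp(C^i_\delta\cap f_\beta(i)))$ for $\pi(\eta)=(i,\delta)$; given $B$, one takes a coordinate $i$ as in your set $Z$, observes that $D:=\acc^+(\{f_\beta(i)\mid\beta\in B\setminus\lambda\})$ is a club in $\lambda_i$, guesses $\delta$ with $C^i_\delta\s D$, and then every colour below $\theta$ is realized on $\{\eta\}\circledast B$. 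If you replace your coordinate colourings $d_i$ by this club-guessing device, the rest of your argument goes through as written.
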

\begin{proof}
Let $\theta<\lambda$ be arbitrary.
As $\lambda$ is a limit cardinal, we may increase $\theta$ and assume that it is a regular cardinal greater than $\cf(\lambda)$.
By Proposition~\ref{prop614}(1), it suffices to prove that $\onto(\{\lambda\},J^{\bd}[\kappa],\theta)$ holds.
Fix an ultrafilter $\mathcal U$ and a sequence $\langle \lambda_i\mid i<\cf(\lambda)\rangle$
witnessing together that $\kappa\in\PP(\lambda)$. As $\mathcal U$ is disjoint from $J^{\bd}[\cf(\lambda)]$, we may assume that $\lambda_0>\theta^+$.
Fix a strictly increasing and cofinal sequence $\langle f_\beta\mid \beta<\kappa\rangle$ in the linear order $(\prod_{i<\cf(\lambda)}\lambda_i,<_{\mathcal U})$.
For each $i<\cf(\lambda)$, fix a $\theta$-bounded club-guessing $C$-sequence $\langle C_\delta^i\mid \delta\in E^{\lambda_i}_\theta\rangle$.
Fix a bijection $\pi:\lambda\leftrightarrow\bigcup_{i<\cf(\lambda)}(\{i\}\times E^{\lambda_i}_\theta)$.
Finally, pick any colouring $c:[\kappa]^2\rightarrow\theta$ such that, for all $\eta<\lambda\le\beta<\kappa$,
if $\pi(\eta)=(i,\delta)$, then
$$c(\eta,\beta):=\sup(\otp(C_\delta^i\cap f_\beta(i))).$$

To see this works, suppose that we are given $B\in[\kappa]^\kappa$.

\begin{claim}\label{claim111} There are cofinally many $i<\cf(\lambda)$ such that
$$\sup\{ f_\beta(i)\mid \beta\in B\setminus\lambda\}=\lambda_i.$$
\end{claim}
\begin{why} Suppose not.
Then there exists $j<\cf(\lambda)$ and a function $g\in\prod_{i<\cf(\lambda)}\lambda_i$ such that, for every $i\in[j,\cf(\lambda))$,
$$g(i)=\sup\{ f_\beta(i) \mid \beta\in B\setminus\lambda\}.$$
Pick $\alpha<\lambda$ such that $g<_{\mathcal U}f_\alpha$. Then find $\beta\in B\setminus(\lambda\cup\alpha)$.
As $g<_{\mathcal U} f_\alpha<_{\mathcal U}f_\beta$, let us pick $i<\cf(\lambda)$ above $j$ such that $g(i)<f_\beta(i)$. This is a contradiction.
\end{why}
Fix one $i$ as in the claim. Then $D:=\acc^+(\{f_\beta(i)\mid \beta\in B\setminus\lambda\})$ is a club in $\lambda_i$,
and we may fix some $\delta\in E^{\lambda_i}_\theta$ such that $C_\delta^i\s D$.
Set $\eta:=\pi^{-1}(i,\delta)$. Clearly, $c[\{\eta\}\circledast B]=\theta$.
\end{proof}

It follows from the preceding that for every singular cardinal $\lambda$, $\onto(\{\lambda\},J^{\bd}[\lambda^+],\allowbreak\theta)$ holds for every cardinal $\theta<\lambda$.
This cannot be improved any further:
\begin{prop}\label{prop610} Assuming the consistency of a supercompact cardinal, it is consistent that $\onto(\{\lambda\},J^{\bd}[\lambda^+],\lambda)$
fails for some singular strong limit cardinal $\lambda$.
\end{prop}
\begin{proof} By \cite[Lemma~8.9(2)]{paper47},
for every strong limit cardinal $\lambda$, $\onto(\{\lambda\},\allowbreak J^{\bd}[\lambda^+],\allowbreak\lambda)$ implies $\lambda^+\nrightarrow[\lambda;\lambda^+]^2_{\lambda^+}$.
However, by the main result of \cite{Sh:949}, in a suitable forcing extension over a ground model with a supercompact cardinal,
there exists a singular strong limit cardinal $\lambda$ such that $\lambda^+\rightarrow[\lambda;\lambda^+]^2_2$ does hold.
\end{proof}

\begin{lemma} Suppose that $\kappa=\lambda^+$ for a singular cardinal $\lambda$. For every $\theta<\lambda$,
there exists a map $c:\lambda\times\kappa\rightarrow\theta\times\kappa$ such that,
for every $B\in[\kappa]^{\kappa}$ and every stationary $S\s\kappa$, there are $\eta<\lambda$
and a stationary $\Gamma\s S$ such that $c[\{\eta\}\circledast B]\supseteq\theta\times \Gamma$.
\end{lemma}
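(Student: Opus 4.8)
The plan is to split the target colouring $c:\lambda\times\kappa\to\theta\times\kappa$ into two coordinates, both governed by data coded into $\eta$, and to reduce the statement to $\onto(\{\lambda\},J^\bd[\kappa],\theta)$ — which holds for $\theta<\lambda$ by the remark following Theorem~\ref{thm811}, since $\kappa=\lambda^+\in\PP(\lambda)$ — together with two applications of the pigeonhole principle exploiting $\lambda<\cf(\kappa)=\kappa$. The first coordinate will carry the $\onto$-colouring, producing all of $\theta$; the second coordinate will carry an Ulam-matrix column index, producing the stationary set $\Gamma$.

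First I would fix a colouring $d:[\kappa]^2\to\theta$ witnessing $\onto(\{\lambda\},J^\bd[\kappa],\theta)$, a bijection $\pi:\lambda\leftrightarrow\lambda\times\lambda$, and for each $\beta<\kappa$ an injection $e_\beta:\beta\to\lambda$ (available since $|\beta|\le\lambda$). These injections encode an Ulam matrix $\langle A^\xi_\gamma\mid \xi<\lambda,\ \gamma<\kappa\rangle$, where $A^\xi_\gamma:=\{\beta<\kappa\mid \gamma<\beta\ \&\ e_\beta(\gamma)=\xi\}$: for each fixed $\gamma$ the family $\langle A^\xi_\gamma\mid\xi<\lambda\rangle$ partitions $\kappa\setminus(\gamma+1)$, and for each fixed $\xi$ the family $\langle A^\xi_\gamma\mid\gamma<\kappa\rangle$ is pairwise disjoint. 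Then I would define, for $\eta<\beta<\kappa$ with $\pi(\eta)=(\eta_0,\xi)$, the value $c(\eta,\beta):=(d(\eta_0,\beta),\gamma)$ where $\gamma$ is the unique ordinal below $\beta$ with $e_\beta(\gamma)=\xi$ if $\xi\in\im(e_\beta)$, and $c(\eta,\beta):=(d(\eta_0,\beta),0)$ otherwise.

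To verify, fix $B\in[\kappa]^\kappa$ and stationary $S\subseteq\kappa$; replacing $S$ by $S\cap[\lambda,\kappa)$ I may assume every $\gamma\in S$ satisfies $\gamma\ge\lambda$. For the first pigeonhole, for each $\gamma\in S$ the cofinal set $B\setminus(\gamma+1)$ is the disjoint union $\bigsqcup_{\xi<\lambda}(A^\xi_\gamma\cap B)$ of $\lambda<\kappa$ pieces, so some $A^{\xi_\gamma}_\gamma\cap B$ has size $\kappa$; since $\lambda<\cf(\kappa)$, I can fix $\xi^*<\lambda$ for which $S':=\{\gamma\in S\mid \xi_\gamma=\xi^*\}$ is stationary, so that $A^{\xi^*}_\gamma\cap B$ is cofinal in $\kappa$ for all $\gamma\in S'$. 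For the second pigeonhole, applying $\onto(\{\lambda\},J^\bd[\kappa],\theta)$ to each cofinal set $A^{\xi^*}_\gamma\cap B$ yields some $\eta_0(\gamma)<\lambda$ with $d[\{\eta_0(\gamma)\}\circledast(A^{\xi^*}_\gamma\cap B)]=\theta$; since $\lambda<\cf(\kappa)$ again, I fix $\eta_0^*<\lambda$ so that $\Gamma:=\{\gamma\in S'\mid \eta_0(\gamma)=\eta_0^*\}$ is stationary. Setting $\eta:=\pi^{-1}(\eta_0^*,\xi^*)<\lambda$, for every $\gamma\in\Gamma$ and $\tau<\theta$ there is $\beta\in A^{\xi^*}_\gamma\cap B$ with $d(\eta_0^*,\beta)=\tau$; as $\beta>\gamma\ge\lambda>\eta$ and $e_\beta(\gamma)=\xi^*$, we get $c(\eta,\beta)=(\tau,\gamma)$, whence $\theta\times\Gamma\subseteq c[\{\eta\}\circledast B]$ with $\Gamma\subseteq S$ stationary.

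The main obstacle is the double simultaneity: one column index $\xi^*$ must make $A^{\xi^*}_\gamma\cap B$ cofinal for stationarily many $\gamma$, and then one onto-index $\eta_0^*$ must achieve all $\theta$ colours on all of these columns at once. Both are handled because any map from a stationary subset of $\kappa$ into $\lambda<\cf(\kappa)$ is constant on a stationary set, so the maps $\gamma\mapsto\xi_\gamma$ and $\gamma\mapsto\eta_0(\gamma)$ each stabilize stationarily often; the preliminary restriction to $\gamma\ge\lambda$ guarantees that every witnessing $\beta>\gamma$ automatically exceeds $\eta<\lambda$, so the pairs stay inside $\{\eta\}\circledast B$ without any further bookkeeping.
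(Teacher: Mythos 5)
Your proof is correct and takes essentially the same route as the paper's: both fix $d$ witnessing $\onto(\{\lambda\},J^{\bd}[\kappa],\theta)$ via Theorem~\ref{thm811}, code the pair (onto-index, Ulam-column-index) into a single $\eta<\lambda$ through a bijection $\pi:\lambda\leftrightarrow\lambda\times\lambda$, and conclude with the same two stationary pigeonhole arguments (first stabilizing the column index, then the onto-index). One small point in your favour: your first coordinate $d(\eta_0,\beta)$ is what the verification actually needs, whereas the paper's displayed definition has $d(j,e_\beta(i))$ in that slot --- evidently a typo for $d(j,\beta)$, since as literally written $c(\eta,\cdot)$ would be constant on each set $B^i_\gamma$ and could never cover $\theta\times\Gamma$.
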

\begin{proof}
Given $\theta<\lambda$, use Theorem~\ref{thm811} to fix a map $d:\lambda\times\lambda^+\rightarrow\theta$ witnessing $\onto(\{\lambda\},J^{\bd}[\lambda^+],\theta)$.
For every $\beta<\kappa$, fix a surjection $e_\beta:\lambda\rightarrow\beta+1$.
Fix a bijection $\pi:\lambda\leftrightarrow\lambda\times\lambda$.
Define a colouring $c:\lambda\times\kappa\rightarrow\theta\times\kappa$ by letting $$c(\eta,\beta):=(d(j,e_\beta(i)),e_\beta(i))\text{ provided }\pi(\eta)=(i,j).$$

To see this works, let $B\in[\kappa]^\kappa$.
For all $i<\lambda$ and $\gamma<\kappa$, let $B^i_\gamma:=\{ \beta\in B\setminus\lambda\mid e_\beta(i)=\gamma\}$,
and then let $\Gamma^i(B):=\{ \gamma\in S\setminus\lambda\mid B^i_\gamma\in [\kappa]^\kappa\}$.
\begin{claim} There exists $i<\lambda$ such that $\Gamma^i(B)$ is stationary.
\end{claim}
\begin{why} For each $\gamma<\kappa$, there exists some $i<\lambda$ such that $B^i_\gamma\in[\kappa]^\kappa$.
Then, there exists some $i<\lambda$ such that
$\Gamma^i(B)$ is stationary.
\end{why}

Let $i$ be given by the claim. For each $\gamma\in\Gamma^i(B)$, since $B^i_\gamma\in[\kappa]^\kappa$,
we may find some $j_\gamma<\lambda$ such that $d[\{j_\gamma\}\circledast B^i_\gamma]=\theta$.
Pick $j<\lambda$ for which $\Gamma:=\{\gamma\in\Gamma^i(B)\mid j_\gamma=j\}$ is stationary.
Now, pick $\eta<\lambda$ such that $\pi(\eta)=(i,j)$.
\begin{claim} Let $(\tau,\gamma)\in \theta\times \Gamma$. There exists $\beta\in B\setminus\lambda$ such that $c(\eta,\beta)=(\tau,\gamma)$.
\end{claim}
\begin{why} As $\gamma\in \Gamma$, we infer that $j_\gamma=j$, and hence we may find $\beta\in B^i_\gamma$ such that $d(j,\beta)=\tau$.
As $\beta\in B^i_\gamma$, it is the case that $\beta\in B\setminus\lambda$ and $e_\beta(i)=\gamma$.
Recalling that $\pi(\eta)=(i,j)$ and the definition of $c$, we get that
$$c(\eta,\beta):=(d(j,e_\beta(i)),e_\beta(i))=(d(j,\gamma),\gamma)=(\tau,\gamma),$$
as sought.
\end{why}
This completes the proof.
\end{proof}

Recall that by Corollary~\ref{lemma712}(3), for a regular cardinal $\lambda$, $\ubd(\{\lambda\},J^{\bd}[\lambda^+],\allowbreak\lambda)$ holds
iff $\mathfrak b_\lambda=\lambda^+$. In contrast, for singular cardinals, we have the following unconditional result:

\begin{cor}\label{cor215} Suppose that $\kappa=\lambda^+$ for a singular cardinal $\lambda$.

Then $\ubd(\{\lambda\},J^{\bd}[\kappa],\lambda)$ holds.
\end{cor}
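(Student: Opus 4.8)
The plan is to deduce this directly from the preceding Lemma together with a pcf scale on $\lambda^+$. The first observation is a convenient reduction: since any set $R\s\lambda$ of ordinals has $\otp(R)\le\lambda$, and a subset of $\lambda$ of cardinality $\lambda$ automatically has order-type $\lambda$, the requirement ``$\otp(c'[\{\eta\}\circledast B])=\lambda$'' is equivalent to ``$|c'[\{\eta\}\circledast B]|=\lambda$''. So it suffices to produce an upper-regressive $c':[\kappa]^2\to\lambda$ hitting $\lambda$-many colours on $\{\eta\}\circledast B$ for some $\eta<\lambda$. Set $\mu:=\cf(\lambda)<\lambda$ and fix, via Shelah's pcf theorem, an increasing sequence $\langle\theta_\tau\mid\tau<\mu\rangle$ of regular cardinals cofinal in $\lambda$ (with $\theta_0>\mu$) together with a scale $\langle f_\gamma\mid\gamma<\kappa\rangle$ that is $<^*$-increasing and $<^*$-cofinal in $(\prod_{\tau<\mu}\theta_\tau,<^*)$; such a sequence exists precisely because $\kappa=\lambda^+$ is the minimum of $\PP(\lambda)$. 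Letting $\sigma_\tau:=\sum_{\tau'<\tau}\theta_{\tau'}$, the intervals $I_\tau:=[\sigma_\tau,\sigma_{\tau+1})$ partition $\lambda$ into blocks with $\otp(I_\tau)=\theta_\tau$, and I define a fold $\phi:\mu\times\kappa\to\lambda$ by $\phi(\tau,\gamma):=\sigma_\tau+f_\gamma(\tau)$, which lands $\phi(\tau,\cdot)$ inside block $I_\tau$.

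Next I apply the preceding Lemma with $\theta:=\mu$, obtaining a map $c:\lambda\times\kappa\to\mu\times\kappa$ whose output, for every $B\in[\kappa]^\kappa$ and every stationary $S$, covers a full product $\mu\times\Gamma$ with $\Gamma\s S$ stationary. I then define $c':[\kappa]^2\to\lambda$ by setting, for $\eta<\beta<\kappa$: if $\eta<\lambda$, let $v:=\phi(c(\eta,\beta))$ and put $c'(\eta,\beta):=v$ when $v<\beta$ and $c'(\eta,\beta):=0$ otherwise; and $c'(\eta,\beta):=0$ when $\eta\ge\lambda$. This is upper-regressive by construction. Given an unbounded $B\s\kappa$ (necessarily of size $\kappa$), I apply the Lemma to $B$ and $S:=\kappa$ to get $\eta<\lambda$ and a stationary $\Gamma$ with $c[\{\eta\}\circledast B]\supseteq\mu\times\Gamma$; inspecting the Lemma's proof, each pair $(\tau,\gamma)\in\mu\times\Gamma$ is realized by some $\beta\in B\setminus\lambda$, for which $\beta\ge\lambda>\phi(\tau,\gamma)$, so the capping is inactive and $c'(\eta,\beta)=\sigma_\tau+f_\gamma(\tau)$. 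Hence $c'[\{\eta\}\circledast B]\supseteq\phi[\mu\times\Gamma]$, and it remains to show $|\phi[\mu\times\Gamma]|=\lambda$.

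This last step is where the scale does the essential work, and it is the main obstacle: a fixed $\theta<\lambda$ can yield order-type at most (cardinality) $\theta$ from a single $\gamma$, and no global fold $\kappa\to\lambda$ can turn an \emph{arbitrary} stationary $\Gamma$ into $\lambda$-many values, since NS$_\kappa$ is $\kappa$-complete and a concentrated $\Gamma$ (e.g.\ a single large fibre) would collapse the image. The scale circumvents exactly this: because $\Gamma$ is cofinal in $\kappa$ and $\langle f_\gamma\rangle$ is $<^*$-cofinal, for cofinally many $\tau<\mu$ the set $A_\tau:=\{f_\gamma(\tau)\mid\gamma\in\Gamma\}$ is unbounded in $\theta_\tau$. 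Indeed, were this to fail, there would be $\tau_0<\mu$ with $A_\tau$ bounded in $\theta_\tau$ for all $\tau\ge\tau_0$; defining $g(\tau):=\sup A_\tau<\theta_\tau$ (using regularity of $\theta_\tau$) would give $f_\gamma\le^* g$ for every $\gamma\in\Gamma$, contradicting $<^*$-cofinality of the scale against a cofinal $\Gamma$. Since each such $A_\tau$ then has size $\theta_\tau$ and the blocks $I_\tau$ are disjoint, $|\phi[\mu\times\Gamma]|\ge\sup_{\tau\in T}\theta_\tau=\lambda$ (with $T$ the cofinal set of good $\tau$), and being a subset of $\lambda$ this forces $\otp(c'[\{\eta\}\circledast B])=\lambda$, as required.
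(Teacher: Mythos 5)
Your proposal is correct and follows essentially the same route as the paper: compose the colouring from the preceding lemma (taken with $\theta:=\cf(\lambda)$) with a scale of length $\kappa$ in $\prod_{\tau<\cf(\lambda)}\theta_\tau$, and then use the fact that a cofinal $\Gamma\s\kappa$ forces the coordinates $\{f_\gamma(\tau)\mid\gamma\in\Gamma\}$ to be unbounded in $\theta_\tau$ for cofinally many $\tau$ (this is exactly the paper's Claim~\ref{claim111}, which you reprove inline). Your disjoint-block fold $\phi$ and the explicit capping to ensure upper-regressivity are only cosmetic refinements of the paper's definition $d(\eta,\beta):=f_\gamma(i)$, and if anything they make the cardinality count and the regressivity requirement more transparent.
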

\begin{proof} Fix a colouring $c$ as in the preceding lemma, using $\theta:=\cf(\lambda)$.
Fix an increasing sequence of regular cardinals $\langle \lambda_i\mid i<\cf(\lambda)\rangle$ converging to $\lambda$
such that $\tcf(\prod_{i<\cf(\lambda)}\lambda_i,<^*)=\kappa$.
Fix a scale $\langle f_\gamma\mid \gamma<\kappa\rangle$ witnessing the preceding.
Define $d:\lambda\times\lambda^+\rightarrow\lambda$ by letting $d(\eta,\beta):=f_\gamma(i)$ iff $c(\eta,\beta)=(i,\gamma)$.

To see this works, let $B\in[\kappa]^\kappa$.
Pick $\eta<\lambda$ and $\Gamma\in[\lambda^+\setminus\lambda]^{\lambda^+}$ such that $c[\{\eta\}\circledast B]\supseteq\cf(\lambda)\times\Gamma$.
By Claim~\ref{claim111}, $I:=\{i<\cf(\lambda)\mid \sup\{ f_\gamma(i)\mid \gamma\in \Gamma\}=\lambda_i\}$ is cofinal in $\cf(\lambda)$.
In effect, the set $\{ f_\gamma(i)\mid i<\cf(\lambda), \gamma\in\Gamma\}$ has size $\lambda$.
As $d[\{\eta\}\circledast B]$ covers the above $\lambda$-sized set, we are done.
\end{proof}

\section{Independent and almost-disjoint families}\label{adsection}

To motivate the next definition, note that $\kappa\in\ad(\theta^+,\theta)$ iff there exists an almost-disjoint family in $[\theta]^\theta$ of size $\kappa$.

\begin{defn}\label{adf} $\ad(\mu,\theta)$ stands for the set of all cardinals $\kappa$ for which there exists a pair $(\mathcal X,\mathcal Y)$ satisfying all of the following:
\begin{enumerate}
\item $\mathcal X$ and $\mathcal Y$ consist of sets of ordinals with $|\mathcal X|=\kappa$ and $\otp(\mathcal Y,{\s})=\theta$;
\item For all $x\neq x'$ from $\mathcal X$, there exists $y\in\mathcal Y$ such that $x\cap x'\s y$;
\item For every $(x,y)\in\mathcal X\times\mathcal Y$,  $x\setminus y\neq\emptyset$;
\item For every $y\in \mathcal Y$, $\{ \min(x\setminus y)\mid x\in\mathcal X\}$ has size less than $\mu$.
\end{enumerate}
\end{defn}
\begin{remark}\label{admonotonicity} If $\kappa\in\ad(\mu,\theta)$, then $\varkappa\in\ad(\mu,\theta)$ for all cardinals $\varkappa<\kappa$.
If $\lambda$ is singular, then $\PP(\lambda)\s \ad(\lambda,\cf(\lambda))$.
\end{remark}

The following two corollaries summarise the main findings of this section.

\begin{cor}\label{cor73} Suppose that $\theta<\nu<\cf(\kappa)\le2^\nu$ are infinite cardinals.

Any of the following implies that $\onto^+(\{\nu\},\mathcal J^\kappa_{\nu^+},\theta)$ holds:
\begin{enumerate}[(1)]
\item $\nu=\nu^\theta$;
\item $\nu$ is a strong limit;
\item $\theta^+=\nu$, $\cf(\theta)=\theta$, and there is a $\nu$-Kurepa tree with $\cf(\kappa)$ many branches;
\item $\theta^+<\nu$, $\cf(\theta)=\theta$, and $\cf(\kappa)\in\ad(\nu^+,\nu)$;
\item $\theta^+<\nu$, $\mathcal C(\theta^+,\theta)\le\nu$, and $\cf(\kappa)\in\ad(\nu^+,\nu)$;
\item $\theta^{++}<\nu$ and $\cf(\kappa)\in\ad(\nu^+,\nu)$;
\item $\theta^{++}<\nu=\nu^{<\nu}$.
\end{enumerate}
\end{cor}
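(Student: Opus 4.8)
The plan is to prove all seven clauses through a single reduction and then supply, case by case, the one combinatorial ingredient that remains. First I would note that it suffices to establish the un-upgraded narrow principle $\onto(\{\nu\},J^{\bd}[\kappa],\theta)$: since $\nu<\cf(\kappa)$, the ideal $J^{\bd}[\kappa]$ is $\nu^+$-complete, so $J^{\bd}[\kappa]\in\mathcal J^\kappa_{\nu^+}$ and $\{\nu\}\s[\kappa]^{\le\nu}$, whence Proposition~\ref{prop614}(2) upgrades $\onto(\{\nu\},J^{\bd}[\kappa],\theta)$ to $\onto^+(\{\nu\},\mathcal J^\kappa_{\nu^+},\theta)$. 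Next I would reduce $\onto(\{\nu\},J^{\bd}[\kappa],\theta)$ to the projection $\proj(\nu,\cf(\kappa),\theta,1)$ on the \emph{regular} cardinal $\cf(\kappa)$, exactly along the pullback used to prove Proposition~\ref{tukey}: fix an increasing continuous cofinal sequence $\langle\kappa_i\mid i<\cf(\kappa)\rangle$ and the level map $\ell(\beta)$ picking the $i$ with $\kappa_i\le\beta<\kappa_{i+1}$, and set $c(\eta,\beta):=p(\eta,\ell(\beta))$ for $\eta<\nu$ (and $0$ otherwise). For unbounded $B$, the trimmed level set $\{i\in\ell[B]\mid\kappa_i\ge\nu\}$ is cofinal in $\cf(\kappa)$, hence—being a subset of a regular cardinal—lies in $[\cf(\kappa)]^{\cf(\kappa)}$; applying $\proj$ to it yields the required $\eta<\nu$ with $c[\{\eta\}\circledast B]=\theta$. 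Thus the whole corollary reduces to: \emph{under each hypothesis, $\proj(\nu,\bar\kappa,\theta,1)$ holds}, where $\bar\kappa:=\cf(\kappa)$ is regular and $\nu<\bar\kappa\le2^\nu$.

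To organize the rest I would reformulate $\proj(\nu,\bar\kappa,\theta,1)$ in ``avoidance'' form: viewing a witness $p$ as columns $d_\beta:=p(\cdot,\beta)\in{}^\nu\theta$, the principle says every $A\in[\bar\kappa]^{\bar\kappa}$ has a coordinate $\eta<\nu$ with $\{d_\beta(\eta)\mid\beta\in A\}=\theta$, and this is equivalent to requiring that for every $g\in{}^\nu\theta$ the set of $\beta$ with $d_\beta(\eta)\neq g(\eta)$ for \emph{all} $\eta<\nu$ have size $<\bar\kappa$. The entire difficulty is that $\bar\kappa$ may be as large as $2^\nu$ while the first parameter must be only $\nu$; the brute-force family of Proposition~\ref{bruteforce} would cost $\bar\kappa^\theta$ columns, far too many. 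Clauses~(1),~(2),~(7) would be handled by exhibiting such a \emph{narrow} onto-family of functions $\nu\to\theta$ directly from the arithmetic: $\nu=\nu^\theta$ already suffices for a $\theta$-valued independent-type family of size $2^\nu\ge\bar\kappa$, and strong-limit $\nu$ (clause~(2)) and $\nu=\nu^{<\nu}$ (clause~(7)) provide the full independent/almost-disjoint apparatus on $\nu$ needed to meet the avoidance requirement.

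Clause~(3) would instead feed the $\nu$-Kurepa tree ($\nu=\theta^+$, with $\bar\kappa$ branches) into the scheme: the $\bar\kappa$ pairwise eventually-splitting branches give $\bar\kappa$ columns spread across the $\theta^+$ levels, and since each level has size $\le\theta$ and $\theta$ is regular I would couple this with the $\theta$-colour gadget $\proj(\theta^+,\theta^+,\theta,\theta)$ supplied by Corollary~\ref{regularpairs}(2), converting ``which node at level $\eta$'' into a genuine surjection onto $\theta$. Clauses~(4),~(5),~(6) are the almost-disjoint cases, where $\bar\kappa\in\ad(\nu^+,\nu)$ provides a pair $(\mathcal X,\mathcal Y)$ with $|\mathcal X|=\bar\kappa$, a $\nu$-chain $\mathcal Y$ of thresholds, and—crucially—condition~(iv) of Definition~\ref{adf}, by which above each threshold $y$ the minima $\min(x\setminus y)$ range over a set of size $\le\nu$; this $\le\nu$-sized set of minima is exactly what indexes the narrow candidates $\eta<\nu$. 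The $\theta$ colours are then grafted on through an auxiliary $\theta$-colouring of the $\theta^+$-sized parameter space, whose availability is precisely what the three side conditions guarantee: $\theta$ regular in~(4) (Shelah club-guessing, via Corollary~\ref{regularpairs}), $\mathcal C(\theta^+,\theta)\le\nu$ in~(5) (Lemma~\ref{bsicproj}), and the wide gap $\theta^{++}<\nu$ in~(6) (enough room for Proposition~\ref{bruteforce}).

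The main obstacle is the simultaneous demand of two features pulling in opposite directions: \emph{narrowness}, forcing the first parameter down to $\nu$ instead of the cheap $\bar\kappa^\theta$, and \emph{full surjectivity} onto all of $\theta$. In clauses~(1),~(2),~(7) the difficulty concentrates in showing that the arithmetic really yields a family satisfying the infinitary avoidance condition (plain finite independence is not enough). The almost-disjoint clauses~(4)--(6) are the most delicate, since the family $\mathcal X$ natively controls only the size-$\le\nu$ minima-sets, so the $\theta$ colours must be introduced by a second, independent mechanism and then shown to survive intersection with the given $J$-positive set. Verifying that these two layers cohere—that for a prescribed cofinal $A$ a \emph{single} threshold $y$ and a \emph{single} coordinate $\eta$ realize all $\theta$ colours at once—is where I expect the real effort to lie.
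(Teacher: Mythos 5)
Your reduction framework is correct: $\proj(\nu,\cf(\kappa),\theta,1)$ does yield $\onto(\{\nu\},J^{\bd}[\kappa],\theta)$ via the level-map pullback (your trimming handles the requirement $\eta<\beta$), and since $J^{\bd}[\kappa]$ is $\nu^+$-complete, Proposition~\ref{prop614}(2) upgrades this to $\onto^+(\{\nu\},\mathcal J^\kappa_{\nu^+},\theta)$; this uniformizes what the paper does locally inside Theorem~\ref{ek}, Lemma~\ref{seplemma} and Lemma~\ref{kurepalemma}. Clauses (1), (3), (4), (5) can be discharged along your lines: (1) is the Engelking--Karlowicz argument, (3) is Lemma~\ref{kurepalemma}(2) transplanted to full-size subsets of the regular cardinal $\cf(\kappa)$, and your two-layer scheme for (4) and (5) (almost-disjoint family producing a $\theta^+$-sized value set inside $\nu$, then a narrow projection of that value set onto $\theta$) is exactly the paper's factorization through Lemma~\ref{ad1} and Lemma~\ref{ubdtoonto}(2). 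Clause (7) is also fine, but for a simpler reason than the one you give: $\nu^{<\nu}=\nu$ together with $\theta<\nu$ already implies $\nu^\theta=\nu$, so (7) is literally an instance of (1). The genuine problems are in clauses (2) and (6).

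For clause (2), the sentence that strong-limit $\nu$ ``provide[s] the full independent/almost-disjoint apparatus on $\nu$ needed to meet the avoidance requirement'' is not a proof; it is a restatement of what must be proved. If $\cf(\nu)>\theta$, then strong-limitness gives $\nu^\theta=\nu$ and clause (1) applies; but in the remaining case $\cf(\nu)\le\theta$ one has $\nu^\theta\ge\nu^{\cf(\nu)}>\nu$, so no Engelking--Karlowicz-type family of width $\nu$ can exist by cardinality considerations, and your avoidance reformulation of $\proj(\nu,\cf(\kappa),\theta,1)$ is then precisely Shelah's principle $\sep(\nu,\nu,\theta,\theta,\cf(\kappa))$. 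This is exactly where the paper invokes a deep external theorem, namely \cite[Claim~2.6(d)]{Sh:898}, through Lemma~\ref{seplemma} and Corollary~\ref{sepapp}; without citing or reproving that result, the hard half of clause (2) is missing from your proposal.

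For clause (6), the appeal to Proposition~\ref{bruteforce} fails on a parameter count. To colour the $\theta^+$-sized parameter space onto $\theta$, Proposition~\ref{bruteforce} needs a first coordinate of size $(\theta^+)^\mu$ with $\theta\le\mu$, hence of size at least $2^\theta$, and $2^\theta\le\nu$ does not follow from $\theta^{++}<\nu$ (e.g.\ $\theta=\omega$ with $2^{\aleph_0}$ large and $\nu=\aleph_5$; the remaining hypotheses of (6) can still be arranged, say with $\cf(\kappa)=\mathfrak b_\nu$ using Proposition~\ref{propD}(2)). Note that the whole point of clause (6) is to cover singular $\theta$, for which no narrow projection onto $\theta$ is available in ZFC; the paper's fix is to run clause (4) with the regular cardinal $\theta^+$ in place of $\theta$ --- legitimate since $(\theta^+)^+=\theta^{++}<\nu$ --- obtaining $\onto^+(\{\nu\},\mathcal J^\kappa_{\nu^+},\theta^+)$, and then to compose with any surjection from $\theta^+$ onto $\theta$, which preserves the $\onto^+$ property. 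Your clause (6) argument should be replaced by this.
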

\begin{proof} (1) By Theorem~\ref{ek} below.

(2) By Corollary~\ref{sepapp} below.

(3) By Corollary~\ref{kurepacor} below.

(4) By Lemma~\ref{ad1} below, using $\mu:=\nu^+$, we get $\ubd^+(\{\nu\},\mathcal J^\kappa_{\nu^+},\theta^+)$.
So, by Lemma~\ref{ubdtoonto}(2), it suffices to prove that $\proj(\nu,\theta^+,\theta,1)$ holds.
Now appeal to Corollary~\ref{regularpairs}(1).

(5) As before, it suffices to prove that $\proj(\nu,\theta^+,\theta,1)$ holds.
Now appeal to Case~(1) of Lemma~\ref{bsicproj}.

(6) By Clause~(4), moreover $\onto^+(\{\nu\},\mathcal J^\kappa_{\nu^+},\theta^+)$ holds.

(7) By Clause~(6) together with Proposition~\ref{propD}(3) and the monotonicity property indicated in Remark~\ref{admonotonicity}.
\end{proof}

In particular, if there exists a Kurepa tree, then $\onto^+(\{\aleph_1\},\mathcal J^{\aleph_2}_{\aleph_2},\aleph_0)$ holds.\footnote{As for getting more than $\aleph_0$ many colours, note that $\onto(\{\aleph_1\},J^{\bd}[\aleph_2],\aleph_1)$ implies $\mathfrak b_{\aleph_1}=\aleph_2$ and $\onto(\{\aleph_1\},J^{\bd}[\aleph_2],\aleph_2)$ (See Corollary~\ref{lemma712}(3)  and \cite[Lemma~6.15(3)]{paper47}, respectively).}

\begin{cor} Suppose that $\theta<\nu<\kappa$ are infinite regular cardinals.
\begin{enumerate}[(1)]
\item If $\kappa\in\ad(\nu^+,\nu)$, then $\ubd^+(\{\nu\},\mathcal J^\kappa_{\nu^+},\theta)$ holds;
\item If $\kappa\in \ad(\nu,\theta)$ with $\theta$ regular, then $\ubd^+(\self, \mathcal J^\kappa_\nu, \theta)$ holds.
\end{enumerate}
\end{cor}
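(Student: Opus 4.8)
The plan is to read both clauses off the $\ad$-to-colouring machinery of this section, matching parameters. For clause~(1) the relevant membership is $\kappa\in\ad(\nu^+,\nu)$, so the completeness parameter is $\mu:=\nu^+$ and the $\mathcal Y$-order-type is $\nu$; this is precisely the instance of Lemma~\ref{ad1} invoked in the proof of Corollary~\ref{cor73}(4), which from $\kappa\in\ad(\nu^+,\nu)$ produces, for each regular cardinal $\vartheta<\nu$, a narrow upper-regressive colouring witnessing $\ubd^+(\{\nu\},\mathcal J^\kappa_{\nu^+},\vartheta)$. Taking $\vartheta:=\theta$ (legitimate since $\theta<\nu$ is regular) yields clause~(1) at once. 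For clause~(2) the membership is $\kappa\in\ad(\nu,\theta)$, so now $\mu:=\nu$ and the $\mathcal Y$-order-type equals the target colour count $\theta$; here the conclusion asks for the diagonal family $\self$ rather than a narrow $\{\nu\}$, so it is the self-variant of the same construction that applies, delivering $\ubd^+(\self,\mathcal J^\kappa_\nu,\theta)$.

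To indicate why these constructions work, fix $(\mathcal X,\mathcal Y)$ witnessing the pertinent $\ad$-membership, write $\mathcal X=\langle x_\beta\mid\beta<\kappa\rangle$, and let $\mathcal Y=\langle y_\tau\mid\tau<\vartheta_0\rangle$ be the $\subseteq$-increasing enumeration. By clauses~(3) and~(4) of Definition~\ref{adf}, for each $\tau<\vartheta_0$ the map $m_\tau(\beta):=\min(x_\beta\setminus y_\tau)$ is total, its range $M_\tau$ has size $<\mu$, and the fibres $V_{\tau,\eta}:=\{\beta<\kappa\mid m_\tau(\beta)=\eta\}$ with $\eta\in M_\tau$ partition $\kappa$; this is the $\ad$-incarnation of an Ulam matrix. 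The crucial separation property comes from clause~(2): for distinct $\beta,\beta'$ there is $\tau_0<\vartheta_0$ with $x_\beta\cap x_{\beta'}\subseteq y_{\tau_0}$, whence $x_\beta\setminus y_\tau$ and $x_{\beta'}\setminus y_\tau$ are disjoint and $m_\tau(\beta)\neq m_\tau(\beta')$ for every $\tau\ge\tau_0$. For clause~(2) one colours $c(\eta,\beta)$ by the least $\tau<\theta$ with $x_\eta\cap x_\beta\subseteq y_\tau$, which lands in $\theta=\vartheta_0$ and is naturally suited to the diagonal $\self$ demand; for clause~(1) one instead uses the level $\tau$ as the narrow first coordinate (below $\nu$) and projects $m_\tau(\beta)$ into $\theta$, arranging upper-regressivity by the usual truncation for small $\beta$.

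The positivity requirement is then verified by an Ulam-style pigeonhole calibrated to the ambient completeness. Given a $\mu$-complete $J\supseteq J^{\bd}[\kappa]$ and $B\in J^+$, assume toward a contradiction that no admissible first coordinate sees $\theta$-many positive colours. In the narrow case one argues per level: since $J$ is $\mu$-complete it absorbs the at most $\theta$ non-positive colours of each level $\tau$ into $J$, and intersecting the resulting $J^*$-sets over all levels (there are $\vartheta_0<\mu$ of them) leaves a set $B'\in J^+$ of full size $\kappa$ on which the level-data $\langle m_\tau(\beta)\mid\tau<\vartheta_0\rangle$ is confined to small prescribed sets; playing this against the separation property — these sequences are pairwise eventually different yet coordinatewise trapped — contradicts $|B'|=\kappa$.

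I expect this last step to be the main obstacle: one must balance the completeness degree $\mu$ (which bounds how many colours can be discarded) against both the smallness $|M_\tau|<\mu$ and the almost-disjointness, and in the narrow case one must further ensure that the positive fibres at a good level meet every fibre of the fixed projection onto $\theta$. This is exactly where the choices $\mu=\nu^+$ in clause~(1) and $\mu=\nu$ (matching the completeness of the ideal class $\mathcal J^\kappa_\nu$) in clause~(2) are forced, together with the hypothesis that $\theta$ be regular so that ``order type $\theta$'' and ``size $\theta$'' coincide. Finally, in clause~(2) the witness $\eta$ must be produced inside $B$ rather than below $\nu$; this is accommodated by running the concentration argument symmetrically on the index side, using $J^{\bd}[\kappa]\subseteq J$ to keep all sets in play cofinal in $\kappa$, and is in any case already packaged in the self-form of the underlying lemma, whose monotonicity features were recorded in Remark~\ref{admonotonicity}.
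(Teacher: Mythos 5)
Your proposal is correct and coincides with the paper's own proof: the paper establishes clause (1) by citing Lemma~\ref{ad1} (instantiated with $\mu:=\nu^+$, so the lemma's auxiliary cardinal equals $\nu$ and any $\vartheta\in\reg(\nu)$, in particular $\vartheta:=\theta$, is admissible) and clause (2) by citing Lemma~\ref{lemma65} (instantiated with $\mu:=\nu$, so that $\varkappa=\max\{\nu,\theta^+\}=\nu$), which are exactly the two instantiations you identify. Your supplementary sketches also track the paper's arguments for those lemmas --- in particular the colouring $c(\eta,\beta)=\min\{\tau<\theta\mid x_\eta\cap x_\beta\subseteq y_\tau\}$ is precisely the one used in Lemma~\ref{lemma65} --- so nothing further is needed.
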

\begin{proof} (1) By Lemma~\ref{ad1} below.

(2) By Lemma~\ref{lemma65} below.
\end{proof}

\begin{thm}\label{ek} Suppose that $\nu=\nu^\theta<\cf(\kappa)\le 2^\nu$.

Then $\onto^{++}(\{\nu\},\mathcal J^\kappa_{\nu^+},\theta)$ holds.
\end{thm}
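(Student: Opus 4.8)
The plan is to separate the ideal-theoretic content from a purely combinatorial core. Since $\nu<\cf(\kappa)$, the ideal $J^{\bd}[\kappa]$ is $\nu^+$-complete, so $J^{\bd}[\kappa]\in\mathcal J^\kappa_{\nu^+}$, and as $\{\nu\}\s[\kappa]^{\le\nu}$ and every member of $\mathcal J^\kappa_{\nu^+}$ extends $J^{\bd}[\kappa]$, Proposition~\ref{prop614}(3) reduces the task to producing a single colouring witnessing $\onto^{++}(\{\nu\},J^{\bd}[\kappa],\theta)$. Writing $\lambda:=\cf(\kappa)$, a routine blocking reduction then lets me assume $\kappa=\lambda$ is regular: fixing a strictly increasing continuous cofinal sequence $\langle\kappa_\xi\mid\xi<\lambda\rangle$ with $\kappa_0\ge\nu$ and letting $\pi(\beta)$ be the block-index of $\beta$, any witness $c'$ for $\onto^{++}(\{\nu\},J^{\bd}[\lambda],\theta)$ pulls back to $c(\eta,\beta):=c'(\eta,\pi(\beta))$ (and $0$ when $\pi(\beta)\le\eta$), because $\pi$ maps unbounded subsets of $\kappa$ to unbounded subsets of $\lambda$ and every colour-fibre lifts back through $\pi$ to an unbounded set. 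So it suffices to treat the case $\nu<\lambda=\cf(\lambda)\le2^\nu$ with $\nu=\nu^\theta$ (which already forces $\theta<\nu$).

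The heart of the matter is to produce a \emph{narrow} projection, i.e.\ a witness to $\proj(\nu,\lambda,\theta,\theta)$ using only $\nu<\lambda$ rows; this is exactly where the two cardinal hypotheses are spent. Using $\nu=\nu^\theta$, I would build a Hausdorff-style independent family. Let $D$ be the set of pairs $(s,\Phi)$ where $s$ is a subset of $\nu$ of size $\le\theta$ (a finite subset, when $\theta$ is finite) and $\Phi$ is a partial function from ${}^s\theta$ to $\theta$ with $|\dom(\Phi)|\le\theta$; then $|D|\le\nu^\theta\cdot2^\theta=\nu$, so I identify $D$ with $\nu$. Since $\lambda\le2^\nu$, fix an injection $\beta\mapsto X_\beta$ of $\lambda$ into ${}^\nu\theta$ and set $p((s,\Phi),\beta):=\Phi(X_\beta\restriction s)$ if $X_\beta\restriction s\in\dom(\Phi)$, and $0$ otherwise. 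The key separation property is: for every $I\in[\lambda]^{\le\theta}$ and every $t:I\to\theta$ there is a row $(s,\Phi)$ with $p((s,\Phi),\beta)=t(\beta)$ for all $\beta\in I$ — choose $s$ separating the distinct functions $\langle X_\beta\mid\beta\in I\rangle$ (so $\beta\mapsto X_\beta\restriction s$ is injective), and let $\Phi$ send each $X_\beta\restriction s$ to $t(\beta)$. Now given $\le\theta$ sets $B_j\in[\lambda]^\lambda$, I would select \emph{distinct} witnesses $\beta^j_\tau\in B_j$ for each pair $(j,\tau)$ (possible since each $B_j$ has size $\lambda>\theta$) and realise the target $\beta^j_\tau\mapsto\tau$ by a single row $\eta$; then $p[\{\eta\}\times B_j]=\theta$ for every $j$. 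This establishes $\proj(\nu,\lambda,\theta,\theta)$.

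Finally, I would convert this narrow projection into the colouring principle. Setting $c'(\eta,\beta):=p(\eta,\beta)$, suppose toward a contradiction that some sequence $\langle B_\tau\mid\tau<\theta\rangle$ of size-$\lambda$ subsets of $\lambda$ defeats $\onto^{++}(\{\nu\},J^{\bd}[\lambda],\theta)$. Then for each $\eta<\nu$ there are $\tau_\eta<\theta$ and a bound $\gamma_\eta<\lambda$ such that $c'(\eta,\beta)\ne\tau_\eta$ for all $\beta\in B_{\tau_\eta}$ above $\gamma_\eta$. Here the regularity of $\lambda$ and $\nu<\lambda$ are used: $\gamma^*:=\sup(\{\gamma_\eta\mid\eta<\nu\}\cup\{\nu\})<\lambda$, so each $B'_\tau:=B_\tau\setminus\gamma^*$ is still in $[\lambda]^\lambda$. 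Applying $\proj(\nu,\lambda,\theta,\theta)$ to $\langle B'_\tau\mid\tau<\theta\rangle$ yields a row $\eta^*$ with $p[\{\eta^*\}\times B'_\tau]=\theta$ for every $\tau$; taking $\tau:=\tau_{\eta^*}$ gives some $\beta\in B'_\tau$ with $c'(\eta^*,\beta)=\tau$, while $\beta\ge\gamma^*\ge\gamma_{\eta^*}$ forces $c'(\eta^*,\beta)\ne\tau$, a contradiction. Running the two reductions backwards then completes the proof.

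The main obstacle is the second step: constructing a projection whose row-index stays below $\nu$ while the column length $\lambda$ may be as large as $2^\nu$. All the weight falls on designing the index set $D$ so that $|D|=\nu$ — which is precisely what $\nu=\nu^\theta$ buys — while retaining enough independence to realise arbitrary targets on any $\le\theta$ prescribed columns; the passage from $\proj$ to $\onto^{++}$ and the reductions surrounding it are comparatively routine bookkeeping.
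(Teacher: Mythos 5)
Your proof is correct for infinite $\theta$, and its combinatorial engine coincides with the paper's. The two opening reductions are exactly those of the paper: Proposition~\ref{prop614}(3) brings the task down to $\onto^{++}(\{\nu\},J^{\bd}[\kappa],\theta)$, and your blocking argument to $\lambda:=\cf(\kappa)$ is what the paper gets by citing \cite[Proposition~6.1]{paper47}. Moreover, your family indexed by pairs $(s,\Phi)$ is precisely the Engelking--Kar{\l}owicz family that the paper invokes as a black box, namely a sequence $\langle g_\eta\mid\eta<\nu\rangle$ of functions in ${}^{\lambda}\theta$ such that every partial function into $\theta$ with domain of size $\le\theta$ extends to some $g_\eta$; you simply re-prove that classical theorem inline. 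Where you genuinely diverge is in the verification. The paper argues positively: for each $\epsilon<\lambda$ it fixes a system of distinct representatives $f_\epsilon\in\prod_{\tau<\theta}B_\tau\setminus\epsilon$, extends $f_\epsilon^{-1}$ to some $g_{\eta_\epsilon}$, and uses $\nu<\cf(\lambda)$ to pigeonhole the map $\epsilon\mapsto\eta_\epsilon$, so that a single row succeeds above every $\epsilon$ in a cofinal set. You instead isolate the narrow projection $\proj(\nu,\cf(\kappa),\theta,\theta)$ of Definition~\ref{projdefn} and convert it to $\onto^{++}$ contrapositively, using $\nu<\cf(\lambda)$ to bound the $\nu$-many failure witnesses $\gamma_\eta$ and applying the projection above their supremum. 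These are dual uses of the same regularity hypothesis; your packaging has the merit of producing $\proj(\nu,\cf(\kappa),\theta,\theta)$ as a quotable intermediate (a converse direction to Remark~\ref{rmk313}) and of being self-contained, at the cost of requiring one row to serve $\theta\cdot\theta$ prescribed witnesses simultaneously rather than $\theta$.

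That last point is the one place needing repair. The theorem is also asserted for finite $\theta$ (where the hypothesis $\nu=\nu^\theta$ is automatic), and there your cap $|\dom(\Phi)|\le\theta$ is too small: your projection argument must realise a target on the $\theta\cdot\theta$-many pairwise distinct witnesses $\beta^j_\tau$, and $\theta\cdot\theta>\theta$ when $\theta$ is finite. The fix costs nothing---for finite $\theta$ let $\Phi$ range over all (automatically finite) partial functions on ${}^s\theta$, which keeps $|D|=\nu$---but as written the finite case does not go through, whereas the paper's representative-based verification only ever needs partial functions of size $\theta$, so it covers finite $\theta$ without change.
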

\begin{proof} By Proposition~\ref{prop614}(3), it suffices to prove that $\onto^{++}(\{\nu\},J^{\bd}[\kappa],\theta)$ holds.
Then, by the same trivial proof of Proposition 6.1 of \cite{paper47}, it suffices to prove that $\onto^{++}(\{\nu\},J^{\bd}[\varkappa],\theta)$ holds for $\varkappa:=\cf(\kappa)$.
Now, by the Engelking-Karlowicz theorem, we may fix a sequence of functions $\langle g_\eta\mid \eta<\nu\rangle$ such that:
\begin{itemize}
\item for every $\eta<\nu$, $g_\eta\in {}^\varkappa \theta$;
\item for every $X \in[\varkappa]^{\theta}$ and for every function $g:X\rightarrow\theta$, for some $\eta<\nu$, $g \s g_\eta$.
\end{itemize}

Let $c:[\varkappa]^2 \rightarrow \theta$ be any colouring that satisfies $c(\eta,\beta)=g_\eta(\beta)$ for all $\eta<\nu\le\beta<\varkappa$.
To see that $c$ witnesses $\onto^{++}(\{\nu\},J^{\bd}[\varkappa],\theta)$,
let $\langle B_\tau \mid \tau< \theta\rangle$ be any sequence of cofinal subsets of $\varkappa$.
For every $\epsilon<\varkappa$, since $\theta<\nu<\cf(\kappa)=\varkappa$, we may
fix an injection $f_\epsilon\in\prod_{\tau<\theta}B_\tau\setminus\epsilon$,
and then we find $\eta_\epsilon<\nu$ such that $f_\epsilon^{-1}\s g_{\eta_\epsilon}$.
As $\nu<\cf(\varkappa)$, find $\eta<\nu$ for which $\sup\{\epsilon<\varkappa\mid \eta_\epsilon=\eta\}=\varkappa$.
A moment's reflection makes it clear that $\sup\{ \beta\in B_\tau\mid c(\eta,\beta)=\tau\}=\varkappa$ for every $\tau<\theta$.
\end{proof}

\begin{cor}\label{alpehbeth} For every infinite cardinal $\theta$, $\onto^{++}(\{2^\theta\},J^{\bd}[2^{2^{\theta}}],\theta)$ holds.\qed
\end{cor}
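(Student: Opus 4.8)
The plan is to invoke Theorem~\ref{ek} with the extremal parameters $\nu := 2^\theta$ and $\kappa := 2^{2^\theta}$, so that $\kappa = 2^\nu$. The first task is to confirm that this pair satisfies the hypotheses of that theorem. The equality $\nu = \nu^\theta$ is the routine computation $(2^\theta)^\theta = 2^{\theta\cdot\theta} = 2^\theta$, using that $\theta$ is infinite. For the chain $\nu < \cf(\kappa) \le 2^\nu$, the right-hand inequality is immediate since $\cf(\kappa) \le \kappa = 2^\nu$, while the left-hand inequality $\cf(2^\nu) > \nu$ is precisely König's theorem.

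With the hypotheses verified, Theorem~\ref{ek} produces a single colouring witnessing $\onto^{++}(\{2^\theta\},\mathcal J^\kappa_{\nu^+},\theta)$. By Convention~\ref{colouringconventions}(1), this means the colouring simultaneously witnesses $\onto^{++}(\{2^\theta\},J,\theta)$ for \emph{every} $J\in\mathcal J^\kappa_{\nu^+}$. It therefore remains only to observe that the target ideal $J^{\bd}[\kappa]$ itself belongs to $\mathcal J^\kappa_{\nu^+}$. This is clear: $J^{\bd}[\kappa]$ trivially extends itself, and it is $\cf(\kappa)$-complete; since $\cf(\kappa) > \nu$ by the König inequality noted above, it is in particular $\nu^+$-complete. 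Restricting the simultaneous witness to the single ideal $J := J^{\bd}[\kappa]$ then yields $\onto^{++}(\{2^\theta\},J^{\bd}[2^{2^\theta}],\theta)$, which is exactly the assertion.

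There is no genuine obstacle here beyond this bookkeeping, which is why the statement is recorded with a bare \texttt{\textbackslash qed}: all the content resides in Theorem~\ref{ek}, and the corollary is simply its specialization to $\nu = 2^\theta$, $\kappa = 2^\nu$, for which both side conditions degenerate into automatic consequences of König's theorem. The only point meriting a moment's attention is confirming that $J^{\bd}[\kappa]$ has the required completeness degree $\nu^+$, which again follows because König forces $\cf(\kappa)$ strictly above $\nu$.
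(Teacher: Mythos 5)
Your proposal is correct and is exactly the derivation the paper intends by its bare \verb|\qed|: apply Theorem~\ref{ek} with $\nu:=2^\theta$, $\kappa:=2^{2^\theta}=2^\nu$, where $\nu^\theta=(2^\theta)^\theta=2^\theta=\nu$ and K\H{o}nig's theorem gives $\nu<\cf(\kappa)\le 2^\nu$, and then specialize the resulting witness for $\mathcal J^\kappa_{\nu^+}$ to the ideal $J^{\bd}[\kappa]$, which lies in $\mathcal J^\kappa_{\nu^+}$ since it is $\cf(\kappa)$-complete and $\cf(\kappa)\ge\nu^+$. All verifications are accurate; nothing to add.
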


By \cite[Proposition~7.7]{paper47}, if $\aleph_0<\kappa\le 2^{\aleph_0}$,
then $\onto(\{\aleph_0\},J^{\bd}[\kappa],n)$ holds for all $n<\omega$.
Using the ideas of the proof of Theorem~\ref{ek}, this is now improved as follows.

\begin{lemma}\label{ek2} Suppose that $\nu=\nu^\theta<\kappa\le 2^\nu$.

Then $\onto(\{\nu\},J^{\bd}[\kappa],\theta)$ holds iff $\cf(\kappa)\ge\theta$.
\end{lemma}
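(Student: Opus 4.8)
The plan is to prove the two directions separately: the forward (only-if) direction is an immediate cardinality count, while the backward (if) direction is a direct application of the Engelking--Karłowicz theorem, carried out exactly as in the proof of Theorem~\ref{ek} but over $\kappa$ itself rather than over $\cf(\kappa)$.

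For the forward direction I would argue contrapositively. Suppose $\cf(\kappa)<\theta$ and let $c:[\kappa]^2\to\theta$ be an arbitrary colouring. Fix any cofinal $B\s\kappa$ with $|B|=\cf(\kappa)$. Then for every $\eta<\nu$ the set $c[\{\eta\}\circledast B]$ has size at most $|B|=\cf(\kappa)<\theta$, so it cannot equal $\theta$. As $c$ was arbitrary, $\onto(\{\nu\},J^{\bd}[\kappa],\theta)$ fails. This is the crux: once $\cf(\kappa)<\theta$, there are cofinal sets too small to carry all $\theta$ colours.

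For the backward direction, assume $\cf(\kappa)\ge\theta$. Since $\nu=\nu^\theta$ and $\kappa\le 2^\nu$, the Engelking--Karłowicz theorem furnishes a family $\langle g_\eta\mid\eta<\nu\rangle$ of functions $g_\eta\in{}^\kappa\theta$ such that every $g:X\to\theta$ with $X\in[\kappa]^\theta$ is extended by some $g_\eta$. I would then define $c:[\kappa]^2\to\theta$ by setting $c(\eta,\beta):=g_\eta(\beta)$ whenever $\eta<\nu\le\beta<\kappa$, and $c(\eta,\beta):=0$ otherwise. To verify, let $B\s\kappa$ be cofinal. As $\nu<\kappa$, the set $B\setminus(\nu+1)$ is still cofinal in $\kappa$, hence of size $\ge\cf(\kappa)\ge\theta$; so I may pick $X\in[B\setminus(\nu+1)]^\theta$ together with a surjection $g:X\to\theta$. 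Let $\eta<\nu$ be such that $g\s g_\eta$. Then for every $\beta\in X$ we have $\eta<\nu<\beta$, whence $c(\eta,\beta)=g_\eta(\beta)=g(\beta)$; since $g$ maps onto $\theta$, this gives $c[\{\eta\}\circledast B]\supseteq g[X]=\theta$, as required.

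This unified argument covers all admissible cofinalities at once, and in particular subsumes the case $\nu<\cf(\kappa)$ already treated by Theorem~\ref{ek}. The only real subtlety --- and the place where the hypotheses $\nu=\nu^\theta$ and $\kappa\le 2^\nu$ are spent --- is the invocation of Engelking--Karłowicz to obtain a $\theta$-covering family of exactly $\nu$ functions on a domain of size $\kappa$, so that the witnessing index $\eta$ lands below $\nu$; everything else is routine bookkeeping. The lemma thereby isolates $\cf(\kappa)\ge\theta$ as the exact threshold, the forward direction showing its necessity and the covering family showing its sufficiency.
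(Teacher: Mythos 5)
Your proposal is correct and follows essentially the same route as the paper: the forward direction via a cofinal set of size $\cf(\kappa)$, which cannot support $\theta$ many colours when $\cf(\kappa)<\theta$, and the backward direction via an Engelking--Kar{\l}owicz family $\langle g_\eta\mid\eta<\nu\rangle$ of functions in ${}^\kappa\theta$, colouring $c(\eta,\beta):=g_\eta(\beta)$ for $\eta<\nu\le\beta$ and extracting a $\theta$-sized subset of $B\setminus\nu$ on which a prescribed surjection onto $\theta$ is realized. The only difference is cosmetic (surjection versus bijection on the chosen $X$), so there is nothing to add.
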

\begin{proof} The forward implication follows from the fact that $J^{\bd}[\kappa]$ contains a set of size $\cf(\kappa)$.
For the backward implication, fix a sequence of functions $\langle g_\eta\mid \eta<\nu\rangle$ such that:
\begin{itemize}
\item for every $\eta<\nu$, $g_\eta\in {}^\kappa \theta$;
\item for every $X \in[\kappa]^{\theta}$ and for every function $g:X\rightarrow\theta$, for some $\eta<\nu$, $g \s g_\eta$.
\end{itemize}

Let $c:[\kappa]^2 \rightarrow \theta$ be any colouring that satisfies $c(\eta,\beta)=g_\eta(\beta)$ for all $\eta<\nu\le\beta<\kappa$.
Now, let $B$ be any cofinal subset of $\kappa$.
Fix $X\in[B\setminus\nu]^\theta$ and some bijection $g:X\leftrightarrow\theta$.
Find $\eta<\nu$ such that $g \s g_\eta$. Then $c[\{\eta\}\circledast B]=\theta$.
\end{proof}

\begin{defn}[{\cite[Definition~1.9]{Sh:775}}] $\sep(\nu,\mu,\lambda,\theta,\varkappa)$ asserts the existence of a sequence $\langle g_\eta\mid \eta<\nu\rangle$
of functions from ${}^\mu\lambda$ to $\theta$, such that, for every function $g\in{}^\nu\theta$, the set
$$\mathcal F(g):=\{ f\in{}^\mu\lambda\mid \forall \eta<\nu\,(g_\eta(f)\neq g(\eta))\}$$ has size less than $\varkappa$.
\end{defn}

Sufficient conditions for $\sep(\ldots)$ to hold may be found at \cite[Claim~2.6]{Sh:898}.

\begin{lemma} \label{seplemma} Suppose that $\theta\le\nu<\cf(\kappa)\le\lambda^\mu$.

If $\sep(\nu,\mu,\lambda,\theta,\cf(\kappa))$ holds, then so does $\onto^+(\{\nu\},\mathcal J^\kappa_{\nu^+},\theta)$.
\end{lemma}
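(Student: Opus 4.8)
The plan is to reduce, by means of the transfer results already at hand, to a single explicit construction driven by the sequence witnessing $\sep$. First, since $\theta\le\nu<\cf(\kappa)$, every union of $\nu$-many bounded subsets of $\kappa$ is bounded, so $J^{\bd}[\kappa]$ is $\nu^+$-complete and hence belongs to $\mathcal J^\kappa_{\nu^+}$; moreover the collection $\{I\in\mathcal J^\kappa_{\nu^+}\mid I\supseteq J^{\bd}[\kappa]\}$ is all of $\mathcal J^\kappa_{\nu^+}$. Thus, by Proposition~\ref{prop614}(2) applied with $\mathcal A:=\{\nu\}\s[\kappa]^{\le\nu}$ and $J:=J^{\bd}[\kappa]$, it suffices to establish the narrow principle $\onto(\{\nu\},J^{\bd}[\kappa],\theta)$. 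Next, exactly as in the proof of Theorem~\ref{ek}, a straightforward reduction (the trivial argument behind Proposition~6.1 of \cite{paper47}, using an order-preserving projection of $\kappa$ onto $\varkappa$ and noting that $\nu<\varkappa$ so the lower indices are preserved) shows that it suffices to prove $\onto(\{\nu\},J^{\bd}[\varkappa],\theta)$ for $\varkappa:=\cf(\kappa)$, where we keep in mind that $\nu<\varkappa\le\lambda^\mu$.

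To construct a colouring on $\varkappa$, fix a sequence $\langle g_\eta\mid\eta<\nu\rangle$ of functions from ${}^\mu\lambda$ to $\theta$ witnessing $\sep(\nu,\mu,\lambda,\theta,\cf(\kappa))$. Since $\varkappa\le\lambda^\mu=|{}^\mu\lambda|$, fix an injection $\beta\mapsto f_\beta$ from $\varkappa$ into ${}^\mu\lambda$. Then define $c:[\varkappa]^2\rightarrow\theta$ by setting $c(\eta,\beta):=g_\eta(f_\beta)$ whenever $\eta<\nu\le\beta<\varkappa$, and $c(\eta,\beta):=0$ otherwise.

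To verify that $c$ is as sought, let $B\s\varkappa$ be cofinal. As $\varkappa=\cf(\kappa)$ is regular and $\nu<\varkappa$, the set $B\setminus\nu$ is cofinal in $\varkappa$, so that $F:=\{f_\beta\mid\beta\in B\setminus\nu\}$ has size $\varkappa$ by the injectivity of $\beta\mapsto f_\beta$. I claim there is $\eta<\nu$ with $g_\eta[F]=\theta$; granting this, for every $\tau<\theta$ we may pick $\beta\in B\setminus\nu$ with $g_\eta(f_\beta)=\tau$, and since $\eta<\nu\le\beta$ we get $c(\eta,\beta)=\tau$, whence $c[\{\eta\}\circledast B]=\theta$, as required. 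Towards a contradiction, suppose that for every $\eta<\nu$ there is a colour $\tau_\eta<\theta$ with $\tau_\eta\notin g_\eta[F]$. Define $g\in{}^\nu\theta$ by $g(\eta):=\tau_\eta$. Then for every $\beta\in B\setminus\nu$ and every $\eta<\nu$ we have $g_\eta(f_\beta)\neq\tau_\eta=g(\eta)$, so $f_\beta\in\mathcal F(g)$; hence $F\s\mathcal F(g)$ and $|\mathcal F(g)|\ge\varkappa=\cf(\kappa)$, contradicting the defining property of the $\sep$-sequence.

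The computations here are routine; the one genuine idea is the contrapositive reading of $\sep$, by which the failure of surjectivity of each individual row $g_\eta$ over $F$ assembles into a single function $g\in{}^\nu\theta$ whose exceptional set $\mathcal F(g)$ is then forced to have size $\ge\cf(\kappa)$. The only mildly delicate points are the membership $J^{\bd}[\kappa]\in\mathcal J^\kappa_{\nu^+}$ (which rests on $\nu<\cf(\kappa)$) needed to invoke Proposition~\ref{prop614}(2), and the passage from $\varkappa$ to $\kappa$, handled uniformly as in Theorem~\ref{ek}; I expect no serious obstacle beyond keeping the index bookkeeping straight.
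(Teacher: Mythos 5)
Your proof is correct and follows essentially the same route as the paper's: the same reduction via Proposition~\ref{prop614}(2) and the cofinality-transfer argument from Theorem~\ref{ek}, the same colouring $c(\eta,\beta):=g_\eta(f_\beta)$ built from an injective enumeration of $\varkappa$-many elements of ${}^\mu\lambda$, and the same use of $\sep$. The only (immaterial) difference is that you phrase the final step contrapositively, showing $F\s\mathcal F(g)$ forces $|\mathcal F(g)|\ge\cf(\kappa)$, whereas the paper picks a single $\beta\in B\setminus\nu$ with $f_\beta\notin\mathcal F(g)$ and derives the contradiction there.
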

\begin{proof} Suppose that $\langle g_\eta\mid \eta<\nu\rangle$ is a sequence witnessing $\sep(\nu,\mu,\lambda,\theta,\varkappa)$
for $\varkappa:=\cf(\kappa)$.
By the same argument from the beginning of the proof of Theorem~\ref{ek},
in order to prove $\onto^+(\{\nu\},\mathcal J^\kappa_{\nu^+},\theta)$,
it suffices to prove $\onto(\{\nu\},J^{\bd}[\varkappa],\theta)$.
As $\varkappa\le\lambda^\mu$, fix an injective sequence $\langle f_\beta\mid \beta<\varkappa\rangle$ of functions from $\mu$ to $\lambda$.
Pick a function $c:[\varkappa]^2\rightarrow\theta$ that satisfies $c(\eta,\beta):=g_\eta(f_\beta)$ for all $\eta<\nu\le\beta<\varkappa$.

Towards a contradiction, suppose that $c$ fails to witness $\onto(\{\nu\},J^{\bd}[\varkappa],\theta)$
and pick a counterexample $B\in[\varkappa]^\varkappa$.
It follows that we may find a function $g:\nu\rightarrow\theta$ such that,
for every $\eta<\nu$, $g(\eta)\notin c[\{\eta\}\circledast B]$.
As $|\mathcal F(g)|<\varkappa=|B\setminus\nu|$,
we may pick $\beta\in B\setminus(\nu\cup\mathcal F(g))$.
As $\beta\notin\mathcal F(g)$, we may fix some $\eta<\nu$ such that $g_\eta(f_\beta)=g(\eta)$.
Altogether, $\eta<\nu\le\beta<\kappa$ and $c(\eta,\beta)=g_\eta(f_\beta)=g(\eta)$, contradicting the choice of $g$.
\end{proof}

Compared to Theorem~\ref{ek}, in the next result $\nu$ could have cofinality less than $\theta$,
and we settle for getting $\onto^+$ instead of $\onto^{++}$.

\begin{cor}\label{sepapp} Suppose $\theta<\nu<\cf(\kappa)\le2^\nu$ and $\nu$ is a strong limit, then
$\onto^+(\{\nu\},\mathcal J^\kappa_{\nu^+},\theta)$ holds.
\end{cor}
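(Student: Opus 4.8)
The plan is to derive the statement directly from Lemma~\ref{seplemma}, whose role is precisely to reduce such an $\onto^+$ conclusion to an instance of the separation principle. Concretely, Lemma~\ref{seplemma} says that $\onto^+(\{\nu\},\mathcal J^\kappa_{\nu^+},\theta)$ follows as soon as we produce cardinals $\mu,\lambda$ with $\cf(\kappa)\le\lambda^\mu$ for which $\sep(\nu,\mu,\lambda,\theta,\cf(\kappa))$ holds, provided $\theta\le\nu<\cf(\kappa)$. The hypotheses $\theta<\nu<\cf(\kappa)\le2^\nu$ already give the chain $\theta\le\nu<\cf(\kappa)$, so the first thing I would do is fix the parameters. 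The natural uniform choice is $\lambda:=2$ and $\mu:=\nu$, so that $\lambda^\mu=2^\nu$ and the size requirement $\cf(\kappa)\le\lambda^\mu$ is met verbatim from $\cf(\kappa)\le2^\nu$. (When $\nu$ is singular one may instead take $\mu:=\cf(\nu)$ and $\lambda:=\nu$, since for a strong limit $\nu$ one has $\nu^{\cf(\nu)}=\prod_{i}2^{\nu_i}=2^\nu$ for any increasing cofinal $\langle\nu_i\rangle$; but $\lambda=2,\mu=\nu$ handles the regular and singular cases at once.)

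The heart of the argument is then establishing the instance $\sep(\nu,\nu,2,\theta,\cf(\kappa))$, and this is exactly the step that uses strong-limitness. Here I would appeal to the sufficient conditions recorded in \cite[Claim~2.6]{Sh:898}: for a strong limit $\nu$ these are met, the operative cardinal-arithmetic feature being $2^{<\nu}=\nu$, which makes the index set $\nu$ rich enough to support a sequence $\langle g_\eta\mid\eta<\nu\rangle$ of maps ${}^\nu 2\to\theta$ separating every target $g\in{}^\nu\theta$ in the sense of the definition. It is worth stressing why this route is needed rather than the one in Theorem~\ref{ek}: there the separating family came from the Engelking--Karlowicz theorem under the hypothesis $\nu=\nu^\theta$, which for a strong limit $\nu$ fails precisely when $\cf(\nu)\le\theta$; the separation principle $\sep(\ldots)$, by contrast, remains available for all strong limit $\nu$, which is what lets the present corollary drop the cofinality restriction (at the cost of concluding only $\onto^+$, not $\onto^{++}$). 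Once $\sep(\nu,\nu,2,\theta,\cf(\kappa))$ is in hand, Lemma~\ref{seplemma} applies immediately and finishes the proof.

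The main obstacle I anticipate is confirming that the instance of $\sep$ furnished by \cite[Claim~2.6]{Sh:898} really carries the bound $|\mathcal F(g)|<\cf(\kappa)$ for the given value of $\cf(\kappa)$, including the extremal case $\cf(\kappa)=\nu^+$, where one must know that for every $g\in{}^\nu\theta$ all but at most $\nu$ of the functions $f\in{}^\nu 2$ fail to lie in $\mathcal F(g)$. This co-$\nu$ covering is the genuinely nontrivial content; a naive choice of the $g_\eta$ makes $\mathcal F(g)$ as large as $2^\nu$, so the cleverness of the construction (a tree/diagonalization on ${}^{<\nu}2$, exploiting $2^{<\nu}=\nu$) is essential. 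Verifying that the cardinal hypotheses of the cited claim specialize, for strong limit $\nu$, to exactly the numerology of our parameters $(\nu,\nu,2,\theta,\cf(\kappa))$ is the point that would require care; everything else is the mechanical invocation of Lemma~\ref{seplemma}.
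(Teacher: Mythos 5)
Your overall skeleton matches the paper's proof: reduce the corollary to Lemma~\ref{seplemma} and feed that lemma an instance of $\sep(\ldots)$ extracted from \cite[Claim~2.6]{Sh:898} using strong-limitness. The gap is in the precise instance you invoke. You ask Claim~2.6 to deliver $\sep(\nu,\nu,2,\theta,\cf(\kappa))$ for the \emph{given} $\theta$, but what the claim (as cited in the paper, part~(d)) actually provides is $\sep(\nu,\nu,\theta,\theta,(2^\theta)^+)$ for strong limit $\nu$ and $\theta<\nu$ \emph{only under the additional restriction} $\theta\neq\cf(\nu)$. Your unqualified invocation therefore breaks exactly when $\theta=\cf(\nu)$ --- for instance $\theta=\aleph_0$ and $\nu=\beth_\omega$ --- which is a case the corollary must cover, and indeed is precisely the case left out by Theorem~\ref{ek} (as you yourself observe, $\nu=\nu^\theta$ fails for strong limit $\nu$ once $\cf(\nu)\le\theta$). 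So the step you flagged as ``requiring care'' is not a routine verification; as stated, it fails.

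The paper's workaround is to never request separation at the given $\theta$: since $\nu$ is a strong limit, $(2^{\theta'})^+\le\nu$ for all $\theta'<\nu$, so $\sep(\nu,\nu,\theta',\theta',\nu)$ holds for \emph{cofinally many} $\theta'<\nu$ (one only needs to avoid the single value $\cf(\nu)$). Picking such a $\theta'$ above $\theta$, one applies Lemma~\ref{seplemma} with $(\mu,\lambda):=(\nu,\theta')$ --- the size hypothesis holds because $(\theta')^\nu=2^\nu\ge\cf(\kappa)$, and the fifth parameter is fine because $\nu<\cf(\kappa)$ --- obtaining $\onto^+(\{\nu\},\mathcal J^\kappa_{\nu^+},\theta')$, and then descends from $\theta'$ colours to $\theta$ colours by composing the colouring with a surjection $\theta'\rightarrow\theta$ (downward monotonicity of $\onto^+$ in the number of colours). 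This colour-merging step is the missing idea in your proposal. Two smaller points: your choice $\lambda:=2$ is not what the claim delivers, though it is harmless --- one can transfer $\sep(\nu,\nu,\theta',\theta',\cdot)$ to domain ${}^\nu2$ via a bijection ${}^\nu2\leftrightarrow{}^\nu\theta'$, available since $2^\nu=(\theta')^\nu$; and your worry about the extremal case $\cf(\kappa)=\nu^+$ is unfounded, since the claim's bound $|\mathcal F(g)|\le2^{\theta'}<\nu<\cf(\kappa)$ is stronger than what Lemma~\ref{seplemma} needs.
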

\begin{proof} By \cite[Claim~2.6(d)]{Sh:898}, if $\nu$ is a strong limit cardinal, $\theta<\nu$ and $\theta\neq\cf(\nu)$,
then $\sep(\nu,\nu,\theta,\theta,(2^\theta)^+)$ holds.
So, if $\nu$ is a strong limit, then for cofinally many cardinals $\theta<\nu$,
$\sep(\nu,\nu,\theta,\theta,\nu)$ holds.
Now, appeal to Lemma~\ref{seplemma} with $(\mu,\lambda):=(\nu,\theta)$, noting that $\theta^\nu=2^\nu$.
\end{proof}

\begin{prop}\label{propD}
Suppose that $\theta$ is an infinite regular cardinal.
\begin{enumerate}[(1)]
\item If there is a family of $\kappa$ many $\theta$-sized cofinal subsets of some cardinal $\lambda$ whose pairwise intersection has size less than $\theta$,
then $\kappa\in\ad(\lambda^+,\theta)$;
\item If there is a strictly $\s^*$-decreasing $\kappa$-sequence of sets in $[\theta]^\theta$, then $\kappa \in \ad(\theta^+, \theta)$.
In particular, if there is a strictly $<^*$-increasing $\kappa$-sequence of functions in ${}^\theta \theta$, then $\kappa \in \ad(\theta^+, \theta)$. So $\mathfrak b_\theta\in\ad(\theta^+,\theta)$;
\item If $\theta^{<\theta}=\theta$, then $2^\theta\in \ad(\theta^+,\theta)$.
\end{enumerate}
\end{prop}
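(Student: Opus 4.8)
The plan is to prove each of the three clauses by directly exhibiting a pair $(\mathcal X,\mathcal Y)$ witnessing membership in the relevant $\ad(\mu,\theta)$, relying throughout on two cheap observations. First, since $\theta$ is regular, every set of ordinals of size ${<}\theta$ is bounded below $\theta$; so whenever the members of $\mathcal X$ have pairwise intersections of size ${<}\theta$ and we take $\mathcal Y$ to be the chain $\{[0,\alpha)\mid\alpha<\theta\}$ of initial segments of $\theta$, condition~(ii) of Definition~\ref{adf} holds automatically. Second, condition~(iv) is free once $\mu$ is large: in clauses (2) and (3) the set $\{\min(x\setminus y)\mid x\in\mathcal X\}$ sits inside $\theta$, hence has size ${\le}\theta<\theta^+=\mu$, while in clause (1) it sits inside $\lambda$, hence has size ${\le}\lambda<\lambda^+=\mu$. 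Thus in every clause the genuine content is to secure condition~(iii) (no $y\in\mathcal Y$ swallows a member of $\mathcal X$) together with $\otp(\mathcal Y,{\s})=\theta$.

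For clause (2), starting from a strictly $\s^*$-decreasing sequence $\langle a_i\mid i<\kappa\rangle$ in $[\theta]^\theta$, I would set $b_i:=a_i\setminus a_{i+1}$. By strict decrease each $b_i$ has size $\theta$, hence is cofinal in $\theta$ (a bounded set would have size ${<}\theta$); and for $i<j$ one has $b_j\subseteq a_j\subseteq^* a_{i+1}$ while $b_i\cap a_{i+1}=\emptyset$, so $|b_i\cap b_j|\le|a_j\setminus a_{i+1}|<\theta$. Taking $\mathcal X:=\{b_i\mid i<\kappa\}$ (distinct, being $\theta$-almost-disjoint of size $\theta$) and $\mathcal Y:=\{[0,\alpha)\mid\alpha<\theta\}$, conditions (i)--(iv) follow from the two observations plus cofinality of the $b_i$. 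For the ``in particular'', given a strictly $<^*$-increasing $\langle f_i\mid i<\kappa\rangle$ in ${}^\theta\theta$, I would pass to the above-graph regions $a_i:=\{\langle\xi,\eta\rangle\in\theta\times\theta\mid \eta\ge f_i(\xi)\}$, viewed as subsets of $\theta$ via a fixed bijection $\theta\times\theta\leftrightarrow\theta$: for $i<j$ the set $a_j\setminus a_i$ lives in the ${<}\theta$ columns where $f_i>f_j$ and so has size ${<}\theta$ (regularity of $\theta$), whereas $a_i\setminus a_j$ has size $\theta$, so $\langle a_i\rangle$ is strictly $\s^*$-decreasing; now apply the main statement. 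Finally $\mathfrak b_\theta\in\ad(\theta^+,\theta)$ follows since a strictly $<^*$-increasing sequence of length $\mathfrak b_\theta$ exists, built recursively as every shorter family is $<^*$-bounded.

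For clause (3), I would use $\theta^{<\theta}=\theta$ to fix a bijection $e\colon 2^{<\theta}\leftrightarrow\theta$ ordering nodes by length: each level $2^\alpha$ has size $2^{|\alpha|}\le\theta^{|\alpha|}\le\theta$, and $e$ sends the block of nodes of length $\alpha$ to a half-open interval $[\sigma_\alpha,\sigma_{\alpha+1})$, with $\langle\sigma_\alpha\mid\alpha<\theta\rangle$ increasing and cofinal in $\theta$. For each branch $b\in{}^\theta2$ set $x_b:=\{e(b\restriction\alpha)\mid\alpha<\theta\}$. Then $x_b$ has size $\theta$ and meets every block, hence is cofinal in $\theta$; and for $b\ne b'$ with splitting level $\Delta$, $x_b\cap x_{b'}=\{e(b\restriction\gamma)\mid\gamma\le\Delta\}\subseteq[0,\sigma_{\Delta+1})$ is bounded. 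Taking $\mathcal X:=\{x_b\mid b\in{}^\theta2\}$ (of size $2^\theta$) and $\mathcal Y:=\{[0,\alpha)\mid\alpha<\theta\}$ yields $2^\theta\in\ad(\theta^+,\theta)$ by the two observations.

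Clause (1) is where the work lies. Here I would take $\mathcal X$ to be the given family and seek a \emph{rank} $r\colon\lambda\to\theta$ with $r[x]$ cofinal in $\theta$ for every $x\in\mathcal X$; then $\mathcal Y:=\{r^{-1}[\alpha]\mid\alpha<\theta\}$ is a $\s$-chain of order type $\theta$ for which (ii) holds (each pairwise intersection has size ${<}\theta$, hence bounded $r$-image), (iii) holds (cofinality of $r[x]$ forces $x\not\subseteq r^{-1}[\alpha]$), and (iv) is free as noted. When $\cf(\lambda)=\theta$ such an $r$ is immediate: a block-rank along a continuous cofinal sequence $\langle\delta_\alpha\mid\alpha<\theta\rangle$ of $\lambda$ works, since each cofinal $x$ meets cofinally many blocks. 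The hard part will be $\cf(\lambda)<\theta$, where a pairwise intersection, though of size ${<}\theta$, may be cofinal in $\lambda$, so block-ranks no longer separate; worse, a member $x$ may be \emph{entirely shared}, i.e.\ $x\subseteq\bigcup_{x'\ne x}(x\cap x')$, so its cofinal $r$-image cannot come from points private to $x$ and must instead be assembled from the bounded contributions of the small overlaps $x\cap x'$. My plan is to build $r$ by recursion along a filtration of $\lambda$, colouring privately-owned points cofinally and distributing the shared points so that each $x$ receives contributions at cofinally many levels below $\theta$ --- possible because each $x$ is cofinal in $\lambda$ of full size $\theta$ while each overlap it meets is bounded (size ${<}\theta=\cf(\theta)$). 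Checking that this distribution can be carried out simultaneously for all members, including the entirely shared ones, is the crux of the argument.
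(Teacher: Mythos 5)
Your treatment of clause (2) --- including the passage from a $<^*$-increasing sequence of functions to a strictly $\s^*$-decreasing sequence of sets, and the closing remark about $\mathfrak b_\theta$ --- is correct, and it is essentially the paper's own argument: the paper likewise forms the differences $Y_\alpha\setminus Y_{\alpha+1}$ and then invokes clause (1) with $\lambda=\theta$, which is precisely your two ``cheap observations'' applied to subsets of $\theta$.

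Clause (3), as written, relies on an object that need not exist. A bijection $e:{}^{<\theta}2\rightarrow\theta$ sending the nodes of length $\alpha$ onto an interval $[\sigma_\alpha,\sigma_{\alpha+1})$, with $\langle\sigma_\alpha\mid\alpha<\theta\rangle$ increasing and cofinal in $\theta$, forces every level of the tree to have size $<\theta$, since a bounded interval of $\theta$ has size $<\theta$; but $\theta^{<\theta}=\theta$ does not make the levels small. Under CH with $\theta=\omega_1$, level $\omega$ of ${}^{<\theta}2$ already has size $2^{\aleph_0}=\omega_1=\theta$, so no such $e$ exists. The repair is one line and uses only your own observations: take an arbitrary bijection $e:{}^{<\theta}2\leftrightarrow\theta$ (the paper does the same with ${}^{<\theta}\theta$); then each branch-set $x_b$ is a $\theta$-sized, hence unbounded, subset of $\theta$, and $x_b\cap x_{b'}$ has size at most $|\Delta(b,b')|+1<\theta$, hence is bounded, so the block structure was never needed.

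The genuine gap is in clause (1), and you flag it yourself. Your rank-function reformulation is sound --- condition (ii) only requires $r[x\cap x']$ to be bounded in $\theta$, which is automatic since $|x\cap x'|<\theta$ and $\theta$ is regular --- and the block-rank settles the case $\cf(\lambda)=\theta$. But for $\cf(\lambda)<\theta$ you offer only a plan, whose crux (securing unbounded $r$-image for members all of whose points are shared) is never carried out, and this is not a gap that bookkeeping can close: almost-disjointness together with the cardinality estimates you cite provably does not suffice to produce such an $r$. Under $\diamondsuit$, for instance, there is a ladder system $\langle x_\delta\mid\delta\in\acc(\omega_1)\rangle$ --- an almost disjoint family of $\omega$-sized sets with pairwise finite intersections --- such that every stationary subset of $\omega_1$ contains some $x_\delta$; then for any $r:\omega_1\rightarrow\omega$, some preimage $r^{-1}[\{0,\dots,n\}]$ is stationary (the nonstationary ideal being countably complete), and the $x_\delta$ inside it has bounded $r$-image. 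That family is not a family of cofinal subsets of a cardinal, so it does not refute clause (1) itself, but it shows that any completion of your plan must exploit the cofinality of the members in $\lambda$ in an essential way, and it is not clear how. For comparison, the paper's own proof of clause (1) is exactly your easy case: it takes $\mathcal Y$ to consist of the initial segments of $\lambda$ determined by a cofinal set of order-type $\theta$, which verifies condition (ii) only when the pairwise intersections are bounded in $\lambda$ --- automatic when $\cf(\lambda)=\theta$, not when $\cf(\lambda)<\theta$ --- and every application of clause (1) in the paper has $\lambda=\theta$ regular. So you have correctly isolated a case that the paper itself passes over in silence; but as a proof of the clause as stated, your proposal is incomplete exactly where you say it is.
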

\begin{proof}
(1) Given a subfamily $\mathcal X=\{ x_\alpha\mid\alpha<\kappa\}$ of $[\lambda]^{\theta}$ such that $\sup(x_\alpha)=\lambda$ and $|x_\alpha \cap x_\beta|<\theta$
for all $\alpha< \beta< \kappa$,
letting $\mathcal Y$ be a cofinal subset of $\lambda$ of order-type $\theta$,
it is easy to verify that $(\mathcal X,\mathcal Y)$ witnesses that $\kappa \in \ad(\lambda^+, \theta)$.

(2) Let $\langle Y_\alpha \mid \alpha< \kappa\rangle$ be a strictly $\s^*$-decreasing sequence of sets in $[\theta]^\theta$.
For every $\alpha< \kappa$, let $X_\alpha:= Y_\alpha \setminus Y_{\alpha+1}$. Then for every $\alpha< \beta< \kappa$, it is the case that $Y_\beta \s^* Y_{\alpha+1}$, and hence $|X_\alpha \cap X_\beta|< \theta$. It follows that $\langle X_\alpha \mid \alpha< \kappa\rangle$ is an almost disjoint family in $[\theta]^\theta$, so we finish by Clause (1). The rest is clear.

(3) That $\theta^{<\theta}= \theta$ gives rise to an almost disjoint family in $[\theta]^\theta$ of size $2^\theta$ is a standard fact. Namely, let $\langle f_\alpha \mid \alpha< \kappa\rangle$ be an injective sequence in ${}^\theta \theta$, and let $\pi: {}^{<\theta}\theta \leftrightarrow \theta$ be a bijection. Then for every $\alpha< \kappa$, let $X_\alpha:= \{\pi(f_\alpha \restriction \tau)\mid \tau< \theta\}$. It is easy to verify that $\langle X_\alpha \mid \alpha< \kappa\rangle$ is an almost disjoint family in $[\theta]^\theta$.
\end{proof}

\begin{prop}\label{prop63} Suppose that $\lambda^{<\theta}<\lambda^\theta$ with $\theta$ regular.

Then $\lambda^\theta\in\ad((\lambda^{<\theta})^+,\theta)$.
\end{prop}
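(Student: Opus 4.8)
The plan is to build the witnessing pair directly from the tree $T:={}^{<\theta}\lambda$, exploiting that it has exactly $\lambda^\theta$ branches while each of its levels has size at most $\lambda^{<\theta}$. First I would fix an injection $\pi\colon T\to\on$ that is \emph{level-monotone}, meaning $\dom(s)<\dom(t)$ implies $\pi(s)<\pi(t)$; such a $\pi$ is obtained as the collapsing map of the well-ordering of $T$ that compares nodes first by the length of their domain and then by some fixed well-ordering of each level ${}^\alpha\lambda$. For every $f\in{}^\theta\lambda$ set $x_f:=\pi[\{f\restriction\alpha\mid\alpha<\theta\}]$, and for every $\alpha<\theta$ set $y_\alpha:=\pi[{}^{\le\alpha}\lambda]$. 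I would then let $\mathcal X:=\{x_f\mid f\in{}^\theta\lambda\}$ and $\mathcal Y:=\{y_\alpha\mid\alpha<\theta\}$, and claim that $(\mathcal X,\mathcal Y)$ witnesses $\lambda^\theta\in\ad((\lambda^{<\theta})^+,\theta)$.

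The bookkeeping clauses of Definition~\ref{adf} should fall out quickly. The map $f\mapsto x_f$ is injective, so $|\mathcal X|=|{}^\theta\lambda|=\lambda^\theta$, while $\langle y_\alpha\mid\alpha<\theta\rangle$ is a strictly $\s$-increasing chain, so $\otp(\mathcal Y,\s)=\theta$, giving Clause~(i). For Clause~(ii), given distinct $f,f'$ let $\alpha<\theta$ be least with $f(\alpha)\ne f'(\alpha)$; then an equality $f\restriction\beta=f'\restriction\beta$ forces $\beta\le\alpha$, so $x_f\cap x_{f'}\s\pi[{}^{\le\alpha}\lambda]=y_\alpha\in\mathcal Y$. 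For Clause~(iii), since $\theta$ is a limit ordinal we have $\alpha+1<\theta$ for every $\alpha<\theta$, whence $\pi(f\restriction(\alpha+1))\in x_f\setminus y_\alpha$ and the latter is nonempty for every pair $(x_f,y_\alpha)$.

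The crux is Clause~(iv), and this is precisely where the level-monotonicity of $\pi$ earns its keep. Fixing $\alpha<\theta$ and $f\in{}^\theta\lambda$, the set $x_f\setminus y_\alpha$ meets each level $\beta$ with $\alpha<\beta<\theta$ in the single node $f\restriction\beta$; since $\pi$ is level-monotone, the ordinal-least of these images is the one of smallest length, namely $\pi(f\restriction(\alpha+1))$. Consequently $\{\min(x_f\setminus y_\alpha)\mid f\in{}^\theta\lambda\}=\pi[{}^{\alpha+1}\lambda]$ has size $\lambda^{|\alpha+1|}$, and as $|\alpha+1|<\theta$ this is at most $\lambda^{<\theta}$, hence strictly below $\mu=(\lambda^{<\theta})^+$, exactly as Clause~(iv) demands. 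I expect the only real obstacle to be arranging the encoding so that, for each discarded initial segment $y_\alpha$, the minimum of $x_f\setminus y_\alpha$ invariably lands on the first surviving level $\alpha+1$ — which is what ordering $T$ by length first secures. The hypothesis $\lambda^{<\theta}<\lambda^\theta$ is what makes the conclusion substantive, placing the width $(\lambda^{<\theta})^+$ properly below $\lambda^\theta$; in the construction itself $\theta$ is used only as a limit cardinal, via Clause~(iii) and the bound $\lambda^{|\alpha+1|}\le\lambda^{<\theta}$.
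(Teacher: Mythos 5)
Your proof is correct, and the witnessing pair is the same one the paper uses: $\mathcal X$ consists of coded branches of the tree ${}^{<\theta}\lambda$ and $\mathcal Y$ of coded initial segments of it. The difference lies in how Clause~(iv) of Definition~\ref{adf} is discharged. The paper fixes an arbitrary injection $f:{}^{<\theta}\lambda\rightarrow\lambda^{<\theta}$, i.e.\ one whose range lies inside the \emph{cardinal} $\lambda^{<\theta}$; then Clause~(iv) is free, since every element of every $x\in\mathcal X$ is an ordinal below $\lambda^{<\theta}$, so any set of minima trivially has size at most $\lambda^{<\theta}<(\lambda^{<\theta})^+$, with no constraint on how $f$ orders the levels. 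You instead impose level-monotonicity on your injection $\pi$ and use it to pin down $\min(x_f\setminus y_\alpha)$ as $\pi(f\restriction(\alpha+1))$, so that the minima all land in $\pi[{}^{\alpha+1}\lambda]$. This costs a little extra bookkeeping but buys generality: your argument does not use that the codomain is small, only that each level is small, and it is verbatim the proof of the paper's subsequent lemma (the generalization of Proposition~\ref{prop63} to an arbitrary tree $(T,<_T)$ of height $\theta$ with levels of size less than $\mu$ and at least $\kappa$ many branches). One further point in your favour: the paper's displayed $\mathcal Y:=\{f[{}^\tau\lambda]\mid\tau<\theta\}$ consists of images of single levels, which are pairwise disjoint and so cannot form a $\subseteq$-chain of order type $\theta$; your $y_\alpha:=\pi[{}^{\le\alpha}\lambda]$ is the reading that makes Clause~(i) work, matching the $f[T\restriction\tau]$ of the paper's generalization.
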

\begin{proof} Fix an injection $f:{}^{<\theta}\lambda\rightarrow\lambda^{<\theta}$.
Set $\kappa:=\lambda^\theta$.
Let $\langle g_\alpha\mid\alpha<\kappa\rangle$ be an injective list of functions from $\theta$ to $\lambda$.
Put $\mathcal X:=\{ \{ f(g_\alpha\restriction\tau)\mid \tau<\theta\}\mid \alpha<\kappa\}$ and $\mathcal Y:=\{ f[{}^\tau\lambda]\mid \tau<\theta\}$.
Then $(\mathcal X,\mathcal Y)$ witnesses that $\kappa\in\ad((\lambda^{<\theta})^+,\theta)$.
\end{proof}
The preceding argument generalizes as follows.
\begin{lemma}
Suppose that $\theta$ is regular and there is a tree $(T,<_T)$ of height $\theta$ with at least $\kappa$ many $\theta$-branches
such that $|T_\gamma|<\mu$ for every $\gamma<\theta$. Then $\kappa\in \ad(\mu,\theta)$.
\end{lemma}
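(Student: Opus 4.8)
The plan is to mimic the construction of Proposition~\ref{prop63}, replacing the tree ${}^{<\theta}\lambda$ there by the given tree $(T,<_T)$. First I would fix an enumeration $\langle b_\alpha\mid\alpha<\kappa\rangle$ of $\kappa$ distinct $\theta$-branches of $T$; for each $\alpha<\kappa$ and $\gamma<\theta$ write $b_\alpha(\gamma)$ for the unique node of $b_\alpha$ lying on level $T_\gamma$, which exists since a $\theta$-branch, being a chain of order type $\theta$, meets every level below $\theta$. The crucial preparatory step is to fix an injection $f\colon T\to\on$ that is \emph{height-monotone}, in the sense that $s\in T_{\gamma'}$ and $t\in T_\gamma$ with $\gamma'<\gamma$ implies $f(s)<f(t)$. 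This is readily arranged: for each $\gamma<\theta$ fix an injection $f_\gamma\colon T_\gamma\to|T_\gamma|$, and for $t\in T_\gamma$ set $f(t):=\bigl(\sum_{\gamma'<\gamma}|T_{\gamma'}|\bigr)+f_\gamma(t)$, the sum being computed in ordinal arithmetic; the blocks of ordinals assigned to successive levels are then pairwise disjoint and strictly increasing with $\gamma$.

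With $f$ in hand, I would set $x_\alpha:=f[b_\alpha]$ and $\mathcal X:=\{x_\alpha\mid\alpha<\kappa\}$, and for each $\tau<\theta$ set $y_\tau:=f[\bigcup_{\gamma\le\tau}T_\gamma]$ and $\mathcal Y:=\{y_\tau\mid\tau<\theta\}$. Clause~(1) of Definition~\ref{adf} is then immediate: distinct branches differ as sets, so their $f$-images are distinct and $|\mathcal X|=\kappa$, while the $y_\tau$ form a strictly $\s$-increasing chain of length $\theta$, the strictness holding because every level is nonempty (each is met by a branch) and $f$ separates the levels; hence $\otp(\mathcal Y,\s)=\theta$.

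For Clause~(2), given $x_\alpha\neq x_{\alpha'}$, the tree property ensures that $b_\alpha\cap b_{\alpha'}$ is a common initial segment $\{b_\alpha(\gamma)\mid\gamma<\tau\}$, where $\tau<\theta$ is the least level of disagreement; as $f$ is injective, $x_\alpha\cap x_{\alpha'}=f[\{b_\alpha(\gamma)\mid\gamma<\tau\}]\s y_\tau\in\mathcal Y$. For Clauses~(3) and~(4) I would exploit height-monotonicity: along any branch $b_\alpha$ the values $f(b_\alpha(\gamma))$ increase strictly with $\gamma$, and since $f(b_\alpha(\gamma))\in y_\tau$ exactly when $\gamma\le\tau$, we get $x_\alpha\setminus y_\tau=\{f(b_\alpha(\gamma))\mid\gamma>\tau\}$, which is nonempty as $\tau+1<\theta$, and whose minimum is precisely $f(b_\alpha(\tau+1))$, the image of the unique node of $b_\alpha$ on level $T_{\tau+1}$. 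Consequently $\{\min(x\setminus y_\tau)\mid x\in\mathcal X\}\s f[T_{\tau+1}]$, a set of size $|T_{\tau+1}|<\mu$, which establishes Clause~(4).

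The one place requiring care—the main obstacle—is Clause~(4), and it is exactly the height-monotonicity of $f$ that makes it go through: without it, the ordinal minimum $\min(x_\alpha\setminus y_\tau)$ need not be the first node of $b_\alpha$ above level $\tau$, and the bound by $|T_{\tau+1}|$ could fail. Everything else is routine bookkeeping, and specializing to $T={}^{<\theta}\lambda$ with $T_\gamma={}^\gamma\lambda$ recovers Proposition~\ref{prop63}.
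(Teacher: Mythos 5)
Your proposal is correct and is essentially the paper's own proof: the paper likewise fixes a level-respecting injection $f:T\rightarrow\on$, takes $\mathcal X:=\{f[\mathbf b_\alpha]\mid\alpha<\kappa\}$ and $\mathcal Y:=\{f[T\restriction\tau]\mid\tau<\theta\}$, and leaves the verification of Definition~\ref{adf} implicit. You simply spell out that verification (and an explicit block construction of $f$), with your indexing of $\mathcal Y$ by levels ${\le}\tau$ being an immaterial shift of the paper's $T\restriction\tau$.
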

\begin{proof} Fix an injection $f:T\rightarrow\on$ satisfying that for all $\gamma<\delta<\theta$, and
$x\in T_\gamma$, and $y\in T_\delta$, it is the case that $f(x)<f(y)$.
Let $\langle \mathbf b_\alpha\mid\alpha<\kappa\rangle$ be an injective list of $\theta$-branches through the tree.
Set $\mathcal X:=\{ f[\mathbf b_\alpha]\mid \alpha<\kappa\}$ and
$\mathcal Y:=\{ f[T\restriction \tau]\mid \tau<\theta\}$.
Then $(\mathcal X,\mathcal Y)$ witnesses that $\kappa\in\ad(\mu,\theta)$.
\end{proof}

\begin{lemma}\label{ad1} Suppose that $\cf(\kappa)\in\ad(\mu,\theta)$ with $\theta$ regular.
If $\mu=\chi^+$ is a successor cardinal, then let $\nu:=\max\{\chi,\theta\}$. Otherwise, let $\nu:=\max\{\mu,\theta\}$.

If $\nu$ is less than $\kappa$,
then for every $\vartheta\in\reg(\theta)$,
$\ubd^+(\{\nu\},\allowbreak\mathcal J^\kappa_{\nu^+},\vartheta)$ holds.
\end{lemma}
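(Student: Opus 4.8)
The plan is to reach the conclusion through two routine reductions followed by one genuine construction on $\cf(\kappa)$. First I would dispose of a degenerate case: if $\nu\ge\cf(\kappa)$, then $\mathcal J^\kappa_{\nu^+}=\emptyset$, for writing $\kappa$ as an increasing union of $\cf(\kappa)\le\nu$ many bounded sets, any $\nu^+$-complete ideal extending $J^{\bd}[\kappa]$ would have to contain $\kappa$; hence $\ubd^+(\{\nu\},\mathcal J^\kappa_{\nu^+},\vartheta)$ holds vacuously and we may assume $\nu<\cf(\kappa)$. Now note $\vartheta<\theta\le\nu$, so $\vartheta\le\nu$, and $J^{\bd}[\kappa]$ is $\cf(\kappa)$-complete, hence $\nu^+$-complete. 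Since $\{\nu\}\s[\kappa]^{\le\nu}$ and $\{I\in\mathcal J^\kappa_{\nu^+}\mid I\supseteq J^{\bd}[\kappa]\}=\mathcal J^\kappa_{\nu^+}$, Proposition~\ref{prop614}(1) reduces the task to producing a witness to $\ubd(\{\nu\},J^{\bd}[\kappa],\vartheta)$.

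Next, writing $\varkappa:=\cf(\kappa)$ and fixing a continuous increasing cofinal map $\varphi:\varkappa\to\kappa$, I would transport an upper-regressive colouring on $[\varkappa]^2$ to one on $[\kappa]^2$ by assigning to $(\eta,\beta)$ the colour of $(\eta,\bar\beta)$, where $\bar\beta<\varkappa$ is the index with $\beta\in[\varphi(\bar\beta),\varphi(\bar\beta+1))$ (and the value $0$ for $\beta\le\vartheta$, to keep the result upper-regressive). A cofinal $B\s\kappa$ projects to a cofinal $\bar B\s\varkappa$, and a colour occurring cofinally for $\bar B$ occurs cofinally for $B$; this is exactly the trivial argument behind \cite[Proposition~6.1]{paper47}, and it reduces matters to proving $\ubd(\{\nu\},J^{\bd}[\varkappa],\vartheta)$, where now $\eta<\nu<\varkappa$ is a legitimate index.

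For the core, fix a witness $(\mathcal X,\mathcal Y)$ to $\varkappa\in\ad(\mu,\theta)$, write $\mathcal X=\{x_\alpha\mid\alpha<\varkappa\}$, and enumerate $\mathcal Y=\langle y_\xi\mid\xi<\theta\rangle$ increasingly under $\s$. For $\alpha<\varkappa$ and $\xi<\theta$ put $m_{\alpha,\xi}:=\min(x_\alpha\setminus y_\xi)$, which is well defined by Clause~(iii) of Definition~\ref{adf}, and set $M_\xi:=\{m_{\alpha,\xi}\mid\alpha<\varkappa\}$. The profile $\langle m_{\alpha,\xi}\mid\xi<\theta\rangle$ is nondecreasing in $\xi$, and Clause~(ii) makes $\alpha\mapsto\langle m_{\alpha,\xi}\mid\xi<\theta\rangle$ injective: if $x_\alpha\cap x_{\alpha'}\s y_\xi$, then $m_{\alpha,\xi}=m_{\alpha',\xi}$ is impossible, as the common value would lie in $x_\alpha\cap x_{\alpha'}$ yet outside $y_\xi$. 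Thus $\mathcal X$ presents $\varkappa$ many distinct branches through a tree of height $\theta$ whose $\xi$-th level is $M_\xi$, and Clause~(iv) gives $|M_\xi|<\mu\le\nu^+$, hence $|M_\xi|\le\nu$. I would index the colouring by pairs $(\xi,e)$ with $\xi<\theta$ and $e\in M_\xi$; using injections $M_\xi\hookrightarrow\nu$ together with a bijection $\theta\times\nu\leftrightarrow\nu$, each such pair is coded by an ordinal below $\nu$, and this is precisely where the room ``$\eta<\nu$'' beyond $\theta$ is consumed. Fixing also a surjection $\pi:\vartheta\to\vartheta$ whose successor-fibres $\{\sigma<\vartheta\mid\pi(\sigma+1)=\tau\}$ are each cofinal (as in the proof of Lemma~\ref{bsicproj}), I would let $c((\xi,e),\beta)$ read off, through $\pi$, the order type of $M_\xi\cap[e,m_{\beta,\xi})$ when $m_{\beta,\xi}\ge e$, and be $0$ otherwise, patching $c$ to be upper-regressive on small arguments.

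The verification, and the hard part, is to show that every cofinal $B\s\varkappa$ admits a single pair $(\xi,e)$ realising a cofinal-in-$\vartheta$ set of colours cofinally often. After passing to a cofinal $B'\s B$ carrying $\varkappa$ distinct profiles, I would run a tree/pigeonhole argument on the nondecreasing profiles: were it the case that at every level $\xi$ the cofinally-realised values $\{m_{\beta,\xi}\mid\beta\in B'\}$ stayed below some point of $M_\xi$ of small order type, then injectivity of profiles together with $|M_\xi|\le\nu<\varkappa$ would cap $|B'|$ below $\varkappa$. Regularity of $\theta$ and of $\vartheta$ should then yield a level $\xi$ and a threshold $e\in M_\xi$ such that, for cofinally many $\beta\in B'$, the order type of $M_\xi\cap[e,m_{\beta,\xi})$ sweeps out an unbounded subset of an ordinal of cofinality $\vartheta$; the cofinal-fibre property of $\pi$ converts this into each of $\vartheta$ colours occurring cofinally, which for regular $\vartheta$ is exactly the required order type $\vartheta$. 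I expect locating one pair that simultaneously splits cofinally and spreads across $\vartheta$ colours—rather than one pair per colour—to be the crux; the completeness bound $|M_\xi|<\mu$ is what confines the index to $\nu$, and the regularity of $\vartheta$ is what lets ``cofinally many colours, each cofinal'' be identified with order type exactly $\vartheta$.
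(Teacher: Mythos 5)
Your reductions (discarding the case $\nu\ge\cf(\kappa)$, invoking Proposition~\ref{prop614}(1) to pass to $\ubd(\{\nu\},J^{\bd}[\kappa],\vartheta)$, and transferring to $\varkappa:=\cf(\kappa)$) match the paper, and your tree of minima $m_{\alpha,\xi}:=\min(x_\alpha\setminus y_\xi)$ with injections of the level sets $M_\xi$ into $\nu$ is exactly the right combinatorial object. The proof breaks down in the verification. First, the pigeonhole you propose is false: from ``at every level $\xi$ the realised values $\{m_{\beta,\xi}\mid\beta\in B'\}$ stay below a point of $M_\xi$ of small order type'' one cannot conclude $|B'|<\varkappa$. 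The profiles are (essentially) branches of a tree of height $\theta$ whose levels have size at most $\nu$, and such a tree can perfectly well have $\varkappa$ (or $2^\theta$) many branches while the set of values realised at every single level is small; indeed, the existence of exactly such a configuration is the hypothesis $\cf(\kappa)\in\ad(\mu,\theta)$, so no contradiction can come out of counting. Consequently there need not exist any level $\xi$ at which the values of $B$ ``spread'', and your search for a pair $(\xi,e)$ can come up empty.

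Second, even at a level where $\vartheta$ many distinct values occur, your threshold colouring cannot convert them into order type $\vartheta$ many colours. The set $V:=\{m_{\beta,\xi}\mid\beta\in B'\}$ is an arbitrary $\vartheta$-sized subset of $M_\xi$, whose order type inside $M_\xi$ may be anything up to $\nu$; reading off $\otp(M_\xi\cap[e,m_{\beta,\xi}))$ must be truncated to land in $\vartheta$, so it only sees $V\cap[e,e')$ where $e'$ is the $\vartheta$-th element of $M_\xi$ above $e$, and you would need $V$ to be dense in some $\vartheta$-length window of $M_\xi$ — which nothing guarantees. Composing with the surjection $\pi$ does not repair this: the positions of the $m_{\beta,\xi}$ inside $M_\xi$ are rigid, so unlike in Lemma~\ref{bsicproj} there is no club-guessing or domination device forcing them onto successor positions of designated fibres. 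The paper's proof avoids both problems at once: it takes merely a $\vartheta$-sized $B'\s B$, uses Clause (ii) of Definition~\ref{adf} together with the regularity of $\theta$ (and $\vartheta<\theta$) to find one $\tau$ with $x_{\bar\alpha}\cap x_{\bar\beta}\s y_\tau$ for all pairs from $B'$, so that at that single level the minima are pairwise distinct; and then, instead of a threshold $e\in M_\tau$, it encodes into the index $\eta$ an ordinal $\delta\in E^\nu_\vartheta$ and colours by $\otp(C_\delta\cap f_\tau(m_{\beta,\tau}))$, where $\langle C_\delta\mid\delta\in E^\nu_\vartheta\rangle$ is a $\vartheta$-bounded $C$-sequence. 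Given the arbitrary $\vartheta$-sized set $H:=f_\tau[V]\s\nu$, choosing $\delta$ to be the supremum of its first $\vartheta$ elements makes $h\mapsto\otp(C_\delta\cap h)$ unbounded on $H\cap\delta$, hence of image order type $\vartheta$. This adaptive compression by a $C$-sequence, indexed inside $\eta$, is the idea missing from your plan.
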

\begin{proof} To avoid trivialities we can assume that $\cf(\kappa)\ge\nu^+$.
Pick a pair $(\mathcal X,\mathcal Y)$ witnessing that $\cf(\kappa)\in\ad(\mu,\theta)$.
Fix an injective enumeration $\langle x_\alpha \mid \alpha< \cf(\kappa)\rangle$ of $\mathcal X$.
Fix a $\s$-increasing enumeration $\langle y_\tau\mid\tau<\theta\rangle$ of $\mathcal Y$.
For every $\tau< \theta$, by Clauses (iii) and (iv) of Definition~\ref{adf}, we may fix an injection $f_\tau:\{ \min(x\setminus  y_\tau)\mid x\in\mathcal X\}\rightarrow\nu$.

Let $\vartheta\in\reg(\theta)$.  By Proposition~\ref{prop614}(1), it suffices to prove that $\ubd(\{\nu\},\allowbreak J^{\bd}[\kappa],\vartheta)$ holds.
Fix a $\vartheta$-bounded $C$-sequence $\langle C_\delta\mid \delta\in E^\nu_\vartheta\rangle$.
Fix a bijection $\pi:\nu\leftrightarrow\theta\times E^\nu_\vartheta$.
Fix a cofinal subset $U$ of $\kappa$ of order-type $\cf(\kappa)$. For every $\beta<\kappa$, let $\bar\beta:=\otp(\beta\cap U)$.
So $\beta\mapsto\bar\beta$ is a nondecreasing map from $\kappa$ to $\cf(\kappa)$ whose image is cofinal in $\cf(\kappa)$.

Finally, pick an upper-regressive colouring $c:[\kappa]^2\rightarrow\vartheta$ such that for all $\eta<\nu\le\beta<\kappa$,
if $\pi(\eta)=(\tau,\delta)$ and $f_\tau(\min(x_{\bar\beta}\setminus y_\tau))<\delta$,
then $$c(\eta,\beta):=\otp(C_\delta\cap f_\tau(\min(x_{\bar\beta}\setminus y_\tau))).$$

Now, given $B\in (J^{\bd}[\kappa])^+$, fix $B'\in[B\setminus\nu]^\vartheta$
on which the map $\beta\rightarrow\bar\beta$ is injective,
and then use Clause~(ii) of Definition~\ref{adf} to find a large enough $\tau<\theta$
such that for all $\alpha\neq\beta$ from $B'$,
$x_{\bar\alpha}\cap x_{\bar\beta}\s y_\tau$.
By Clause~(iii),
$\beta\mapsto \min(x_{\bar\beta}\setminus y_\tau)$ is a well-defined injection over $B'$.
So $H:=\{f_\tau(\min(x_{\bar\beta}\setminus y_\tau))\mid \beta\in B'\}$ has size $\vartheta$.
Pick $\delta\in E^\nu_\vartheta$ such that $\otp(H\cap\delta)=\vartheta$.
Fix $\eta<\nu$ such that $\pi(\eta)=(\tau,\delta)$.
Evidently, $\vartheta\ge\otp(c[\{\eta\}\circledast B])\ge \otp(c[\{\eta\}\times B'])=\vartheta$.
\end{proof}

\begin{lemma}\label{prop13} Suppose:
\begin{itemize}
\item $\vartheta<\theta\le\nu\le\kappa$;
\item $\theta$ and $\mu$ are regular cardinals;
\item $\varkappa:=\max\{\theta^+,\mu\}$ is less than $\cf(\kappa)$;
\item there is a tree $(T,<_T)$ of height $\theta$ with at least $\cf(\kappa)$ many $\theta$-branches
such that $|T_\gamma|<\mu$ and  $|T_\gamma|^\vartheta\le\nu$ for every $\gamma<\theta$.
\end{itemize}
Then $\onto^{++}(\{\nu\},\allowbreak\mathcal J^\kappa_\varkappa,\vartheta)$  holds.
\end{lemma}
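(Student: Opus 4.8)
The goal is to produce a single colouring $c:[\kappa]^2\to\vartheta$ witnessing $\onto^{++}(\{\nu\},\mathcal J^\kappa_\varkappa,\vartheta)$, and the natural template is Lemma~\ref{ad1}, which the present statement is meant to strengthen in two respects: the conclusion is $\onto^{++}$ rather than $\ubd^+$, and the completeness is improved to $\varkappa=\max\{\theta^+,\mu\}$. The first thing I would record is that, by the preceding lemma on trees, the hypothesis gives $\cf(\kappa)\in\ad(\mu,\theta)$; but since $\nu$ may exceed $\varkappa$ and we want the sharper completeness, I do not expect to quote Lemma~\ref{ad1} as a black box. Instead the plan is a direct construction whose two ingredients are (a) the $\varkappa$-completeness of $J$ to extract \emph{positive} fibres, and (b) the extra counting hypothesis $|T_\gamma|^\vartheta\le\nu$ together with $\theta\le\nu$, which lets the single parameter $\eta<\nu$ encode a whole $\vartheta$-indexed datum at one level of the tree.

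For the construction I would fix an injective enumeration $\langle\mathbf b_\alpha\mid\alpha<\cf(\kappa)\rangle$ of $\cf(\kappa)$-many distinct $\theta$-branches of $T$, a cofinal $U\subseteq\kappa$ of order-type $\cf(\kappa)$ with $\bar\beta:=\otp(\beta\cap U)$, and for each $\gamma<\theta$ a linearization $f_\gamma:T_\gamma\hookrightarrow|T_\gamma|$ (available since $|T_\gamma|\le|T_\gamma|^\vartheta\le\nu$). I would then let $\eta<\nu$ code a pair $(\gamma,\Phi)$ with $\gamma<\theta$ and $\Phi\subseteq|T_\gamma|$ of order-type $\le\vartheta$; the count of such pairs is at most $\theta\cdot|T_\gamma|^\vartheta\le\nu$, which is exactly where $\theta\le\nu$ and $|T_\gamma|^\vartheta\le\nu$ are used. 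The colouring is
$$c(\eta,\beta):=\otp\bigl(\Phi\cap f_\gamma(\mathbf b_{\bar\beta}(\gamma))\bigr),$$
read as $0$ when $\eta$ is not of this form or the value is undefined. Here $\Phi$ plays the role that the club-guessed set $C_\delta$ plays in Lemma~\ref{ad1}, except that it is now \emph{encoded} into $\eta$ rather than guessed, so that its $\vartheta$ ``gaps'' can be aligned with $\vartheta$ prescribed target sets.

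The heart of the verification is a non-concentration claim: for every $J\in\mathcal J^\kappa_\varkappa$ and every $B\in J^+$, the set of branches carrying positive mass of $B$ cannot be too thin. Concretely, since $J\supseteq J^{\bd}[\kappa]$ is $\varkappa$-complete and $\theta<\varkappa$, a union of $\theta$-many (indeed of fewer than $\varkappa$-many) $J$-null sets is $J$-null; running this over the $<\mu\le\varkappa$ nodes of each level shows that $\{\beta\in B:\mathbf b_{\bar\beta}=\mathbf c\}$ is bounded, hence in $J$, for any fixed branch $\mathbf c$, and more generally that the $\bar\beta$-support of $B$ is unbounded (so of size $\cf(\kappa)$) in $\cf(\kappa)$. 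Given $\langle B_\tau\mid\tau<\vartheta\rangle$, I would use $\varkappa$-completeness at a suitable level $\gamma$ to choose, for each $\tau$, a node at which $B_\tau$ has positive mass, then place the cut-points $\Phi$ so that the $\tau$-th gap of $\Phi$ captures $B_\tau$'s chosen node; this yields an $\eta$ for which each $B_\tau$ realizes its intended colour on a $J^+$-set.

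The main obstacle is twofold. First, as the binary-tree case shows, the positive mass of a single $B_\tau$ need not spread over $\vartheta$-many nodes at any one level, so one cannot simply pick $\vartheta$ distinct positive nodes at a common level; this is precisely why the gap-counting colouring (and, in Lemma~\ref{ad1}, a $C$-sequence) is needed to convert a thin-but-tall positive subtree into all $\vartheta$ colours. Second, and most delicate, is the \emph{limit-level} problem: the ``agreement sets'' one tracks while descending a guessed branch are decreasing intersections, and $\varkappa$-completeness controls unions (equivalently intersections of co-null sets) but not intersections of $J^+$-sets, so positivity can be lost at limit levels of height $<\theta$. Overcoming this is where the regularity of $\theta$, the bound $\vartheta<\theta$, and the non-concentration claim must be combined to guarantee enough positive splitting below a single level $\gamma<\theta$; I expect this to be the technically heaviest step. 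If a clean direct argument for the \emph{exact} colours of $\onto^{++}$ proves awkward, the fallback is to first establish the weaker $\ubd^{++}(\{\nu\},\mathcal J^\kappa_\varkappa,\vartheta)$ by the same construction (tolerating a permutation of colours) and then upgrade to $\onto^{++}$ via a projection, invoking Lemma~\ref{lemma15}(1) with a suitable instance of $\proj$ supplied by Corollary~\ref{regularpairs} or Proposition~\ref{bruteforce}.
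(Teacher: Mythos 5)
Your proposal has a genuine gap, and it sits exactly where you yourself flagged it. Everything hinges on producing, for an arbitrary $J\in\mathcal J^\kappa_\varkappa$ and arbitrary positive sets, a single level $\gamma<\theta$ at which the positive mass spreads over $\vartheta$-many \emph{distinct} nodes; you defer this (``the technically heaviest step''), and the non-concentration claim you do state --- that the fibre of $B$ over any single branch is bounded, hence null, and that the $\bar\beta$-support of $B$ is unbounded in $\cf(\kappa)$ --- is far too weak to yield it: a positive set can meet $\cf(\kappa)$-many branches while, for all you have shown, at every level $\gamma<\theta$ its positive mass sits on a single node of $T_\gamma$. The paper supplies precisely this missing ingredient as Claim~\ref{splitting}: for every $B\in J^+$ there is $\alpha\in B$ such that $\{\xi<\theta\mid\{\beta\in B\mid\Delta(\mathbf b_{\bar\alpha},\mathbf b_{\bar\beta})=\xi\}\in J^+\}$ has order type $\theta$. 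Its proof is a one-shot pigeonhole --- stabilize $\ssup$ of the splitting-level set by $\theta^+$-completeness, stabilize the node at that level by $\mu$-completeness (here $|T_{\gamma^*}|<\mu$ enters), then split the remaining positive set off its minimum --- so no level-by-level descent occurs and the ``limit-level problem'' you worry about never arises. Without this claim or an equivalent, neither your direct construction nor your fallback gets started.

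There is a second, structural defect: your gap-counting colouring is order-bound, so your primary plan (exact colours) cannot work even granting the splitting claim. Since $x\mapsto\otp(\Phi\cap x)$ is nondecreasing, distinct colours on chosen nodes must increase with their codes $f_\gamma(t_\tau)$; to give $t_\tau$ colour exactly $\tau$ you need the codes to increase with $\tau$, which cannot be arranged for an arbitrary sequence $\langle B_\tau\mid\tau<\vartheta\rangle$ (all the positive nodes available to $B_0$ may lie above all those available to $B_1$ in the linearization, or the $B_\tau$ may all be equal with exactly $\vartheta$ positive nodes whose colour order is then dictated by their code order). So your colouring can only ever witness the permuted statement $\ubd^{++}(\{\nu\},\mathcal J^\kappa_\varkappa,\vartheta)$, and the projection fallback is forced, not optional; moreover that fallback is itself left unverified --- you would still need the $\ubd^{++}$ statement, and the instance $\proj(\nu,\vartheta,\vartheta,\vartheta)$ (which does hold, but not by Corollary~\ref{regularpairs}: rather, some level of $T$ has at least two nodes, so $2^\vartheta\le|T_\gamma|^\vartheta\le\nu$ and Proposition~\ref{bruteforce} applies, after which Lemma~\ref{lemma15}(1) upgrades). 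The paper sidesteps all of this by letting $\eta$ code an arbitrary tuple $f(\eta)\in{}^\vartheta T_\gamma$ and colouring $\beta$ by the least $\tau$ with $\mathbf b_{\bar\beta}\supseteq f(\eta)(\tau)$: a branch through level $\gamma$ extends at most one of the coded nodes, so the colour assignment is order-free and $\onto^{++}$ with exact colours comes out directly.
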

\begin{proof}  Let $\langle \mathbf b_\beta \mid \beta<\cf(\kappa)\rangle$ be an injective enumeration of $\theta$-branches through the tree $(T,<_T)$.
We denote by $\Delta(\mathbf b_\alpha, \mathbf b_{\beta})$ the unique level in which $\mathbf b_\alpha$ splits from $\mathbf b_\beta$.
Fix a cofinal subset $U$ of $\kappa$ of order-type $\cf(\kappa)$. For every $\beta<\kappa$, let $\bar\beta:=\otp(\beta\cap U)$.
\begin{claim}\label{splitting} Let $J\in\mathcal J^\kappa_\varkappa$ and $B\in J^+$.
Then there is an $\alpha\in B$ such that the following set has order-type $\theta$:
$$B(\alpha) := \{\xi< \theta\mid \{\beta \in B \mid \Delta(\mathbf b_{\bar\alpha}, \mathbf b_{\bar\beta}) = \xi\}\in J^+\}.$$
\end{claim}
\begin{why} Suppose not. Then, for every $\alpha\in B$, $\otp(B(\alpha))<\theta$,
so that $\gamma_{\alpha}:= \ssup(B(\alpha))$ is less than $\theta$.
As $J$ is $\theta^+$-complete, we may find some $\gamma^* < \theta$ such that the set $B_0:=\{\alpha \in B \mid \gamma_{\alpha} = \gamma^*\}$ is in $J^+$.
Further, since $|T_{\gamma^*}|<\mu$ and $J$ is $\mu$-complete, we can also find a $t^*\in T_{\gamma^*}$ such that
the set $B_1:=\{\alpha \in B_0\mid \mathbf b_{\bar\alpha} \restriction \gamma^* = t^*\}$ is in $J^+$.
Let $\alpha:=\min(B_1)$ and $B_2:=B_1\setminus\{\alpha\}$.
Evidently, $B_2$ is in $J^+$ and for every $\beta\in B_2$, $\gamma^*\le\Delta(\mathbf b_{\bar\alpha}, \mathbf b_{\bar\beta})<\theta$.
Now, as $J$ is $\theta^+$-complete, we may pick $\xi<\theta$ for which $B_3:=\{\beta\in B_2\mid \Delta(\mathbf b_{\bar\alpha}, \mathbf b_{\bar\beta})=\xi\}$ is in $J^+$.
Then $B_3$ witnesses that $\xi\in B(\alpha)$, contradicting the fact that $\xi\ge\gamma^*=\ssup(B(\alpha))$.
\end{why}

Fix a surjection $f:\nu\rightarrow\bigcup_{\gamma<\theta}{}^\vartheta T_\gamma$.
Then fix a colouring $c:[\kappa]^2\rightarrow\vartheta$ satisfying that for every $\eta<\beta<\kappa$,
if $\eta<\nu$ and $\mathbf b_{\bar\beta}$ extends $f(\eta)(\tau)$ for some $\tau<\vartheta$,
then $c(\eta,\beta)$ is the least such $\tau$.

To see that $c$ is as sought, let $J\in\mathcal J^\kappa_\mu$,
and let $\langle B_\tau\mid \tau<\vartheta\rangle$ be a given sequence of $J^+$-sets.
Recursively define an injective transversal $\langle \alpha_\tau\mid \alpha<\vartheta\rangle\in\prod_{\tau<\vartheta}B_\tau$, as follows:

$\br$ Let $\alpha_0$ be the $\alpha$ given by the preceding claim with respect to $B:=B_0$.

$\br$ For every $\tau<\vartheta$ such that $\langle \alpha_\sigma\mid \sigma<\tau\rangle$ has already been defined,
let $\alpha_\tau$ be the $\alpha$ given by the preceding claim with respect to $B:=B_\tau\setminus\{\alpha_\sigma\mid \sigma<\tau\}$.
As $J$ is $\vartheta$-complete, $B$ is indeed in $J^+$.

As $\vartheta<\theta =\cf(\theta)$, $\gamma:=\ssup\{ \Delta(\mathbf b_{\bar\alpha_\sigma},\mathbf b_{\bar\alpha_\tau})\mid \sigma<\tau<\vartheta\}$
is less than $\theta$,
so we may find an ordinal $\eta<\nu$ such that, for every $\tau<\vartheta$, $f(\eta)(\tau)$ is the unique element of $T_\gamma$ that belongs to the branch $\mathbf b_{\bar\alpha_\tau}$.
\begin{claim} Let $\tau<\vartheta$. Then $\{\beta\in B_\tau\setminus(\eta+1)\mid c(\eta,\beta)=\tau\}$ is  in $J^+$.
\end{claim}
\begin{why} By the choice of $\alpha_\tau$,
we may pick $\xi\in B_{\tau}(\alpha_\tau)$ above $\gamma$.
As $J$ extends $J^{\bd}[\kappa]$, $B':=\{\beta \in B_\tau\setminus(\eta+1) \mid \Delta(\mathbf b_{\bar\alpha_\tau}, \mathbf b_{\bar\beta}) = \xi\}$ is in $J^+$.
Evidently, $c(\eta,\beta)=\tau$ for every $\beta\in B'$.
\end{why}
This completes the proof.
\end{proof}
By taking $\mu$ and $\nu$ to be $\kappa$ in Lemma~\ref{prop13}, we get:
\begin{cor} Suppose that $\vartheta<\cf(\theta)=\theta<\cf(\kappa)=\kappa$ are given cardinals.
If there is a tree $(T,<_T)$ of height $\theta$ with at least $\kappa$ many $\theta$-branches
such that, for every $\gamma<\theta$, $|T_\gamma|<\kappa$ and $|T_\gamma|^\vartheta\le\kappa$,
then $\onto^{++}(\mathcal J^\kappa_\kappa,\vartheta)$ holds.\qed
\end{cor}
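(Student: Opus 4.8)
The plan is to derive the statement as the special case $\mu:=\kappa$ and $\nu:=\kappa$ of Lemma~\ref{prop13}, so that essentially all the work reduces to verifying that the hypotheses of that lemma are met under the present assumptions. From $\vartheta<\cf(\theta)=\theta<\cf(\kappa)=\kappa$ one reads off that both $\theta$ and $\kappa$ are regular, and since $\theta<\kappa$ we have $\theta^+\le\kappa$. With the substitution $\nu:=\kappa$, the first bullet of Lemma~\ref{prop13} becomes $\vartheta<\theta\le\kappa\le\kappa$, which holds; and its second bullet asks that $\theta$ and $\mu:=\kappa$ be regular, which also holds.

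For the tree hypothesis, specializing $\mu:=\kappa$ and $\nu:=\kappa$ turns the requirements $|T_\gamma|<\mu$ and $|T_\gamma|^\vartheta\le\nu$ into exactly $|T_\gamma|<\kappa$ and $|T_\gamma|^\vartheta\le\kappa$ for every $\gamma<\theta$, which are the hypotheses we are handed; moreover the demand for at least $\cf(\kappa)=\kappa$ many $\theta$-branches matches the assumption that $(T,<_T)$ carries at least $\kappa$ many $\theta$-branches. Thus the fourth bullet is satisfied verbatim. It then remains to compute the resulting completeness parameter: $\varkappa:=\max\{\theta^+,\mu\}=\max\{\theta^+,\kappa\}=\kappa$, using $\theta^+\le\kappa$. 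Hence Lemma~\ref{prop13} yields $\onto^{++}(\{\kappa\},\mathcal J^\kappa_\kappa,\vartheta)$, which, by Convention~\ref{colouringconventions}(2) (omitting $\mathcal A$ means $\mathcal A:=\{\kappa\}$), is precisely the assertion $\onto^{++}(\mathcal J^\kappa_\kappa,\vartheta)$ we want.

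The one delicate point — and the step I would check most carefully — is the third bullet of Lemma~\ref{prop13}, which requires $\varkappa<\cf(\kappa)$; with $\varkappa=\kappa=\cf(\kappa)$ this strict inequality fails to hold literally. The remedy is to revisit the proof of Lemma~\ref{prop13} and confirm that this clause is invoked only to guarantee that each completeness appeal made there — to $\theta^+$-completeness and to $\mu$-completeness inside the splitting claim, and to $\vartheta$-completeness in the recursive construction of the transversal $\langle\alpha_\tau\mid\tau<\vartheta\rangle$ — is covered by the completeness degree of $J$. In the present regular case $J\in\mathcal J^\kappa_\kappa$ is $\kappa$-complete, and since $\vartheta<\theta^+\le\kappa$ and $\mu=\kappa$, every one of these appeals is subsumed by $\kappa$-completeness; furthermore $J^{\bd}[\kappa]$ itself is $\kappa$-complete because $\kappa$ is regular, so $\mathcal J^\kappa_\kappa$ is nonempty and the conclusion is not vacuous. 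Once this bookkeeping is made explicit — that the role of $\varkappa<\cf(\kappa)$ in the general lemma is, in the regular case, played by the straightforward fact that all the relevant completeness degrees are $\le\kappa$ — the corollary follows directly from Lemma~\ref{prop13}, and no further construction is needed.
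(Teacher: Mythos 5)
Your proposal is correct and is essentially the paper's own proof: the paper derives this corollary in one line by applying Lemma~\ref{prop13} with $\mu:=\kappa$ and $\nu:=\kappa$, exactly as you do. You are in fact more careful than the paper, which silently passes over the failure of the lemma's third bullet ($\varkappa<\cf(\kappa)$ becomes $\kappa<\kappa$) under this substitution; your repair --- noting that in the proof of Lemma~\ref{prop13} that hypothesis only feeds the completeness appeals (the $\theta^+$- and $\mu$-pigeonhole steps in the splitting claim and the $\vartheta$-step in the recursion), all of which are subsumed by the $\kappa$-completeness of any $J\in\mathcal J^\kappa_\kappa$, with $J^{\bd}[\kappa]$ itself $\kappa$-complete since $\kappa$ is regular --- is precisely the verification needed to make the paper's one-line deduction legitimate.
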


\begin{lemma}\label{kurepalemma} Assume that $\nu$ is a regular uncountable cardinal and there is a $\nu$-Kurepa tree with $\kappa$-many branches. Let $\theta<\nu$.
\begin{enumerate}[(1)]
\item $\ubd^+(\{\nu\},\mathcal J^\kappa_{\nu^+},\theta)$ holds;
\item If $\proj(\nu,\nu,\theta,1)$ holds, then moreover $\onto^+(\{\nu\},\mathcal J^\kappa_{\nu^+},\theta)$ holds.
\end{enumerate}
\end{lemma}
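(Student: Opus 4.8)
The plan is to reduce both clauses to the narrowly‑witnessed bounded‑ideal principles and then route the $\nu$‑Kurepa tree through the almost‑disjoint‑family machinery of this section. For Clause~(1), since $J^{\bd}[\kappa]\in\mathcal J^\kappa_{\nu^+}$ and $\mathcal J^\kappa_{\nu^+}=\{I\in\mathcal J^\kappa_{\nu^+}\mid I\supseteq J^{\bd}[\kappa]\}$, Proposition~\ref{prop614}(1) (with $\mathcal A:=\{\nu\}\s[\kappa]^{\le\nu}$) reduces the task to producing an upper‑regressive colouring witnessing $\ubd(\{\nu\},J^{\bd}[\kappa],\theta)$. The point is that a $\nu$‑Kurepa tree delivers precisely the required data: as $\nu$ is regular and each level has size $<\nu$, the (unlabelled) tree‑to‑$\ad$ lemma stated just before Lemma~\ref{ad1}, applied with both its height parameter and its $\mu$ equal to $\nu$, shows that the $\kappa$ branches witness $\kappa\in\ad(\nu,\nu)$, whence $\cf(\kappa)\in\ad(\nu,\nu)$ by Remark~\ref{admonotonicity}. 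Feeding this into Lemma~\ref{ad1} with its parameter $\mu$ set to $\nu$ and its second parameter also $\nu$ — for which the cardinal computed there equals $\nu$ itself, and the side condition $\nu<\kappa$ holds since a $\nu$‑Kurepa tree has at least $\nu^+$ branches — yields $\ubd^+(\{\nu\},\mathcal J^\kappa_{\nu^+},\vartheta)$ for every $\vartheta\in\reg(\nu)$. This is exactly Clause~(1) whenever $\theta$ is regular.

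For the remaining (singular or finite) values of $\theta<\nu$ I would re‑run the construction of Lemma~\ref{ad1} directly on the tree, replacing the abstract sets $\min(x\setminus y_\tau)$ by branch‑nodes. Concretely, fix level‑injections $f_\gamma\colon T_\gamma\to\nu$ together with a $\theta$‑bounded $C$‑sequence on $E^\nu_{\cf(\theta)}$, and for $\eta$ coding a pair $(\tau,\delta)$ set $c(\eta,\beta):=\otp(C_\delta\cap f_\tau(b_\beta(\tau)))$ whenever this value is below $\beta$. Given an unbounded $B$, I would thin it to a subset $B'$ of size $\theta$ whose pairwise branch‑splitting levels are bounded by some $\tau<\nu$ (possible as $\nu$ is regular); the nodes $b_\beta(\tau)$ for $\beta\in B'$ are then pairwise distinct, so $H:=\{f_\tau(b_\beta(\tau))\mid\beta\in B'\}$ has size $\theta$, and choosing $\delta$ so that $H$ is cofinal in $\delta$ drives $\{c(\eta,\beta)\mid\beta\in B'\}$ to be cofinal in $\otp(C_\delta)=\theta$.

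For Clause~(2) I would promote the $\ubd^+$ colouring of Clause~(1) to an $\onto^+$ colouring by composing it with a witness $p$ to $\proj(\nu,\nu,\theta,1)$, in the spirit of Lemma~\ref{lemma15} and Lemma~\ref{ubdtoonto}(2): writing $\eta=\langle\eta_0,\eta_1\rangle$, set $d(\eta,\beta):=p(\eta_1,c(\eta_0,\beta))$. Then, for a given positive $B$, one first extracts from Clause~(1) an $\eta_0$ for which the set of positive colours has order type $\theta$, and then applies $p$ to spread this set across all of $\theta$, so that $d(\eta,\cdot)$ realizes every colour positively.

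The main obstacle is, in both clauses, the order‑type bookkeeping. In Clause~(1) the $C$‑sequence only produces a \emph{cofinal} subset of $\otp(C_\delta)$, which automatically has order type $\theta$ when $\theta$ is regular but not when $\theta$ is singular; covering singular $\theta$ forces one to arrange the $\theta$‑sized set $H$ to lie \emph{inside} a single $C_\delta$ — an injectivity rather than a cofinality argument, amounting to aligning the level‑injections with a guessing pattern. In Clause~(2) the set that the projection must spread is the order‑type‑$\theta$ set of positive colours, of size $\theta<\nu$, so the composition genuinely calls for a projection whose middle parameter is $\theta$; the delicate step is to verify that the stated hypothesis $\proj(\nu,\nu,\theta,1)$ supplies what the composition needs. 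I expect this parameter‑matching, together with the singular‑$\theta$ order‑type control, to be the crux of the proof.
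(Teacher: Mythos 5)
Your Clause (1), for regular $\theta$, is exactly the paper's route: the paper's entire proof of (1) is a citation of Lemma~\ref{ad1}, reached just as you describe (Kurepa tree $\Rightarrow\kappa\in\ad(\nu,\nu)$ via the unlabelled tree-to-$\ad$ lemma $\Rightarrow\cf(\kappa)\in\ad(\nu,\nu)\Rightarrow$ Lemma~\ref{ad1} with all parameters equal to $\nu$). The genuine gap is in Clause (2), precisely at the step you flag as ``delicate parameter-matching'': it is not a verification that can be carried out. Your composition $d(\eta,\beta)=p(\eta_1,c(\eta_0,\beta))$ must spread the set $T_{\eta_0}$ of $J^+$-positive $c$-colours onto $\theta$; since $c$ takes only $\theta$ colours, $T_{\eta_0}$ is a $\theta$-sized subset of $\theta$, so what you need is $\proj(\nu,\theta,\theta,1)$ --- the middle parameter must match the colour count of the $\ubd^+$ witness, exactly as in Lemma~\ref{ubdtoonto}(2). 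The hypothesis $\proj(\nu,\nu,\theta,1)$ only promises to spread sets in $[\nu]^\nu$; it does not yield $\proj(\nu,\theta,\theta,1)$, which is a statement about the cardinal $\theta$ that the paper ties to invariants such as $\mathcal C(\theta,\theta)$ and $\mathfrak d_\theta$ (Lemmas \ref{bsicproj} and \ref{d1}), none of which is bounded by $\nu$ under the present hypotheses. Nor can you instead feed Lemma~\ref{ubdtoonto}(2) a $\nu$-coloured witness to $\ubd^+(\{\nu\},\mathcal J^\kappa_{\nu^+},\nu)$ (which \emph{would} match $\proj(\nu,\nu,\theta,1)$): Clause (1) provides only $\theta<\nu$ colours, and for $\kappa=\nu^+$ such a witness would force $\mathfrak b_\nu=\nu^+$ by Corollary~\ref{lemma712}(3), an assumption the lemma does not make. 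So the argument must be restructured, not patched.

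The paper's restructuring applies $p$ to the $\nu$-valued splitting-level map rather than to any $\theta$-valued colouring. Enumerating the branches as $\langle\mathbf b_\beta\mid\beta<\kappa\rangle$, one colours $c(\eta,\beta):=p(\zeta,\Delta(t,\mathbf b_\beta))$, where $\eta<\nu$ codes a pair $(t,\zeta)\in T\times\nu$ (possible since $|T|=\nu$). Given $J\in\mathcal J^\kappa_{\nu^+}$ and $B\in J^+$, Claim~\ref{splitting} (applied to the Kurepa tree, whose height is $\nu$ and whose levels have size $<\nu$, using the $\nu^+$-completeness of $J$) yields $\alpha\in B$ such that $X:=\{\xi<\nu\mid B_\alpha^\xi\in J^+\}$ has order type $\nu$, where $B_\alpha^\xi=\{\beta\in B\mid\Delta(\mathbf b_\alpha,\mathbf b_\beta)=\xi\}$. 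This $X\in[\nu]^\nu$ is exactly what $\proj(\nu,\nu,\theta,1)$ can spread: pick $\zeta$ with $p[\{\zeta\}\times X]=\theta$, extract $X'\in[X]^\theta$ still realizing every colour, and only \emph{then} truncate the branch to the node $t:=\mathbf b_\alpha\restriction\ssup(X')$ of $T$; each colour $\tau$ is attained on the entire $J^+$-set $B_\alpha^\xi$ for a suitable $\xi\in X'$, since $\Delta(t,\mathbf b_\beta)=\Delta(\mathbf b_\alpha,\mathbf b_\beta)$ for all such $\beta$. (Incidentally, your fallback for singular $\theta$ in Clause (1) also fails as written: the image of a cofinal $\theta$-sized set under the nondecreasing map $h\mapsto\otp(C_\delta\cap h)$ need only have order type $\cf(\theta)$; the correct repair is again branch truncation combined with injections $e_\gamma:\gamma\rightarrow\theta$, as in Corollary~\ref{ccor68}(1) --- though, in fairness, the paper's own one-line proof of (1) likewise only covers $\theta\in\reg(\nu)$ directly.)
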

\begin{proof} (1) This follows from Lemma~\ref{ad1}.

(2) Fix a $\nu$-Kurepa tree $T\s{}^{<\nu}2$.
Let $\langle \mathbf b_\beta \mid \beta< \kappa\rangle$ be an injective enumeration of $\nu$-branches through $T$.
Fix a bijection $\pi:\nu\leftrightarrow T\times\nu$.
Fix a map $p:\nu\times\nu\rightarrow\theta$ witnessing $\proj(\nu,\nu,\theta,1)$.
Finally define a colouring $c:[\kappa]^2\rightarrow\theta$, as follows. For all $\eta<\nu\le\beta<\kappa$,
if $\pi(\eta)=(t,\zeta)$ and $\Delta(t,\mathbf b_\beta)$ is defined, then let $c(\eta,\beta):=p(\zeta,\Delta(t,\mathbf b_\beta))$. Otherwise, let $c(\eta,\beta):=0$.

To see this works, let $B\in J^+$, for a given $J\in\mathcal J^\kappa_{\nu^+}$.
For all $\alpha<\nu$ and $\xi<\nu$, denote
$$B_\alpha^\xi:=\{\beta \in B \mid \Delta(\mathbf b_\alpha, \mathbf b_{\beta}) = \xi\}.$$
Using Claim~\ref{splitting}, pick $\alpha\in B$ such that the following set has order-type $\nu$:
$$X := \{\xi< \nu\mid B_\alpha^\xi\in J^+\}.$$
By the choice of $p$, pick $\zeta<\nu$ such that $p[\{\zeta\}\times X]=\theta$.
Now, find $X'\in[X]^\theta$ such that $p[\{\zeta\}\times X']=\theta$. Set $\gamma:=\ssup(X')$ and $t:=\mathbf b_\alpha\restriction\gamma$.
Pick $\eta<\nu$ such that $\pi(\eta)=(t,\zeta)$.
Then for every $\tau<\theta$, we may find $\xi\in X'$ such that $p(\zeta,\xi)=\tau$,
and then for every $\beta\in B_\alpha^\xi$, $c(\eta,\beta)=p(\zeta,\Delta(t,\mathsf b_\beta))=p(\zeta,\Delta(\mathbf b_\alpha,\mathbf b_\beta))=p(\zeta,\xi)=\tau$, as sought.
\end{proof}
\begin{cor}\label{kurepacor}  If $\theta$ is an infinite regular cardinal, and there exists a $\theta^+$-Kurepa tree with $\cf(\kappa)$ many branches,
then $\onto^+(\{\theta^+\},\mathcal J^\kappa_{\theta^{++}},\theta)$ holds.
\end{cor}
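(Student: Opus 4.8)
The plan is to set $\nu:=\theta^+$ and $\varkappa:=\cf(\kappa)$, and to reduce the assertion, in the style of the proofs of Theorem~\ref{ek} and Lemma~\ref{seplemma}, to an instance that Lemma~\ref{kurepalemma}(2) delivers over the regular cardinal $\varkappa$.

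First I would record the cardinal arithmetic forced by the hypotheses. As $\theta$ is infinite and regular, $\nu=\theta^+$ is regular and uncountable and $\theta<\nu$. A $\nu$-Kurepa tree carries at least $\nu^+=\theta^{++}$ branches, so the existence of such a tree with exactly $\varkappa=\cf(\kappa)$ branches forces $\nu^+\le\varkappa$; in particular $\nu<\varkappa$ and, since $\cf(\kappa)\le\kappa$, also $\nu^+\le\kappa$. Consequently $J^{\bd}[\kappa]$ is $\cf(\kappa)$-complete, hence $\nu^+$-complete, so that $J^{\bd}[\kappa]\in\mathcal J^\kappa_{\nu^+}$; likewise, as $\varkappa$ is regular, $J^{\bd}[\varkappa]\in\mathcal J^\varkappa_{\nu^+}$.

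Next I would run the reduction. By Proposition~\ref{prop614}(2), applied with $J:=J^{\bd}[\kappa]$, $\mathcal A:=\{\nu\}\s[\kappa]^{\le\nu}$ and $\nu<\kappa$, it suffices to establish $\onto(\{\nu\},J^{\bd}[\kappa],\theta)$. By the same trivial cofinality argument used at the beginning of the proof of Theorem~\ref{ek} (namely Proposition~6.1 of \cite{paper47}, which stretches a colouring along a strictly increasing cofinal map $\varkappa\to\kappa$ while keeping the narrow parameter $\{\nu\}$), it in turn suffices to establish $\onto(\{\nu\},J^{\bd}[\varkappa],\theta)$. To obtain the latter I would check the two inputs of Lemma~\ref{kurepalemma}(2) over the ambient cardinal $\varkappa$: the given tree is a $\nu$-Kurepa tree with $\varkappa$-many branches, and, since $\theta\in\reg(\nu)$, Corollary~\ref{regularpairs}(1) yields $\proj(\nu,\nu,\theta,1)$. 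Lemma~\ref{kurepalemma}(2) then gives $\onto^+(\{\nu\},\mathcal J^\varkappa_{\nu^+},\theta)$, and as $J^{\bd}[\varkappa]\in\mathcal J^\varkappa_{\nu^+}$ the witnessing colouring witnesses $\onto^+(\{\nu\},J^{\bd}[\varkappa],\theta)$, which trivially implies $\onto(\{\nu\},J^{\bd}[\varkappa],\theta)$ since each prescribed fibre is $J^{\bd}[\varkappa]$-positive and hence nonempty.

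The one point demanding care --- the step I would treat as the main obstacle --- is the mismatch of ambient cardinals: Lemma~\ref{kurepalemma} is phrased so that the number of branches equals the ambient cardinal, whereas here we are handed $\cf(\kappa)$-many branches yet must conclude over $\kappa$. The device above (apply the lemma over $\varkappa=\cf(\kappa)$, descend to $J^{\bd}[\varkappa]$, climb to $J^{\bd}[\kappa]$ via the cofinality reduction, and finish with Proposition~\ref{prop614}(2)) resolves this, but it hinges on the completeness bookkeeping $\nu^+\le\cf(\kappa)$ read off from the Kurepa property, which I would make explicit at the outset; the remaining steps are direct citations.
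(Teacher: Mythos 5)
Your proposal is correct and follows essentially the same route as the paper: the paper's proof likewise applies Lemma~\ref{kurepalemma}(2) together with Corollary~\ref{regularpairs}(1) at the regular cardinal $\cf(\kappa)$ to get $\onto^+(\{\theta^+\},\mathcal J^{\cf(\kappa)}_{\theta^{++}},\theta)$, and then transfers this to $\kappa$. The only difference is that the paper dismisses the transfer with a single ``hence'', whereas you make it explicit (downgrade to $\onto(\{\nu\},J^{\bd}[\varkappa],\theta)$, stretch along a cofinal map as in Theorem~\ref{ek}, then re-upgrade via Proposition~\ref{prop614}(2)); this is a legitimate way to fill in that step.
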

\begin{proof} By Lemma~\ref{kurepalemma}(2) together with Corollary~\ref{regularpairs}(1), the hypothesis implies that $\onto^+(\{\theta^+\},\mathcal J^{\cf(\kappa)}_{\theta^{++}},\theta)$ holds,
hence, so does $\onto^+(\{\theta^+\},\mathcal J^\kappa_{\theta^{++}},\theta)$.
\end{proof}

\begin{lemma}\label{lemma65} Suppose that $\kappa\in \ad(\mu,\theta)$ with $\theta$ regular. Set $\varkappa:=\max\{\mu,\theta^+\}$.

Then $\ubd^+(\self, \mathcal J^\kappa_\varkappa, \theta)$ holds.
\end{lemma}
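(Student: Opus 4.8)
The plan is to construct a single upper-regressive colouring $c:[\kappa]^2\to\theta$, depending only on a fixed witness $(\mathcal X,\mathcal Y)$ to $\kappa\in\ad(\mu,\theta)$, and then verify the $\ubd^+(\self,J,\theta)$ clause for every $J\in\mathcal J^\kappa_\varkappa$ at once. Since we aim at the $\self$-version with the full number $\theta$ of colours, neither Proposition~\ref{prop614}(1) (which requires a narrow $\mathcal A\subseteq[\kappa]^{\le\nu}$ together with $\theta\le\nu$) nor the subnormal reduction of Lemma~\ref{lemma43b} apply, so the verification must be carried out directly over all $\varkappa$-complete ideals.

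First I would fix an injective enumeration $\langle x_\alpha\mid\alpha<\kappa\rangle$ of $\mathcal X$ and the $\subseteq$-increasing enumeration $\langle y_\tau\mid\tau<\theta\rangle$ of $\mathcal Y$, and, for $\alpha\ne\beta$, define the separation level $\rho(\alpha,\beta):=\min\{\tau<\theta\mid x_\alpha\cap x_\beta\subseteq y_\tau\}$, which is well-defined by Clause~(ii) of Definition~\ref{adf}. The colouring is then simply $c(\eta,\beta):=\rho(\eta,\beta)$ when $\beta\ge\theta$ and $c(\eta,\beta):=0$ otherwise; this is upper-regressive since $\rho(\eta,\beta)<\theta\le\beta$. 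The only facts about the family I would isolate are: writing $m_y(\alpha):=\min(x_\alpha\setminus y)$ for $y\in\mathcal Y$ (well-defined by Clause~(iii)), that $x_\eta\cap x_\beta\subseteq y$ forces $m_y(\eta)\ne m_y(\beta)$ (else $m_y(\eta)\in x_\eta\cap x_\beta\subseteq y$, contradicting $m_y(\eta)\notin y$), and that $\{m_y(\alpha)\mid\alpha<\kappa\}$ has size $<\mu\le\varkappa$ by Clause~(iv).

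Next, fixing $J\in\mathcal J^\kappa_\varkappa$ and $B\in J^+$ (which I may assume disjoint from $\theta$, as $J\supseteq J^{\bd}[\kappa]$), I would argue by contradiction, assuming that for every $\eta\in B$ the set $T_\eta:=\{\tau<\theta\mid\{\beta\in B\setminus(\eta+1)\mid c(\eta,\beta)=\tau\}\in J^+\}$ has order type $<\theta$. Since $\theta$ is regular, each $T_\eta$ is then bounded by some $\sigma_\eta<\theta$, and since $J$ is $\theta^+$-complete (as $\varkappa\ge\theta^+$), the union over the $\le\theta$ levels $\tau\ge\sigma_\eta$ of the corresponding $J$-fibres lies in $J$; hence $D_\eta:=\{\beta\in B\setminus(\eta+1)\mid x_\eta\cap x_\beta\subseteq y_{\sigma_\eta}\}$ is co-$J$ in $B\setminus(\eta+1)$, as it contains the co-$J$ set of $\beta$ with $\rho(\eta,\beta)<\sigma_\eta$. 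A first pigeonhole, again by $\theta^+$-completeness, produces $\sigma^*<\theta$ with $B^*:=\{\eta\in B\mid\sigma_\eta=\sigma^*\}\in J^+$; writing $y:=y_{\sigma^*}$, a second pigeonhole, using $\varkappa$-completeness ($\varkappa\ge\mu$) and the smallness of $\{m_y(\alpha)\}$, produces a value $v$ with $B^{**}:=\{\eta\in B^*\mid m_y(\eta)=v\}\in J^+$.

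The payoff --- which I expect to be the crux, and the only genuinely non-routine step --- is combining the two thinnings: picking any $\eta_0\in B^{**}$, the set $B^{**}\cap D_{\eta_0}$ is still in $J^+$ (since $D_{\eta_0}$ is co-$J$ in $B\setminus(\eta_0+1)$ while $B^{**}\in J^+$), so it contains some $\eta_1>\eta_0$. Then $x_{\eta_0}\cap x_{\eta_1}\subseteq y$ forces $m_y(\eta_0)\ne m_y(\eta_1)$, contradicting $m_y(\eta_0)=v=m_y(\eta_1)$. This contradiction yields an $\eta\in B$ with $\otp(T_\eta)=\theta$, as required. The main obstacle is the bookkeeping that makes the two pigeonholes compatible: $\eta_0$ and $\eta_1$ must share the single level $\sigma^*$, so that one fixed $y$ simultaneously governs the ``$D_{\eta_0}$ co-$J$'' property and the shared $m_y$-value, which is precisely why the level-pigeonhole has to precede the value-pigeonhole.
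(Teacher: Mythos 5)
Your proof is correct, and it shares its skeleton with the paper's: the same colouring $c(\alpha,\beta):=\min\{\tau<\theta\mid x_\alpha\cap x_\beta\subseteq y_\tau\}$ (your adjustment $c(\eta,\beta):=0$ for $\beta<\theta$ makes upper-regressivity explicit, a point the paper's proof glosses over), the same opening pigeonhole stabilizing the bound $\sigma_\eta$ on a $J$-positive set via $\theta^+$-completeness, and the same closing contradiction via the map $\alpha\mapsto\min(x_\alpha\setminus y)$. Where you genuinely diverge is the combinatorial core. The paper never pigeonholes the ideal a second time: it uses $\mu$-completeness to recursively build an injection $f:\mu\rightarrow B'$ whose $i$-th point avoids the $J$-sets $N_\eta$ attached to all earlier points, extracts an order-preserving $I\in[\mu]^\mu$ by the Dushnik--Miller theorem, and only then invokes Clause (iv) of Definition~\ref{adf} as a \emph{cardinality} pigeonhole on the $\mu$-sized set $I$, producing $i<j$ in $I$ with $\min(x_{f(i)}\setminus y_\epsilon)=\min(x_{f(j)}\setminus y_\epsilon)$. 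You instead read Clause (iv) as an \emph{ideal} pigeonhole: since $\{\min(x\setminus y)\mid x\in\mathcal X\}$ has size ${<}\mu\le\varkappa$ and $J$ is $\varkappa$-complete, some fibre $B^{**}$ of $\eta\mapsto m_y(\eta)$ is $J$-positive, after which two points suffice --- any $\eta_0\in B^{**}$, and any $\eta_1\in B^{**}\cap D_{\eta_0}$, the latter intersection being $J$-positive because $D_{\eta_0}$ is co-$J$ in $B\setminus(\eta_0+1)$ and bounded sets lie in $J$. Your route costs one extra (freely available) application of $\varkappa$-completeness and in exchange eliminates both the transfinite recursion and the appeal to Dushnik--Miller; it also sidesteps a small glitch in the paper, whose printed recursion $f(i):=\min(B'\setminus\bigcup_{\eta\in\im(f\restriction i)}N_\eta)$ does not by itself force new points to differ from old ones (one must additionally take $f(i)$ above $\ssup(\im(f\restriction i))$, which is possible since $J\supseteq J^{\bd}[\kappa]$ --- and with that repair $f$ is increasing, rendering Dushnik--Miller superfluous in the paper's argument as well). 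Both proofs use exactly the same hypotheses on $J$, so nothing is lost in generality by your shortcut.
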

\begin{proof} Pick a pair $(\mathcal X,\mathcal Y)$ witnessing that $\cf(\kappa)\in\ad(\mu,\theta)$.
Fix an injective enumeration $\langle x_\alpha \mid \alpha< \kappa\rangle$ of $\mathcal X$ and a $\s$-increasing enumeration $\langle y_\tau\mid\tau<\theta\rangle$ of $\mathcal Y$.
By Clause~(ii) of Definition~\ref{adf}, we may define a colouring $c:[\kappa]^2\rightarrow \theta$ via $$c(\alpha, \beta): = \min\{\tau<\theta\mid x_\alpha\cap x_\beta\s y_\tau\}.$$
Towards a contradiction, suppose that $c$ does not witness $\ubd^+(\self, \mathcal J^\kappa_\varkappa, \theta)$.
Fix $J \in\mathcal J^\kappa_{\varkappa}$ and $B \in J^+$ such that
for every $\eta \in B$ there is an $\epsilon_\eta< \theta$ such that
$$\{\tau<\theta\mid \{\beta\in B\setminus(\eta+1)\mid c(\eta,\beta)=\tau\}\in J^+\}\s \epsilon_\eta.$$
As $J$ is $\theta^+$-complete, we can then find an $\epsilon< \theta$ and a subset $B'\s B$ in $J^+$  such that for every $\eta \in B'$, $\epsilon_\eta= \epsilon$.
As $J$ is $\theta^+$-complete,
it follows that for every $\eta\in B'$, $N_\eta:=\{ \beta\in B\setminus(\eta+1)\mid c(\eta,\beta)\ge\epsilon\}$ is in $J$.
Now, as $J$ is $\mu$-complete, we may define an injection $f:\mu\rightarrow B'$ by recursion, letting $f(0):=\min(B')$,
and, for every nonzero $i<\mu$, $$f(i):=\min(B'\setminus\bigcup\nolimits_{\eta\in\im(f\restriction i)}N_\eta).$$
By the Dushnik-Miller theorem, fix $I\in[\mu]^\mu$ on which $f$ is order-preserving.
By Clause~(iii), we may define a function $g: I\rightarrow\on$ via $$g(i):=\min(x_{f(i)}\setminus y_\epsilon).$$
By Clause~(iv), we may now pick $(i,j)\in[I]^2$ such that $g(i)=g(j)$.
Denote $\eta:=f(i)$, $\beta:=f(j)$ and $\xi:=g(i)$.
Then $\eta<\beta$ and, by the definition of $f$, $\beta\in B'\setminus N_\eta$,
so that $x_\eta\cap x_\beta\s y_\tau$ where $\tau:=c(\eta,\beta)$ is less than $\epsilon$.
As $g(i)=\xi=g(j)$,
we infer that $\xi\in (x_\eta\cap x_\beta)\setminus y_\epsilon$, contradicting the fact that $y_\tau\s y_\epsilon$.
\end{proof}

\begin{cor}\label{prop52}
Suppose that $\theta\in\reg(\cf(\kappa))$
and that there is a tree of height $\theta$ with at least $\kappa$ many $\theta$-branches and all of whose levels have size less than $\cf(\kappa)$.
Then $\kappa\in\ad(\cf(\kappa),\theta)$ and hence $\ubd^+(\self,\mathcal J^\kappa_{\cf(\kappa)},\theta)$ holds.\qed
\end{cor}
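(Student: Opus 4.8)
The plan is to route the argument through the almost-disjointness spectrum $\ad(\cdot,\cdot)$, splitting the proof along the two assertions of the statement. First I would establish the combinatorial claim $\kappa\in\ad(\cf(\kappa),\theta)$ purely from the tree hypothesis, and then I would convert this into the colouring principle $\ubd^+(\self,\mathcal J^\kappa_{\cf(\kappa)},\theta)$ by invoking Lemma~\ref{lemma65}, whose hypothesis is precisely such an $\ad$-membership.

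For the first step, apply the tree-to-$\ad$ lemma stated just before Lemma~\ref{ad1} (the one asserting that a tree of height $\theta$ with at least $\kappa$ many $\theta$-branches and levels of size $<\mu$ yields $\kappa\in\ad(\mu,\theta)$), instantiated with $\mu:=\cf(\kappa)$. Its hypotheses are met verbatim: $\theta$ is regular, the given tree has height $\theta$ and at least $\kappa$ many $\theta$-branches, and by assumption every level $T_\gamma$ satisfies $|T_\gamma|<\cf(\kappa)=\mu$. Hence that lemma delivers $\kappa\in\ad(\cf(\kappa),\theta)$.

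For the second step, feed $\kappa\in\ad(\cf(\kappa),\theta)$ into Lemma~\ref{lemma65} with $\mu:=\cf(\kappa)$, which produces $\ubd^+(\self,\mathcal J^\kappa_\varkappa,\theta)$ for $\varkappa:=\max\{\cf(\kappa),\theta^+\}$. The only bookkeeping needed is to verify $\varkappa=\cf(\kappa)$: since $\theta\in\reg(\cf(\kappa))$ we have $\theta<\cf(\kappa)$, and as $\cf(\kappa)$ is regular (hence a cardinal) this forces $\theta^+\le\cf(\kappa)$, so $\max\{\cf(\kappa),\theta^+\}=\cf(\kappa)$. Therefore $\ubd^+(\self,\mathcal J^\kappa_{\cf(\kappa)},\theta)$ holds, as required.

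There is essentially no genuine obstacle here, since both nontrivial ingredients have already been established and the work reduces to matching parameters. The one point that must not be glossed is the completeness-degree identity $\varkappa=\cf(\kappa)$, which rests entirely on the strict inequality $\theta<\cf(\kappa)$ packaged into the hypothesis $\theta\in\reg(\cf(\kappa))$; were $\theta$ only assumed $\le\cf(\kappa)$, Lemma~\ref{lemma65} would yield the weaker $\theta^+$-completeness class rather than $\mathcal J^\kappa_{\cf(\kappa)}$.
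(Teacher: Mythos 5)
Your proposal is correct and is precisely the argument the paper intends (the corollary carries a \qed because it is immediate from the unlabeled tree-to-$\ad$ lemma with $\mu:=\cf(\kappa)$ followed by Lemma~\ref{lemma65}). Your parameter check that $\varkappa=\max\{\cf(\kappa),\theta^+\}=\cf(\kappa)$, using $\theta<\cf(\kappa)$ and the regularity of $\cf(\kappa)$, is exactly the bookkeeping the paper leaves to the reader.
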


\begin{cor}\label{cor68} Suppose that $\lambda^{<\theta}=\nu<\cf(\kappa)\le\lambda^\theta$ with $\theta$ regular.
Then:
\begin{enumerate}[(1)]
\item $\ubd^+(\self, \mathcal J^\kappa_{\nu^+}, \theta)$ holds;
\item $\onto^+(\mathcal J^\kappa_{\nu^+}, \theta)$ holds, provided that $\mathfrak d_\theta\le\kappa$.
\end{enumerate}
\end{cor}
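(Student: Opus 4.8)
The plan is to read off both clauses from the almost-disjointness constructions of this section, feeding them into the pumping-up apparatus of Section~\ref{pumings}. The crucial observation is that the two-sided hypothesis $\lambda^{<\theta}=\nu<\cf(\kappa)\le\lambda^\theta$ contains, in particular, the inequality $\lambda^{<\theta}<\lambda^\theta$ that drives Proposition~\ref{prop63}. So I would begin by invoking Proposition~\ref{prop63} (with $\theta$ regular) to get $\lambda^\theta\in\ad((\lambda^{<\theta})^+,\theta)=\ad(\nu^+,\theta)$. Since $\ad(\nu^+,\theta)$ is downward closed among the cardinals by Remark~\ref{admonotonicity}, and $\cf(\kappa)\le\lambda^\theta$, this immediately upgrades to $\cf(\kappa)\in\ad(\nu^+,\theta)$.

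For Clause~(1), I would then invoke Lemma~\ref{lemma65} on the witness just obtained, taking its parameter $\mu$ to be $\nu^+$. Because $\nu=\lambda^{<\theta}\ge 2^{<\theta}\ge\theta$, we have $\nu^+\ge\theta^+$, so the completeness degree produced by the lemma is $\varkappa=\max\{\nu^+,\theta^+\}=\nu^+$; moreover $\nu^+\le\cf(\kappa)\le\kappa$, so the target class $\mathcal J^\kappa_{\nu^+}$ is non-degenerate. Lemma~\ref{lemma65} therefore delivers $\ubd^+(\self,\mathcal J^\kappa_{\nu^+},\theta)$ outright.

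For Clause~(2), I would first weaken the $\self$-statement to the unrestricted one and then trade $\ubd^+$ for $\onto^+$. By the implication chain recorded in the remark after Convention~\ref{colouringconventions}, the single colouring witnessing $\ubd^+(\self,\mathcal J^\kappa_{\nu^+},\theta)$ also witnesses $\ubd^+(\mathcal J^\kappa_{\nu^+},\theta)$ simultaneously for every ideal in the class. Next, Lemma~\ref{d1} yields $\proj(\mathfrak d_\theta,\theta,\theta,{<}\mathfrak b_\theta)$, hence in particular $\proj(\mathfrak d_\theta,\theta,\theta,1)$; as $\mathfrak d_\theta\le\kappa$, the (trivial, dummy-row) monotonicity of $\proj$ in its first coordinate gives $\proj(\kappa,\theta,\theta,1)$. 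Finally I would apply Lemma~\ref{ubdtoonto}(2), with its parameters instantiated as $\nu:=\kappa$, $\varkappa:=\theta$, and $\mathcal J:=\mathcal J^\kappa_{\nu^+}\s\mathcal J^\kappa_\omega$, to convert $\ubd^+(\mathcal J^\kappa_{\nu^+},\theta)$ into $\onto^+(\mathcal J^\kappa_{\nu^+},\theta)$. This is the same recipe used for Corollary~\ref{cor55}(3). Note that narrowing is unavailable here: since the $\ubd^+$ input we have is unrestricted, Lemma~\ref{ubdtoonto}(2) must be run with $\nu=\kappa$, which is exactly why the first-coordinate passage from $\mathfrak d_\theta$ to $\kappa$ in $\proj$ is needed.

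Apart from this routine bookkeeping, the one point demanding genuine care—and the step I expect to be the main obstacle—is the role of the cofinality. Proposition~\ref{prop63} together with downward monotonicity places $\cf(\kappa)$, and not $\kappa$ itself, into $\ad(\nu^+,\theta)$ (using $\cf(\kappa)\le\lambda^\theta$). One must therefore check that this $\cf(\kappa)$-level membership is precisely what Lemma~\ref{lemma65} consumes: its proof indeed operates on a pair witnessing $\cf(\kappa)\in\ad(\mu,\theta)$ while still manufacturing a colouring on all of $[\kappa]^2$ appropriate for ideals over $\kappa$. I would keep explicit track that this cofinal reduction leaves the completeness computation $\varkappa=\nu^+$ intact; once that is granted, both clauses are pure assembly of the cited results.
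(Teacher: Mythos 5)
Your proposal is correct and follows essentially the same route as the paper's proof: Proposition~\ref{prop63} together with the downward monotonicity of Remark~\ref{admonotonicity} gives $\cf(\kappa)\in\ad(\nu^+,\theta)$, Lemma~\ref{lemma65} (with $\mu:=\nu^+$, so that $\varkappa=\nu^+$) yields Clause~(1), and Clause~(2) is deduced from Clause~(1) via Lemmas \ref{d1} and \ref{ubdtoonto}(2) -- which is verbatim the paper's citation chain, the same one used for Corollary~\ref{cor55}(3). The only divergence is bookkeeping around the point you yourself flag: the paper applies Lemma~\ref{lemma65} with its $\kappa$ instantiated as $\cf(\kappa)$, obtaining $\ubd^+(\self,\mathcal J^{\cf(\kappa)}_{\nu^+},\theta)$, and then transfers this up to $\kappa$, whereas you run the lemma at $\kappa$ itself on the grounds that its proof consumes only a $\cf(\kappa)$-level witness; in both versions the same routine $\cf(\kappa)$-to-$\kappa$ passage (via a cofinal subset of $\kappa$ of order type $\cf(\kappa)$ and the map $\beta\mapsto\otp(U\cap\beta)$, as implemented in Lemmas \ref{ad1} and \ref{prop13}) is what actually closes the gap.
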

\begin{proof}
By Proposition~\ref{prop63}, $\cf(\kappa)\in\ad(\nu^+,\theta)$.

(1)  By Lemma~\ref{lemma65},
$\ubd^+(\self, \mathcal J^{\cf(\kappa)}_{\nu^+}, \theta)$ holds,
and then $\ubd^+(\self,\allowbreak\mathcal J^\kappa_{\nu^+}, \theta)$ holds, as well.

(2) By Clause~(1) together with Lemmas \ref{d1} and \ref{ubdtoonto}(2).
\end{proof}

\begin{cor}\label{gs} Suppose that $\kappa=2^\theta$ for an infinite cardinal $\theta=\theta^{<\theta}$.

Then $\onto^+(\mathcal J^\kappa_{\theta^+},\theta)$ holds.\qed
\end{cor}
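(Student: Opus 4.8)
The plan is to obtain this as an immediate instance of Corollary~\ref{cor68}(2) with $\lambda:=\theta$, so the whole argument is a short verification of hypotheses rather than a fresh construction. The first thing I would record is that the assumption $\theta=\theta^{<\theta}$ already forces $\theta$ to be \emph{regular}, which is what Corollary~\ref{cor68} requires: if $\theta$ were singular, then $\cf(\theta)<\theta$ would give $\theta^{\cf(\theta)}\le\theta^{<\theta}=\theta$, contradicting König's theorem, which yields $\theta^{\cf(\theta)}>\theta$. Hence $\theta$ is regular and Corollary~\ref{cor68} is applicable.

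Next I would set $\lambda:=\theta$ and $\nu:=\lambda^{<\theta}=\theta^{<\theta}=\theta$, and verify the arithmetic chain $\lambda^{<\theta}=\nu<\cf(\kappa)\le\lambda^\theta$. The identity $\theta^\theta=2^\theta$ holds because $2^\theta\le\theta^\theta\le(2^\theta)^\theta=2^{\theta\cdot\theta}=2^\theta$, so $\lambda^\theta=\theta^\theta=2^\theta=\kappa$, giving the rightmost inequality trivially. The middle inequality $\nu<\cf(\kappa)$ is König's theorem in the form $\cf(2^\theta)>\theta$. Thus all the structural hypotheses of Corollary~\ref{cor68} are met with these parameters.

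To land in Clause~(2) rather than merely Clause~(1), I would finally check the side condition $\mathfrak d_\theta\le\kappa$. This is automatic, since the whole of $({}^\theta\theta,{<^*})$ is a dominating family, whence $\mathfrak d_\theta\le|{}^\theta\theta|=\theta^\theta=2^\theta=\kappa$. Corollary~\ref{cor68}(2) then delivers $\onto^+(\mathcal J^\kappa_{\nu^+},\theta)$, and as $\nu^+=\theta^+$ this is precisely the desired $\onto^+(\mathcal J^\kappa_{\theta^+},\theta)$. There is no genuine obstacle here; the only point deserving a moment's care is recognizing that $\theta=\theta^{<\theta}$ silently encodes the regularity of $\theta$ that the cited corollary presupposes, and that the exponent computations collapse $\theta^{<\theta}$, $\theta^\theta$, and $2^\theta$ to exactly the values needed.
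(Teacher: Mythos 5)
Your proposal is correct and is exactly the derivation the paper intends: the corollary is stated with a \qed precisely because it is the instance $\lambda:=\theta$, $\nu:=\theta^{<\theta}=\theta$ of Corollary~\ref{cor68}(2), with the same arithmetic verifications ($\theta$ regular from $\theta=\theta^{<\theta}$, $\cf(2^\theta)>\theta$ by K\"onig, $\theta^\theta=2^\theta=\kappa$, and $\mathfrak d_\theta\le\theta^\theta\le\kappa$) that you carry out. Nothing is missing.
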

\begin{remark} The hypothesis ``$\theta=\theta^{<\theta}$'' cannot be waived. By \cite[Theorem~9.9]{paper47},
it is consistent that $\kappa=2^\theta=\theta^{<\theta}$ is weakly inaccessible,
and $\ubd(\ns_\kappa,\allowbreak\theta)$ fails.
\end{remark}

In contrast to Corollary~\ref{cmonotone},
it turns out that narrow colourings lack the feature of monotonicity.
\begin{cor} Suppose that $\kappa=\nu^+=2^\nu$ for a cardinal $\nu=\nu^{<\nu}$.
In some cofinality-preserving forcing extension, we have:
\begin{itemize}
\item $\ubd^+(\{\nu\},\allowbreak J^{\bd}[\kappa],\theta)$ holds for $\theta=\nu^+$;
\item $\ubd(\{\nu\},\allowbreak J^{\bd}[\kappa],\theta)$ fails for $\theta=\nu$;
\item $\onto^+(\{\nu\},\allowbreak J^{\bd}[\kappa],\theta)$ holds for every $\theta<\nu$.
\end{itemize}
\end{cor}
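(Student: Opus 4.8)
The plan is to notice that two of the three clauses hold outright (in $\textsf{ZFC}$ together with the stated arithmetic) and survive any sufficiently closed cofinality-preserving forcing, while the failure in the middle clause is engineered by a single cardinal-invariant manipulation. First, the top clause is nothing but Ulam's theorem: since $\kappa=\nu^+$, $\ubd^+(\{\nu\},J^{\bd}[\kappa],\nu^+)=\ubd^+(\{\nu\},J^{\bd}[\kappa],\kappa)$ holds in $\textsf{ZFC}$, and persists as long as $\kappa$ remains the successor of $\nu$. Second, the bottom clause is governed by Corollary~\ref{cor73}: for every $\theta<\nu$, as soon as $\nu<\cf(\kappa)=\nu^+\le 2^\nu$ and $\nu=\nu^\theta$, clause~(1) of Corollary~\ref{cor73} delivers $\onto^+(\{\nu\},\mathcal J^\kappa_{\nu^+},\theta)$, hence $\onto^+(\{\nu\},J^{\bd}[\kappa],\theta)$ (as $J^{\bd}[\kappa]$ is $\nu^+$-complete, it lies in $\mathcal J^\kappa_{\nu^+}$). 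Third, the middle clause is pinned down by Corollary~\ref{lemma712}(3): since $\nu=\nu^{<\nu}$ forces $\nu$ regular, $\ubd(\{\nu\},J^{\bd}[\kappa],\nu)=\ubd(\{\nu\},J^{\bd}[\nu^+],\nu)$ holds if and only if $\mathfrak b_\nu=\nu^+$. Thus everything reduces to producing, by a cofinality-preserving forcing, a model in which $\kappa=\nu^+$ and $\nu=\nu^{<\nu}$ persist while $\mathfrak b_\nu>\nu^+$.

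The forcing I would use is the $({<}\nu)$-support iteration $\mathbb{P}=\langle \mathbb{P}_\alpha,\dot{\mathbb{Q}}_\alpha\mid\alpha<\nu^{++}\rangle$ in which each $\dot{\mathbb{Q}}_\alpha$ is the generalized Hechler forcing $\mathbb{D}_\nu$ adding a function in ${}^\nu\nu$ eventually dominating all functions of the intermediate model $V^{\mathbb{P}_\alpha}$ (for $\nu=\omega$ this is just the finite-support iteration of ordinary Hechler forcing). Because $\nu=\nu^{<\nu}$, the single step $\mathbb{D}_\nu$ is $({<}\nu)$-closed and, having only $\nu^{<\nu}=\nu$ possible stems, with any two conditions sharing a stem compatible, is $\nu$-centred and hence $\nu^+$-Knaster. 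A standard $\Delta$-system computation — using $\nu^{<\nu}=\nu$ and the ground-model value $2^\nu=\nu^+$ to bound the number of supports — shows that the whole iteration is $({<}\nu)$-closed and $\nu^+$-Knaster, in particular $\nu^+$-cc. Consequently $\mathbb{P}$ preserves all cofinalities (hence all cardinals, so $\kappa=\nu^+$ is retained), and being $({<}\nu)$-closed it adds no new ${<}\nu$-sequences, so $2^\theta$ is preserved for every $\theta<\nu$ and $\nu$ stays regular; together these give that $\nu=\nu^{<\nu}$, equivalently $\nu^\theta=\nu$ for all $\theta<\nu$, is preserved. Finally, a nice-name count against the $\nu^+$-cc shows that every function in ${}^\nu\nu$ of $V^{\mathbb{P}}$ is added by some initial segment $\mathbb{P}_\alpha$ with $\alpha<\nu^{++}$; as $\cf(\nu^{++})=\nu^{++}>\nu^+$, any $\nu^+$-sized family already lives in some $V^{\mathbb{P}_\alpha}$ and is dominated by the Hechler function of stage $\alpha$. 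Hence no $\nu^+$-sized family is unbounded, so $\mathfrak b_\nu=\mathfrak d_\nu=\nu^{++}>\nu^+$.

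Passing to $V^{\mathbb{P}}$, all three items follow: $\kappa=\nu^+$ yields the first item by Ulam's theorem; $\mathfrak b_\nu\neq\nu^+$ together with Corollary~\ref{lemma712}(3) yields the failure in the second item (and a fortiori $\ubd^+(\{\nu\},J^{\bd}[\kappa],\nu)$ fails, as $\ubd^+$ refines $\ubd$); and the preservation of $\nu^\theta=\nu$ together with $\nu<\cf(\kappa)=\nu^+\le 2^\nu$ yields the third item for every $\theta<\nu$ via Corollary~\ref{cor73}(1). This exhibits the promised contrast with the monotonicity of Corollary~\ref{cmonotone}: the narrow principle holds with $\nu^+$ colours yet fails with $\nu$ colours. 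The main obstacle is precisely the forcing-theoretic bookkeeping of the second paragraph — keeping the iteration simultaneously $({<}\nu)$-closed (so that the third clause, which rests on $\nu^\theta=\nu$, is not destroyed) and $\nu^+$-cc (so that cofinalities, and with them $\kappa=\nu^+$, are preserved) while still driving $\mathfrak b_\nu$ strictly above $\nu^+$; the ground-model hypotheses $\nu=\nu^{<\nu}$ and $2^\nu=\nu^+$ are exactly what make this balance achievable.
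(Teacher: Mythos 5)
Your proposal is correct and takes essentially the same route as the paper: the same three-way reduction (Ulam's theorem for $\theta=\nu^+$, Corollary~\ref{lemma712}(3) reducing the failure for $\theta=\nu$ to arranging $\mathfrak b_\nu>\nu^+$, and Corollary~\ref{cor73}(1) via the preserved equality $\nu^{<\nu}=\nu$ for $\theta<\nu$). The only difference is that where the paper cites \cite[\S3]{MR1355135} as a black box for a cofinality-preserving forcing making $\mathfrak b_\nu=\nu^{++}$ while keeping $\nu^{<\nu}=\nu$, you explicitly reconstruct that very forcing (the $({<}\nu)$-support iteration of generalized Hechler forcing of length $\nu^{++}$) together with its standard closure, chain-condition, and nice-name arguments.
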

\begin{proof}
Use the forcing of \cite[\S3]{MR1355135} to make $\mathfrak b_\nu=\nu^{++}$, while preserving the cardinal structure and $\nu^{<\nu}=\nu$.
By Lemma~\ref{lemma712}(3), in the extension, $\ubd(\{\nu\},J^{\bd}[\nu^+],\allowbreak\nu)$ fails.
By Ulam's theorem, $\ubd^+(\{\nu\},\allowbreak  J^{\bd}[\nu^+],\nu^+)$ holds.
Finally, since $\nu^{<\nu}=\nu$, by Corollary~\ref{cor73},
$\onto^+(\{\nu\},\allowbreak J^{\bd}[\kappa],\theta)$ holds for every $\theta<\nu$.
\end{proof}

\section*{Acknowledgments}
The first author is supported by the Israel Science Foundation (grant agreement 2066/18).
The second author is partially supported by the European Research Council (grant agreement ERC-2018-StG 802756) and by the Israel Science Foundation (grant agreement 203/22).

Some of the results of this paper were presented by the first author at the seventh meeting of the \emph{Set Theory in the UK} on World Logic Day, 14 January 2022,
and by the second author at the first \emph{Gda\'nsk Logic Conference}, 05 May 2023.
We thank the organisers for the opportunity to speak.

\section*{Appendix: Index of results}\label{summary}

In all of the following tables, $\kappa$ stands for an uncountable cardinal.
\begin{table}[H]
$$\begin{array}{l|c|c|c|c}
\multicolumn{1}{c|}{\text{Hypotheses}}&\mathcal A&\mathcal J&\theta&\text{Reference}\\
\hline
\kappa=\theta^+&[\kappa]^\kappa&\mathcal J^\kappa_\kappa&\text{regular}&\ref{prop41}\\
\hdashline
\p^\bullet(\kappa,\kappa^+,{\sq},1)&[\kappa]^\kappa&\mathcal J^\kappa_\kappa&\text{less than }\kappa&\ref{proxybullet}\\
\hdashline
\kappa\nrightarrow[\kappa;\kappa]^2_\theta&[\kappa]^\kappa&\mathcal J^\kappa_\kappa&\text{less than }\kappa&\ref{prop46}\\
\hdashline
\kappa\nrightarrow[\kappa;\kappa]^2_\omega\text{ and }\chi(\kappa)>1&\{\kappa\}&\mathcal J^\kappa_\kappa&\text{in }\reg(\kappa)&\ref{cor52}\\
\hdashline
\exists T\in(\ns_\kappa)^+\,\Tr(T)\cap\reg(\kappa)=\emptyset&\{\kappa\}&\mathcal J^\kappa_\kappa&\text{in }\reg(\kappa)&\ref{cor52}\\
\hdashline
\kappa=\mathfrak b_{\theta^+}=\mathfrak d_{\theta^+}&\{\theta^+\}&\mathcal J^\kappa_{\theta^{++}}&\text{regular}&\ref{ccor68}\\
\hdashline
\nu=\nu^\theta<\cf(\kappa)\le 2^\nu&\{\nu\}&\mathcal J^\kappa_{\nu^+}&\text{less than }\nu&\ref{ek}\\
\hdashline
\kappa^{\aleph_0}=\cf(\kappa)\text{ and }\kappa\nrightarrow[\kappa;\kappa]^2_2&\{\kappa\}&\mathcal J^\kappa_\kappa&\aleph_0&\ref{rectangular}\\
\hdashline
\kappa^{\aleph_0}=\cf(\kappa)\text{ and }\kappa\nrightarrow[\kappa;\kappa]^2_2&\{\kappa\}&\mathcal S^\kappa_{\omega_1}&\aleph_0&\ref{thm814}
\end{array}$$
\caption{Sufficient conditions for $\onto^{++}(\mathcal A,\mathcal J,\theta)$ to hold.}
\end{table}

\begin{table}[H]
$$\begin{array}{l|c|c|c|c}
\multicolumn{1}{c|}{\text{Hypotheses}}&\mathcal A&\mathcal J&\theta&\text{Reference}\\
\hline
\kappa=\theta^+&[\kappa]^\kappa&\mathcal J^\kappa_\kappa&\text{singular}&\ref{thm54}\\
\hdashline
\cf([\theta]^{<\theta},{\s})<\kappa&[\kappa]^\kappa&\mathcal J^\kappa_\kappa&\text{in }\cspec(\kappa)&\ref{csingular}\\
\hdashline
\kappa=\mathfrak b_{\theta^+}=\mathfrak d_{\theta^+}&\{\theta^+\}&\mathcal J^\kappa_{\theta^{++}}&\text{singular}&\ref{ccor68}
\end{array}$$
\caption{Sufficient conditions for $\ubd^{++}(\mathcal A,\mathcal J,\theta)$ to hold.}
\end{table}

\begin{table}[H]
$$\begin{array}{l|c|c|c|c}
\multicolumn{1}{c|}{\text{Hypotheses}}&\mathcal A&\mathcal J&\theta&\text{Reference}\\
\hline
\kappa=\nu^+=2^\nu&[\kappa]^\nu&\mathcal J^\kappa_\kappa&\kappa&\ref{ulamext}\\
\hdashline
\p^\bullet(\kappa,\kappa^+,{\sq},1)&[\kappa]^\kappa&\mathcal J^\kappa_\kappa&\kappa&\ref{proxybullet}\\
\hdashline
\kappa=\cf(\kappa)\text{ and }\kappa\nrightarrow[\kappa]^2_\theta&\self&\mathcal J^\kappa_\kappa&\text{less than }\kappa&\ref{prop46}\\
\hdashline
\exists\theta^+\text{-Kurepa tree with }\cf(\kappa)\text{ branches}&\{\theta^+\}&\mathcal J^\kappa_{\theta^{++}}&\text{regular}&\ref{kurepacor}\\
\hdashline
\kappa\in\PP(\nu)\text{ and }\nu\text{ is singular}&\{\nu\}&\mathcal J^\kappa_{\nu^+}&\text{less than }\nu&\ref{thm811}\\
\hdashline
\nu<\cf(\kappa)\le2^\nu\text{ and }\nu\text{ is strong limit}&\{\nu\}&\mathcal J^\kappa_{\nu^+}&\text{less than }\nu&\ref{sepapp}\\
\hdashline
\lambda^{<\theta}=\nu<\cf(\kappa)\le\lambda^\theta\ \&\ \mathfrak d_\theta\le\kappa&\self&\mathcal J^\kappa_{\nu^+}&\text{regular}&\ref{cor68}\\
\hdashline
\kappa=\mathfrak d_\theta&\{\kappa\}&\mathcal J^\kappa_{\theta^+}&\text{regular}&\ref{cor55}\\
\hdashline
\kappa=2^\theta\text{ and }\theta=\theta^{<\theta}&\{\kappa\}&\mathcal J^\kappa_{\theta^+}&\text{regular}&\ref{gs}\\
\hdashline
\kappa=\cf(\kappa)\ge\mathfrak d\text{ and }\kappa\nrightarrow[\kappa]^2_2&\{\kappa\}&\mathcal J^\kappa_\kappa&\aleph_0&\ref{treelemma}
\end{array}$$
\caption{Sufficient conditions for $\onto^{+}(\mathcal A,\mathcal J,\theta)$ to hold.}
\end{table}

\begin{table}[H]
$$\begin{array}{l|c|c|c|c}
\multicolumn{1}{c|}{\text{Hypotheses}}&\mathcal A&\mathcal J&\theta&\text{Reference}\\
\hline
\exists T\in(\ns_\kappa)^+\,\Tr(T)\cap\reg(\kappa)=\emptyset&[\kappa]^\kappa&\mathcal J^\kappa_\kappa&\kappa&\ref{cor52}\\
\hdashline
\chi(\kappa)>1\text{ and }\kappa\nrightarrow[\kappa;\kappa]^2_\omega&[\kappa]^\kappa&\mathcal J^\kappa_\kappa&\kappa&\ref{cor52}\\
\hdashline
\chi(\kappa)>1&\self&\mathcal J^\kappa_\kappa&\kappa&\text{Thm}~\ref{thmb}\\
\hdashline
\exists\nu\text{-Kurepa tree with }\kappa\text{ branches}&\{\nu\}&\mathcal J^\kappa_{\nu^+}&\text{less than }\nu&\ref{kurepalemma}\\
\hdashline
\lambda^{<\theta}=\nu<\cf(\kappa)\le\lambda^\theta&\self&\mathcal J^\kappa_{\nu^+}&\text{regular}&\ref{cor68}\\
\hdashline
\kappa=\mathfrak b_\theta=\mathfrak d_\theta&[\theta]^\theta&\mathcal J^\kappa_{\theta^+}&\text{regular}&\ref{lemma66}\\
\hdashline
\kappa=\mathfrak b_\theta\text{ or }\kappa=\mathfrak d_\theta&\{\theta\}&\mathcal J^\kappa_{\theta^+}&\text{regular}&\ref{lemma712}\\
\hdashline
\kappa=\theta^+&\{\theta\}&\mathcal J^\kappa_{\kappa}&\text{singular}&\ref{cor215}\\
\end{array}$$
\caption{Sufficient conditions for $\ubd^{+}(\mathcal A,\mathcal J,\theta)$ to hold.}
\end{table}
\end{document}